\newcommand{\excise}[1]{}
\newtheorem{thm}{Theorem}[section]
\newtheorem{lemma}[thm]{Lemma}
\newtheorem{cor}[thm]{Corollary}
\newtheorem{prop}[thm]{Proposition}
\newtheorem{question}[thm]{Question}
\newtheorem{prob}[thm]{Problem}
\theoremstyle{definition}
\newtheorem{example}[thm]{Example}
\newtheorem{remark}[thm]{Remark}
\newtheorem{defn}[thm]{Definition}
\newtheorem{conv}[thm]{Convention}
\numberwithin{equation}{section}
\newcommand{\ring}[1]{\ensuremath{\mathbb{#1}}}
\renewcommand\>{\rangle}
\newcommand\<{\langle}
\newcommand\CC{\ring{C}}
\newcommand\NN{\ring{N}}
\newcommand\QQ{\ring{Q}}
\newcommand\RR{\ring{R}}
\newcommand\ZZ{\ring{Z}}
\newcommand\kk{\Bbbk}
\newcommand\mm{{\mathfrak m}}
\newcommand\pp{{\mathfrak p}}
\newcommand\ww{{\mathbf w}}
\newcommand\xx{{\mathbf x}}
\newcommand\oJ{{\hspace{.45ex}\overline{\hspace{-.45ex}J}}}
\newcommand\oQ{\hspace{.15ex}\ol{\hspace{-.15ex}Q\hspace{-.25ex}}\hspace{.25ex}}
\newcommand\ttt{\mathbf{t}}
\newcommand\app{\mathord\approx}
\newcommand\nil{\infty}
\newcommand\ott{\ol\ttt{}}
\newcommand\til{\mathord\sim}
\newcommand\Iqp{I_q{}^{\hspace{-.7ex}P}}
\newcommand\Irn{I_{\rho,\nothing}}
\newcommand\Irp{I_{\rho,P}}
\newcommand\Isp{I_{\sigma,P}}
\newcommand\Iwp{I_w{}^{\hspace{-1.2ex}P}}
\newcommand\rqp{\rho_q^{\hspace{.2ex}P}}
\newcommand\Iaug{I_{\mathrm{aug}}}
\newcommand\MipI{M_\infty^P(I)}
\newcommand\MrpI{M_\rho^P(I)}
\newcommand\MspI{M_\sigma^P(I)}
\newcommand\MwpI{M_\ww^P(I)}
\newcommand\Qlqp{Q_{\preceq q}^P}
\newcommand\Qlwp{Q_{\preceq w}^P}
\newcommand\WwpI{W_\ww^P(I)}
\newcommand\into{\hookrightarrow}
\newcommand\onto{\twoheadrightarrow}
\newcommand\rpqP{\rho_{p+q}^{\hspace{.2ex}P}}
\newcommand\minus{\smallsetminus}
\newcommand\nothing{\varnothing}
\renewcommand\iff{\Leftrightarrow}
\renewcommand\implies{\Rightarrow}
\def\ol#1{{\overline {#1}}}
\DeclareMathOperator\ann{ann} 
\DeclareMathOperator\Ass{Ass} 
\DeclareMathOperator\sat{sat} 
\begin{document}

\mbox{}
\title{Decompositions of commutative monoid congruences and binomial ideals\qquad}
\author{Thomas Kahle}
\address{Fakult\"at f\"ur Mathematik, Otto-von-Guericke Universit\"at\\D-39106 Magdeburg, Germany}
\email{http://www.thomas-kahle.de}
\author{Ezra Miller}
\address{Mathematics Department\\Duke University\\Durham, NC 27708\\USA}
\email{http://math.duke.edu/\~{\hspace{-.3ex}}ezra}

\makeatletter
  \@namedef{subjclassname@2010}{\textup{2010} Mathematics Subject Classification}
\makeatother
\subjclass[2010]{Primary: 20M14, 05E40, 20M25; Secondary: 20M30,
20M13, 05E15, 13F99, 13C05, 13A02, 13P99, 68W30, 14M25}
\date{13 May 2014}

\begin{abstract}
Primary decomposition of commutative monoid congruences is insensitive
to certain features of primary decomposition in commutative rings.
These features are captured by the more refined theory of
\emph{mesoprimary decomposition} of congruences, introduced here
complete with witnesses and associated prime objects.  The
combinatorial theory of mesoprimary decomposition lifts to arbitrary
binomial ideals in monoid algebras.  The resulting \emph{binomial
mesoprimary decomposition} is a new type of intersection decomposition
for binomial ideals that enjoys computational efficiency and
independence from ground field hypotheses.  Binomial primary
decompositions are easily recovered from mesoprimary decomposition.
\end{abstract}
\maketitle

\setcounter{tocdepth}{1}
\tableofcontents

\section{Introduction}

\subsection*{Overview}

Primary decomposition of ideals and modules has been a mainstay of
commutative algebra since Emmy Noether's unification of scattered
results roughly a century ago~\cite{noether}.  A formally analogous
theory for congruences on commutative monoids made its first
appearance around fifty years ago \cite{drbohlav}, and subsequently
the topic of decompositions has similarly played a central role in
commutative semigroup theory \cite{grillet}.  Our first goal is to
demonstrate that the formal analogy in the setting of finitely
generated monoids and congruences---the \emph{combinatorial
setting}---fails to capture the essence of primary decomposition in
noetherian rings and modules.  We justify this claim, and rectify it,
by exhibiting a more sensitive theory of \emph{mesoprimary
decomposition} of congruences, complete with witnesses, associated
prime objects, and other facets of control afforded in parallel with
primary decomposition in rings.  We then proceed beyond formal analogy
by lifting our witnessed theory of mesoprimary decomposition to the
\emph{arithmetic setting} of binomial ideals in semigroup rings, at
the interface of commutative ring theory with finitely generated
monoids.

Mesoprimary decomposition of binomial ideals is not binomial primary
decomposition, but a new type of intersection decomposition for
binomial ideals, with numerous advantages over ordinary primary
decomposition, such as combinatorial clarity, independence from
properties of the ground field, and computational efficiency.
Nevertheless, binomial primary decomposition is easily recovered from
mesoprimary decomposition.  In essence, by lifting mesoprimary
decomposition of congruences, binomial mesoprimary decomposition
distills the coefficient-free combinatorics inherent in primary
decomposition of binomial ideals and isolates the precise manner in
which coefficients subsequently determine the primary components.  The
subtlety of coefficient arithmetic causes the lifting procedure to
fail verbatim translation, particularly where redundancy is involved.
Part of our study therefore contrasts the slightly different notions
of witness and associatedness in the combinatorial and arithmetic
settings.

\subsection*{General motivation}
\enlargethispage{4.5ex}

The need for natural decompositions in the monoid and binomial
contexts has become increasingly important in recent years, in view of
appearances and applications in numerous areas.  Some of these
directly involve commutative monoids, such as schemes
over~$\mathbb{F}_1$ \cite{connes, deitmar}, where monoids form the
foundation just as rings do for usual schemes.  Another instance is
the arrival of mis\`ere quotients in combinatorial game theory, where
monoids provide data structures for recording and computing winning
strategies \cite{Pla05, misereQuots} (see also \cite{abelSurvey} for
an algebraic introduction).  At the same time, binomial ideals
interact with other parts of mathematics and the sciences, motivating
research into applicable descriptions of their decompositions.  For
example, dynamics of mass-action kinetics, where steady states in
detailed-balanced cases are described by vanishing of binomial
trajectories, arise from stoichiometric exponential growth and decay
\cite{mass-actionReview}; binomial decompositions in mass-action
kinetics can identify which species persist or become extinct
\cite{shiu-stu}.  In algebraic statistics, decompositions of binomial
ideals give insight into how a set of conditional independence
statements among random variables can be realized \cite{algstat,HHH}.
More generally, the connectivity of lattice point walks in polyhedra
can be analyzed using decompositions of binomial ideals
\cite{diaconisLattice,krs}.  These applications rely on decompositions
of \emph{unital} ideals---generated by monomials and differences of
monomials---into unital ideals; these are mesoprimary decompositions.
The algebra, geometry, and combinatorics of binomial primary
decomposition interacts with systems of differential equations of
hypergeometric type \cite{GGZ87, GKZ89}, whose solutions are
eigenfunctions for binomial differential operators encoding the
infinitesimal action of an algebraic torus.  In fact, it was in the
hypergeometric framework that the combinatorics of binomial primary
decomposition had its origin \cite{rankJumps, primDecomp, dmm},
providing tight control over series solutions.  In the meantime,
mesoprimary decomposition serves as an improved method for presenting
and visualizing binomial primary decomposition in algorithmic output
\cite{BinomialsM2}.  Beyond that, the methods here have already found
a theoretical application to combinatorial game theory
\cite{latticeGames,affineStrat}.

\subsection*{Conventions}

Unless otherwise stated, $Q$ denotes a finitely generated
(equivalently, noetherian) commutative monoid, and $\kk$ denotes an
arbitrary field.

\subsection*{Gathering primary components rationally}
\enlargethispage{5ex}

Staring at output of binomial primary decomposition algorithms
intimates that certain primary components belong together.

\begin{example}\label{e:8to1}
During investigations of presentations of mis\`ere quotients of
combinatorial games (culminating in the definition of lattice games
\cite{gmw, latticeGames}), Macaulay2~\cite{M2} produced long lists of
primary binomial ideals.  In one instance, eight of the components
were
$$%
\begin{array}{c@{\ }c}
  \<e - 1, d - 1, b - 1, a - 1, c^3\>,
& \<e - 1, d - 1, b - 1, a + 1, c^3\>,
\\\<e - 1, d + 1, b - 1, a - 1, c^3\>,
& \<e - 1, d + 1, b - 1, a + 1, c^3\>,
\\\<e + 1, d - 1, b + 1, a - 1, c^3\>,
& \<e + 1, d - 1, b + 1, a + 1, c^3\>,
\\\<e + 1, d + 1, b + 1, a - 1, c^3\>,
& \<e + 1, d + 1, b + 1, a + 1, c^3\>.
\end{array}
$$
The urge to gather these eight into one piece
(a piece of eight\hspace{-1pt}?\hspace{-1pt}), namely their \mbox{intersection}
$$%
  \<b - e, e^2 - 1, d^2 - 1, a^2 - 1, c^3\>,
$$
is irresistible.  (Who would rather sift through the big list?)  And
it would have become more so had the exponents in the single gathered
component been larger integers, for then the coefficients in the long
list of primary ideals would not even have been rational numbers,
though the intersection would still have been rational.
\end{example}

An arbitrary binomial prime ideal~$\Irp$ in a finitely generated
monoid algebra~$\kk[Q]$ is determined by a monoid prime ideal $P
\subset Q$ and a character $\rho: K \to \kk^*$ defined on a subgroup
of the unit group $G_P \subseteq Q_P$ in the localization of $Q$
along~$P$ (Definition~\ref{d:localize} and
Theorem~\ref{t:binomPrime}).  A binomial ideal $I \subseteq \kk[Q]$
might possess many associated primes sharing the same~$P$ and~$K$,
differing only in the character~$\rho$.  \emph{Mesoprimary ideals}
(Definition~\ref{d:cellular}; see also Propositions~\ref{p:one}
and~\ref{p:filter}) are data structures for keeping track of primary
components for such groups of associated binomial primes.  The term
``group'' here is used in the ordinary nonmathematical sense, but it
is appropriate mathematically: the primary components of a mesoprimary
ideal over an algebraically closed field are indexed by the characters
of a finite abelian group, namely the quotient $\sat(K)/K$ of the
saturation of~$K$ in~$G_P$ (Propositions~\ref{p:assofmeso}
and~\ref{p:mesoprimcomp}).  Gathering primary components into
mesoprimary ideals saves space just as writing the presentation for a
finite abelian group instead of listing every one of its characters
does.

The situation is not typically as simple as in Example~\ref{e:8to1}.
Indeed, upon inspecting a binomial primary decomposition, it can be
difficult to determine which mesoprimary ideals ought to occur, and
which mesoprimary ideal each primary component ought to contribute to.
Furthermore, some primary components of a mesoprimary ideal can be
absent, even if the mesoprimary ideal clearly ought to appear.

\begin{example}\label{e:unital}
If $\mathrm{char}(\kk) \neq 2$, the ideal $I = \<y-x^2y, y^2-xy^2,
y^3\> \subseteq \kk[x,y]$ has primary decomposition $I = \<y\> \cap
\<1+x, y^2\> \cap \<1-x, y^3\>$.  The ideal $I$ is unital, being
generated by differences of monomials, so the component $\<1+x, y^2\>$
feels out of place.  Yet there are no obvious components to gather.
What's missing is a ``phantom'' component $\<1-x, y^2\>$, hidden by
$\<1-x, y^3\>$.  Gathering yields $\<1+x, y^2\> \cap \<1-\nolinebreak
x, y^2\> = \<1-x^2, y^2\>$.  If $\mathrm{char}(\kk) = 2$, then $I =
\<y\> \cap \<1-x^2, y^2-xy^2, y^3\>$ is a primary decomposition
of~$I$.  While this decomposition is forced to be unital, it feels not
fine enough.  Indeed, $1-x^2$ and $1-x$ look like they should
contribute two associated objects, and in all but a single
characteristic they do.  Independent of the characteristic the
mesoprimary decomposition splits the second component: $I = \<y\> \cap
\<1-x^2, y^2\> \cap \<1-x,y^3\>$.
\end{example}

A \emph{mesoprimary decomposition} of a binomial ideal~$I$ is an
expression of~$I$ as an intersection of \emph{mesoprimary components}
(Definition~\ref{d:Wwp}), each of which is a mesoprimary ideal.
Mesoprimary decompositions of binomial ideals always exist
(Definition~\ref{d:mesodecomp'} and Theorem~\ref{t:mesodecomp'}) in a
form that realizes our initial intent (Theorems~\ref{t:primDecomp}
and~\ref{t:galois}).  However, an arbitrary intersection of
mesoprimary ideals is not a mesoprimary decomposition, even if the
intersection is a binomial ideal; exigent additional conditions must
be met regarding the interaction of the combinatorics and the
arithmetic of the mesoprimary components, as compared with that of~$I$
(Remark~\ref{r:stronger}).  In summary, mesoprimary decomposition
gathers primary components so~that:
\begin{enumerate}
\item%
the decomposition into binomial ideals requires no hypotheses on the
ground~field;
\item%
specifying one mesoprimary component takes the place of individually
listing all primary components arising from saturated extensions of a
fixed character;~and
\item%
the combinatorics of the components and their associated prime objects
accurately and faithfully reflects the combinatorics of the decomposed
binomial ideal.
\end{enumerate}

\subsection*{Congruences: binomial combinatorics}
\enlargethispage{1.5ex}

The simple (and not new) idea of binomial combinatorics is that a
binomial ideal $I \subseteq \kk[Q]$ determines an equivalence
relation~$\til$ on~$Q$ that sets $u \sim v$ if $I$ contains a two-term
binomial $\ttt^u - \lambda\ttt^v$ (Definition~\ref{d:I}).  The
quotient $\oQ = Q/\til$ modulo this relation is a monoid.

\begin{example}\label{e:msri}
The following ideals induce the depicted congruences on~$\NN^2$ and
quotient monoids.  The congruence classes are the connected components
of the graphs drawn in the left-hand pictures.  Each element labeled
$0$ is the identity of the quotient monoid.  Each element labeled
$\infty$ in the right-hand picture is \emph{nil}
(Definition~\ref{d:nil} and Remark~\ref{r:zero}) in the quotient
monoid; its congruence class comprises all monomials in the given
binomial ideal.  In items~2 and~4, the groups labeling the rows
indicate how the group in the bottom row acts on the higher rows.  In
all four items, every element outside of the bottom row of the
quotient monoid is \emph{nilpotent}~(Definition~\ref{d:nil}).\vspace{-.1ex}
\begin{enumerate}
\item%
For the ideal $\<y\> \subset \kk[x,y]$, the quotient monoid is $\NN
\cup \infty$:

\begin{tabular}[b]{@{}c@{}}
\\[-3.5ex]
\psfrag{0}{\footnotesize\raisebox{1.5ex}{$0$}}
\psfrag{i}{\footnotesize\raisebox{-.5ex}{\hspace{-.5ex}$\infty$}}
\psfrag{x}{\footnotesize $x$}
\psfrag{y}{\footnotesize $y$}
\begin{tabular}{@{}c@{}}
  \includegraphics[scale=.7]{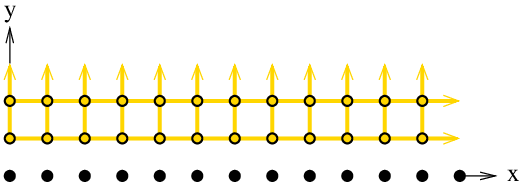}
\end{tabular}
\begin{tabular}{@{}c@{}}
  \\\\\\\quad$\onto$\quad\mbox{}
\end{tabular}
\begin{tabular}{@{}c@{}}
\psfrag{x}{}
\psfrag{y}{}
  \\\\\\[1.06ex]
  \includegraphics[scale=.7]{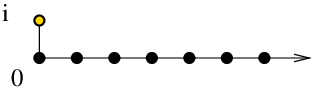}
\end{tabular}
\\\\[-3ex]
\end{tabular}

\item%
For the ideal $\<1-x^2,y^2\> \subset \kk[x,y]$, the quotient monoid is
a copy of the group $\ZZ/2\ZZ$ (the bottom row), a free module over
$\ZZ/2\ZZ$ (the middle row), and a nil:

\begin{tabular}[b]{@{}c@{}}
\\[-6.5ex]
\psfrag{0}{\footnotesize\raisebox{1.5ex}{\hspace{-4.25ex}$\ZZ/2\ZZ$}}
\psfrag{i}{\footnotesize\raisebox{-.5ex}{\hspace{-.5ex}$\infty$}}
\psfrag{x}{\footnotesize $x$}
\psfrag{y}{\footnotesize $y$}
\begin{tabular}{@{}c@{}}
  \includegraphics[scale=.7]{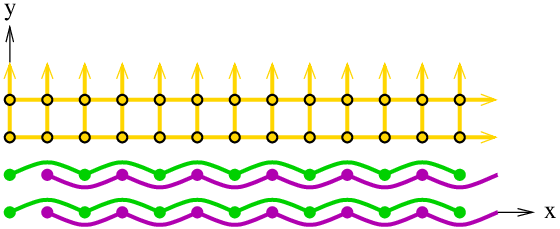}
  \\[-4ex]
\end{tabular}
\begin{tabular}{@{}c@{}}
  \\\\\\\\\quad$\onto$\qquad\mbox{}
\end{tabular}
\begin{tabular}{@{}c@{}}
  \\\\\\\\[1.6ex]\hspace{-1ex}
  \includegraphics[scale=.7]{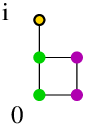}
\end{tabular}
\\\\[-3ex]
\end{tabular}

\item%
For the ideal $\<1-x, y^3\> \subset \kk[x,y]$, the quotient monoid is
the quotient $\NN/(3 + \NN)$ of the natural numbers modulo the
\emph{Rees congruence} of the ideal $3 + \NN$, which makes all
elements of the ideal equivalent and leaves the other elements
of~$\NN$~alone:

\begin{tabular}[b]{@{}c@{}}
\\[-2ex]
\psfrag{0}{\footnotesize\raisebox{1.5ex}{$0$}}
\psfrag{i}{\footnotesize\raisebox{-.5ex}{\hspace{-.5ex}$\infty$}}
\psfrag{x}{\footnotesize $x$}
\psfrag{y}{\footnotesize $y$}
\begin{tabular}{@{}c@{}}
  \includegraphics[scale=.7]{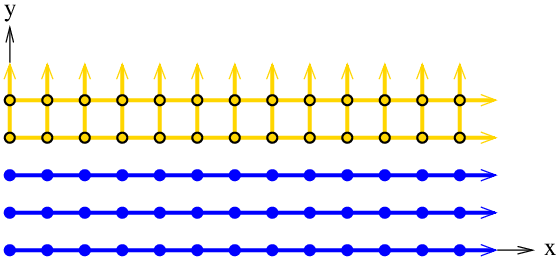}
\end{tabular}
\begin{tabular}{@{}c@{}}
  \\\\\\\quad$\onto$\quad\mbox{}
\end{tabular}
\begin{tabular}{@{}c@{}}
  \\\\\\[1ex]\hspace{-1ex}
  \includegraphics[scale=.7]{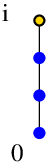}
\end{tabular}
\\\\[-4ex]
\end{tabular}

\item%
For the ideal $\<y-x^2y, y^2-xy^2, y^3\> \subset \kk[x,y]$, the
quotient monoid is a disjoint union of the group~$\ZZ$ and three
$\ZZ$-modules:

\begin{tabular}[b]{@{}c@{}}
\psfrag{0}{}
\psfrag{i}{\footnotesize\hspace{-.5ex}$\infty$}
\psfrag{Z/Z}{\footnotesize\hspace{-2.5ex}$\ZZ/\ZZ$}
\psfrag{Z/2}{\footnotesize\hspace{-3.5ex}$\ZZ/2\ZZ$}
\psfrag{Z}{\footnotesize $\,\ZZ$}
\psfrag{x}{\footnotesize $x$}
\psfrag{y}{\footnotesize $y$}
\begin{tabular}{@{}c@{}}
  \includegraphics[scale=.7]{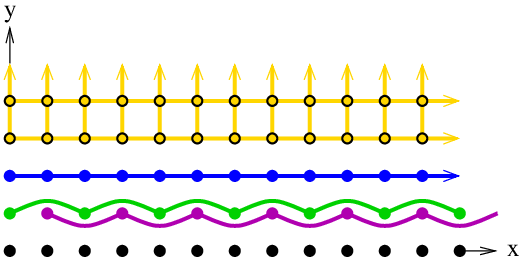}
\end{tabular}
\begin{tabular}{@{}c@{}}
  \\\\\\\quad$\onto$\ \ \mbox{}
\end{tabular}\qquad
\begin{tabular}{@{}c@{}}
  \\\\\\[1.8ex]
  \includegraphics[scale=.7]{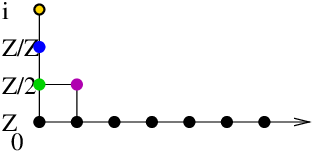}
\end{tabular}
\\\\[-4.5ex]
\end{tabular}
\end{enumerate}
\end{example}

We examined (the literature on) monoid congruences on the premise that
an appropriate decomposition theory for them should lift, either
directly or analogously, to the desired mesoprimary theory for
binomial ideals.  However, although we found rich decomposition
theories for commutative semigroups \cite{grillet}, the expected
analogue of binomial primary decomposition was absent.

The most promising development we encountered along these lines is
Grillet's discovery of conditions guaranteeing that a commutative
semigroup can be realized as a subsemigroup of the multiplicative
semigroup of a primary ring---that is, a ring with just one associated
prime \cite{grilletPrimary}.  That work covers ground
anticipating---in a more general setting---the characterization of
primary binomial ideals over algebraically closed fields of
characteristic zero \cite{primDecomp}.

The closest monoid relative in the literature to primary decomposition
in rings seems to be primary decomposition of congruences
\cite{drbohlav} (see \cite{gilmer} for a treatment in the context of
semigroup rings).  However, one of our motivating discoveries is that
primary decomposition of congruences, being much closer to a shadow of
cellular binomial decomposition (see Theorem~\ref{t:til}), falls
short of serving as a rubric for either primary or mesoprimary
decomposition of binomial ideals.  Indeed, congruences that are
\emph{prime}, meaning that quotients modulo them are cancellative
except perhaps for a nil (Definition~\ref{d:prim*}.4), fail to be
irreducible (Example~\ref{e:irreducible}).  Furthermore, congruences
that are \emph{primary}, meaning that every element in the quotient is
either nilpotent or cancellative (Definition~\ref{d:prim*}.1), admit
further decompositions into pieces that are visibly more
``homogeneous'', in a manner more analogous to primary decomposition
in the presence of embedded primes than to irreducible decomposition
of primary ideals.

\begin{example}\label{e:meso}
All of the congruences depicted in Example~\ref{e:msri} are primary,
but the first three are visibly more homogeneous: in each one, the
non-nil rows all look the same.  In fact, the fourth congruence is the
\emph{common refinement} (Section~\ref{s:primary}) of the first three.
This is equivalent, given that all of the ideals (and their
intersection) are binomial, to saying that the fourth binomial ideal
equals the intersection of the first three, since the ideals in
question are all unital and contain monomials; see
Remark~\ref{r:unital} and Theorem~\ref{t:chain}.  This intersection is
the mesoprimary decomposition from Example~\ref{e:unital}.
\end{example}

Primary binomial ideals in characteristic zero induce \emph{primitive}
congruences (Definition~\ref{d:prim*} and Theorem~\ref{t:til}), but
congruences usually do not admit expressions as intersections (common
refinements) of primitive congruences.  The reason stems from the same
phenomenon that requires one to assume, for binomial primary
decomposition, that the base field is algebraically closed:
decompositions of ideals generated by binomials---even unital
ones---usually require nontrivial roots of unity.  Viewed another way,
the arithmetic part of binomial primary decomposition has a
combinatorial ramification: intersecting multiple primary ideals
inducing the same primitive congruence results in a single mesoprimary
ideal whose associated prime congruence has finite index in the
primitive one (Proposition~\ref{p:mesoprimcomp}).  In essence, primary
congruences on~$Q$ are too coarse to reflect binomial primary
decomposition in~$\kk[Q]$ accurately, and primitive congruences on~$Q$
are too fine, requiring additional arithmetic data from~$\kk$ to
resolve otherwise indistinguishable associated primes in~$\kk[Q]$.

An additional layer of complication arises from the fact that primary
binomial ideals in positive characteristic need not induce nicely
filtered congruences (Example~\ref{e:primary=/=>mesoprimary}).  The
reason for this failure is not under our control: the ideal
$\<(x-1)^p, y(x-1), y^2\>$ happens to be primary, the ideal $\<x^p-1,
y(x-1), y^2\>$ happens to be binomial, and---accidentially, one may
conclude---they coincide in characteristic~$p$.  This highlights that
even the ``binomiality'' of a ring-theoretic construction can depend
on the characteristic, and consequently no study of binomial ideals
can skirt the resulting distinctions.

The true monoid congruence analogue of primary decomposition in
rings is a suitable compromise, developed (in
Sections~\ref{s:taxonomy}--\ref{s:cong}) as \emph{mesoprimary
decomposition for congruences} (Definition~\ref{d:mesodecomp} and
Theorem~\ref{t:mesodecomp}).  The type of homogeneity mentioned before
Example~\ref{e:meso}, discovered by Grillet \cite{grilletPrimary}
(Remark~\ref{r:primary}.4), characterizes mesoprimary congruences
(Corollary~\ref{c:one} and Remark~\ref{r:semifree}).  These are also
distinguished (Theorem~\ref{t:oneassprim}) as those with just one
\emph{associated prime congruence} (Definitions~\ref{d:prim*}.4
and~\ref{d:mesoass}), a notion new to monoid theory.  For comparison,
a congruence is primary precisely when it has just one
\emph{associated prime ideal} (Definition~\ref{d:witness} and
Corollary~\ref{c:primary}).

The development of binomial mesoprimary decomposition in the latter
half of the paper (Sections~\ref{s:aug}--\ref{s:false}) mirrors the
first half directly.  Arithmetic existence statements build on
combinatorial ones by exhibiting lifts of statements or requirements
concerning elements equivalent under congruences to statements or
requirements concerning binomials with nonzero coefficients.

It is worth warning the reader at this juncture of the inevitable
clash of terminology in translating between combinatorics and
arithmetic; see the table in Section~\ref{s:binomial}, which in
particular explains the source of our term \emph{mesoprimary} to mean
``between the two occurrences of `primary'\,".
To aid readers coming from commutative ring theory, the basic notions
from semigroup theory are reviewed from scratch
(Sections~\ref{s:taxonomy} and~\ref{s:primary}).  For readers
interested primarily in monoids, we complete the entire combinatorial
theory in Section~\ref{s:cong}, before starting the arithmetic theory
in Section~\ref{s:aug}.

\subsection*{Witnessed associated objects}

In ordinary primary decomposition, a witness is an element whose
annihilator is (an associated) prime.  Our \emph{witnesses} also have
\emph{associated} prime objects (Definitions~\ref{d:witness},
\ref{d:mesoass}, and~\ref{d:witness'}).  Continuing the parallel, our
notions of associatedness are defined by local combinatorial or
algebraic conditions but equivalently characterized by the consistent
appearance of prime objects in every primary decomposition
(Theorems~\ref{t:primary} and~\ref{t:assPrim}).  The local conditions
defining witnesses incorporate the combinatorial quiddity of having
prime \mbox{annihilator in ordinary ring theory}.

The proof of concept for mesoprimary decomposition as a mode to
connect the combinatorial and arithmetic settings lies in a
fundamental discovery: there is a combinatorially defined set of
witnesses that captures decompositions of both a binomial ideal and
its induced congruence.  To yield finite decompositions, however, not
all witnesses are to be believed.  The \emph{key witnesses} for
congruences (Definition~\ref{d:witness}) and \emph{essential
witnesses} for binomial ideals (Definition~\ref{d:witness'}) yield
finitely many components whose intersections suffice.  These key and
essential decompositions can generally fail to be minimal in ways that
even retain symmetry.  In the cellular binomial ideal case, we
demonstrate a systematic reduction to \emph{character witnesses}
(Defintion~\ref{d:false'}) that should have an extension to general
binomial ideals.  The dichotomy between key and essential witnesses
demands care, as do other subtle distinctions between the
combinatorial and arithmetic aspects of the theory, since they
necessitate occasional slight weakenings, or failures of the
combinatorics to lift; see Remarks~\ref{r:mesorefine}
and~\ref{r:construct}, for~instance.

\subsection*{Acknowledgements}

The authors are very grateful to Chris O'Neill and Howard M Thompson
for their detailed readings of previous drafts; their comments led to
substantial mathematical corrections and expositional improvements.
In particular, O'Neill detected an oversight in the definition of
coprincipal congruence that led to the excision of claims about
binomial irreducible decomposition; see \cite{soccular} for amended
statements and corrected proofs.  Zekiye \c Sahin and Laura Matusevich
provided crucial mathematical corrections as well.  TK was supported
by an EPDI fellowship and gratefully acknowledges the hospitality of
Institut Mittag-Leffler, where substantial parts of the research for
this paper were carried out.  EM had support from NSF grants
DMS-0449102 = DMS-1014112 and DMS-1001437.

\section{Taxonomy of congruences on monoids}\label{s:taxonomy}

Fix a \emph{commutative semigroup}~$Q$: a set with an associative,
commutative binary operation (usually denoted by~$+$ here).  Assume
that $Q$ has an identity, usually denoted by~$0$ here, so $Q$ is a
\emph{monoid}.  An \emph{ideal} $T \subseteq Q$ is a subset such that
$T+Q \subseteq T$, and $T$ is \emph{prime} if $t+s \in T$ implies
$t\in T$ or $s\in T$.  The ideal generated by elements $q_1,\dots q_s$
is written $\<q_1,\dots,q_s\>$.  A~\emph{congruence} $\sim$ on~$Q$ is
an equivalence relation that is additively closed: $a \sim b \implies
a + c \sim b + c$ for all $a,b,c \in Q$.  The quotient $Q/\til$ by any
congruence is a monoid.  The minimal relation satisfying this
definition is equality itself, called the \emph{identity congruence}.
The congruence that equates all pairs of elements in~$Q$, and has
trivial quotient, is the \emph{universal congruence}.  For any ideal
$T\subseteq Q$, under the \emph{Rees congruence} $\til_T$ all elements
of $T$ form one class, while all elements outside of $T$ are
singletons.

\begin{defn}\label{d:module}
A \emph{module} over a commutative monoid $Q$ is a nonempty set $T$
with an \emph{action} of~$Q$, which means a map $Q \times T \to T$,
written $(q,t) \mapsto q + t$, that satisfies
\begin{itemize}
\item%
$0 + t = t$ for all $t \in T$, and
\item%
$(q + q') + t = q + (q' + t)$,
\end{itemize}
the latter meaning that the action respects addition.  A
\emph{congruence} on a module is an equivalence relation that is
preserved by the action.  A \emph{module homomorphism} over a given
monoid is a set map that respects the actions.  For any element $q\in
Q$, the \emph{addition morphism} $\phi_q : Q \to \<q\>$ is the module
morphism defined by $p\mapsto p+q$.  The \emph{kernel} $\ker (\phi)$
of a module homomorphism $\phi : T_1 \to T_2$ is the congruence on
$T_1$ under which $t \sim s \iff \phi(t) = \phi (s)$.
\end{defn}

\begin{remark}\label{r:grilletActs}
For general semigroups Grillet defines an \emph{act} as a set with an
action of a semigroup that satisfies only the second bullet in
Definition~\ref{d:module}, even if the semigroup was a monoid to start
with~\cite{grilletActions}.  To every semigroup $S$ a formal identity
element~$e$ can be adjoined (even if $S$ is already a monoid) to form
the monoid $S \cup \{e\}$.  Upon this operation an $S$-act turns into
an $(S \cup \{e\})$-module as it automatically satisfies the first
item in Definition~\ref{d:module}.
\end{remark}

\begin{remark}
A subsemigroup of a monoid may have an identity and in that case it
may or may not be the identity of the monoid.  To the contrary, a
submonoid is required to have the same identity as its ambient monoid.
In this sense a subsemigroup of a monoid can be a monoid without being
a submonoid.
\end{remark}

\begin{defn}\label{d:subgroup}
A \emph{subgroup of a monoid} is a subsemigroup that is a group.
\end{defn}

\begin{defn}\label{d:Green}
\emph{Green's preorder} on a monoid is the divisibility preorder $p
\preceq q \iff \<p\> \supseteq \<q\>$.  \emph{Green's relation} on a
monoid is $p \sim q \iff \<p\> = \<q\>$.
\end{defn}

\begin{lemma}\label{l:modGreen}
The quotient of a commutative monoid modulo Green's relation is
partially ordered by divisibility.
\end{lemma}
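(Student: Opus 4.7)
The plan is to observe, in three short steps, that $\preceq$ is a preorder on $Q$ whose symmetrization is exactly Green's relation, so the induced relation on the quotient is automatically a partial order.

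First, I would check that $\preceq$ is a preorder on~$Q$. Reflexivity holds because $0 \in Q$, so the ideal $\langle p\rangle = p + Q$ contains~$p$ itself, giving $\langle p\rangle \supseteq \langle p\rangle$ and hence $p \preceq p$. Transitivity is immediate from the containment definition: $\langle p\rangle \supseteq \langle q\rangle \supseteq \langle r\rangle$ forces $\langle p\rangle \supseteq \langle r\rangle$, i.e., $p \preceq q$ and $q \preceq r$ imply $p \preceq r$.

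Second, I would verify that $\preceq$ descends to a well-defined relation on $Q/\sim$. If $p \sim p'$ and $q \sim q'$, then by definition $\langle p\rangle = \langle p'\rangle$ and $\langle q\rangle = \langle q'\rangle$, so $\langle p\rangle \supseteq \langle q\rangle$ holds if and only if $\langle p'\rangle \supseteq \langle q'\rangle$ does. Thus setting $[p] \preceq [q] \iff p \preceq q$ is unambiguous, and the descended relation inherits reflexivity and transitivity from~$Q$.

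Third, antisymmetry is essentially tautological and constitutes the content of the lemma: if $[p] \preceq [q]$ and $[q] \preceq [p]$ in $Q/\sim$, then $\langle p\rangle \supseteq \langle q\rangle$ and $\langle q\rangle \supseteq \langle p\rangle$, so $\langle p\rangle = \langle q\rangle$, which by definition means $p \sim q$ and hence $[p] = [q]$. There is no real obstacle here; the lemma is an instance of the general fact that the quotient of any preordered set by the symmetrization of its preorder is a partially ordered set, and the definitions of Green's preorder and Green's relation have been arranged precisely so that this observation applies verbatim.
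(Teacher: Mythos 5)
Your argument is correct. The paper gives no proof of its own here---it simply cites \cite[Proposition~I.4.1]{grillet}---so your direct verification that Green's preorder is a preorder whose symmetrization is Green's relation, hence descends to a partial order on the quotient, is exactly the elementary argument behind that citation. One small point worth adding for completeness: the lemma is used later (e.g.\ in Definition~\ref{d:orderideal} and Lemma~\ref{l:poset}) with the partial order understood to be divisibility \emph{in the quotient monoid}, so one should also observe that $[p] \preceq [q]$ as you define it coincides with $\<[p]\> \supseteq \<[q]\>$ in $Q/\sim$; this is immediate since Green's relation is a congruence on a commutative monoid, but it is the one piece of content your write-up leaves implicit.
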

\begin{proof}
\cite[Proposition~I.4.1]{grillet}.
\end{proof}

\begin{remark}
Green's relation measures the extent to which group-like behavior
occurs in a monoid.  Idempotents and non-trivial units are
obstructions to partially ordering a monoid by divisibility.  In
particular, a monoid with trivial unit group is partially ordered if
Green's relation is trivial.  Note that our divisibility preorder is
the opposite direction compared to Grillet's, to be compatible with
divisibility of monomials.
\end{remark}

The following observation, which relies crucially on the noetherian
hypothesis, is applied in the proof of
Proposition~\ref{p:coprincipal}.

\begin{lemma}\label{l:bijective}
Fix a noetherian commutative monoid~$Q$.  If $p \in Q$ and the Green's
class of~$w$ satisfies $[w] = [p + w]$, then the map $[w] \to [p + w]$
of Green's classes induced by adding~$p$ is bijective.
\end{lemma}
\begin{proof}
Suppose that $v \in [w] = [p + w]$.  For surjectivity, first note that
$v \in p + \<w\>$, because $v \in \<v\> = \<p + w\> = p + \<w\>$.
Consequently $v \in p + [w]$ because $[v] = [w]$ is the (unique)
minimal element in the poset of Green's classes with representatives
in~$\<w\>$ (that is, $[v]$ can't lie in $p + [u]$ if $[u] \succ [w]$).

Since the sets in question can be infinite, injectivity requires
additional reasoning.  Suppose that $v \in [w]$ satisfies $p + w = p +
v$.  By surjectivity, for $k \in \NN$ choose $w_k, w_k' \in [w]$ so
that $k \cdot p + w_k = w$ and $k \cdot p + w_k' = v$.  If $\til_k$ is
the kernel congruence of addition by $k \cdot p$, then $\til_k$
refines $\til_\ell$ whenever $k \leq \ell$.  The noetherian property
implies that the chain of kernel congruences stabilizes: $\til_k =
\til_{k+1}$ for $k \gg 0$.  But $w_k \sim_{k+1} w_k'$ for all~$k$
because $p + w = p + v$, whence $w_k \sim_k w_k'$ for $k \gg 0$ by
stability.  For $k \gg 0$, then, $w = k \cdot p + w_k \sim k \cdot p +
w_k' = v$.
\end{proof}

\begin{defn}\label{d:nil}
A non-identity element $\nil$ in a monoid~$Q$ is \emph{nil} if $q +
\nil = \nil$ for all $q \in Q$.  An element $q \in Q$ is
\begin{itemize}
\item%
\emph{nilpotent} if one of its multiples $nq$ is nil for some
nonnegative integer $n \in \NN$.
\item%
\emph{cancellative} if addition by it is injective:
$q + a = q + b \implies a = b$ in~$Q$.
\item%
\emph{partly cancellative} if $q + a = q + b \neq \nil \implies a = b$
for all cancellative $a,b \in Q$.
\end{itemize}
A set $S$ of elements in a monoid is \emph{torsion-free} if $na = nb
\implies a = b$ for all $n \in \NN$, whenever $a,b\in S$.  An
\emph{affine semigroup} is a monoid isomorphic to a finitely generated
submonoid of a free abelian group.  A \emph{nilmonoid} is a monoid
whose nonidentity elements are all nilpotent.
\end{defn}

\begin{remark}\label{r:zero}
In the literature a nil is often called a zero instead; but when
we work with monoid algebras, we need to distinguish the nil monomial
$\ttt^\nil$ from the zero element~$0$ of the algebra (see
Section~\ref{s:aug} for ramifications of this distinction), and we
need to identify the identity monomial~$\ttt^0$ with the unit
element~$1$ of the algebra.
\end{remark}

\begin{remark}\label{r:differ}
The condition $a + c = b + c'$ for cancellative $c,c'$ means that $a$
and~$b$ are off by a unit in the localization~$Q'$ of~$Q$ obtained by
inverting all of its cancellative elements.  Note that the natural map
$Q \to Q'$ is~injective.
\end{remark}

\begin{defn}\label{d:prim*}
Fix a commutative monoid~$Q$, a congruence~$\til$, and use a bar to
denote passage to the quotient $\oQ = Q/\til$.  The congruence $\til$
is
\begin{enumerate}
\item%
\emph{primary} if
every element of~$\oQ$ is either nilpotent or cancellative.
\item%
\emph{mesoprimary} if it is primary and every element of~$\oQ$ is
partly cancellative.
\item%
\emph{primitive} if it is mesoprimary and the cancellative subset
of~$\oQ$ is torsion-free.
\item%
\emph{prime} if every element of~$\oQ$ is either nil or cancellative.
\item%
\emph{toric} if the non-nil elements of~$\oQ$ form an affine
semigroup.
\end{enumerate}
\end{defn}

\begin{remark}\label{r:primary}
The notions just defined are nearly or exactly the same as concepts
that have appeared in the literature on monoids.
\begin{enumerate}
\item%
Our definition of prime and primary congruences agrees with those in
the literature \cite[\S5]{gilmer}.  In the case of prime congruences,
where the non-nil elements of $\oQ$ form a cancellative monoid, this
is easy.  In the case of primary congruences, for $q \in Q$ the
condition Gilmer expresses as $q + a \sim q + b$ for all $a,b \in Q$
is equivalent to the class $\ol q$ being a nil in~$\oQ = Q/\til$, so
$q$ lies in the nil class; and the condition that Gilmer expresses by
saying that $q$ lies in the radical of the nil class is equivalent to
$\ol q$ being nilpotent in~$\oQ$.  

\item%
Our definition of affine semigroup differs slightly from \cite[\S
II.7]{grillet}: Grillet requires the unit group to be trivial, whereas
we do not.  Equivalently, our affine semigroups are the finitely
generated, cancellative, torsion-free commutative monoids, while
Grillet additionally requires affine semigroups to be reduced (that
is, to have trivial unit group).

\item%
A congruence on~$Q$ is primary if and only if $\oQ$ is a
\emph{subelementary} monoid, by definition \cite[\S VI.2.2]{grillet}.

\item%
A congruence on~$Q$ is mesoprimary if and only if the subelementary
monoid~$\oQ'$, obtained from the monoid $\oQ$ in the previous item by
inverting its cancellative elements, is \emph{homogeneous}
\cite[\S VI.5.3]{grillet}; this is Corollary~\ref{c:one}, below.
\end{enumerate}
\end{remark}

\begin{lemma}\label{l:implies}
For monoid congruences,
\begin{itemize}
\item%
toric $\implies$ prime $\implies$ mesoprimary $\implies$ primary; and
\item%
toric $\implies$ primitive $\implies$ mesoprimary $\implies$ primary.
\end{itemize}
\end{lemma}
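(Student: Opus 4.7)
The plan is to verify each arrow directly from Definition~\ref{d:prim*}, always passing to the quotient $\oQ = Q/\til$. Two preliminary facts streamline the argument: a monoid has at most one nil element (two nils $\nil_1,\nil_2$ would satisfy $\nil_1 = \nil_1+\nil_2 = \nil_2$), and a nil, when present, is automatically nilpotent by taking $n = 1$.

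For toric $\implies$ prime, suppose the non-nil elements of $\oQ$ form an affine semigroup $A$. Closure of $A$ under addition says the sum of two non-nil elements is non-nil; contrapositively, if $q$ is non-nil and $q+a$ is nil, then $a$ is nil. Now fix a non-nil $q \in \oQ$ and suppose $q+a = q+b$. If this common value lies in $A$, then $a$ and $b$ are both non-nil, and cancellativity of $A$ as an affine semigroup yields $a=b$. If instead $q+a = \nil$, the closure remark forces $a = b = \nil$. Either way $a=b$, so every non-nil element is cancellative; with $\nil$ (if it exists) being nil by definition, the congruence is prime.

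The step prime $\implies$ mesoprimary is then short. Each element of $\oQ$ is either nil (hence nilpotent) or cancellative, giving primary; and if $q+a = q+b \neq \nil$ with $a,b$ cancellative, then $q$ cannot be nil (otherwise $q+a = \nil$), so $q$ is cancellative and $a=b$, establishing partly cancellative. For toric $\implies$ primitive, we inherit mesoprimary from the first chain and observe that the toric $\implies$ prime argument already identifies the cancellative subset of $\oQ$ with $A$, which embeds in a free abelian group by the definition of affine semigroup and is therefore torsion-free. Finally mesoprimary $\implies$ primary and primitive $\implies$ mesoprimary are immediate from the definitions. The only mild subtlety throughout is bookkeeping the nil element, which is never cancellative but is always nilpotent; the closure of $A$ under addition in the toric case is exactly the ingredient that prevents the nil from interfering with cancellativity conclusions.
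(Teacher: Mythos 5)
Your proof is correct and follows essentially the same route as the paper's: the one nontrivial implication, prime $\implies$ mesoprimary, is handled identically (if $\ol q + \ol a = \ol q + \ol b$ is non-nil then $\ol q$ is non-nil, hence cancellative), while the remaining arrows are read off the definitions. The paper dispatches toric $\implies$ prime and toric $\implies$ primitive as immediate from Definition~\ref{d:prim*}, whereas you helpfully spell out the small point that closure of the affine semigroup under addition keeps the nil from interfering with cancellativity in~$\oQ$.
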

\begin{proof}
The only implication that is not immediate from the definitions is
that prime implies mesoprimary.  For this assume $\til$ is a prime
congruence and that $\ol q + \ol a = \ol q + \ol b$ in~$\oQ$ with
neither side being nil.  Then $\ol q$ is not nil, whence $\ol a = \ol
b$ by cancellativity.
\end{proof}

\begin{defn}\label{d:I}
The \emph{semigroup algebra} $\kk[Q] = \bigoplus_{q \in Q} \kk \cdot
\ttt^q$ is the direct sum with multiplication $\ttt^p \ttt^q =
\ttt^{p+q}$.  Any congruence $\til$ on~$Q$ induces a grading of
$\kk[Q]$ by $\oQ = Q/\til$ in which the \emph{monomial}\/ $\ttt^q$ has
degree~$\ol q \in \oQ$ whenever $q \mapsto \ol q$ under the quotient
map $Q \to \oQ$.  A \emph{binomial ideal}\/ $I \subseteq \kk[Q]$ is an
ideal generated by \emph{binomials} $\ttt^p - \lambda\ttt^q$, where
$\lambda \in \kk$ is a scalar, possibly equal to $0 \in \kk$.  A
binomial ideal is \emph{unital} if all coefficients $\lambda$ are
equal to either~$0$ or~$1$.  The ideal $I$ \emph{induces} the
congruence $\til_I$ in which $p \sim_I q$ whenever $\ttt^p -
\lambda\ttt^q \in I$ for some unit~$\lambda \in \kk^*$.
\end{defn}

\begin{remark}\label{r:unital}
Giving a congruence on~$Q$ is the same as giving a unital ideal
in~$\kk[Q]$ that is generated by unital binomials $\ttt^p - \ttt^q$
(and no monomials).  In particular, every congruence is induced by
some binomial ideal.  That said, other binomial ideals can induce the
same congruence as the canonical unital ideal, by rescaling the
variables or via Theorem~\ref{t:chain}, for instance.
\end{remark}

\begin{example}[Some congruences from unital ideals]\label{e:pure-diff}
\mbox{}
\begin{enumerate}
\item%
The prime ideal $\<x-y\> \subset \kk[x,y]$ induces a toric congruence
such that $\ol{\NN^2} \cong \NN$.

\item%
The ideal $\<x^2-y^2\> \subset \kk[x,y]$ induces a prime congruence
with $\ol{\NN^2}$ isomorphic to the submonoid $Q \subseteq G = \ZZ
\oplus \ZZ/2\ZZ$ generated by $(1,0)$ and~$(1,1)$.  The monoid is not
torsion-free since $x^2 = y^2$ but $x \neq y$ in~$\kk[Q]$.  Therefore
the congruence on~$\NN^2$ is not toric, since $Q$ generates~$G$ as a
group.

\item%
The ideal $\<x^2-x\> \subset \kk[x]$ induces the same toric congruence
on~$\NN$ as the prime ideal~$\<x\>$ does, but $\<x^2-x\>$ is not
primary (in fact, not even cellular; see Definition~\ref{d:cellular}).
Nevertheless $\til_{\<x^2-1\>} = \til_{\<x\>}$ is irreducible
according to Definition~\ref{d:irreducible}.

\item%
The $\<x,y\>$-primary ideal $\<x^2,x-y\>$ induces the primitive
congruence on~$\NN^2$ with $\ol{\NN^2} \cong \{0,x,\infty\} =: Q$.
The monoid algebra $\kk[Q]$ has a presentation $\kk[x,y]/J$ where $J =
\<x-y, x-x^{2}\> = \<x-1,y-1\> \cap \<x,y\>$ induces the same
congruence.

\item%
The binomial ideal $\<y-x^2y,y^2-xy^2,y^3\>$ induces a primary
congruence whose classes are depicted as connected components of the
graph in the following~figure.
\begin{center}
\psfrag{x}{\footnotesize $\ x$}
\psfrag{y}{\footnotesize $\ y$}
\includegraphics{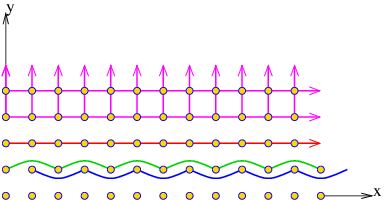}
\end{center}
This congruence exhibits the distinction between primary and
mesoprimary congruences: for a primary congruence, no injectivity is
required of addition by a nilpotent element.  In the picture, this
means that translating two dots in different classes upward by one
unit can force them into the same non-nil class.  To make the
congruence mesoprimary, homogenize the bottom three rows by replacing
any two of them with the third; after that, upward translation on two
dots keeps them in separate classes unless both land in the nil class.
This replacement procedure also exhibits the distinction between
mesoprimary and primitive congruences: it results in a primitive
congruence only if the bottom row or the third row is preserved;
preserving the second row yields torsion in the cancellative part
of~$\oQ$.
\end{enumerate}
\end{example}

The following example demonstrates the partly cancellative property.

\begin{example}\label{e:mesoprimary}
Partly cancellative elements can still merge congruence classes.  For
instance, consider the congruence on~$\NN^2$ induced by $I = \<x^2-xy,
xy-y^2, x^3, y^3\> \subseteq \kk[x,y]$.  In the following figure
\begin{center}
\psfrag{x}{\footnotesize $\ x$}
\psfrag{y}{\footnotesize $\ y$}
\includegraphics[scale=.7]{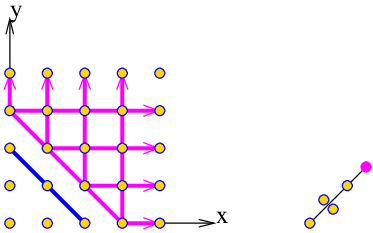}
\end{center}
the congruence on~$\NN^2$ appears at left, and the
quotient~$\ol\NN{}^2$ appears at right.  The~quotient is the monoid
$\NN$ with two copies of~$1$ modulo the Rees congruence of $\<3\>$
(declare all elements in $\<3\>$ congruent).  The two copies of~$1$
become identified upon addition by either: $1 + 1 = 1 + 1' = 1' + 1' =
2$.  Nonetheless, both $1$ and~$1'$ are partly~cancellative and the
congruence is mesoprimary.
\end{example}

The next result will be applied in the proofs of
Theorems~\ref{t:coprincipal} and~\ref{t:til}.  The conclusion says
that $Q/F$ is a nilmonoid whose Green's preorder is an order (i.e.~is
antisymmetric).  Equivalently, it says that $Q/F$ is \emph{naturally
partially ordered}, or a \emph{holoid} \cite[\S V.2.2]{grillet}.

\begin{lemma}\label{l:poset}
Fix a monoid $Q$ whose identity congruence is primary, so the
non-nilpotent elements of $Q$ constitute a cancellative submonoid $F
\subseteq Q$.  The quotient monoid~$Q/F$ defined by the congruence
$$%
  p \sim q\ \iff\ p + f = q + g \text{ for some } f,g \in F
$$
is a nilmonoid partially ordered by divisibility.  If $Q$ is finitely
generated, $Q/F$ is~finite.
\end{lemma}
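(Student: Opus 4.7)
The plan is to establish the four assertions of the conclusion in turn: that $\sim$ is a congruence on $Q$, that its identity class coincides with $F$, that $Q/F$ is a nilmonoid whose Green's preorder is antisymmetric, and that $Q/F$ is finite when $Q$ is. Verifying that $\sim$ is reflexive, symmetric, transitive, and respected by translation by an arbitrary $c \in Q$ is routine, using only that $0 \in F$ and that $F$ is additively closed.

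The conceptual heart of the proof is pinning down the identity class of $Q/F$ as exactly $F$. Given $p \sim 0$, one has $p + f = g$ for some $f, g \in F$; if $p$ were nilpotent with $np = \nil$ in $Q$, then multiplying by $n$ would yield $\nil = \nil + nf = ng$. But $ng \in F$ by closure, while $\nil \notin F$ because an absorbing element is nilpotent rather than cancellative. The primary hypothesis therefore forces $p \in F$. I expect this step to be the main obstacle, because it is where the primary hypothesis interacts nontrivially with the nil/cancellative dichotomy of $Q$.

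Once the identity class is identified, every non-identity class $\ol p \in Q/F$ comes from some $p \notin F$, which must be nilpotent by the primary hypothesis, so $n \ol p = \ol \nil$, where $\ol \nil$ is absorbing in $Q/F$; hence $Q/F$ is a nilmonoid. (If $Q$ has no nil, then $F = Q$ and $Q/F$ is trivial.) For antisymmetry of Green's preorder, suppose $\ol q = \ol p + \ol a$ and $\ol p = \ol q + \ol b$, so that $\ol p = \ol p + \ol c$ with $\ol c = \ol a + \ol b$. If $\ol c$ is nilpotent, then some $n \ol c = \ol \nil$, whence $\ol p = \ol p + n \ol c = \ol \nil$ and similarly $\ol q = \ol \nil$; otherwise $\ol c = \ol 0$, and because a nilmonoid has trivial group of units one concludes $\ol a = \ol b = \ol 0$, so $\ol p = \ol q$.

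For finiteness, the finitely many generators of $Q$ map to generators of $Q/F$, each of which is either $\ol 0$ or nilpotent; bounding elements of $Q/F$ by combinations of these generators whose coefficients lie below the respective nilpotency indices, plus $\ol \nil$ itself, yields $|Q/F| \leq 1 + \prod n_i$. The nontrivial content of the lemma is thus concentrated in the identity-class computation; the remaining assertions propagate nearly formally from it.
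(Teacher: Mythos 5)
Your proof is correct and follows the same route as the paper's, which is terse and delegates to Grillet the facts that $Q/F$ is a nilmonoid and that nilmonoids are partially ordered by divisibility; you supply self-contained arguments for both, and your explicit finiteness bound $|Q/F| \le 1 + \prod n_i$ agrees with the paper's one-line deduction from finite generation. One remark: you single out the identification $[\ol 0] = F$ as the conceptual heart, but the rest of your argument uses only the trivial inclusion $F \subseteq [\ol 0]$ (so that any representative of a nonidentity class lies outside $F$ and is therefore nilpotent in $Q$); the reverse containment $[\ol 0] \subseteq F$ that you establish via the $\nil = ng$ computation is correct but not actually needed.
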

\begin{proof}
This is more or less \cite[Proposition~VI.3.3]{grillet}, but the proof
is simple.  Every nonidentity element of $Q/F$ is nilpotent by
definition, so when $Q$ is finitely generated, $Q/F$ is finite.  The
rest follows because every nilmonoid is partially ordered by
divisibility; this is easy, and can be found in
\cite[Proposition~IV.3.1]{grillet}.
\end{proof}

\begin{remark}
It is a crucial assumption for Lemma~\ref{l:poset} that every element
is nilpotent or cancellative, excluding idempotents.  If every
cancellative element is a unit, e.g.~after localizing at the nilpotent
ideal (see Section~\ref{s:witness}), then $Q/F$ equals $Q$ modulo
Green's relation.
\end{remark}

Concluding this section we comment on the notion of irreducibility for
congruences which is, despite the close connection between binomial
ideals and their congruences, quite different from irreducibility for
ideals.

\begin{defn}\label{d:irreducible}
A congruence is \emph{irreducible} if it cannot be expressed as the
common refinement of two congruences neither of which equals the given
one.
\end{defn}

The theories of irreducible and primary decomposition for congruences
in commutative monoids are not as nice as for (binomial) ideals in
rings.  The following example might come as a nasty surprise (it did
to us).  Quotients by irreducible congruences are characterized in
\cite[Theorem~VI.5.3]{grillet}.

\begin{example}\label{e:irreducible}
The identity congruence on $\NN^2$ is reducible: it is the common
refinement of the congruences induced by $\<x-1\>$ and $\<y-1\>$.
Ring-theoretically, this is due to the fact that $\<x-1\> \cap
\<y-1\>$ does not contain binomials.
\end{example}

Example~\ref{e:irreducible} demonstrates the sad reality that prime
congruences need not be irreducible.  In a wider sense, unrestricted
primary or irreducible decomposition of congruences decomposes them
into components that are too fine to provide nuanced information about
their combinatorics.  The theory of mesoprimary decomposition, with
its well founded notions of associatedness for prime ideals and prime
congruences, is our remedy.

\section{Primary decomposition and localization in monoids}\label{s:primary}

We review the notion of primary decomposition for congruences on
finitely generated commutative monoids, which traces back to
Drbohlav~\cite{drbohlav}.  This decomposition is only a coarse
approximation of mesoprimary decomposition, a central goal of this
paper.  In general, a \emph{decomposition} of a congruence is an
expression of it as a common refinement of congruences.  The notion of
refinement here is standard: formally, an equivalence relation on $Q$
is a reflexive, symmetric, transitive subset of $Q \times Q$; one
relation~$\til$ \emph{refines} another relation~$\app$ if $\app$
contains~$\til$ (we also say $\app$ \emph{coarsens}~$\til$); and the
\emph{common refinement} of a family of equivalence relations is their
intersection~in~$Q \times Q$.

\begin{remark}
Every congruence in this setting admits a \emph{primary
decomposition}: an expression as the common refinement of finitely
many primary congruences \cite[Theorem~5.7]{gilmer}.  Similarly to the
case of rings, this follows from the existence of irreducible
decomposition using a noetherian induction argument.  Any
decomposition theory that is finer than primary decomposition---that
is, any theory that further decomposes each primary component---yields
a greater number of congruences each of which is coarser than some
primary component.
\end{remark}

\begin{remark}
The preimage under any monoid homomorphism of a prime ideal is prime.
Since $\NN^n$ has only finitely many prime ideals and a finitely
generated commutative monoid $Q$ has a presentation $\NN^n \onto Q$,
it follows that $Q$ has only finitely many prime ideals.  Precisely
one of these is the \emph{maximal ideal} of~$Q$.
\end{remark}

\begin{conv}\label{conv}
To avoid tedious case distinctions in the following, we consider the
empty set as an ideal of any monoid, and in fact we declare it to be a
prime ideal (its complement is, after all, a submonoid).  The empty
set considered as an ideal will be denoted by $\nothing \subset Q$;
this symbol is never used for any other purpose in this~paper.
\end{conv}

\begin{defn}\label{d:nilpideal}
The \emph{nilpotent ideal} of a congruence $\til$ on~$Q$ is the ideal
of~$Q$ consisting of all elements with nilpotent image in $Q/\til$.
If $P$ is the nilpotent ideal of a primary congruence $\til$,
then~$\til$ is \emph{$P$-primary}.
\end{defn}

\begin{lemma}\label{l:nothing}
If $\til$ is a primary congruence, then the nilpotent ideal is prime.
If $Q/\til$ is cancellative, then $\til$ is $\nothing$-primary.\qed
\end{lemma}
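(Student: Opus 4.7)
The plan is to exploit two simple observations about the quotient $\oQ = Q/\til$: first, that the cancellative elements of $\oQ$ form a submonoid (if $\ol a, \ol b$ are cancellative and $(\ol a + \ol b) + x = (\ol a + \ol b) + y$, then cancelling $\ol a$ and then $\ol b$ in succession forces $x = y$); and second, that in the primary setting, nilpotency and cancellativity are complementary, apart from the degenerate case $\oQ = \{0\}$. For the latter, an element that is simultaneously cancellative and nilpotent would satisfy $n\ol p = \nil$ for some $n$, whereupon the identity $n\ol p + 0 = n\ol p + \nil$ (valid since $\nil$ is absorbing) could be cancelled through the $n$ copies of $\ol p$ to yield $0 = \nil$, collapsing $\oQ$ to a point.

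With these in hand the primeness claim is immediate. Given $p + q \in P$, suppose for contradiction that neither $p$ nor $q$ lies in $P$; by the primary hypothesis both $\ol p$ and $\ol q$ are then cancellative, so by the submonoid observation their sum $\ol{p+q}$ is cancellative as well, contradicting its nilpotency. In the trivial quotient case $\oQ = \{0\}$, the nilpotent ideal is all of $Q$, which is tautologically prime under the definition in Section~\ref{s:taxonomy}.

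For the second assertion I would invoke Convention~\ref{conv}. If $\oQ$ is cancellative and non-trivial then $\oQ$ has no nil (since a nil element fails cancellativity, as just noted), hence no element of $\oQ$ is nilpotent and $P = \nothing$; moreover any nil of $Q$ would map to a nil of $\oQ$, so $Q$ has no nil either, which is exactly the hypothesis Convention~\ref{conv} requires in order to declare $\nothing$ a prime ideal of $Q$. The main obstacle, such as it is, lies not in the mathematics but in correctly threading this convention through the degenerate cases (trivial quotient, and the question of whether $Q$ itself has a nil); the core content is a short diagram chase through the definitions of nilpotent, cancellative, and prime.
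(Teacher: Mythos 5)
The lemma in the paper carries a \verb|\qed| immediately after its statement; the authors give no proof, treating it as an immediate unwinding of the definitions. Your argument is a correct and complete spelling-out of what is being left implicit. The three ingredients you isolate --- cancellative elements form a submonoid, a cancellative element cannot be nilpotent in a nontrivial quotient (since $n\ol p = \nil$ and $\nil + 0 = \nil + \nil$ cancel down to $0 = \nil$), and a nil of $Q$ pushes forward to a nil of $\oQ$ so that the hypothesis of Convention~\ref{conv} transfers --- are exactly the right ones, and the primeness and $\nothing$-primaryness deductions from them are sound. The one soft spot you correctly flag is the degenerate case $\oQ = \{0\}$: there the quotient \emph{is} cancellative but the nilpotent ideal is all of $Q$ (the identity is nil, hence nilpotent), so the second assertion of the lemma reads literally false; the paper implicitly excludes this triviality, and your treatment of it (you note the nilpotent ideal becomes $Q$, which is still vacuously prime, making part~1 fine, but you restrict part~2 to the nontrivial case) is the honest reading. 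In short: correct, same route as the paper would take, just written out.
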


\begin{remark}\label{r:generators}
If $q_1,\dots, q_n$ generate $Q$, then a primary congruence defines a
partition of~$\{q_1,\ldots,q_n\}$ into generators with cancellative
and nilpotent images, respectively.  In this case the nilpotent ideal
is generated by the generators~$q_i$ with nilpotent images.
\end{remark}

\begin{prop}\label{p:pprefine}
The common refinement of finitely many $P$-primary congruences is
$P$-primary.
\end{prop}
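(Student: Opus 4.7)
My plan is a direct check that the common refinement $\sim$ of the finitely many $P$-primary congruences $\sim_1,\dots,\sim_k$ again satisfies the two axioms of $P$-primariness. The mechanism I would exploit throughout is that $\sim \subseteq \sim_i$ as subsets of $Q\times Q$, so the canonical surjection $Q/\sim \onto Q/\sim_i$ is a monoid homomorphism for each $i$; both cancellativity and nilpotence of images then travel easily between the quotients.

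First I would handle cancellativity: given $q \in Q \setminus P$ and elements $a,b \in Q$ with $q + a \sim q + b$, unwinding the definition of common refinement gives $q + a \sim_i q + b$ for every $i$. Since $q \notin P$ and $\sim_i$ is $P$-primary, the image of $q$ is cancellative in $Q/\sim_i$, so $a \sim_i b$ for all~$i$, whence $a \sim b$. Thus $q$ has cancellative image in $Q/\sim$.

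Next I would handle nilpotence: for $q \in P$, choose for each $i$ an exponent $n_i$ such that $n_i q$ is nil in $Q/\sim_i$, and set $n := \max_i n_i$. Because the nil class of any quotient is absorbing, $nq$ is also nil in each $Q/\sim_i$, so for any $r \in Q$ we have $nq + r \sim_i nq$ for every $i$, and therefore $nq + r \sim nq$ by definition of common refinement. Hence $nq$ is nil in $Q/\sim$, making $q$ nilpotent modulo~$\sim$.

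Combining these two steps with the partition $Q = (Q \setminus P) \sqcup P$ (available because $P$ is prime and each $\sim_i$ is $P$-primary) shows $\sim$ is primary. To identify its nilpotent ideal as exactly~$P$: the containment $P \subseteq$ (nilpotent ideal of $\sim$) is Step~2, and conversely any $q$ whose multiple $mq$ is nil modulo $\sim$ has $mq$ nil modulo each $\sim_i$ (since $\sim \subseteq \sim_i$), placing $q$ in the nilpotent ideal of every $\sim_i$, namely~$P$. The only point requiring even mild care is the uniform choice of exponent in the nilpotence step, which is immediate from finiteness of the collection of congruences; I do not expect any other obstacle.
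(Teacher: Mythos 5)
Your proposal is correct and follows essentially the same approach as the paper's proof: cancellativity of images outside $P$ passes through each $\sim_i$, and a uniform multiple of any $p\in P$ is nil simultaneously in all the quotients. The paper streamlines the bookkeeping by inducting to the case of two congruences and reducing modulo their common refinement, but the substance of the argument is the same.
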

\begin{proof}
It suffices by induction to show this for two $P$-primary
congruences $\til_1$ and $\til_2$.  Reducing modulo their
intersection, we can assume that the intersection is the identity
congruence on~$Q$.  Denote by $Q_1$ and~$Q_2$ the quotients modulo
$\til_1$ and~$\til_2$, respectively.  By assumption $P \subset Q$ is
the nilpotent ideal of both $\til_1$ and $\til_2$.  We claim that if
$p\in P$ then $p$ is nilpotent already in~$Q$.  Indeed, a
sufficiently high multiple of $p$ is congruent to nil under
both~$\til_1$ and~$\til_2$, and since their intersection is trivial
this can only happen if that multiple is nil.  On the other hand, if
$p\notin P$, then it must be cancellative: if there exist $a,b \in
Q$ with $a+p = b+p$, then $a \sim_1 b$ and $a \sim_2 b$ both
hold---whence $a = b$, in fact---since $p$ is cancellative modulo
$\til_1$ and~$\til_2$.
\end{proof}

\begin{remark}\label{r:pprefine}
Albeit in different language, \cite[Theorem~5.6.2]{gilmer} contains a
variant of the statement of Proposition~\ref{p:pprefine}.
\end{remark}

Passing from the theory surrounding $P$-primary congruences to that
for general congruences is best accomplished by localizing.

\begin{defn}\label{d:localize}
The \emph{localization} $T_P$ of a $Q$-module $T$ at a prime ideal $P
\subset Q$ is the set of formal differences $t - q$ for $t \in T$ and
$q \notin P$, with $t - q$ and $t' - q'$ identified when $w + q' + t =
w + q + t'$ for some $w \in Q \minus P$.  Conventions for this are as
follows.
\begin{itemize}
\item%
The localization $Q_P$ of~$Q$ itself is naturally a monoid, and $T_P$
is a $Q_P$-module.
\item%
The image of $P$ in~$Q_P$ is the maximal ideal~$P_P$ of~$Q_P$.
\item%
Any given congruence~$\til$ on~$Q$ induces a congruence on~$Q_P$, also
denoted~$\til$.
\item%
If $\oQ = Q/\til$ then we write $\oQ_P = Q_P/\til$.
\item%
The \emph{unit group at $P$} is the subgroup $G_P = Q_P \minus
P_P$.
\end{itemize}
\end{defn}

\begin{example}\label{e:noNil}
Localizing $Q$ at the empty prime ideal yields the universal
group~$Q_\nothing$.  When $Q$ has a nil, $Q_\nothing$ is trivial.  In
fact, the universal group $Q_\nothing$ is trivial precisely when $Q$
has a nil.  (Proof: If $Q_\nothing$ is trivial, then $q$ becomes equal
to~$0$ after inverting every element of~$Q$.  Thus there is an element
$x_q \in Q$ such that $x_q + q = x_q$.  As $Q$ is generated by a
finite set $S \subseteq Q$, the sum of the elements $x_s$ for $s \in
S$ exists, and it is nil in~$Q$.)
\end{example}

By definition, the group of units of~$Q_P$ acts on itself and also on
the set $\oQ_P$ of equivalence classes modulo any congruence on~$Q_P$.
Here and in what follows, we often think of the quotient~$\oQ$
explicitly as a set of congruence classes in~$Q$.  Thus $\oQ_P$ is a
set of congruence classes in $Q_P$.  We record this fact for future
reference.

\begin{lemma}\label{l:action}
Let $P \subset Q$ be a prime ideal.  Given any congruence on~$Q$, the
unit group of\/~$Q_P$ acts on the quotient~$\oQ_P$ modulo the induced
congruence on~$Q_P$.\qed
\end{lemma}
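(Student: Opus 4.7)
The plan is to unpack the statement: we need to produce a map $G_P \times \oQ_P \to \oQ_P$ satisfying the two axioms of Definition~\ref{d:module}. Since $G_P$ is by definition a subsemigroup of $Q_P$, and it contains the identity $0$ (because $0 \notin P_P$, as $P_P$ is a proper ideal), the main task is simply to show that addition in $Q_P$ descends to a well-defined action of $Q_P$ on~$\oQ_P$; then restrict to the subgroup $G_P$.

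First I would verify that the relation on~$Q_P$ induced by~$\sim$ (defined in Definition~\ref{d:localize}) is indeed a congruence on~$Q_P$, so that the quotient set $\oQ_P = Q_P/\!\sim$ is meaningful. This is essentially automatic from the construction of~$Q_P$ as formal differences: if $t - q \sim t' - q'$ in $Q_P$ and $s - r \in Q_P$, then adding $s - r$ to both sides yields $(t + s) - (q + r) \sim (t' + s) - (q' + r)$, using additive closure of~$\sim$ in the monoid~$Q$ together with the equivalence relation defining the localization.

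Next, I would define the candidate action by
\[
  g \cdot \ol q := \ol{g + q} \qquad \text{for } g \in G_P, \ \ol q \in \oQ_P.
\]
Well-definedness on the second coordinate follows from the fact that~$\sim$ is a congruence on~$Q_P$: if $q \sim q'$ then $g + q \sim g + q'$. The identity axiom $0 \cdot \ol q = \ol q$ holds because $0$ is the additive identity of~$Q_P$, and the associativity axiom $(g+g') \cdot \ol q = g \cdot (g' \cdot \ol q)$ is just associativity of addition in~$Q_P$ passed through the quotient map.

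There is no real obstacle here; the lemma is essentially a bookkeeping statement recording that the standard ``monoid acts on its congruence quotient by addition'' construction restricts to an action of the unit subgroup. The only point that requires any care is checking that the induced relation on~$Q_P$ is additively closed, and this reduces immediately to the defining property of a congruence on~$Q$ combined with the symmetric role of numerators and denominators in the localization identification.
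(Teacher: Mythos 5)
Your proof is correct and takes exactly the approach the paper has in mind: the paper leaves this lemma with a bare \qed because it is the routine verification that the localization of a congruence remains a congruence and that addition descends to the quotient, and you have simply written out that bookkeeping in full.
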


In analogy with what happens over rings, primary decomposition of
congruences behaves well under localization.

\begin{thm}\label{t:commutes}
Primary decomposition of congruences commutes with localization: if
$\til = \til_1 \cap \cdots \cap \til_r$ is a primary decomposition of
the congruence $\til$ on~$Q$, and $P \subset Q$ is a prime ideal, then
each of the congruences induced by~$\til_1,\ldots,\til_r$ on~$Q_P$ is
primary or universal, and their common refinement is the congruence
induced by~$\til$ on~$Q_P$.
\end{thm}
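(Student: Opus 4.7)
The plan is to prove the two assertions of Theorem~\ref{t:commutes} separately: that each $\til_i$ induces either a primary or universal congruence on~$Q_P$, and that the common refinement of these induced congruences equals the congruence induced by~$\til$ on~$Q_P$.

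For the first assertion, I would split on whether the nilpotent ideal $P_i$ of $\til_i$ is contained in~$P$. If $P_i \not\subseteq P$, pick $q \in P_i \minus P$: its image in $Q/\til_i$ is nilpotent, so some multiple $nq$ is nil modulo~$\til_i$. Localizing at~$P$ inverts~$q$, making $nq$ into a unit modulo~$\til_i$, but a nil class that is invertible must equal the identity (apply the inverse to the defining equation $x + \infty = \infty$). This collapses $Q_P/\til_i$ to a single class, so $\til_i$ induces the universal congruence on~$Q_P$. If $P_i \subseteq P$, no element of $P_i$ is inverted, and a short check using the witness-based definition of the localized congruence shows that cancellative elements of $Q/\til_i$ remain cancellative in $Q_P/\til_i$ while nilpotent elements remain nilpotent, so the induced congruence is primary with nilpotent ideal the extension of~$P_i$ to~$Q_P$.

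For the second assertion, one inclusion is immediate: $\til \subseteq \til_i$ for each~$i$ implies $\til^{\mathrm{loc}} \subseteq \til_i^{\mathrm{loc}}$, hence $\til^{\mathrm{loc}} \subseteq \bigcap_i \til_i^{\mathrm{loc}}$. For the reverse inclusion I would use a witness-combining argument: if $(a - p) \sim_i^{\mathrm{loc}} (b - q)$ for every~$i$, with respective witnesses $w_i \in Q \minus P$ satisfying $w_i + q + a \sim_i w_i + p + b$ in~$Q$, then $w = w_1 + \cdots + w_r$ lies in $Q \minus P$ since primes have multiplicatively (hence additively) closed complements. Adding $\sum_{j \neq i} w_j$ to both sides of the $i$-th equivalence yields $w + q + a \sim_i w + p + b$ for every~$i$, and therefore $w + q + a \sim w + p + b$ in~$Q$ because $\til$ is the common refinement of the~$\til_i$; this witnesses $(a - p) \sim^{\mathrm{loc}} (b - q)$ in~$Q_P$.

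I expect the main obstacle to be the universality argument when $P_i \not\subseteq P$: one must carefully recognize that a nil element that is simultaneously invertible forces the ambient monoid to collapse, and translate this from the quotient $Q/\til_i$ back to a statement about the induced congruence on $Q_P$. The rest of the proof is largely formal manipulation with the explicit formal-difference description of localization from Definition~\ref{d:localize}, and the witness-combining step succeeds precisely because the primality of~$P$ guarantees that finite sums of elements outside~$P$ remain outside~$P$.
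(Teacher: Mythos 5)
Your proposal is correct and follows essentially the same route as the paper's: the case split on whether the nilpotent ideal $P_i$ is contained in $P$ reproduces the paper's dichotomy (universal when some $q \notin P$ is nilpotent mod $\til_i$, primary otherwise), and your witness-combining argument for the reverse inclusion of the common refinement is exactly the paper's Lemma~\ref{l:commutes}. The only superficial difference is that you spell out explicitly why an invertible nil collapses the monoid, whereas the paper treats this as evident (cf.\ Lemma~\ref{l:qquniv}).
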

\begin{proof}
If some element of~$Q$ lies outside of~$P$ but becomes nilpotent
in~$Q/\til_j$, then $\til_j$ induces the universal congruence
on~$Q_P$, so assume no such element exists.  Suppose that $q - u \in
Q_P$.  Our assumption means that $u$ has cancellative image
in~$Q/\til_j$.  It follows that $q - u \in Q_P$ becomes cancellative
in~$Q_P/\til_j$ as long as $q - u$ does not become nilpotent
in~$Q_P/\til_j$.  Therefore $\til_j$ induces a primary congruence
on~$Q_P$.  The rest of the proof is covered by the following lemma.
\end{proof}

\begin{lemma}\label{l:commutes}
Localization commutes with finite common refinement of congruences: if
$\til = \til_1 \cap \cdots \cap \til_r$ as congruences on~$Q$, and $P
\subset Q$ is a prime ideal, then the induced congruences on the
localization~$Q_P$ still satisfy $\til = \til_1 \cap \cdots \cap
\til_r$.
\end{lemma}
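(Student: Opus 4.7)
The plan is to unfold the definition of the induced congruence on $Q_P$ and then handle each inclusion separately, with the primality of $P$ doing the real work in the harder direction.

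First I would set notation: given a congruence $\sim_j$ on $Q$, write $\sim_j^P$ for the induced congruence on $Q_P$, characterized by $a - u \sim_j^P a' - u'$ iff there exists $w \in Q \minus P$ with $a + u' + w \sim_j a' + u + w$ in~$Q$. This is just the kernel congruence of $Q_P \to (Q/\sim_j)_P$, and I would verify that it is indeed a congruence on~$Q_P$, and that a single common $w$ can always be used in place of a pair of witnesses (by adding the two witnesses together, using that $Q \minus P$ is closed under~$+$ by primality of~$P$).

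The easy inclusion is $\til^P \subseteq \bigcap_{j=1}^r \til_j^P$: since $\til = \til_1 \cap \cdots \cap \til_r$, every $\til$-equivalence in~$Q$ is a $\til_j$-equivalence, and this passes verbatim to the localized relation using the same witness~$w$.

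The other inclusion is the main point. Suppose $a - u \sim_j^P a' - u'$ for all~$j$, with $a, a', u, u' \in Q$ and $u, u' \in Q \minus P$. For each~$j$, pick $w_j \in Q \minus P$ with $a + u' + w_j \sim_j a' + u + w_j$. Set $w = w_1 + \cdots + w_r$. The key observation is that $w \in Q \minus P$ because $P$ is prime (primality ensures $Q \minus P$ is additively closed, as noted earlier in the paper). For each fixed~$j$, add $\sum_{i \neq j} w_i$ to both sides of the $\sim_j$-relation; congruence closure under addition gives $a + u' + w \sim_j a' + u + w$. Since this holds for every~$j$ and $\til = \bigcap_j \til_j$, we get $a + u' + w \sim a' + u + w$ in~$Q$, so $a - u \sim^P a' - u'$, as required.

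The only place any subtlety arises is in choosing the common witness $w$, which is precisely where primality of~$P$ is used; everything else is bookkeeping on the definition of the induced congruence. I do not expect serious obstacles. If one prefers to avoid the explicit formal-difference description, the argument can be recast via the universal property: $\til^P$ is the kernel of $Q_P \to (Q/\til)_P$, and the same ``multiply the denominators'' argument (now applied in $Q/\til$) yields the inclusion.
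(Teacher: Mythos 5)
Your proof is correct and takes essentially the same approach as the paper's: both directions reduce to the fact that the individual witnesses $w_j \in Q \minus P$ can be summed to produce a single common witness $w \in Q \minus P$, and the converse inclusion follows by applying the same witness to each $\sim_j$. The only difference is that you unwind the formal-difference representation of $Q_P$ explicitly, whereas the paper states the argument directly in terms of elements of $Q_P$.
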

\begin{proof}
For the duration of this proof, a dot denotes passage to~$Q_P$, so
$\dot\til$ is the congruence on~$Q_P$ induced by~$\til$ on~$Q$.  If $v
\mathbin{\dot\sim_j} w$ in~$Q_P$ for all~$j$, then for each~$j$ there
is an element $u_j \in Q \minus P$ with $u_j + v \sim_j u_j + w$.
Summing these elements $u_j$ yields an element $u = u_1 + \cdots +
u_r$ such that $u + v \sim_j u + w$ for all~$j$, whence $u + v \sim u
+ w$ by definition of~$\til$ as the common refinement.  Therefore $v
\mathbin{\dot\sim} w$.  This logic easily reverses to show that $v
\mathbin{\dot\sim} w \implies v \mathbin{\dot\sim_j} w$ for all~$j$.
We conclude that $\dot\til = \dot\til_1 \cap \cdots \cap \dot\til_r$,
as~desired.
\end{proof}

\section{Witnesses and associated prime ideals of congruences}\label{s:witness}

Our aim in this section is to show that primary decompositions of
congruences in finitely generated commutative monoids have
well-defined associated prime ideals.  These, and their witnesses,
reflect the combinatorial features of a given congruence more
accurately than does primary decomposition alone.

\begin{defn}\label{d:annihilator}
For any ideal $T \subseteq Q$, the \emph{annihilator modulo~$T$} is
the common refinement $\ann(T) = \bigcap_{t \in T} \ker(\phi_t)$ of
the kernels of the addition morphisms $\phi_t$ for~$t \in T$.
\end{defn}

\begin{remark}
If $q_1 + v = q_2$ then $\ker(\phi_{q_1})$ refines $\ker(\phi_{q_2})$.
Therefore, in the definition of~$\ann(T)$, it suffices to intersect
only over generators of $T$.  Equivalently, if $T$ is generated by
$t_1,\ldots,t_r$, then $\ann(T) = \ker(\phi_{t_1}\! \oplus \cdots
\oplus \phi_{t_r}: Q \to T^{\oplus r})$.  If $T = \nothing$ is the
empty ideal, then $\ann(T)$ is the universal congruence (that has just
one class).
\end{remark}

\begin{example}
To explain the ``annihilator'' terminology, let $Q$ be a monoid with
nil~$\infty$ and write $\kk[Q]^- := \kk[Q]/\<\ttt^\infty\>$.  If $T
\subseteq Q$ is a monoid ideal, then $\ann(T)$ is the congruence
induced by the binomials (and the monomials) in the ideal
$\big(0:\kk\{T\}\big) = \{f \in \kk[Q] \mid f\,\kk\{T\} = 0 \text{ in
} \kk[Q]^-\}$.
\end{example}

\begin{defn}\label{d:cover}
Fix a prime ideal $P \subset Q$ with $P_P \subset Q_P$ minimally
generated by $p_1, \ldots, p_r$.  The \emph{$P$-covers} of $q \in Q$
are the elements $q+p_i \in Q_P$ for $i=1, \ldots, r$.  The
\emph{cover morphisms at~$P$} are the morphisms $\phi_i : Q_P \to
\<p_i\>_P$ defined via $q \mapsto q + p_i$; if $P$ is the maximal
ideal, then the~$\phi_i$ are called simply \emph{the cover morphisms}
of~$Q$.
\end{defn}

\begin{remark}\label{r:cover}
The set of $P$-cover morphisms depends on the choice of generators
$p_1, \ldots, p_r$ and may be infinite if, for example, $Q_P$ has a
lot of units.  However, modulo Green's relation on~$Q_P$ there is a
unique finite minimal generating set of any ideal, and every minimal
generating set for~$P_P$ maps bijectively to it.
\end{remark}

\begin{lemma}\label{l:cover}
For a fixed prime~$P$, the set of kernels of $P$-cover morphisms is
finite.
\end{lemma}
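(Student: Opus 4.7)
The plan is to leverage the fact that, modulo Green's relation, a finitely generated monoid ideal has a unique finite minimal generating set. Specifically, I will show that the kernel of a cover morphism $\phi_p$ depends only on the Green's class of $p$ in $Q_P$; since there are only finitely many Green's classes among minimal generators of $P_P$, the set of kernels will perforce be finite.

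First I would verify the Green's invariance of kernels. If $p$ and $p'$ are Green-equivalent in $Q_P$---that is, $\<p\>_P = \<p'\>_P$---then by definition there exist $u, u' \in Q_P$ with $p = p' + u$ and $p' = p + u'$. These equations factor the addition morphisms as $\phi_p = \phi_u \circ \phi_{p'}$ and $\phi_{p'} = \phi_{u'} \circ \phi_p$, yielding inclusions $\ker(\phi_{p'}) \subseteq \ker(\phi_p)$ and $\ker(\phi_p) \subseteq \ker(\phi_{p'})$ respectively, hence equality. So the assignment $p \mapsto \ker(\phi_p)$ factors through the Green's quotient.

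Next I would assemble the finiteness. Since $Q$ is finitely generated by the standing convention, so is $Q_P$, and consequently the monoid ideal $P_P \subseteq Q_P$ is finitely generated. By Lemma~\ref{l:modGreen} the quotient of $Q_P$ modulo Green's relation is a poset; the image of $P_P$ there has a unique minimal generating set---its minimal elements---which is finite by finite generation of $P_P$. As recorded in Remark~\ref{r:cover}, every minimal generating set of $P_P$ projects bijectively onto this distinguished finite set of Green's classes. Combining these observations, each $P$-cover morphism has the form $\phi_{p}$ for some $p$ in some minimal generating set of $P_P$; by the first step its kernel depends only on the Green's class of $p$; and there are only finitely many such classes. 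The only mild subtlety in this argument is ensuring that the two-sided factorizations arising from Green's equivalence yield equality (not merely containment) of kernels, which is immediate from symmetry but is the crux of the reduction.
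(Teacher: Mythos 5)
Your proof is correct and follows essentially the same route as the paper's: both establish that $\ker(\phi_p)$ depends only on the Green's class of $p$ via the factorization $\phi_p = \phi_u \circ \phi_{p'}$ (and symmetrically), and then invoke the finiteness of Green's classes in a minimal generating set of $P_P$ as recorded in Remark~\ref{r:cover}. You have merely spelled out the finiteness step in more detail than the paper, which leaves it largely implicit.
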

\begin{proof}
Two cover morphisms $\phi_p$ and $\phi_{p'}$ for elements $p,p'$ that
are Green's equivalent in~$Q_P$ have the same kernel, because if $p
\in \<p'\>$ then there exists an element $u$ such that $p = p'+u$, and
thus the kernel of $\phi_{p'}$ refines the kernel of $\phi_p$ and vice
versa.
\end{proof}

Next comes the first main new definition of the paper (note that the
concept of mesoprimary congruence in Definition~\ref{d:prim*} is
equivalent to a notion already available in the literature; cf.\
Remark~\ref{r:primary}.4), whose details can best be seen in action in
the proofs of Proposition~\ref{p:coprincipal} and
Theorem~\ref{t:coprincipal}.

\begin{defn}\label{d:witness}
Let $\til$ be a congruence on $Q$ and $P \subset Q$ a prime ideal.
Consider the localized quotient $\oQ_P$.  For each $q \in Q$ let $\ol
q$ be its image in $\oQ_P$.  An element $\ol q$ is \emph{exclusively
maximal} in a subset $S \subseteq \oQ_P$ if $\ol q$ is the unique
maximal element of~$S$ under Green's preorder.  An element $w \in Q$
is~a
\begin{enumerate}
\item%
\emph{witness for $P$} if the class of $\ol w$ is non-singleton under
the kernel of each cover morphism (i.e.~the class $\ol p + \ol w$ is
non-singleton for all $p \in P$) and in each of its non-singleton
kernel classes, $\ol w$ is not exclusively maximal;
\item%
\emph{key witness for $P$} if the class of $\ol w$ is non-singleton
under the intersection of the kernels of all cover morphisms (i.e.~if
the class of $\ol w$ is non-singleton under~$\ann(\ol P_P)$) and $\ol
w$ is not exclusively maximal in the non-singleton class.
\end{enumerate}
The ideal $P$ is an \emph{associated prime ideal of~$\til$} if the
annihilator modulo $\ol P_P \subset \oQ_P$ is not the identity
congruence.
\end{defn}

\begin{conv}\label{c:witnessSpeak}
A (key) witness is a (key) witness for some prime ideal~$P$.  When we
speak of the set of (key) witnesses for a given congruence we mean the
set of pairs $(w,P)$ where $w\in Q$ is a (key) witness for a prime
ideal~$P \subset Q$.  If the con\-gruence~$\til$ is not clear from
context, a (key) witness may be called a (key)~$\til$\mbox{-witness}.
\end{conv}

\begin{lemma}\label{l:associated}
A prime ideal $P \subset Q$ is associated to a congruence~$\til$
on~$Q$ if and only if $Q$ has a key witness for~$P$.
\end{lemma}
\begin{proof}
Once the annihilator $\ann(\ol P_P)$ does not equal the identity
congruence, it has a class of size $2$ or more; at least one element
therein avoids being exclusively~maximal.$\!\!$
\end{proof}

\begin{defn}\label{d:aide}
Fix the notation of Definition~\ref{d:witness}.
\enlargethispage{.5ex}
\begin{enumerate}
\item%
An \emph{aide}%
\footnote{The English word ``aide'' is fortuitously a transliteration
of the Hebrew word for ``witness''.  In talmudic courts, a pair of
witnesses was required for any conviction.}
for a witness $w$ and a generator $p \in P$ is an element $w' \in Q$
whose image~$\ol w' \in \oQ_P$ is (i)~distinct from~$\ol w$, but
(ii)~congruent to~$\ol w$ in the kernel of the cover
morphism~$\phi_p$, and (iii)~maximal (under Green's preorder) in
the~set~$\{\ol w, \ol w'\}$.
\item%
A \emph{key aide} for a key witness~$w$ is an element $w' \in Q$ whose
image~$\ol w' \in \oQ_P$ is (i)~distinct from~$\ol w$, but
(ii)~congruent to~$\ol w$ in the intersection of the kernels of all
cover morphisms, and (iii)~maximal (under Green's preorder) in the set
$\{\ol w, \ol w'\}$.
\end{enumerate}
\end{defn}

\begin{lemma}\label{l:aide}
Every witness for~$P$ and generator $p \in P$ has an aide.  Every key
witness has a key aide.
\end{lemma}
\begin{proof}
In each case, there is a non-singleton class containing $\ol w \in
\oQ_P$, so there exists an element $\ol w' \neq \ol w$ in this class.
The point is to choose $\ol w'$ so that it does not precede~$\ol w$
under Green's preorder and so that $\ol w'$ lies in the image of the
composite morphism $Q \to \oQ \to \oQ_P$.  The existence of $\ol w'$
not preceding~$\ol w$ is a consequence of $\ol w$ not being
exclusively maximal.  Now use that every element of~$\oQ_P$ is off
from the image of~$\oQ$ by an element of~$P$, and that $Q \to \oQ$ is
surjective.
\end{proof}

\begin{remark}\label{r:witness}
Every key witness is a witness, because any key aide is an aide for all
generators of~$P$.
\end{remark}

\begin{remark}\label{r:aide}
An aide $w'$ for a witness~$w$ and $p \in P$ can be a witness but need
not~be:
\begin{itemize}
\item%
adding $\ol p$ could join $\ol w$ to~$\ol w'$ while some other element
of~$P$ fails to join~$\ol w$~to~$\ol w'$;
\item%
$\ol w'$ can be exclusively maximal in its class under the kernel of
the cover morphism.
\end{itemize}
Similarly, a key aide can be a witness (and hence a key witness) but
need not be; howev\-er, in the key case only the second circumstance
(i.e., exclusive maximality)~\mbox{can occur}.
\end{remark}

In the set of (key) witnesses for a congruence, a single $w \in Q$ can
occur multiple times for different~$P$.  For instance, this happens
when $\nothing$ is associated.

\begin{example}\label{e:groupWitn}
The condition for an element to be a witness for the empty prime
ideal~$\nothing$ is vacuous: there are no cover morphisms.
Furthermore, the congruence $\ann(\nothing)$ in the definition of key
witness is an empty intersection of congruences, so it is the
universal congruence on~$\oQ_\nothing$.  Thus the empty ideal is
associated to a congruence if and only if the universal group
$\oQ_\nothing$ of the quotient modulo that congruence is nontrivial,
and that occurs precisely when $\oQ$ has no nil (see
Example~\ref{e:noNil}).  Every $q \in Q$ is a (key) witness in this
case but at the same time $\oQ_\nothing$ has only one class under
Green's~relation.
\end{example}

The following series of examples demonstrates various features of
associatedness of prime ideals and their witnesses.

\begin{example}\label{e:assprim}
As usual it will be convenient to describe congruences on $\NN^n$ by
unital binomial ideals in polynomial rings.  We use $e_x,
e_y,\dots$ to denote the generators of $\NN^n$ corresponding to
variables $x,y,\dots$ in the polynomial ring $\kk[\NN^n]$, but we
denote the addition morphisms by $\phi_x, \phi_y,\ldots$ instead of
$\phi_{e_x}, \phi_{e_y},\ldots$, for simplicity.
\begin{enumerate}
\item\label{e:witt1}%
Let $\til$ be the congruence on $\NN^2$ induced by the binomial ideal
$\<x^2-xy, xy-y^2\> \subset \kk[x,y]$.  The set of associated prime
ideals in $\NN^2$ consists of the empty ideal $\nothing$ and the
maximal ideal $P = \<e_x,e_y\>$.  Localization at the maximal ideal
does nothing and there are only two cover morphisms, given by adding
$e_x$ and $e_y$, respectively.  To establish that $P$ is associated,
note that $e_x$ and $e_y$ themselves are key witnesses for $P$,
congruent under $\ann(P)$, and serve as aides for one another.  Indeed,
$\ann(P)$, the intersection of the two kernels, contains the pair
$(e_x,e_y)$ since $e_x + e_x \sim e_y + e_x$ and also $e_x + e_y \sim
e_y + e_y$.  The identity $0\in \NN^2$ is not a witness for~$P$.
Neither $\<e_x\>$ nor $\<e_y\>$ is associated since adjoining inverses
to either turns the quotient $\NN^2/\til$ into a cancellative monoid.
In this case all kernels of addition morphisms are trivial.  Finally,
localizing at the empty prime ideal amounts to considering the induced
congruence on~$\ZZ^2$, which is induced by the binomial ideal $\<x-y\>
\subset \kk[x^\pm, y^\pm]$.  Since the quotient is nontrivial,
$\nothing$ is associated too.  Every element of $\NN^2$ is a witness
for~$\nothing$, but taken together they form only one Green's class
in~$\ZZ^2$.
\item%
Let $\til$ be the congruence on $\NN^3$ induced by $\<x^2 -xy, y^2-xy,
x(z-1)\> \subset \kk[x,y,z]$.  The associated prime ideals are
$\<e_x,e_y\>$ and $\nothing$.  The argument for $\nothing$ is the same
as in item~\ref{e:witt1}.  The localization of $\til$ at $\<e_x,
e_y\>$ is induced by the same ideal, considered in $\kk[x,y,z^\pm]$.
This says that $e_z$ is cancellative; i.e.~that the addition morphism
$\phi_z : q \mapsto q + e_z$ is injective.  The set of key witnesses
is invariant under the $\phi_z$-action.  It consists of $e_y + ke_z$
and $e_x + ke_z$ for $k\in\NN$.  The translates of $e_y$ all become
equivalent when adding $e_x$ or $e_y$.  Any translates of $e_x$ are
witnesses since they are each joined to a translate of $e_y$.  No
$e_z$-translate of $0$ is a witness, though.  Again, all witnesses
are~key.
\item\label{e:witt2nonKey}%
Let $\til$ be the congruence on $\NN^4$ induced by $\<x^2 -xy, y^2-xy,
x(z-1), y(w-1)\> \subset \kk[x,y,z,w]$.  The associated prime ideals
are again $\nothing$ and $P = \<e_x, e_y\>$.  The set of witnesses
for~$P$ is determined as follows.  The element $0\in\NN^4$ is a
witness that is not key.  The kernel congruences of $\phi_x$ and
$\phi_y$ are generated by $\{(0,e_z), (e_x, e_y)\}$ and $\{(0,e_w),
(e_x, e_y)\}$ in~$\NN^4 \times \NN^4$, respectively.  This shows the
witness property and also, because their common refinement leaves it
singleton similarly to~Example~\ref{e:irreducible}, that $0$ is not
key.  In contrast, $e_x$ and $e_y$ are key witnesses because $\phi_x
(e_x) = \phi_x (e_y)$ and likewise for $\phi_y$.  
A mesoprimary decomposition (Theorem~\ref{t:mesodecomp'}) of the
binomial ideal defining~$\til$ has components corresponding to all
three witnesses, while a mesoprimary decomposition of the congruence
$\til$ itself needs components only for the two key witnesses
(Theorem~\ref{t:coprincipal}).  Why the extra binomial component?  The
common refinement of the congruences induced by $\<z-1 , x^2, y\>$ and
$\<w-1, x, y^2\>$ leaves the class of $0$ singleton, but the
intersection of the ideals is merely free of binomials, rather than
being altogether zero.
\end{enumerate}
\end{example}

This next example demonstrates how the monoid prime ideal $P$ matters
in the definition of a (key) witness for~$P$, and how the same element
can be a witness for~different~$P$.

\begin{example}\label{e:samewitness}
Fix the congruence $\til$ induced on~$\NN^4$ by the unital binomial
ideal $\<x(z-1),x(w-1), y(z-1), y^2\> \subset \kk[x,y,z,w]$.  The
associated prime ideals of $\til$ are $\<e_x,e_y\>$ and $\<e_y\>$.
Consider the addition morphisms $\phi_x$ and $\phi_y$.  The key
witnesses for $\<e_y\>$ are $e_y + ke_x $ and all their translates in
the $e_z$ and~$e_w$ directions.  No element in the ideal $\<e_x\>$ can
be a witness for a monoid prime containing $e_x$ because $\phi_x$ acts
injectively on that ideal.  Indeed, the witnesses for $\<e_x, e_y\>$
are $0 \in \NN^4$ together with all its translates in the $e_z$
direction, and $e_y$ together with its translates in the $e_z$
and~$e_w$~directions.
\end{example}

The final example on witnesses demonstrates the prohibition on
exclusive maximality, which in particular bars $\nil$ and idempotents
from being witnesses.  See Remark~\ref{r:exclusively-maximal} for a
deeper explanation of the ban on exclusive maximality.

\begin{example}\label{e:exclusively-maximal}
Let $P = \<e_x,e_y\>$ be the maximal ideal of~$\NN^2$.
\begin{enumerate} 
\item%
Under the Rees congruence induced by the monomial ideal $\<x^2,y^2\>$,
the element $e_x + e_y$ is joined to nil under both cover morphisms.
Only $e_x + e_y$ a $P$-witness, and in fact a key witness.  In
contrast, $\nil$ is a key aide but not a witness and hence certainly
not a key witness.
\item%
Under the congruence induced by the unital binomial ideal $\<y,
x^2-x\>$, both cover morphisms join the identity $0$ to~$e_x$.
However, only the identity is a witness, because $e_x$ lies in the
ideal that $0$ generates.
\end{enumerate}
\end{example}

\begin{lemma}\label{l:ann}
If $P$ is maximal among the prime ideals associated to the components
in a primary decomposition, then $\ann(P)$ refines all $P'$-primary
components with~$P' \subsetneq P$.
\end{lemma}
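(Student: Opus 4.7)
The plan is to verify directly that $\ann(P)$, viewed as a congruence on~$Q$, is contained in (that is, refines) each $P'$-primary component $\til_j$ with $P' \subsetneq P$. Unpacking Definition~\ref{d:annihilator}, a pair $(a,b) \in \ann(P)$ is nothing other than a pair satisfying $a + p = b + p$ in~$Q$ for every $p \in P$; by the remark following that definition, it suffices to check this equality on generators of~$P$.

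The key step is to exploit the strict containment $P' \subsetneq P$ to produce a single element $p \in P \minus P'$ and push the resulting equality through the quotient map $Q \to Q/\til_j$. Because $\til_j$ is $P'$-primary, the nilpotent ideal of $Q/\til_j$ is the image of~$P'$, so $\ol p$ is not nilpotent in $Q/\til_j$; by Definition~\ref{d:prim*}, it must therefore be cancellative. The equality $a + p = b + p$ in~$Q$ descends to $\ol a + \ol p = \ol b + \ol p$ in $Q/\til_j$, and cancellativity of $\ol p$ yields $\ol a = \ol b$, i.e.\ $a \sim_j b$. This proves the desired refinement $\ann(P) \subseteq \til_j$.

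The argument reduces to a one-step cancellation and there is no real obstacle; the only substantive point is recognising that $\ann(P)$ is a congruence on the ambient monoid~$Q$ itself (not on some localization), so an equality in~$Q$ is available to be pushed down. Worth noting is that the maximality hypothesis on~$P$ is not used directly: it merely frames which other primes $P'$ occur in the decomposition, and for the refinement claim we need only the strict containment $P' \subsetneq P$, which guarantees that $P \minus P'$ is nonempty. The maximality is presumably invoked in the application of this lemma in Theorem~\ref{t:primary}, where one must also deal with components whose associated primes are incomparable with, or equal to,~$P$.
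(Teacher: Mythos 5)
Your proof is correct and is essentially the same as the paper's: choose $p \in P \minus P'$, observe that $(a,b) \in \ann(P)$ forces $a + p = b + p$ already in~$Q$, and then cancel $\ol p$ in $Q/\til_j$ since it is non-nilpotent, hence cancellative, for a $P'$-primary congruence. Your closing remark is also accurate — the maximality hypothesis plays no role in this lemma itself (only the strict inclusion $P' \subsetneq P$ is used) and is carried along as framing for its application inside the proof of Theorem~\ref{t:primary}.
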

\begin{proof}
Fix a $P'$-primary component $\app$ with $P' \subsetneq P$, and choose
$p \in P \minus P'$, so that $\ol p \in Q/\app$ is cancellative.  By
definition, if $a,b \in Q$ are congruent modulo $\ann(P)$ then $a + p$
and $b + p$ are congruent modulo the original congruence, so $a + p
\approx b + p$, and therefore $a \approx b$ by the cancellative
property of~$\ol p$.  Thus $\ann(P)$ refines~$\app$.
\end{proof}

\begin{lemma}\label{l:qquniv}
For all primes $P \not \supseteq P'$, the congruence on~$Q_P$
induced by any $P'$-primary congruence on~$Q$ is universal
on~$Q_P$.
\end{lemma}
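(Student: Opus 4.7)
The plan is to exhibit an element $p' \in Q$ that is simultaneously nilpotent modulo~$\sim$ and a unit in~$Q_P$; the nilpotence collapses any translate of an element of $Q$ by a multiple of $p'$ into the nil class of $\oQ = Q/\sim$, while the invertibility of $p'$ in $Q_P$ allows us to cancel that multiple in the localization, forcing every pair of elements of~$Q_P$ to become equivalent.

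Concretely, since $P \not\supseteq P'$ there exists $p' \in P' \minus P$. (This also excludes the case $P' = \nothing$, which by Convention~\ref{conv} would satisfy $\nothing \subseteq P$ automatically; hence $\oQ$ genuinely has a nil class.) Because $\sim$ is $P'$-primary and $p' \in P'$, the image of~$p'$ in~$\oQ$ is nilpotent, so some multiple $np'$ maps to the nil. For arbitrary $a, b \in Q$, the images of $np' + a$ and $np' + b$ in~$\oQ$ then both equal the nil (since the nil is absorbing), so $np' + a \sim np' + b$ in~$Q$.

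Since $P$ is prime and $p' \notin P$, the element $np'$ also lies in $Q \minus P$, so by the defining condition of the induced congruence on~$Q_P$ (clearing denominators by an element of $Q \minus P$), the relation $np' + a \sim np' + b$ yields $a \sim b$ in~$Q_P$ for every pair $a, b \in Q$. Every element of~$Q_P$ has the form $a - u$ with $a \in Q$ and $u \in Q \minus P$, and translation by the unit~$u$ preserves congruence classes, so any two elements of~$Q_P$ become congruent; the induced congruence is therefore universal. The only delicate point is the convention treating $\nothing$ as a prime, but this affects only the trivially vacuous case $P' = \nothing$.
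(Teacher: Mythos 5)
Your argument is correct and is essentially an unpacking of the paper's one-line proof, ``Localization adjoins an inverse for a nilpotent element'': you make explicit that $p'\in P'\minus P$ is nilpotent modulo the congruence yet becomes a unit in~$Q_P$, and that these two facts force the induced congruence on~$Q_P$ to be universal. The handling of the degenerate case $P'=\nothing$ (which makes the hypothesis $P\not\supseteq P'$ vacuous) is a nice touch and is consistent with Convention~\ref{conv}.
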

\begin{proof}
Localization adjoins an inverse for a nilpotent element.
\end{proof}

Despite the oddities in Example~\ref{e:irreducible}, primary
decomposition of congruences is combinatorially well behaved: the
associated prime ideals of a congruence reflect which components are
necessary in every primary decomposition.

\begin{thm}\label{t:primary}
A prime $P \subset Q$ is associated to a congruence~$\til$ on~$Q$ if
and only if every primary decomposition of~$\til$ has a $P$-primary
component.  Moreover, if $P$ is not associated to~$\til$, then every
$P$-primary component in every primary decomposition of~$\til$ is
\emph{redundant}: omitting it leaves another primary decomposition
of~$\til$.
\end{thm}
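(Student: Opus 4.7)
The plan is to prove both halves by localizing at~$P$ and invoking Theorem~\ref{t:commutes}: if $\til = \til_1 \cap \cdots \cap \til_r$ is any primary decomposition of~$\til$, then each $\til_j$ induces on~$Q_P$ either a primary congruence with nilpotent ideal $(P_j)_P \subseteq P_P$ or the universal congruence. The central object is the annihilator $\ann(\ol P_P)$ on~$\oQ_P$, whose non-triviality is precisely the condition that $P$ is associated.

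For the forward direction, I would suppose $P$ is associated but no $\til_j$ is $P$-primary. Each $\til_j$ that survives localization then has nilpotent ideal strictly smaller than~$P_P$, so admits a generator $\bar p_j \in \ol P_P$ that is cancellative modulo~$\til_j$. Non-triviality of $\ann(\ol P_P)$ yields distinct $\bar a, \bar b \in \oQ_P$ with $\bar p + \bar a = \bar p + \bar b$ for every $\bar p \in \ol P_P$; applying this to~$\bar p_j$ and cancelling gives $a \sim_j b$ for each surviving~$j$, while universal components yield $a \sim_j b$ trivially. The common refinement forces $\bar a = \bar b$, a contradiction. The case $P = \nothing$ falls out of the same template, since omitting all $\nothing$-primary components turns every localized~$\til_j$ universal on~$Q_\nothing$ and collapses~$\oQ_\nothing$.

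For the moreover (which subsumes the reverse implication), assume $P$ is not associated, $\til_r$ is $P$-primary, and $\til' := \bigcap_{j \ne r} \til_j$ strictly refines~$\til$, so there are $a \sim' b$ with $a \not\sim b$. If $\bar a = \bar b$ in~$\oQ_P$, then some $w \in Q \minus P$ gives $w + a \sim w + b$ in~$Q$, and cancellativity of~$w$ modulo the $P$-primary~$\til_r$ forces $a \sim_r b$, yielding $a \sim b$, a contradiction. In the remaining case $\bar a \ne \bar b$ in~$\oQ_P$, I would manufacture a distinct pair in $\ann(\ol P_P)$, contradicting the hypothesis. Here I use that each generator $\bar p_i$ of~$\ol P_P$ is nilpotent modulo~$\til_r$, so a sufficient multiple $N_i p_i$ is nil modulo~$\til_r$, which combined with $a \sim_j b$ for $j \ne r$ puts $N_i \bar p_i$ in the equalizer set $S := \{\bar t \in \oQ_P : \bar t + \bar a = \bar t + \bar b\}$. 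I would then iterate through the generators, at each step either leaving the pair alone (if $\bar p_i$ already lies in the current equalizer) or replacing it by $(\bar a_{i-1} + (k_i - 1)\bar p_i,\, \bar b_{i-1} + (k_i - 1)\bar p_i)$, where $k_i$ is the smallest multiplicity of $\bar p_i$ in the current equalizer. Minimality of~$k_i$ preserves distinctness, and ideal closure of the equalizer set carries prior equalizations through each adjustment, so the terminal pair lies in $\bigcap_i \ker \phi_{\bar p_i} = \ann(\ol P_P)$.

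The main obstacle is this iterative construction: producing a pair that is \emph{simultaneously} killed by every generator of~$\ol P_P$ while remaining distinct. Naive modifications---adding a large multiple of a single generator or the sum $N(\bar p_1 + \cdots + \bar p_s)$---do place the modified pair in the annihilator, but collapse it to a single element, because the added shift already lies in~$S$. Proceeding generator-by-generator along the least effective multiple of each, exploiting ideal closure of the equalizer set, is what allows the pair to join the annihilator without falling into collapse.
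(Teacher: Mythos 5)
Your proof is correct and follows the paper's template in the forward direction (localize via Theorem~\ref{t:commutes}, then cancel the annihilator pair against cancellative elements furnished by each non-$P$-primary component), but your moreover/reverse direction takes a genuinely different route. The paper sets $T = \{t : t + a \sim_P t + b\}$, observes that $\sqrt T = P$ since $\til_P$ is $P$-primary, and then selects a maximal Green's class of $Q_P$ not in $T_P$; the preimage of this class shifts $a$ to a key witness in a single step, with maximality guaranteeing every cover lands in $T_P$. You instead walk up through the generators $\bar p_i$ of $\ol P_P$ one at a time, shifting the pair $(\bar a,\bar b)$ by $(k_i-1)\bar p_i$ where $k_i$ is the least multiple of $\bar p_i$ landing in the current equalizer, and using ideal closure of the equalizer set to preserve both earlier equalizations and distinctness. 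Both constructions exploit the same equalizer ideal; the paper's jump-to-the-top produces a named key witness (reused in spirit later, e.g.\ in Theorem~\ref{t:coprincipal}), while your generator-by-generator ascent produces a nontrivial $\ann(\ol P_P)$-pair directly, which is all the definition of associated requires, at the cost of the bookkeeping you flag as the main obstacle. One terminology slip: where you write that $\til' := \bigcap_{j\ne r}\til_j$ ``strictly refines'' $\til$, it should be that $\til$ strictly refines $\til'$, since $\til'$ is an intersection of fewer components and hence coarser; the intent is clear from context and the rest of the argument is consistent with it.
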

\begin{proof}
Suppose that a primary decomposition with no $P$-primary component is
given.  Working modulo $\til$, assume that the congruence to be
decomposed is the identity congruence on~$Q$.  After localizing
along~$P$, the induced congruences on~$Q_P$ form a primary
decomposition of the identity congruence there by
Theorem~\ref{t:commutes}, with all $P'$-primary components for $P'
\not\subseteq P$ being universal and thus redundant by
Lemma~\ref{l:qquniv}.  That is to say, we can assume that $P$ is the
maximal monoid prime ideal of~$Q$.  Since the primary decomposition
has no $P$-primary component, Lemma~\ref{l:ann} implies that $\ann(P)$
refines all primary components, and thus it refines their
intersection.  Thus $\ann(P)$ is trivial and $P$ is not associated.

To prove the rest of the statement, it suffices to show that $P$ is an
associated prime of~$\til$ if some primary decomposition of~$\til$ has
a $P$-primary component $\til_P$ that is \emph{irredundant} in the
sense that omitting $\til_P$ yields a coarser congruence than~$\til$.
Write $\app$ for the (not necessarily primary) common refinement of
all other congruences in the decomposition.  Thus $\til_P
\hspace{.1ex}\cap\hspace{.1ex} \app$ is a nontrivial decomposition of
the identity congruence.  Choose~$a \neq\nolinebreak b \in Q$ with
$a\approx b$ but $a \not \sim_P b$.  Let $T =\{t\in Q \mid t+a \sim_P
t+b\}$.  Since $\til_P$ is $P$-primary, the radical of~$T$ is~$P$.
Modulo Green's relation on $Q_P$, find a maximal element $\hat{t}$ not
in the image of~$T$.  If $t \in Q$ maps to~$\hat{t}$ then the images
of $t+a$ and $t+b$ in $Q_P$ are joined under each cover morphism.
Therefore their class is non-singleton under $\ann(P)$, so one of them
is a key witness for~$P$.
\end{proof}

Theorem~\ref{t:primary} implies a natural characterization of primary
congruences.

\begin{cor}\label{c:primary}
A congruence is primary if and only if it has exactly one associated
prime ideal.
\end{cor}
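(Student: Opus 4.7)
The plan is to derive both directions of the equivalence from Theorem~\ref{t:primary} (together with Proposition~\ref{p:pprefine} for one direction). The key observation is that a $P$-primary congruence is by itself a primary decomposition with a single component, so Theorem~\ref{t:primary} forces every associated prime to coincide with~$P$; conversely, uniqueness of the associated prime lets Theorem~\ref{t:primary} collapse an arbitrary primary decomposition into a common refinement of $P$-primary congruences, and Proposition~\ref{p:pprefine} keeps primariness stable under common refinement.

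For the forward direction, suppose $\til$ is primary. By Definition~\ref{d:nilpideal} the nilpotent ideal~$P$ of~$\til$ is a well-defined prime, and $\til$ is $P$-primary. The one-term expression $\til = \til$ is itself a primary decomposition. By Theorem~\ref{t:primary}, a prime~$P'$ is associated to~$\til$ if and only if this decomposition (like every primary decomposition) contains a $P'$-primary component. Since the only component present is $P$-primary, we conclude $P' = P$, so $P$ is the unique associated prime.

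For the reverse direction, suppose $\til$ has exactly one associated prime~$P$. Choose any primary decomposition $\til = \til_1 \cap \cdots \cap \til_r$, with $\til_i$ being $P_i$-primary; such a decomposition exists by the Remark at the start of Section~\ref{s:primary} (cf.~\cite[Theorem~5.7]{gilmer}). For every index~$i$ with $P_i \neq P$, the prime~$P_i$ is not associated to~$\til$, and the second sentence of Theorem~\ref{t:primary} says that the component~$\til_i$ is redundant and may be omitted. Iterating, we reduce to a primary decomposition $\til = \bigcap_{i\in S} \til_i$ in which every~$\til_i$ is $P$-primary; the index set~$S$ is nonempty because Theorem~\ref{t:primary} requires the associated prime~$P$ to be represented by at least one component. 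Proposition~\ref{p:pprefine} then yields that $\til$ is $P$-primary, hence primary.

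The only step that needs any care is justifying the iterative removal of non-$P$-primary components: at each stage the resulting shorter intersection is still a primary decomposition of~$\til$, so Theorem~\ref{t:primary} still applies to ensure that the remaining non-associated components remain redundant. This is immediate from the definition of redundancy used in Theorem~\ref{t:primary} (omitting a redundant component leaves a primary decomposition of the same congruence), so there is no real obstacle and no separate lemma is required.
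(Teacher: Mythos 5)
Your proof matches the paper's intended approach (the corollary is stated as an immediate consequence of Theorem~\ref{t:primary}, with Proposition~\ref{p:pprefine} supplying the reverse direction), and the reverse direction is handled carefully, including the justification for iteratively removing redundant non-$P$-primary components.

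There is, however, a small gap in the forward direction. You correctly deduce that any associated prime of~$\til$ must equal the nilpotent ideal~$P$, but you never establish that $P$ itself is associated, which is needed to conclude $\til$ has \emph{exactly} one associated prime rather than at most one. The parenthetical use of Theorem~\ref{t:primary} reads as the biconditional ``$P'$ is associated iff this one-term decomposition contains a $P'$-primary component,'' but the theorem's biconditional quantifies over \emph{every} primary decomposition, so the ``if'' direction for a single, chosen decomposition is not available to you. The fix is short: the single $P$-primary component in $\til = \til$ is irredundant (removing it cannot leave a primary decomposition of~$\til$ unless $\til$ is the universal congruence), so the contrapositive of the ``moreover'' clause of Theorem~\ref{t:primary} gives that $P$ is associated.
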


\begin{remark}
Via the Rees congruence construction, primary decomposition of
congruences is a refinement of primary decomposition of ideals in
monoids.  There is an extensive literature on the second type of
decomposition surveyed in~\cite{andersonSurvey}.  Our definitions are
aligned with those in the literature: the Rees congruence of a monoid
ideal is primary if and only if that monoid ideal is primary.  In this
case its unique associated monoid prime ideal is the unique associated
monoid prime ideal of the congruence.
\end{remark}

\section{Associated prime congruences}\label{s:ass}

Each primary congruence on a finitely generated commutative monoid $Q$
has a unique associated prime ideal.  One of the most basic insights
in this paper is that a single primary congruence can have several
associated prime congruences.  The first definition says how a
congruence looks near a given $q \in Q$.

\begin{defn}\label{d:prime-component}
Fix a prime ideal $P \subseteq Q$, a congruence~$\til$ on~$Q$, and an
element $q \in Q$.  The \emph{$P$-prime congruence} of~$\til$
\emph{at~$q$} is the kernel of the morphism $Q \to \big(\<\ol
q\>/\<\ol q + P\>\big){}_P$ induced by the quotient $Q \to Q/\til =
\oQ$, addition $\phi_\ol q: \oQ \to \<\ol q\>$, and localization
at~$P$.
\end{defn}

\begin{defn}\label{d:mesoass}
A prime congruence $\app$ on $Q$ is \emph{associated} to an arbitrary
congruence~$\til$ if $\app$ equals the $P$-prime congruence of~$\til$
at a key witness for~$P$.
\end{defn}

\begin{remark}
The definition implies that the associated prime $P$ of $\app$ is
associated to~$\til$ too.  If $P$ is clear from the context, such as
after $\app$ is fixed, then we also speak of a key witness for $P$
simply as a \emph{key witness}.
\end{remark}

\begin{lemma}\label{l:samePprime}
If $p, q\in Q$ are equivalent under Green's relation, that is, if
$\<p\> = \<q\>$, then their $P$-prime congruences agree for each~$P$.
\end{lemma}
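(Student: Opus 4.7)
The plan is to show that the two defining morphisms for the $P$-prime congruences at~$q$ and at~$p$ differ, after identifying their codomains, by post-composition with an injective shift, so that their kernels coincide.

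First I would use the Green equivalence $\<p\>=\<q\>$ in~$Q$ to pick $u,v\in Q$ with $p=q+u$ and $q=p+v$, yielding $q=q+u+v$ in~$Q$ and hence $\ol q=\ol q+\ol u+\ol v$ in~$\oQ$. From $\ol p=\ol q+\ol u$ and $\ol q=\ol p+\ol v$ I would deduce $\<\ol p\>=\<\ol q\>$ and $\<\ol p+P\>=\<\ol q+P\>$ as ideals of~$\oQ$, so that the target $M_p$ of the defining morphism at~$p$ is literally the same module $M_q:=(\<\ol q\>/\<\ol q+P\>)_P$ as the target at~$q$. Writing the two defining morphisms as $\alpha_q(a)=[\ol a+\ol q]$ and $\alpha_p(a)=[\ol a+\ol p]=[\ol a+\ol q+\ol u]$, I would then factor $\alpha_p=\mu_u\circ\alpha_q$, where $\mu_u\colon M_q\to M_q$, $[\ol s]\mapsto[\ol s+\ol u]$, is well-defined because adding $\ol u$ preserves both the submodule $\<\ol q\>_P$ and the Rees sub-ideal $\<\ol q+P\>_P$.

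Finally I would verify that $\mu_u$ is injective. The identity $\ol q+\ol u+\ol v=\ol q$ from the first step persists in $\oQ_P$, hence for any $[\ol q+\ol r]\in M_q$,
\[
  \mu_v\bigl(\mu_u([\ol q+\ol r])\bigr)=[\ol q+\ol r+\ol u+\ol v]=[\ol r+\ol q].
\]
Thus $\mu_v\circ\mu_u=\mathrm{id}_{M_q}$, so $\mu_u$ is injective; since $\alpha_p=\mu_u\circ\alpha_q$, this forces $\ker\alpha_q=\ker\alpha_p$, which is exactly the equality of the two $P$-prime congruences.

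The only delicate point is confirming that the relations $\ol p=\ol q+\ol u$, $\ol q=\ol p+\ol v$, and $\ol q=\ol q+\ol u+\ol v$ remain usable after passing through the Rees quotient and localizing at~$P$. This is immediate because both operations are of module type: identities of elements are preserved, and the shift morphisms $\mu_u,\mu_v$ descend unambiguously to~$M_q$.
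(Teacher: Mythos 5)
Your proof is correct, and it takes essentially the same route as the paper's one-line proof, which simply invokes the argument from Lemma~\ref{l:cover}: write $p=q+u$ and $q=p+v$, note the addition-by-$u$ and addition-by-$v$ maps are inverse shifts on the common target module, and conclude the two kernels refine each other and hence coincide. Your version just makes explicit what "the same argument applies" entails for the slightly more elaborate target $(\<\ol q\>/\<\ol q+P\>)_P$ of Definition~\ref{d:component}.
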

\begin{proof}
The same argument as for Lemma~\ref{l:cover} applies.
\end{proof}

\begin{example}\label{e:samewitness2}
In the situation of Example~\ref{e:samewitness}, the associated prime
congruences are induced by the ideals $\<x,y\>$, $\<x,y, z-1\>$, and
$\<y, z-1, w-1\>$.  The first two correspond to witnesses for $\<e_x,
e_y\>$, while the third corresponds to all of the witnesses
for~$\<e_y\>$.
\end{example}

The following and Lemma~\ref{l:poset} are the central finiteness
results, reflected in all of the following development, particularly
Theorem~\ref{t:coprincipal}.

\begin{thm}\label{t:finiteAss}
Fix a congruence~$\til$ on a finitely generated commutative
monoid~$Q$.  For each of the finitely many primes~$P$ of~$Q$, the key
$\til$-witnesses for~$P$ generate only finitely many Green's classes
in the localization $Q_P$ along~$P$.  Consequently, each congruence
on~$Q$ has only finitely many associated prime congruences.
\end{thm}
\begin{proof}
Since the definition of key witness for~$P$ is already local, it
suffices to treat the case where $P$ is the maximal ideal of~$Q$.
Form a relation on~$Q$ by joining every key witness~$w$ to a key
aide~$a$.  This relation is a congruence by definition of key witness
and key aide.  The claim about Green's classes holds because $Q$ is
noetherian.  To prove the consequence for associated prime
congruences, use Lemma~\ref{l:samePprime}.
\end{proof}

\begin{example}
The congruence in Example~\ref{e:mesoprimary} is primary with
respect to the maximal ideal.  The (key) witnesses are $e_x, e_y$, and
also $2e_x, e_x+e_y$, and~$2e_y$, since their class gets joined to nil
under $\phi_x$ and $\phi_y$.  Although the witnesses look
combinatorially different, the only associated prime congruence is the
identity congruence on the monoid $\{0,\infty\}$.  This is forced, as
the identity is the only cancellative element in~$\oQ$.
\end{example}

If on $Q$ the identity congruence is primary, then the assignment of
witnesses to their $P$-prime congruences is order preserving.  It
would be interesting to understand which posets of witnesses and
associated prime congruences can occur (Problem~\ref{p:assmeso}).

\section{Characterization of mesoprimary congruences}\label{s:mesoprimary}

In parallel with the theory of ordinary primary ideals in commutative
rings, the mesoprimary condition admits a characterization in terms of
associated prime congruences.  Definition~\ref{d:prim*} was made with
this proposition in mind.

\begin{thm}\label{t:oneassprim}
A congruence is mesoprimary if and only if it has exactly one
associated prime congruence.
\end{thm}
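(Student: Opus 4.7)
Both directions proceed locally in $\oQ_P$, where the partly cancellative property translates into freeness of the action of the unit group $\ol G_P$ (the image of $G_P$ in $\oQ_P$) on the non-nil part $\oQ_P \setminus \{\nil\}$.  Corollary~\ref{c:primary} immediately reduces both implications to the primary case: associated prime congruences carry associated prime ideals, so uniqueness of the former forces uniqueness of the latter; and conversely in the mesoprimary case there is a single associated prime ideal $P$ by definition.  Thus in each direction a single monoid prime~$P$ is fixed and the analysis takes place in $\oQ_P$.

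For the forward direction, assume $\til$ is mesoprimary, so $\ol G_P$ acts freely on $\oQ_P \setminus \{\nil\}$.  The orbits form a nilmonoid $M$, partially ordered by divisibility via Lemma~\ref{l:poset}.  For any non-nil orbit $[O]$ and any nonidentity orbit $[O']$, the orbit $[O] + [O']$ lies strictly below $[O]$ in this partial order, or else equals the nil orbit.  Consequently, for any key witness $q$ and any unit $\ol a \in \ol G_P$, the element $\ol q + \ol a$ remains in the orbit of $\ol q$ and cannot lie in $\ol q + \ol P_P$.  Combined with the free action, the $P$-prime congruence at $q$ must identify two elements of $Q$ iff their images in $\oQ_P$ are either both non-units (all absorbed to $\nil$) or both units and equal---a description independent of the choice of~$q$, giving a unique associated prime congruence.

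For the backward direction, suppose $\til$ is primary with a unique associated prime congruence $\app$, and assume for contradiction that partly cancellative fails, so some non-nil $\ol q^* \in \oQ_P$ has nontrivial stabilizer $H_{q^*} \subseteq \ol G_P$.  A noetherian argument on the (poset of) nilmonoid of orbits produces a socle lift $\ol q^{**} = \ol q^* + \ol s$ that is still non-nil but satisfies $\ol q^{**} + \ol p = \nil$ for every generator $\ol p$ of $\ol P_P$; such a socle element is automatically a key witness (its class under $\ann(\ol P_P)$ contains $\nil$) and inherits $H_{q^{**}} \supseteq H_{q^*} \neq 0$.  The $P$-prime congruence at $\ol q^{**}$ thus identifies units differing by any element of $H_{q^{**}}$.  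To contradict uniqueness, I exhibit a second key witness at which that identification fails: if $\bigcap_i H_{p_i} \neq 0$, then $0 \in Q$ is itself a key witness whose trivial stabilizer yields a strictly different $P$-prime congruence; otherwise, pick a chosen $h \in H_{q^{**}} \setminus \{0\}$ and a generator $\ol p_i$ whose stabilizer $H_{p_i}$ does not contain~$h$, and use a key witness in or just above the orbit of $\ol p_i$ whose stabilizer still excludes~$h$.

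The main obstacle is the final step of the backward direction: producing the second key witness with a demonstrably smaller stabilizer in every subcase.  The subcase $\bigcap_i H_{p_i} = 0$ is the most delicate, since one must track how $\ol G_P$-stabilizers grow along chains in the nilmonoid of orbits (using Lemma~\ref{l:poset}) to rule out the possibility that every key witness happens to share the same nontrivial stabilizer.  The forward direction, by contrast, is essentially a direct structural computation, and the reduction to the primary case via Corollary~\ref{c:primary} is immediate in both directions.
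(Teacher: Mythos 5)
Your forward direction is sound and is essentially the paper's one-sentence observation: the partly cancellative hypothesis makes the $P$-prime congruence at any non-nil element coincide with the one at the identity (units are identified iff equal, by the free action; non-units are uniformly absorbed), so the associated prime congruence is witness-independent.

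Your backward direction has a genuine gap exactly where you flag it. In the subcase $\bigcap_i H_{p_i} = 0$, the instruction to ``use a key witness in or just above the orbit of $\ol{p}_i$ whose stabilizer still excludes $h$'' is an unproved existence claim, and moving upward from $p_i$ is precisely the wrong direction: stabilizers only grow along the partial order of Lemma~\ref{l:poset}, so a key witness above $p_i$ may well have $h$ in its stabilizer. Nothing you have deployed rules out the possibility that every key witness absorbs $h$. The fix---which also makes the case split superfluous---is to let $w$ be maximal modulo $F$ (the cancellative elements) with $h \notin H_w$, i.e.\ with $w + a \neq w + b$ for cancellative $a,b$ representing $h = b - a$. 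The missing lemma is that such a $w$ is automatically a key witness: for each generator $p$ of $P$, the image of $w + p$ strictly exceeds that of $w$ in $Q/F$, so by maximality either $w+p$ is nil or $h \in H_{w+p}$; in both cases $w+a+p = w+b+p$, so $w+a$ and $w+b$ lie in the same non-singleton class under $\ann(P)$, making $w$ a key witness. Since $h \notin H_w$ while $h \in H_{q^{**}}$, the $P$-prime congruences at these two key witnesses differ, contradicting uniqueness. The paper's proof is exactly this, phrased without stabilizers: it fixes $a \neq b$ cancellative, takes $w$ maximal with $w+a \neq w+b$, compares $w$ to a socle element $w'$ above it, and uses uniqueness of the associated prime congruence together with the factorization $\phi_{w'} = \phi_q\circ\phi_w$ to force $w = w'$ modulo $F$.
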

\begin{proof}
Fix a $P$-primary congruence~$\til$ on~$Q$.  If $\til$ is mesoprimary
and $w$ is not nil, then the $P$-prime congruence of~$\til$ at~$w$
coincides with the $P$-prime congruence of~$\til$ at the identity
because $\ol w$ is partly cancellative.  The uniqueness of the
associated prime congruence follows from the special case where $w$ is
a key witness.

On the other hand, assume $\til$ has a unique associated prime
congruence.  Then $\til$ is primary by Corollary~\ref{c:primary}.
Replacing $Q$ with $\ol Q$, assume $\til$ is the identity congruence
on~$Q$.  Suppose that $a$ and $b$ are distinct elements whose images
in $Q_P$ satisfy $a + u = b$ for some unit $u \in Q_P$.
Using the partial order from Lemma~\ref{l:poset}, let
$w \in Q$ be any element such that $w + a \neq w + b$ and the image
of~$w$ modulo the cancellative elements $F \subseteq Q$ is maximal
with this property.  Let $w'$ be any maximal non-nil element whose
image in the poset $Q/F$ is comparable to~$w$ but not below.  The
choices of~$w$ and $w'$ make them both key witnesses: $w'$ has
$\infty$ as an aide, and $w$ is verified directly to be a key witness
since $p + w = p + (w + a - b)$ in~$Q_P$ for all $p \in P$.  Replacing
$w'$ with $w' + c$ for some cancellative element~$c$ if necessary,
assume that $w' = w + q$ for some $q \in Q$.  Uniqueness of the
associated prime congruence, combined with the relation $\phi_{w'} =
\phi_q \circ \phi_w$ among addition morphisms, implies that $w' + a
\neq w' + b$.  By maximality of $w'$ in $Q/F$, the relation $v+a =
v+b$ can only hold for $v$ such that $v+a = \nil$.  Thus $\til$ is
partly cancellative.
\end{proof}

\begin{remark}
A primary congruence has only one associated monoid prime ideal by
Corollary~\ref{c:primary}.  Theorem~\ref{t:oneassprim} makes precise
the notion that further decomposition along the associated prime
congruences is natural, as is visible already in Example~\ref{e:msri}.
\end{remark}

Quotients by mesoprimary congruences can be described fairly
explicitly in terms related to the action in Lemma~\ref{l:action}.
Making this description into a precise alternative characterization of
mesoprimary congruences requires some specialized notions involving
monoid actions.

\begin{defn}\label{d:semifree}
The action of a monoid $F$ on an $F$-module~$T$ is \emph{semifree} if
\begin{itemize}
\item%
$\hspace{.25ex}t \mapsto f + t$ is an injection $\hspace{.2ex}T
\into T$ for all $f \in F$, and
\item%
$f \mapsto f + t$ is an injection $F \into T$ for all
$\hspace{.25ex}t \in T$.
\end{itemize}
\end{defn}

\begin{remark}
The letter ``$F$'' stands for ``face'': in practice, the monoid $F$ is
often a face of an affine semigroup, and thinking of it that way is
good for intuition.
\end{remark}

\begin{lemma}\label{l:semifree}
An action of a cancellative monoid $F$ on an $F$-module~$T$ is
semifree if and only if the localization map $T \into T_\nothing$ is
injective and the universal group~$F_\nothing$ acts freely
on~$T_\nothing$.
\end{lemma}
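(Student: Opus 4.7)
The plan is to prove both implications directly from the definition of localization (Definition~\ref{d:localize}) applied at the empty prime of $F$. Since $F$ is cancellative and, unless trivial, has no nil, the map $F \into F_\nothing$ is injective, and $T_\nothing$ consists of formal differences $t - f$ with $t \in T$ and $f \in F$, modulo the relation that $t - f \sim t' - f'$ iff $w \cdot f' \cdot t = w \cdot f \cdot t'$ for some $w \in F$. Both directions will reduce to mechanical use of this equivalence once the bullets of the semifree definition are translated into statements about orbits.

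For the forward direction, assume the action is semifree. The injectivity of $T \into T_\nothing$ follows by identifying $t$ with $t - 0$: two elements $t, t'$ collapse in $T_\nothing$ precisely when $w \cdot t = w \cdot t'$ for some $w \in F$, and the first bullet of the semifree property rules this out unless $t = t'$. To see that $F_\nothing$ acts freely on $T_\nothing$, suppose $g \cdot \tau = \tau$ for $g = f_1 - f_2 \in F_\nothing$ and $\tau = t - f_3 \in T_\nothing$; unpacking the localization relation yields an element $w \in F$ with $(w + f_3 + f_1) \cdot t = (w + f_3 + f_2) \cdot t$, and the second bullet, combined with cancellativity of $F$, forces $f_1 = f_2$, hence $g = 0$.

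Conversely, assume injectivity of $T \into T_\nothing$ and freeness of the $F_\nothing$-action. If $f \cdot t = f \cdot t'$ in $T$, then the same holds in $T_\nothing$, where $f$ is now invertible; cancelling $f$ and pulling the equality back via $T \into T_\nothing$ gives $t = t'$, proving the first bullet. For the second, the composite $F \into F_\nothing \to T_\nothing$ given by $f \mapsto f \cdot t$ is injective (the first map by cancellativity of $F$; the second because a free group action has trivial stabilizers, so each orbit map is injective), and the injectivity of $T \into T_\nothing$ then forces $f \mapsto f \cdot t \colon F \to T$ to be injective as well. The only real subtlety here is pinning down the precise content of ``free action'' for a group acting on a set, and then correctly reading off stabilizers from the equivalence relation in Definition~\ref{d:localize}; once these conventions are fixed, no further bookkeeping is required.
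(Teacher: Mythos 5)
Your proof is correct and follows essentially the same route as the paper's: unwind the localization equivalence from Definition~\ref{d:localize} at the empty prime of $F$ and translate the two semifree bullets into statements about the equivalence relation on formal differences. The one stylistic difference in the harder (``only if'') direction is that the paper first uses the first semifree bullet to simplify the equivalence relation to $t - f = t' - f' \iff f'\cdot t = f\cdot t'$ (absorbing the auxiliary element), whereas you keep the auxiliary $w$ throughout and kill it at the end via the second bullet plus cancellativity of $F$; both work. You also supply a full proof of the ``if'' direction, which the paper omits as elementary; that part is fine too, although in the second bullet the invocation of injectivity of $T \into T_\nothing$ is superfluous — injectivity of the composite $F \to T \to T_\nothing$ already forces injectivity of the first factor $F \to T$, no matter what the second map does.
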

\begin{proof}
The cancellative condition means that the natural map $F \into
F_\nothing$ is injective.  Using this fact, the ``if'' direction is
elementary, and omitted.  In the other direction, the semifree case,
the first injectivity condition guarantees that $t - f = t' - f' \iff
f' + t = f + t'$.  In particular, $t - 0 = t' - 0 \iff t = t'$, so the
natural map $T \into T_\nothing$ is injective.  The second injectivity
condition guarantees that the action of~$F_\nothing$ is free: $(f -
f') + (t - w) = t - w \iff (f + t) - (f' + w) = t - w \iff (w + f) + t
= (f' + w) + t$, and by the second injectivity condition this occurs
if and only if $f + w = f' + w$, which is equivalent to $f = f'$
because $F$ is cancellative.
\end{proof}

In contrast to group actions, monoid actions need not define
equivalence relations, because the relation $t \sim f + t$ can fail to
be symmetric.  The relation is already reflexive and transitive,
however, precisely by the two axioms for monoid actions.

\begin{defn}\label{d:orbit}
An \emph{orbit} of a monoid action of~$F$ on~$T$ is an equivalence
class under the symmetrization of the relation $\{(s,t) \mid f + s =
t$ for some $f \in F\} \subseteq T \times T$.
\end{defn}

Combinatorially, from an $F$-module~$T$, one can construct a directed
graph with vertex set~$T$ and an edge from $s$ to~$t$ if $t = f + s$
for some $f \in F$.  Then an orbit is a connected component of the
underlying undirected graph.

\begin{cor}\label{c:one}
A congruence~$\til$ on a finitely generated commutative monoid~$Q$ is
mesoprimary if and only if the set $F$ of non-nilpotent elements
in~$\oQ = Q/\til$ is a cancellative monoid that acts semifreely on
$\oQ \minus \{\nil\}$ with finitely many orbits.
\end{cor}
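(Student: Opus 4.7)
The plan is a direct unpacking of Definition~\ref{d:prim*}.2 and of the semifree action definition, in both directions.

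For the forward direction, assume the congruence $\til$ is mesoprimary. Since it is in particular primary, every element of $\oQ$ is either nilpotent or cancellative, so $F$ is exactly the set of cancellative elements of $\oQ$; closure under addition and containment of $\overline{0}$ make $F$ a cancellative submonoid. I first check that addition by any $f \in F$ restricts to a map $\oQ \minus \{\nil\} \to \oQ \minus \{\nil\}$: otherwise $f + t = \nil = f + \nil$ would force $t = \nil$ by cancellativity of~$f$. The two semifree injectivity conditions then translate directly into the two defining conditions of a mesoprimary congruence: $t \mapsto f + t$ is injective by cancellativity of $f$, and $f \mapsto f + t$ is injective on $F$ for each $t \neq \nil$ precisely by the partly cancellative property applied to elements of $F$. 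For finiteness of orbits, I would identify the orbit equivalence with the congruence $p \approx q \iff p + f = q + g$ for some $f, g \in F$ featured in Lemma~\ref{l:poset}: a single zig-zag $p \to p + f = q + g \leftarrow q$ places $p$ and $q$ in the same orbit, and any longer symmetrization chain reduces to two steps by commutativity. Lemma~\ref{l:poset} then supplies finiteness, since $\oQ$ is finitely generated and primary.

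For the reverse direction, assume the three conditions on $F$. Elements of $\oQ \minus F$ are nilpotent by hypothesis, and each $f \in F$ is cancellative in $\oQ$: injectivity of $t \mapsto f + t$ on $\oQ \minus \{\nil\}$ is the first semifree condition, while $f + t = f + \nil = \nil$ is ruled out for $t \neq \nil$ by the fact that the action maps $\oQ \minus \{\nil\}$ into itself. Hence $\til$ is primary. For the partly cancellative property, if $q + a = q + b \neq \nil$ with $a, b$ cancellative (i.e., $a, b \in F$), then $q \in \oQ \minus \{\nil\}$, and the second semifree condition applied at $q$ yields $a = b$.

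The main obstacle is the bookkeeping around the nil element: the semifree hypothesis is phrased on $\oQ \minus \{\nil\}$, whereas cancellativity in $\oQ$ must hold globally. The translation between these viewpoints hinges on the elementary but crucial observation that cancellativity of $f \in F$ precludes $f + t$ from collapsing to $\nil$ for $t \neq \nil$. The only other step deserving care is the identification of $F$-orbits on $\oQ$ with the equivalence classes of the congruence in Lemma~\ref{l:poset}, which requires the short zig-zag reduction mentioned above.
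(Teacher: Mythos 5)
Your proof is correct, but it takes a genuinely different route from the paper's. The paper derives Corollary~\ref{c:one} as a consequence of Theorem~\ref{t:oneassprim}, which characterizes mesoprimary congruences as those with a unique associated prime congruence; the proof then argues that semifreeness of the $F$-action is equivalent to the $P$-prime congruence agreeing at every non-nil element of~$\oQ$, and uses a maximality-in-$Q/F$ argument to propagate that agreement from the identity up through all key witnesses. Your argument bypasses the associated prime congruence machinery entirely and unpacks Definition~\ref{d:prim*}.2 directly: you observe that the second semifree injectivity condition $f \mapsto f + t$ is precisely the partly cancellative condition at~$t$ (once one notes that adding a cancellative $f$ to non-nil $t$ cannot produce $\nil$), and the first is just cancellativity of $f$ on $\oQ\minus\{\nil\}$, with the nil case handled by absorbingness. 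The identification of $F$-orbits with the classes of the congruence in Lemma~\ref{l:poset}, including the zig-zag reduction, is correct and supplies finiteness. What the paper's route buys is that Theorem~\ref{t:oneassprim} is itself a central structural result the authors want on the table, so Corollary~\ref{c:one} then drops out as a reformulation; what your route buys is a shorter, self-contained verification that does not require first establishing uniqueness of the associated prime congruence, and makes the dictionary between ``partly cancellative'' and ``semifree'' transparent.
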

\begin{proof}
Whether we assume the mesoprimary condition on~$\til$ or the condition
on the non-nilpotent elements in~$\oQ$, we can in each case deduce
that $\til$ is $P$-primary for some prime $P \subset Q$.  The image
of~$Q\minus P$ in~$\oQ$ is the cancellative submonoid~$F$ by
definition, which has finitely many orbits by Lemma~\ref{l:poset}.
The only feature of the corollary's statement that distinguishes
mesoprimary congruences from general primary ones is semifreeness,
which we claim is equivalent to uniqueness of the associated prime
congruence in Theorem~\ref{t:oneassprim}.  Indeed, $F$ acts semifreely
if and only if the $P$-prime congruences at all non-nil elements
of~$\oQ$ coincide.  Those coincidences certainly imply that the
$P$-prime congruences at all witnesses coincide, in which case $\til$
is mesoprimary.  On the other hand, if $\til$ is mesoprimary, then the
$P$-prime congruences at all key witnesses coincide.  They all
coincide with the $P$-prime congruence at the identity, or else there
would be two key witnesses, one sharing its $P$-prime congruence with
the identity and the other not.  Since the image in $Q/F$ of every
non-nil element of~$Q$ lies between the identity and a key witness,
the $P$-prime congruence of every non-nil element is forced to agree
with the one shared by the identity and the key witnesses.
\end{proof}

\begin{remark}\label{r:semifree}
As the proof of Corollary~\ref{c:one} shows, one interpretation of the
structure theorem in the statement is that a $P$-primary congruence
has the same $P$-prime congruence at every non-nil element as soon as
it has the same $P$-prime congruence at every key witness, and that is
what it means to be mesoprimary.
\end{remark}

\begin{prop}\label{p:mesorefine}
Given a finite set of congruences on~$Q$, all of which are
meso\-primary with the same associated prime congruence, their common
refinement is also mesoprimary with the same associated prime
congruence.
\end{prop}
\begin{proof}
Let $\til$ be the common refinement of finitely many $P$-mesoprimary
congruences.  Then $\til$ is $P$-primary by
Proposition~\ref{p:pprefine}.  Applying Theorem~\ref{t:oneassprim}, it
suffices to show that the $P$-prime congruence of~$\til$ at any
element $q \in Q$ that lies outside the nil class of~$\til$ is the
same as the $P$-prime congruence of~$\til$ at the identity.

Lemma~\ref{l:commutes} implies that we may assume $P$ is the maximal
ideal of~$Q$ with unit group $G = Q \minus P$.  Under each of the
given mesoprimary congruences, Corollary~\ref{c:one} (in the guise of
Remark~\ref{r:semifree}) implies that the class of~$q$ is either nil
or its intersection with the orbit $G + q$ equals $K + q$, where $K
\subseteq G$ is the subgroup that stabilizes (fixes as a set, but not
necessarily pointwise) the class of the identity under each of the
mesoprimary congruences.  Since the nil class contains $K + q$ once it
contains~$q$, the class of~$q$ under~$\til$ is either nil or its
intersection with the orbit $G + q$ equals $K + q$.  Having excluded
nil by our choice of~$q$, the intersection must be $K + q$.  Thus the
$P$-prime congruence at~$q$ under~$\til$ coincides with the $P$-prime
congruence at~$q$ under (every) one of the mesoprimary
congruences~modulo which $q$~is~not nil.

In particular, letting $q$ be the identity shows that $K$ is the
intersection of the identity class of~$\til$ with~$G$.  Consequently,
the $P$-prime congruence of~$\til$ at~$q$ coincides with the $P$-prime
congruence of $\til$ at the identity, as desired.
\end{proof}

\section{Coprincipal congruences}\label{s:coprincipal}

In commutative rings, irreducible decomposition underlies primary
decomposition.  Analogously, coprincipal decomposition underlies
mesoprimary decomposition of commutative monoid congruences (but see
the remarks and examples after~Theorem~\ref{t:coprincipal}).

\begin{defn}\label{d:peaks}
A \emph{peak} of a monoid $Q$ is a non-nil element $q \in Q$ such that
$q + a = \nil$ for all nonunit $a \in Q$.  The \emph{cogenerators} of
a $P$-primary congruence on~$Q$ are the elements of~$Q$ whose images
in $\oQ_P$ are peaks.
\end{defn}

\begin{defn}\label{d:coprincipal}
A congruence~$\til$ on~$Q$ is \emph{coprincipal} if it is
$P$-mesoprimary for some monoid prime $P$ and additionally the
quotient of $\oQ_P$ modulo its Green's relation has precisely one
peak.
\end{defn}

\begin{example}\label{e:peaks}
The congruence in Example~\ref{e:mesoprimary} is coprincipal.  It is
$P$-mesoprimary for $P = Q\minus \{0\}$ and its unique peak is the
class of~$2$.  
\end{example}

\begin{defn}\label{d:orderideal}
Fix a congruence on~$Q$ with quotient $\oQ$.  The \emph{order ideal
$\Qlqp$ cogenerated by $q \in Q$ at a prime ideal $P \subset Q$}
consists of those $a \in Q$ whose image precedes that of~$q$ in the
partially ordered quotient of~$\oQ_P$ modulo its Green's
relation~(Lemma~\ref{l:modGreen}).
\end{defn}

\begin{example}\label{e:GreenOrderIdeal}
Let $\til$ be the congruence on $\NN$ induced by the binomial ideal
$\<x^3-x^6\> \subset \kk[x]$.  Set $P = \<e\>$, where $e = e_x$ is the
generator of~$\NN$.
\begin{enumerate}
\item%
The order ideal $\NN_{\preceq e}^P$ consists of~$e$ itself and $0 \in
\NN$.
\item%
Including $2e$ yields the order ideal $\NN_{\preceq 2e}^P =
\{0,e,2e\}$.
\item%
The order ideals $\NN_{\preceq q}^P$ for $q = m e$ with $m \geq 3$ all
coincide with~$\NN$ itself.  Thus, in general, order ideals $\Qlqp
\subseteq Q$ need not be finite, although their images in~$\oQ_P$
modulo Green's relation always are.
\item%
The order ideals $\NN_{\preceq q}^\nothing$ for $q \in \NN$ all
coincide with~$\NN$ itself.
\end{enumerate}
\end{example}

\begin{example}\label{e:anotherOrderIdeal}
Let $\til$ be the identity congruence on $Q = \NN^3$, and set $P =
\<e,f\>$, where $e,f$ are two of the three generators of~$\NN^3$, the
third being~$g$.  The order ideal $Q_{\preceq e + f + 2g}^P$ consists
of the lattice points on the nonnegative $g$-axis together with their
translates by $e$, $f$, and $e + f$.  The answer would have been the
same had $e + f + 2g$ been replaced by $e + f$, or $e + f + g$, or $e
+ f + mg$ for any $m \in \NN$.
\end{example}

\begin{defn}\label{d:cogenerated}
Fix a congruence~$\til$.  The congruence \emph{cogenerated by~$q$
along~$P$} is the coarsening~$\til_q^P$ of~$\til$ obtained by first
joining any pair of elements in $Q \minus \Qlqp$ and also joining any
pair $(a,b) \in Q$ such~that
\begin{enumerate}[\quad(i)]
\item%
the images $\ol a$ and $\ol b$ in~$\oQ_P$ differ by a unit in~$\oQ_P$
and
\item%
$\ol c + \ol a = \ol c + \ol b = \ol q \in \oQ_P$ for some $c \in
Q_P$.
\end{enumerate}
\end{defn}

\begin{example}\label{e:need-not-be-mesoprimary}
The congruence $\til_q^P$ in Definition~\ref{d:cogenerated} need not
be primary, and hence it need not be coprincipal.  Essentially, the
prime $P$ has to be small enough to foster the mesoprimary condition.
In Example~\ref{e:exclusively-maximal}.2, the congruence cogenerated
by $q = e_x$ along $P' = \{e_x,\nil\}$ is not primary.  However, along
$P = \{\nil\}$ localization inverts more, causing $e_x$ to be joined
with~$0$, resulting in a primary---and hence
\mbox{coprincipal---congruence}.
\end{example}

\begin{prop}\label{p:coprincipal}
Fix a congruence~$\til$ and a witness $w$ for a prime~$P$.  All
elements of~$P$ are nilpotent modulo the congruence~$\til_w^P$, whose
nil class is $Q \minus \Qlwp$.
\end{prop}
\begin{proof}
Given an aide $w'$ for $w$ and a generator $p$ of~$P$, one of two
things must happen, and in both cases $p + w$ is nil
modulo~$\til_w^P$.  Write $[q]$ for the Green's class of $\ol q \in
\oQ_P$.
\begin{enumerate}
\item%
$[w] \neq [w']$.  In this case, either $[w] < [w']$ or $[w]$ and
$[w']$ are incomparable, but these both imply that $w'$ maps to nil
modulo the coprincipal congruence~$\til_w^P$, so $p + w = p + w'$ is
nil modulo~$\til_w^P$.
\item%
$[w] = [w']$; that is, their images lie in the same Green's class.  In
this case, $[p + w] > [w]$ by Lemma~\ref{l:bijective}, since addition
by $p$ joins $[w]$ to~$[w']$.
\end{enumerate}
Since $p$ is an arbitrary generator of~$P$, it follows that $P + w$ is
nil modulo~$\til_w^P$.  This implies that every element of~$P$ is
nilpotent modulo~$\til_w^P$, as follows.  There are only finitely many
Green's classes beneath~$[w]$, so the Green's classes of multiples of
any given nonunit element $a \in Q_P/\til_w^P$ are not all distinct:
there must be repeats.  Suppose $[\alpha \cdot a] = [\beta \cdot a]$
for some positive integers $\alpha < \beta$.  Every non-nil element of
$Q_P/\til_w^P$ precedes $\ol w$ in Green's preorder.  Therefore, if
neither $\alpha \cdot a$ nor $\beta \cdot a$ is nil, then there is
some $c \in Q$ such that $[\alpha \cdot a] + c = [w]$, whence
$$%
  [w]
  =
  [\beta \cdot a] + c
  =
  (\beta-\alpha) \cdot a + [\alpha \cdot a] + c
  =
  (\beta-\alpha) \cdot a + [w] \subseteq P + [w]
$$
is nil modulo~$\til_w^P$, contradicting the choice of~$w$.

The statement about the nil class holds because $Q \minus \Qlwp$ is an
ideal of~$Q$ (so its image is nil) that does not contain $w$ itself
(so the image of~$w$ is not made nil by the first relations in
Definition~\ref{d:cogenerated}) or any element in~$\Qlwp$ (so none of
the relations defined by (i) and~(ii) in
Definition~\ref{d:cogenerated} make $w$ or any other element
of~$\Qlwp$ nil).
\end{proof}

\begin{remark}\label{r:exclusively-maximal}
Proposition~\ref{p:coprincipal} can fail if $w$ is merely an
aide---even a key aide.  The not-exclusively-maximal property of a
witness guarantees existence of an aide that can be set congruent to
nil modulo the coprincipal congruence without forcing $w$ to be nil as
well.  In Example~\ref{e:exclusively-maximal}.2, for instance, there
is no way to define a coprincipal congruence cogenerated by~$e_x$ in
such a way that $e_x$ is nilpotent without it being~nil.
\end{remark}

\begin{thm}\label{t:P-coprincipal}
Given a congruence~$\til$, the congruence $\til_w^P$ cogenerated by
any witness~$w$ for~$P$ is coprincipal, with associated prime
ideal~$P$.
\end{thm}
\begin{proof}
Every nonunit in the localization $Q_P/\til_w^P$ of the quotient
monoid $Q/\til_w^P$ along~$P$ is nilpotent by
Proposition~\ref{p:coprincipal}.  The statement about the nil class in
that same proposition implies that the Green's class of~$w$ is the
unique peak.  The localization morphism $Q/\til_w^P \to Q_P/\til_w^P$
is injective by condition~(i) in Definition~\ref{d:cogenerated}.
Condition~(ii) there forces the $P$-prime congruence at the identity
to equal the $P$-prime congruence at~$w$, which consequently forces
the $P$-prime congruences at all non-nil elements to coincide, since
they lie between the $P$-prime congruences at the identity and at~$w$.
Therefore the action of the unit group of $Q_P/\til_w^P$ on its
non-nilpotent elements is free.  The proof is complete by
Corollary~\ref{c:one}, using the characterization of semifreeness in
Lemma~\ref{l:semifree}.
\end{proof}

\begin{defn}\label{d:component}
If $w$ is a witness for an associated $P$-prime congruence of~$\til$,
then the congruence~$\til_w^P$
is the \emph{coprincipal component} of~$\til$ cogenerated by~$w$
along~$P$.  If the prime ideal $P$ is clear from context, e.g.~if $w$
is already specified to be a witness for~$P$, then we simply speak of
the coprincipal component cogenerated~by~$w$.
\end{defn}

\begin{example}\label{e:coprincipalNotPrimary}
Consider the congruence on $\NN^2$ induced by $I=\<x^3-x^2,
y^3-y^2\>$.  The quotient $Q = \NN^2/\til_I$ has nine elements, with
the class of $2e_x + 2e_y$ being nil.  The quotient also has two
idempotents, namely the classes of $2e_x$ and $2e_y$.  Neither of the
congruences cogenerated by $q = e_x + 2e_y$ and $q = e_y + 2e_x$ along
$P = \<e_x,e_y\>$ is primary; however, these elements are not
$P$-witnesses.  In fact, there are no $P$-witnesses: the maximal ideal
is not associated.  In contrast, the coprincipal components for the
witnesses $(2e_2, \<e_1\>)$ and $(2e_1, \<e_2\>)$ are mesoprimary, as
per Theorem~\ref{t:P-coprincipal}.
\end{example}

\begin{example}\label{e:GreenOrderIdeal2}
In the setting of Example~\ref{e:GreenOrderIdeal}, the coprincipal
component of~$\til$ cogenerated by any~$q\in\NN$ along~$\nothing$ is
induced by the binomial ideal $\<1-x^3\>$.  The component cogenerated
by the key witness~$2e$ along $\<e\>$ is induced by the binomial
ideal~$\<x^3\>$.
\end{example}

\begin{prop}\label{p:orderideal}
Given any witness~$w$ for an associated $P$-prime congruence
of~$\til$, the coprincipal component of~$\til$ cogenerated by~$w$
along~$P$ is refined by~$\til$.
\end{prop}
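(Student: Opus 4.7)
The plan is to verify directly from the definitions that every pair of elements equivalent under $\til$ is also equivalent under the coprincipal congruence $\til_w^P$. Suppose $a \sim b$ under $\til$. My first step would be to observe that $\ol a = \ol b$ in the localized quotient $\oQ_P$: the congruence $\til$ on $Q$ induces one on $Q_P$ (Definition~\ref{d:localize}), and the images of $a$ and $b$ in $Q_P$ remain related under this induced congruence, whose quotient is $\oQ_P$. In particular, $\ol a$ and $\ol b$ determine the same Green's class in $\oQ_P$, so by Definition~\ref{d:orderideal} either both $a$ and $b$ lie in $\Qlwp$ or neither does.

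Next I would split into two cases according to Lemma~\ref{l:coprincipal}'s description of $\til_w^P$. If both $a$ and $b$ lie outside $\Qlwp$, then they belong to the nil class of $\til_w^P$ and hence are $\til_w^P$-equivalent. If both lie inside $\Qlwp$, the definition of the order ideal guarantees that the Green's class of $\ol w$ follows that of $\ol a$ in the partially ordered Green's quotient of $\oQ_P$; lifting a witnessing multiplier to $Q_P$ produces an element $u \in Q_P$ with $\ol u + \ol a = \ol w$ in $\oQ_P$. Using $\ol a = \ol b$, the same $u$ satisfies $\ol u + \ol b = \ol w$, so the second clause in the definition of $\til_w^P$ from Lemma~\ref{l:coprincipal} gives $a \sim_w^P b$.

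The main obstacle, such as it is, is essentially notational: one must upgrade the Green's-preorder relation $\ol a \preceq \ol w$ in $\oQ_P$ to the existence of an honest element $u \in Q_P$ with $\ol u + \ol a = \ol w$, rather than only to the existence of a class in $\oQ_P$. This is immediate, since every class in $\oQ_P$ is the image of some element of $Q_P$. I note in passing that no special feature of the witness $w$ enters the argument; the same proof shows that $\til_q^P$ is refined by $\til$ for \emph{any} $q \in Q$, not only witnesses, which is why Definition~\ref{d:cogenerated} can cleanly separate the general construction from the designation of a distinguished ``component.''
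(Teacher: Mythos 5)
Your proof is correct and takes essentially the same route as the paper, which compresses the whole argument into the single sentence that ``starting from $\til$ the coprincipal component is formed by identifying additional pairs of elements''; you are simply unpacking that observation, verifying directly from Lemma~\ref{l:coprincipal} that a $\til$-equivalence $a \sim b$ forces $\ol a = \ol b$ in $\oQ_P$ and hence the two elements fall together in $\til_w^P$ in both the nil-class and the $\Qlwp$ cases. Your closing remark that nothing special about witnesses is used is accurate and is reflected in how the paper separates Lemma~\ref{l:coprincipal}/Definition~\ref{d:cogenerated} (which treat arbitrary $q$) from this proposition.
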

\begin{proof}
Starting from $\til$ the coprincipal component is formed by
identifying additional pairs of elements.
\end{proof}

\begin{prop}\label{p:mesoCopr}
Any mesoprimary congruence~$\til$ equals the common refinement of the
coprincipal components of~$\til$ cogenerated by the cogenerators
of~$\til$.
\end{prop}
\begin{proof}
Fix a $P$-mesoprimary congruence~$\til$.  By
Proposition~\ref{p:orderideal} each coprincipal component at a
cogenerator coarsens~$\til$.  On the other hand, suppose that $q
\not\sim q'$.  Let $\ol q$ and $\ol q'$ denote their images in the
localized quotient $\ol Q_P$.  By mesoprimaryness, $\ol q \neq \ol
q'$.  Modulo Green's relation on $\ol Q_P$, every element precedes a
peak.  If exactly one of $\ol q$ and~$\ol q'$ precedes some peak~$\ol
w$, then modulo~$\til_w^P$ exactly one of $q$ and~$q'$ maps to nil, so
they are incongruent.  If no such peak exists, then $\ol q$ and~$\ol
q'$ both precede some peak~$\ol w$.  For $\ol q$ and~$\ol q'$ to be
joined by~$\til_w^P$ they must differ by a unit and satisfy $\ol q +
\ol c = \ol q' + \ol c = \ol w$ for some $c \in Q_P$, all by
Definition~\ref{d:cogenerated}.  However, since $\til$ is mesoprimary,
$\ol q + \ol c = \ol q' + \ol c$ implies that both sides are nil.
Consequently, $q \not \sim_w^P q'$.
\end{proof}

\section{Mesoprimary decompositions of congruences}\label{s:cong}

\begin{defn}\label{d:mesodecomp}
Fix a congruence $\til$ on a finitely generated commutative
monoid~$Q$.
\begin{enumerate}
\item%
An expression of~$\til$ as the common refinement of finitely many
mesoprimary congruences is a \emph{mesoprimary decomposition} if, for
each mesoprimary congruence~$\app$ that appears in the decomposition
with associated prime ideal $P \subset Q$, the $P$-prime congruences
of~$\til$ and~$\app$ at every cogenerator of~$\app$ coincide.
\item%
Each mesoprimary congruence that appears is a \emph{mesoprimary
component}~of~$\til$.
\item%
If every cogenerator of every $P$-mesoprimary component~$\app$ is a
key $\til$-witness for~$P$, then the decomposition is a \emph{key}
mesoprimary decomposition.
\end{enumerate}
\end{defn}

\begin{example}\label{e:irredNotComb}
According to Definition~\ref{d:mesodecomp} the decomposition in
Example~\ref{e:irreducible} is not a mesoprimary decomposition because
the intersectands are not components of the identity congruence: the
combinatorics at the witnesses for the mesoprimary congruences in the
decomposition do not agree with the combinatorics of the identity
congruence.  More precisely, the $\nothing$-prime congruence at each
element of~$\NN^2$ is the identity congruence, not the congruence
induced by $\<x-1\>$ or $\<y-1\>$.
\end{example}

\begin{thm}\label{t:mesodecomp}
Every congruence on a finitely generated commutative monoid admits a
key mesoprimary decomposition.
\end{thm}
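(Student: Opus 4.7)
The plan is to decompose $\til$ as the common refinement of the coprincipal components cogenerated by its key witnesses. By Theorem~\ref{t:finiteAss}, $\til$ has only finitely many Green's classes of key witnesses; picking one representative $(w,P)$ from each Green's class yields a finite family, and for each such pair I would form the coprincipal component $\til_w^P$ of Definition~\ref{d:cogenerated}. Each $\til_w^P$ is mesoprimary by Lemma~\ref{l:coprincipal}, and Proposition~\ref{p:orderideal} guarantees that $\til$ refines $\til_w^P$. Hence $\til$ refines the common refinement $\til^\star := \bigcap_{(w,P)} \til_w^P$, and the theorem reduces to establishing the reverse refinement together with the two conditions of Definition~\ref{d:mesodecomp}.

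The main step is to show that $\til^\star$ refines $\til$, i.e., any $a,b \in Q$ with $a \not\sim b$ are separated by some component $\til_w^P$. To do this I would mimic the witness-production argument from the proof of Theorem~\ref{t:primary}. Invoke that theorem to pick a prime~$P \subset Q$ associated to~$\til$ admitting a $P$-primary component~$\til_P$ in some primary decomposition for which $a \not\sim_P b$; set $T_P = \{t \in Q \mid t + a \sim_P t + b\}$, and use Lemma~\ref{l:poset} applied to the quotient of~$\oQ_P$ modulo its cancellative submonoid to choose an element $\hat t$ whose image is maximal among those not in the image of~$T_P$. As in the proof of Theorem~\ref{t:primary}, $w := t + a$ (for any lift~$t$ of~$\hat t$) is then a key $\til$-witness for~$P$. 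The maximality of~$\hat t$ forces every proper successor of~$\ol w$ modulo Green's to identify $a$ and~$b$ in $\oQ_P$, which I would translate, via the explicit description in Lemma~\ref{l:coprincipal}, into the statement that no $u \in Q_P$ satisfies $\ol u + \ol a = \ol u + \ol b = \ol w$; consequently $a \not\sim_w^P b$, as desired.

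For the Definition~\ref{d:mesodecomp} conditions, $\til_w^P$ is coprincipal, so by Definition~\ref{d:coprincipal} and Example~\ref{e:socle} its witnesses form exactly the Green's class of~$w$ in~$\oQ_P$, and Lemma~\ref{l:samePprime} identifies this class as key $\til$-witnesses for~$P$, verifying the key condition. The matching of $P$-prime congruences of $\til$ and $\til_w^P$ at every $\til_w^P$-witness holds at~$w$ by construction of $\til_w^P$ as the coprincipal component of $\til$, and it propagates to the entire Green's class of~$w$ by Lemma~\ref{l:samePprime}. The chief obstacle lies in the main step: the definition of \emph{key} witness refers to the joint kernel $\ann(\ol P_P)$ of all cover morphisms simultaneously, not to each cover separately, so one must select~$P$ and carry out the maximality argument inside $\oQ_P$ with care---cf.~Example~\ref{e:assprim}(\ref{e:witt2nonKey})---to ensure that the chosen element is genuinely a key witness and that it certifies inequivalence of $a$ and $b$ under $\til_w^P$ rather than merely under some weaker congruence.
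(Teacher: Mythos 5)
Your overall strategy---decompose $\til$ as the common refinement of coprincipal components cogenerated by its key witnesses, invoking finiteness from Theorem~\ref{t:finiteAss}---is exactly the route the paper takes: the statement is a forward reference to Theorem~\ref{t:coprincipal}. However, there is a genuine gap in your main step, and it lives precisely where you flag a ``chief obstacle,'' though the real problem is not the one you identify.

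You set $T_P = \{t \in Q \mid t + a \sim_P t + b\}$, using the \emph{primary component} $\til_P$ to define the joining ideal, and then take $\hat t$ maximal (modulo Green's in $Q_P$) among elements not in $T_P$. Maximality then gives, for every $p \in P_P$, that $p + t + a \sim_P p + t + b$. But the definition of a key $\til$-witness requires the class of $\ol w$ in $\oQ_P = (Q/\til)_P$---not $(Q/\til_P)_P$---to be non-singleton under $\ann(\ol P_P)$; that is, you must produce \emph{$\sim$}-identification at all covers, not $\sim_P$-identification. Since $\til$ strictly refines $\til_P$ in general, ``$\sim_P$-joined at covers'' is weaker than ``$\sim$-joined at covers,'' and the argument stalls. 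The place you borrowed this from, the proof of Theorem~\ref{t:primary}, works only because there $a$ and $b$ additionally satisfy $a \approx b$ under the complementary refinement $\app$, so that $\sim_P$-identification combines with $\approx$-identification to yield $\sim$-identification. Your $a,b$ are arbitrary elements with $a \not\sim b$, and no such $\approx$ is available.

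The fix is to define $T$ directly from $\til$: set $T = \{t \in Q \mid t + a \sim t + b\}$, choose $P$ \emph{minimal} among monoid primes containing $T$, and find the maximal Green's class of $\hat t \in Q_P$ with $\hat t + \hat a \not\sim \hat t + \hat b$. Then $w = t + a$ is a key $\til$-witness by definition (every cover lands in $T_P$, giving $\sim$-joining), and $\til_w^P$ keeps $a$ and $b$ separate. This is Proposition~\ref{p:liberator} in disguise and is exactly how the paper proves Theorem~\ref{t:coprincipal}. Your verification of the two conditions in Definition~\ref{d:mesodecomp} (via Example~\ref{e:socle} and Lemma~\ref{l:samePprime}) is correct and, once the main step is repaired along these lines, the argument goes through.
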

\begin{proof}
Two examples are the decompositions in Theorem~\ref{t:coprincipal} and
Corollary~\ref{c:mesodecomp}, by Remark~\ref{r:allwitnesses} and
finiteness of the set of Green's classes of witnesses in
Theorem~\ref{t:finiteAss}.
\end{proof}

In the remainder of this section, Convention~\ref{c:witnessSpeak}
leads to some simplification of terminology.  The first statement to
benefit is our first main decomposition theorem (the other being
Corollary~\ref{c:mesodecomp}), which generalizes to arbitrary monoid
congruences the notion of irreducible decomposition for monoid ideals;
see Examples~\ref{e:irredundant} and~\ref{e:redundant}.

\begin{thm}\label{t:coprincipal}
Every congruence on a finitely generated commutative monoid is the
common refinement of the coprincipal congruences cogenerated by its
key witnesses.
\end{thm}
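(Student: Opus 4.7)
One direction is immediate: Proposition~\ref{p:orderideal} gives that $\sim$ refines every coprincipal congruence $\sim_w^P$ cogenerated by a key $\sim$-witness, so $\sim$ refines their common refinement. The substance is the converse: given $a \not\sim b$ in~$Q$, I plan to exhibit a key witness $(w,P)$ of~$\sim$ with $a \not\sim_w^P b$.

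The first step localizes. Consider the ideal $I = \{c \in Q : (a+c, b+c) \in \sim\}$, proper since $0 \notin I$. If $I = \nothing$, then $Q$ has no nil, $\nothing$ is associated to~$\sim$ by Definition~\ref{d:witness} and Example~\ref{e:noNil}, and $w=0$ is a key witness for~$\nothing$. Lemma~\ref{l:coprincipal} unravels $\sim_0^\nothing$ into the relation of equality in the universal group~$\oQ_\nothing$, and $I = \nothing$ is equivalent to $\bar a \neq \bar b$ in~$\oQ_\nothing$, so $a \not\sim_0^\nothing b$ in this case. Otherwise, pick a prime $P$ minimal over~$I$. Since any unit of $Q_P$ descends to a cancellable element of $\oQ_P$, the ideal $T^{(P)} := \{c \in Q_P : \bar a + \bar c = \bar b + \bar c \text{ in } \oQ_P\}$ lies in~$P_P$, so in particular $\bar a \neq \bar b$ in~$\oQ_P$.

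The core technical step is to produce $q \in Q_P$ satisfying (i) $\bar a + \bar q \neq \bar b + \bar q$ in $\oQ_P$ and (ii) $\bar a + \bar q + \bar p = \bar b + \bar q + \bar p$ in $\oQ_P$ for every $p \in P_P$. The plan is a maximality argument: the addition kernels $\rho_q := \ker(\phi_{\bar q}\colon Q_P \to \oQ_P)$ grow with $q$ in divisibility, and ascending chains of congruences on $Q_P$ stabilize since $\kk[Q_P]$ is noetherian. Choose $q$ so that $\rho_q$ is maximal among $\{\rho_{q'} : (a,b) \notin \rho_{q'}\}$. For any $p \in P_P$, either $\rho_{q+p}$ strictly contains $\rho_q$---forcing $(a,b) \in \rho_{q+p}$ by maximality, i.e., (ii) at~$p$---or $\rho_{q+p} = \rho_q$, in which delicate case one further refines the choice of~$q$ within its Green's class in $\oQ_P$, invoking the finiteness of Green's classes of witnesses from Theorem~\ref{t:finiteAss} together with the partial order from Lemma~\ref{l:modGreen} to rule out the obstruction.

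With such $q$ in hand, let $w$ be whichever of $a+q$ or $b+q$ has non-nil image in~$\oQ_P$, which exists because (i) forbids both from being nil. Conditions (i) and (ii) place $(\bar{a+q},\bar{b+q})$ into the congruence $\ann(\bar P_P)$ as a non-trivial pair, so the $\ann(\bar P_P)$-class of $\bar w$ is non-singleton and $w$ is a key witness for the now-associated prime~$P$. To verify $a \not\sim_w^P b$ via Lemma~\ref{l:coprincipal}: the relation $\bar a \preceq \bar w$ places $a \in Q_{\preceq w}^P$, so if $b \notin Q_{\preceq w}^P$ we are done. In the remaining subcase, any would-be $u \in Q_P$ with $\bar u + \bar a = \bar u + \bar b = \bar w$ satisfies $u \in T^{(P)} \subseteq P_P$, and a careful manipulation using condition~(ii) together with the minimality of $P$ over~$I$ and the equality $\bar u + \bar b = \bar a + \bar q$ derives a contradiction. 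The two principal obstacles are the delicate step in the maximality argument that produces~$q$ and this final elimination of~$u$ in the subcase $b \in Q_{\preceq w}^P$.
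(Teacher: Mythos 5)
Your high-level plan matches the paper's: reduce to the converse inclusion (via Proposition~\ref{p:orderideal}), form the liberator ideal $T=\{t : t+a\sim t+b\}$, pass to a prime $P$ minimal over $T$, and locate a key witness that separates. Your careful treatment of the $T = \nothing$ case is a nice addition. The divergence, and the genuine gap, is at the ``core technical step'': the paper does not maximize an addition kernel but rather takes a \emph{maximal Green's class in~$Q_P$} among the non-joiners $\{\hat t : \hat t+\hat a \not\sim \hat t + \hat b\}$. That choice hands you (ii) essentially for free: if $\hat t + p$ were Green's-equivalent to $\hat t$ for some $p\in P_P$, one would have $\hat t = \hat t + n(p+u)$ for all $n$, and since $\sqrt{T_P}=P_P$ and $T_P$ is an ideal this forces $\hat t \in T_P$ --- contradiction; while if $\hat t + p$ lies strictly higher, maximality forces it into $T_P$. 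By contrast, maximizing the \emph{congruence} $\rho_q$ simply does not control the Green's data: many $q$'s with incomparable Green's classes can share the same maximal $\rho_q$, and for some of them $\rho_{q+p}=\rho_q$ with $p\in P_P$ persists. Your proposed fix --- ``refine within its Green's class in $\oQ_P$, invoking Theorem~\ref{t:finiteAss} and Lemma~\ref{l:modGreen}'' --- is not an argument; those results concern finiteness and partial ordering and do not by themselves produce the needed socle element. As written, the delicate case is open.

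The second unproven step, your ``careful manipulation \dots derives a contradiction,'' is also not routine, and your criterion for choosing $w$ (whichever of $a+q$, $b+q$ is non-nil) is insufficient. From $\bar u + \bar a = \bar u + \bar b = \bar w$ with $u\in T_P$ one can deduce, for instance, that $\ol w$ lies in the image of $T_P$, but this does not contradict the socle property of $q$ in general; the contradiction is available only for the correctly chosen cogenerator, and in some examples the coprincipal congruence cogenerated by $a+q$ genuinely fails to separate $a$ from $b$ while the one cogenerated by $b+q$ succeeds (and both may have non-nil image). So this final step needs an actual argument, not a gesture. In summary: your proposal is in the right spirit and correctly identifies the two crux points, but both crux points are left as acknowledged gaps, and the specific optimization you chose (over kernels $\rho_q$) is strictly weaker than the paper's (over Green's classes of non-joiners) and does not close the first gap.
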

\begin{proof}
Fix a congruence $\til$ on~$Q$.  Proposition~\ref{p:orderideal}
implies that the intersection of all of the coprincipal congruences
for witnesses is refined by~$\til$.  On the other hand, suppose that
$q \not\sim q'$ for two elements $q,q' \in Q$.  The proof is done once
we find a prime $P \subset Q$ and a key witness $w \in Q$ whose
coprincipal congruence $\til_w^P$ on~$Q$ fails to join $q$ to~$q'$.

Let $T = \{t \in Q \mid t + q \sim t + q'\}$ be the ideal of elements
joining $q$ to~$q'$.  Fix a prime ideal~$P$ minimal among primes
of~$Q$ containing~$T$.  The images $\hat q$ and $\hat q'$ of $q$
and~$q'$ in the localization $Q_P$ remain incongruent because $P$
contains~$T$.  In contrast, every element in the localized image $T_P$
joins $\hat q$ to~$\hat q'$; that is, $\hat t + \hat q \sim \hat t +
\hat q'$ for all $\hat t \in T_P$.  Since the maximal ideal $P_P$
of~$Q_P$ is minimal over~$T_P$ by minimality of~$P$ over~$T$, there is
a maximal Green's class among those represented by the elements
$\{\hat t \in Q_P \mid \hat t + \hat q \not\sim \hat t + \hat q'\}$.
If the image of~$t$ lies in such a maximal Green's class, then in~$Q$
at least one of the elements $w = t + q$ and $w' = t + q'$---namely
one whose image in~$\oQ_P$ is not strictly greater than the other
under Green's preorder---is a key witness by definition.  Assuming, by
symmetry, that $w$ is a key witness, the localization of the
congruence~$\til_w^P$ satisfies $\hat q \not\sim_w^P \hat q'$, so $q
\not\sim_w^P q'$ before localization.
\end{proof}

\begin{remark}\label{r:allwitnesses}
In Theorem~\ref{t:coprincipal} it makes no difference whether one uses
all the key witnesses or just one per Green's class.  This follows
instantly from the definition of a coprincipal component; indeed, for
a given Green's class of key witnesses, the coprincipal components are
all equal---not just equivalent, but literally the same congruence.
\end{remark}

\begin{example}\label{e:irredundant}
For a monomial ideal in an affine semigroup ring, the coprincipal
decomposition of the Rees congruence afforded by
Theorem~\ref{t:coprincipal} arises equivalently from the Rees
congruences of the components in the unique irredundant irreducible
decomposition into monomial ideals \cite[Theorem~2.4]{irredres}; see
also \cite[Corollary~11.5 and Proposition~11.41]{cca}.
\end{example}

\begin{example}\label{e:redundant}
Unlike the case in Example~\ref{e:irredundant}, the decomposition in
Theorem~\ref{t:coprincipal} can be redundant in general.  This happens
for the congruence in Example~\ref{e:assprim}.\ref{e:witt1}.  The
decomposition produced by Theorem~\ref{t:coprincipal} has three
mesoprimary components: $\til_w^P$ for $P = \<e_x,e_y\>$ and $w \in
\{(e_x,0), (0,e_y)\}$ arise from joining $e_y$ and $e_x$,
respectively, to nil.  A third component $\til^\nothing$ arises for
$P=\nothing$ (with any element as a witness) and is induced by
$\<x-y\>$.  The decomposition into three congruences is redundant: the
given congruence is already the common refinement of~$\til^\nothing$
and either of~$\til^{P}_{w}$, the point being that once
$\til^\nothing$ is given, one only needs to separate $(1,0)$
from~$(0,1)$.  That said, the points $(1,0)$ and $(0,1)$ represent
distinct Green's classes of key witnesses for the associated prime
congruence induced by the binomial ideal $\<x,y\>$.  There is simply
no way of constructing an irredundant coprincipal decomposition
without breaking the symmetry: no systematic method of eliminating one
of the redundant components in this example would have a way to choose
between them.
\end{example}

\begin{remark}\label{r:coprDecCopr}
A coprincipal congruence can have more than one Green's class of key
witnesses, such as Example~\ref{e:mesoprimary}.  In any such case the
mesoprimary decomposition from Theorem~\ref{t:coprincipal} produces
more than one coprincipal component.  By Proposition~\ref{p:mesoCopr},
however, it is guaranteed that the original congruence appears as the
component for the Green's class of the unique peak, and thus all other
components are redundant.  This phenomenon prevents arbitrary
coprincipal congruences from accurately reflecting the combinatorics
of irreducible decomposition of binomial ideals.  One irreducible
decomposition of the coprincipal ideal $I = \<x^2-xy, xy-y^2, x^3\>$
from Example~\ref{e:mesoprimary} is $I = \<x-y, x^3\> \cap \<x^2, y\>
\cap \<x, y^2\>$, as can be seen by
applying~\cite[Proposition~3.1.7]{Vas}.
\end{remark}

\begin{remark}\label{r:irred}
Any irreducible congruence is mesoprimary: if a congruence is not
mesoprimary then it has at least two associated prime congruences by
Theorem~\ref{t:oneassprim}, and then it is reducible by mesoprimary
decomposition.  However, irreducible decompositions of congruences do
not, in general, reflect the combinatorics of congruences in a manner
that is witnessed combinatorially by the congruence itself.
\end{remark}

\begin{lemma}\label{l:peak}
Every cogenerator of the common refinement of a finite set of
$P$-meso\-primary congruences is a cogenerator of one of the given
mesoprimary \mbox{congruences}.
\end{lemma}
\begin{proof}
If $w$ is a cogenerator of the common refinement~$\til$, then $w$ is
not nil modulo~$\til$, so $w$ is not nil modulo (at least) one of the
given mesoprimary congruences.  On the other hand, $p + w$ is nil
modulo $\til$ for all $p \in P$, whence $P + w$ is nil modulo each one
of the given mesoprimary congruences.  Therefore $w$ is a cogenerator
of each of the given mesoprimary congruences modulo which it is not
nil.
\end{proof}

Combining Theorem~\ref{t:coprincipal} with
Proposition~\ref{p:mesorefine} and Lemma~\ref{l:peak} yields the
following, the culmination of our study of commutative monoid
congruence decompositions.

\begin{cor}\label{c:mesodecomp}
Every congruence on a finitely generated commutative monoid admits a
key mesoprimary decomposition with one component per associated prime
congruence.
\end{cor}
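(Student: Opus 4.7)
The plan is to combine Theorem~\ref{t:coprincipal}, which already produces a key mesoprimary decomposition (one component per key witness), with Proposition~\ref{p:mesorefine}, which lets me merge components sharing an associated prime congruence. Start with the coprincipal decomposition $\til = \bigcap_{w} \til_w^{P(w)}$, where $w$ runs over key witnesses of~$\til$ and $P(w)$ is the associated monoid prime of the associated prime congruence at~$w$. By Theorem~\ref{t:finiteAss}, there are only finitely many associated prime congruences of~$\til$; partition the key witnesses by their associated prime congruence $\app$, and for each such $\app$ set
$$
  \til_\app \ :=\ \bigcap_{w \text{ a key witness with } P\text{-prime congruence } \app} \til_w^{P(w)}.
$$
Then $\til = \bigcap_\app \til_\app$, with the intersection over the (finitely many) associated prime congruences.

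Next I would verify that each $\til_\app$ is mesoprimary with associated prime congruence~$\app$. Each coprincipal component $\til_w^{P(w)}$ is mesoprimary (coprincipal $\implies$ mesoprimary by Definition~\ref{d:coprincipal}) with associated prime congruence~$\app$ by construction. Applying Proposition~\ref{p:mesorefine} inductively to the finite intersection yields that $\til_\app$ is mesoprimary with associated prime congruence~$\app$. Moreover, the second sentence of Proposition~\ref{p:mesorefine} tells me that every $\til_\app$-witness is a witness under one of the original coprincipal components and is automatically key (this is Example~\ref{e:socle}, which says witnesses of mesoprimary congruences are key). Hence every $\til_\app$-witness is a key witness for some $\til_w^{P(w)}$.

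To conclude that $\bigcap_\app \til_\app$ is a \emph{key} mesoprimary decomposition of~$\til$ in the sense of Definition~\ref{d:mesodecomp}, I need two things: (i) every $\til_\app$-witness is a key $\til$-witness, and (ii) the $P$-prime congruences of $\til$ and $\til_\app$ at each $\til_\app$-witness coincide. For~(i), a $\til_\app$-witness~$v$ corresponds to a key witness~$w$ under one of the $\til_w^{P(w)}$'s, and every key witness of a coprincipal component cogenerated by a $\til$-witness was itself obtained (via the construction in Definition~\ref{d:cogenerated} and Lemma~\ref{l:coprincipal}) as a key $\til$-witness; in fact the only non-nil elements in a coprincipal congruence's quotient are the images of the cogenerator and its divisors, so this requires tracing through the definitions. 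For~(ii), the $P$-prime congruence at~$v$ in both $\til$ and $\til_\app$ agrees with $\app$ itself, by Theorem~\ref{t:oneassprim} (mesoprimary $\iff$ unique associated prime congruence) applied to $\til_\app$, and by the defining property of the original coprincipal decomposition of~$\til$ at its witnesses.

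The main obstacle is the bookkeeping in step~(i)--(ii): making sure that the key witnesses of the amalgamated mesoprimary component $\til_\app$ are literally key witnesses of~$\til$ with the same $P$-prime congruence, so that Definition~\ref{d:mesodecomp} is satisfied on the nose. Everything else is essentially a mechanical combination of Theorem~\ref{t:coprincipal}, Proposition~\ref{p:mesorefine}, Theorem~\ref{t:finiteAss}, and Example~\ref{e:socle}, as the result's own proof indicates.
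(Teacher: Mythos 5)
Your proposal is correct and follows exactly the route the paper has in mind: start from the coprincipal (key-witness) decomposition of Theorem~\ref{t:coprincipal}, group the components by associated prime congruence, and amalgamate each group into a single mesoprimary component via Proposition~\ref{p:mesorefine}, with Example~\ref{e:socle} guaranteeing that the witnesses of the amalgamated components are key. The bookkeeping you flag in steps~(i)--(ii) is indeed the content that the paper's one-line proof leaves implicit; your filling it in (via Lemma~\ref{l:samePprime}, the fact that the socle of a coprincipal quotient is a single Green's class, and Remark~\ref{r:allwitnesses}) is in the spirit of, not a departure from, the paper's argument.
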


\begin{example}\label{e:non-ass}
In general the set of key witnesses is properly contained in the set
of witnesses.  Example~\ref{e:assprim}.\ref{e:witt2nonKey} shows one
way this can happen.  Exploiting the weirdness of irreducible
decomposition of the identity congruence is not necessary: consider
the primary congruence induced by the (cellular) binomial ideal
\begin{equation*}
  I = \<a^2-1, b^2-1, x(b-1), y(a-1), z(a-b), x^2, y^2, z^2\>.
\end{equation*}
The geometry of the quotient is shown here, where $\ZZ^\delta_2$ is
the diagonal copy of $\ZZ_2$ in $\ZZ_2\times \ZZ_2$, i.e.~generated by
$(1,1)$:
$$%
\psfrag{a}{\footnotesize $\,\ZZ_2^\delta$}
\psfrag{b}{\footnotesize \hspace{-4ex}$\ZZ_2 \times 0$}
\psfrag{c}{\footnotesize $0 \times \ZZ_2$}
\psfrag{d}{\footnotesize $0$}
\psfrag{x}{\footnotesize $x$}
\psfrag{y}{\footnotesize $y$}
\psfrag{z}{\footnotesize $z$}
\includegraphics{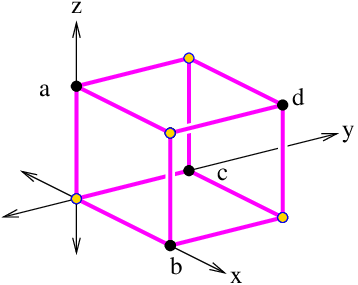}
$$
The solid dots indicate key witnesses and are labeled with quotients
of~$\oQ$ modulo the corresponding stabilizers, under the action from
Lemma~\ref{l:action}.  The origin is not a key witness because the
common refinement of the three kernels of addition morphisms is
trivial.  According to Theorem~\ref{t:coprincipal}, a coprincipal
mesoprimary decomposition of $\til_I$ is induced by the following
decomposition of~$I$ into unital binomial~ideals:
\begin{align*}
     I = \<a-1,b-1,z^2,y^2,x^2\> &\cap \<a^2-1,b-1,z,y,x^2\>\\
\mbox{}\cap\<a-1,b^2-1,z,x,y^2\> &\cap \<ab-1,a-b,y,x,z^2\>.
\end{align*}
The heart of the remainder of this paper---the ring-theoretic
part---is to make the corresponding decomposition of arbitrary
(non-unital) binomial ideals precise.  For reference, the primary
decomposition of~$I$ is
\begin{align*}
    I = \<a-1,b-1,z^2,y^2,x^2\> &\cap \<a+1,b-1,z,y,x^2\>\\
\mbox{}\cap \<a-1,b+1,z,x,y^2\> &\cap \<a+1,b+1,y,x,z^2\>.
\end{align*}
\end{example}


\section{Augmentation ideals, kernels, and nils}\label{s:aug}

One of our goals is to compare the combinatorics of congruences on a
commutative monoid~$Q$ in purely monoid-theoretic settings with their
ring-theoretic counterparts.  It is therefore important to note that
various binomial ideals $I \subset \kk[Q]$ can induce the same
congruence on~$Q$.  One way for this to happen is an arithmetic way,
via binomials involving the same monomials but different sets of
coefficients; this occurs for binomial primes~$\Irp$ whose
characters share their domain of definition (see
Section~\ref{s:components}).

\begin{example}\label{e:z+1}
Let $\mathrm{char}(\kk) \neq 2$.  In the polynomial ring $\kk[x,y,z]$,
both of the ideals $I = \<x(z-1),y(z-1),z^2-1,x^2,xy,y^2\>$ and $I' =
\<x(z-1),y(z+1),z^2-1,x^2,y^2\>$ induce the same congruence; note that
$I'$ contains $\<xy\>$, so the only difference between these two
ideals is the character on $\ZZ = \{0\} \times \{0\} \times \ZZ
\subseteq \ZZ \times \ZZ \times \ZZ$ induced by the monomials $y, zy,
z^2y, \ldots$ due to the generator $y(z+1)$ instead of~$y(z-1)$.
\end{example}

\noindent
Another way, demonstrated in parts~3 and~4 of
Example~\ref{e:pure-diff}, is combinatorial: when $Q$ has a
nil~$\nil$, the binomial ideal $\<\ttt^\nil\>$ induces the same
(trivial) congruence on~$Q$ as the zero ideal $\<0\> \subseteq
\kk[Q]$.  Nils are the only way for this to occur.

\begin{lemma}\label{l:nilmonomial}
Fix a binomial ideal $I \subseteq \kk[Q]$ whose congruence $\til_I$ is
trivial (every class is a singleton).  Then $I = 0$ or $I =
\<\ttt^\nil\>$ for a nil $\nil \in Q$.
\end{lemma}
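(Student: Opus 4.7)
The first step is to rule out any genuinely two-term generator. By Definition~\ref{d:I}, $I$ is generated by binomials $\ttt^p - \lambda\ttt^q$ with $\lambda \in \kk$ (possibly zero). Any generator with $\lambda \in \kk^*$ and $p \neq q$ would witness $p \sim_I q$, contradicting triviality of~$\til_I$. A generator with $p = q$ is of the form $(1-\lambda)\ttt^p$, which is either zero (and hence discardable) or a nonzero scalar multiple of a monomial. The remaining allowed generators are monomials $\ttt^p$ (the case $\lambda = 0$). Thus $I$ is a monomial ideal.

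The second step is to show that the exponent set $M := \{p \in Q : \ttt^p \in I\}$ is either empty or consists solely of a nil element. Assume $M$ is nonempty and pick any $p \in M$. For every $q \in Q$, the product $\ttt^p \cdot \ttt^q = \ttt^{p+q}$ also lies in~$I$, so $p + q \in M$ as well. Then $\ttt^p - \ttt^{p+q} \in I$ is a binomial with unit coefficient~$1$, so by definition $p \sim_I (p+q)$. Triviality of $\til_I$ forces $p = p + q$. Since $q \in Q$ was arbitrary, $p$ is absorbing, i.e.\ a nil~$\nil$ of~$Q$.

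The third step is uniqueness: if $\nil$ and $\nil'$ are both nils, then $\nil = \nil + \nil' = \nil'$, so $M = \{\nil\}$ and $I = \<\ttt^\nil\>$.

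\textbf{Where the work is.} There is no real obstacle; the whole argument is a direct unwinding of the definitions of binomial ideal, induced congruence, and nil element. The one point that deserves explicit mention is that although $I$ is only \emph{generated} by binomials, one must use the derived element $\ttt^p - \ttt^{p+q} \in I$ (obtained from the fact that both monomials lie in~$I$) rather than a chosen generator to extract the congruence relation $p \sim_I (p+q)$.
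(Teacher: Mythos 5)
Your proof is correct and takes essentially the same route as the paper: both observe that triviality of $\til_I$ forces $I$ to be a monomial ideal containing at most one monomial, and that the ideal property of the exponent set then forces that lone exponent to be nil. You simply unpack the paper's terse two-sentence argument into its explicit steps, including the useful remark that one must pass from the generators to the derived binomial $\ttt^p - \ttt^{p+q}$ to extract the congruence relation.
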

\begin{proof}
If $I \neq 0$ then $I$ must be a monomial ideal with a unique
monomial, or else the congruence $\til_I$ has a class of size at
least~$2$.  Hence the result follows because a monoid can have at most
one nil.
\end{proof}

\begin{defn}\label{d:truncate}
If $\nil \in Q$ is a nil, then the \emph{truncated algebra} is
$\kk[Q]^- := \kk[Q]/\<\ttt^\nil\>$.  By convention, if $Q$ has no nil,
then we set $\kk[Q]^- := \kk[Q]$.
\end{defn}

\begin{remark}\label{r:nil}
Truncated algebras arise naturally from monoid algebras because of
differences in the way quotients of monoids and monoid algebras by
ideals are formed.  If $F\subseteq Q$ is a monoid ideal and $\til_F$
its Rees congruence, the quotient $\kk[Q] \to \kk[Q]/M_F$ modulo the
monomial ideal $M_F = \<\ttt^f \mid f \in F\>$ equals
$\kk[Q/\til_F]^-$ rather than $\kk[Q/\til_F]$ itself.  We shall see
that if $Q$ has a nil, then $\kk[Q]$ and $\kk[Q]^-$ reflect certain
aspects of the algebra of~$Q$ to varying degrees of accuracy.
\end{remark}

More generally, if the congruence induced by a (not necessarily
unital) binomial ideal~$I$ results in a quotient $Q/\til_I$
that has a nil, then throwing in monomials from the nil class results
in an ideal that determines the same congruence.

\begin{prop}\label{p:nil}
Fix a binomial ideal $I \subseteq \kk[Q]$.  The only binomial ideals
containing~$I$ that determine the same congruence~$\til_I$ are $I$
itself and, if $\oQ = Q/\til_I$ has a nil~$\nil$, the ideal \mbox{$I +
\<\ttt^q \mid \ol q = \nil\>$}, where the bar denotes passage from $q
\in Q$ to its image $\ol q \in \oQ$.
\end{prop}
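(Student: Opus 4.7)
Fix a binomial ideal $J \subseteq \kk[Q]$ with $I \subseteq J$ and $\til_J = \til_I$. The plan is to analyze the set $N = \{q \in Q \mid \ttt^q \in J\}$ of monomials in $J$, show that $N$ is either empty or is exactly the nil class of $\oQ$, and then prove that these two possibilities correspond to $J = I$ and $J = I + M$ respectively, where $M = \<\ttt^q \mid \ol q = \nil\>$.

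First I would establish two closure properties of $N$. It is a monoid ideal of $Q$ because $J$ is a ring ideal; and it is a union of $\til_I$-classes, because if $\ttt^q \in J$ and $q \sim_I q'$, then some unit $\lambda$ gives $\ttt^{q'} - \lambda\ttt^q \in I \subseteq J$, so $\ttt^{q'} \in J$. Combining the two, if $q \in N$, then for every $q' \in Q$ both $q$ and $q + q'$ lie in $N$ and thus in the same $\til_I$-class; hence $\ol q + \ol{q'} = \ol q$ in $\oQ$, making $\ol q$ absorbing. By uniqueness of nils in a monoid, $N$ is either empty or precisely the nil class of $\oQ$ (and in the latter case $\oQ$ has a nil).

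The crux is then the following dichotomy for a generator $\ttt^p - \lambda\ttt^q$ of $J$. If $\lambda = 0$, the generator is the monomial $\ttt^p$, so $p \in N$, hence $\ttt^p \in M$. If $\lambda \neq 0$, then $p \sim_J q = \sim_I q$, so there exists a unit $\mu \in \kk^*$ with $\ttt^p - \mu\ttt^q \in I$. Subtracting yields $(\mu - \lambda)\ttt^q \in J$. Either $\mu = \lambda$, placing the generator in $I$; or $\mu \neq \lambda$, forcing $\ttt^q \in J$ and hence also $\ttt^p = (\ttt^p - \lambda\ttt^q) + \lambda\ttt^q \in J$, so both $p, q \in N$ and the generator lies in $M$. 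Thus every generator of $J$ lies in $I + M$, giving $J \subseteq I + M$; the reverse inclusion is clear once $N$ matches the nil class, because then $M \subseteq J$ by definition of $N$. If instead $N = \nothing$, the $M$-case of the dichotomy is excluded, forcing every generator into $I$, so $J = I$.

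The only real obstacle is bookkeeping: one must juggle the definition of $\til_I$ (two-term binomials with nonzero coefficient) against the fact that binomial generators may have $\lambda = 0$, while also verifying that a new monomial appearing in $J$ really does propagate to the entire nil class. Once these routine consistency checks are handled by the two closure properties of $N$ in the first step, the rest of the argument is essentially the single-line computation $(\mu - \lambda)\ttt^q \in J$.
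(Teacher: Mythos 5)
Your proof is correct and reaches the result by a more explicit, self-contained route than the paper's. The paper's proof considers the $\oQ$-grading of the quotient $\kk[Q]/I$, notes that each graded piece has dimension $0$ or $1$, observes that the exponents on the monomials in a binomial ideal occupy a single class (necessarily nil), and then appeals informally ("close enough for the argument to work") to Lemma~\ref{l:nilmonomial} applied to $J/I$ inside $\kk[Q]/I$. Your proof never leaves $\kk[Q]$: you analyze the set $N$ of monomials lying in $J$ directly, show it is empty or exactly the nil class, and then run a generator-by-generator case analysis in which the computation $(\mu-\lambda)\ttt^q \in J$ --- which is precisely the content of the dimension-at-most-one observation, used operationally rather than structurally --- forces each binomial generator of $J$ into $I$ or into $M$. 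The two proofs hinge on the same pair of facts (two monomials in a binomial ideal differ by a unital binomial, hence belong to the same class, which must be nil), but your version avoids both Lemma~\ref{l:nilmonomial} and the hedge about $\kk[Q]/I$ being ``close enough'' to a monoid algebra, at the cost of a somewhat longer write-up. Neither you nor the paper verifies that $I+M$ actually induces $\til_I$; that is a routine check via the $\oQ$-homogeneity of both $I$ and $M$, and the statement implicitly presumes it, so it would be worth a sentence.
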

\begin{proof}
Under the grading of the quotient algebra $\kk[Q]/I$ by $\oQ =
Q/\til_I$ the dimension of the graded piece $(\kk[Q]/I)_\ol q$ as a
vector space over~$\kk$ is either $0$ or~$1$, depending on whether $I$
contains a monomial in the corresponding class.  Since the (exponents
on) monomials in~$I$ form a single class, the dimension can only
be~$0$ for at most one~$\ol q$, and $\ol q$ must be a nil in~$\oQ$.
Now note that $\kk[Q]/I$ is close enough to the monoid algebra
$\kk[\oQ]$ for the argument from Lemma~\ref{l:nilmonomial} to work,
and lift the result from $\kk[Q]/I$ to~$\kk[Q]$.
\end{proof}

The two binomial ideals in Proposition~\ref{p:nil} are unequal
precisely when $I$ contains no monomials, and in this case it is
trivial to form the second ideal by inserting monomials.  In special
circumstances, it is possible to reverse this procedure.  To this end,
we wish to examine the transition from $\kk[Q]$ to the truncated
algebra~$\kk[Q]^-$ (when $Q$ has a nil) in terms of primary
decomposition of binomial ideals.  This naturally leads to the
following concept refining that of a nil.

\begin{defn}\label{d:ker}
A \emph{kernel} of a commutative monoid~$Q$ is a nonempty ideal
contained in all nonempty ideals of~$Q$.  (Such an ideal might not
exist.)
\end{defn}

\begin{example}\label{e:nil}
A nil is the same thing as a kernel of cardinality~$1$.
\end{example}

The existence of a nil in~$Q$, or a finite kernel more generally, is
reflected by a certain kind of maximal ideal of~$\kk[Q]$ being an
associated prime of~$\kk[Q]$.

\begin{defn}\label{d:aug}
Fix a commutative monoid~$Q$, and write $\kk^* = \kk \minus \{0\}$.
The \emph{unital augmentation ideal} in the monoid algebra~$\kk[Q]$ is
the ideal
$$%
  \Iaug^1 := \<\ttt^q - 1 \mid q \in Q\>
$$
generated by all monomial differences.  More generally, an
\emph{augmentation ideal} for a given binomial ideal~$I \subseteq
\kk[Q]$ is a proper ideal of the form
$$%
  \Iaug := \<\ttt^q - \lambda_q \mid q \in Q, \lambda_q \in \kk^*\>
  \subseteq \kk[Q],
$$
such that $I \cap \Iaug$ is a binomial ideal.
\end{defn}

\begin{example}
The ideal $I = \<x^2\> \subset \kk[x,y]$ induces a primary congruence
(a Rees congruence) identifying all monomials in $I$.  A compatible
augmentation ideal is $\Iaug = \<x-1,y-1\>$, which satisfies $I \cap
\Iaug = \<x^2-x^3, yx^2-x^2\>$.  This intersection induces the same
congruence~$\til$ as~$I$ does.  Note that $\kk[x,y]/(I\cap \Iaug)
\cong \kk[\NN^2/\til]$ is isomorphic to the semigroup algebra
of~$\NN^2/\til$ while $\kk[\NN^2]/I \cong \kk[\NN^2/\til]^-$ is the
truncated algebra.
\end{example}

\begin{lemma}\label{l:aug}
Given an augmentation ideal $\Iaug$ as in Definition~\ref{d:aug}, the
association $q \mapsto \lambda_q$ constitutes a monoid homomorphism
$\phi: Q \to \kk^*$.
\end{lemma}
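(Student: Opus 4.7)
The plan is to verify the three requirements for $\phi$ to be a monoid homomorphism: well-definedness of the assignment $q \mapsto \lambda_q$, sending the identity $0 \in Q$ to $1 \in \kk^*$, and multiplicativity. Every step rests on a single rigidity observation: since $\Iaug$ is a proper ideal of $\kk[Q]$, no nonzero element of $\kk \subseteq \kk[Q]$ can lie in $\Iaug$, so any two scalars in $\kk$ that are congruent modulo $\Iaug$ must be equal.

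First I would establish well-definedness. If both $\ttt^q - \lambda$ and $\ttt^q - \mu$ lie in $\Iaug$ for some $\lambda,\mu \in \kk^*$, then subtracting gives $\mu - \lambda \in \Iaug$, forcing $\lambda = \mu$ by rigidity; hence $\lambda_q$ depends only on $q$. Next, setting $q = 0$, the generator $\ttt^0 - \lambda_0 = 1 - \lambda_0$ lies in $\Iaug$, so rigidity yields $\lambda_0 = 1$, i.e., $\phi(0) = 1$. For multiplicativity, given $p,q \in Q$ I would combine the generators via
$$\ttt^p(\ttt^q - \lambda_q) + \lambda_q(\ttt^p - \lambda_p) = \ttt^{p+q} - \lambda_p\lambda_q \in \Iaug.$$
Subtracting the generator $\ttt^{p+q} - \lambda_{p+q} \in \Iaug$ then produces $\lambda_{p+q} - \lambda_p\lambda_q \in \Iaug$, and rigidity delivers $\phi(p+q) = \lambda_p \lambda_q = \phi(p)\phi(q)$.

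There is no substantive obstacle here---the three verifications all collapse to the same use of properness of $\Iaug$ to rule out a nontrivial scalar relation. The only preliminary worth pointing out is that Definition~\ref{d:aug} implicitly supplies, for every $q \in Q$, \emph{some} $\lambda_q \in \kk^*$ with $\ttt^q - \lambda_q \in \Iaug$ (the indexing set for the generators ranges over all of $Q$), so $\phi$ really is defined on all of $Q$ rather than merely on a submonoid generated by a chosen set.
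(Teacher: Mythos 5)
Your proof is correct, and it takes a genuinely different route from the paper's. The paper's proof is a one-liner that appeals to the universal property of monoid algebras: it observes that $\Iaug$ is a maximal ideal of $\kk[Q]$ with residue field $\kk$ (since modulo $\Iaug$ every monomial equals a scalar, so the quotient is spanned by $1$), hence corresponds to a $\kk$-algebra map $\kk[Q] \to \kk$, which in turn corresponds to a monoid homomorphism $Q \to (\kk,\cdot)$; the constraint $\lambda_q \in \kk^*$ then forces the image into $\kk^*$. Your proof instead works entirely by hand with the generators and the single rigidity observation that a proper ideal meets $\kk$ only in $0$: well-definedness, unitality, and multiplicativity each reduce to producing a scalar difference that must vanish, with the telescoping identity $\ttt^p(\ttt^q - \lambda_q) + \lambda_q(\ttt^p - \lambda_p) = \ttt^{p+q} - \lambda_p\lambda_q$ doing the real work. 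The paper's route is shorter and more conceptual but silently uses the fact that $\Iaug$ is maximal; yours is more elementary and makes every step explicit, and has the minor additional virtue of showing that the scalars $\lambda_q$ are determined by the ideal $\Iaug$ itself rather than by a chosen presentation of it.
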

\begin{proof}
The maximal ideals of~$\kk[Q]$ with residue field~$\kk$ are in
bijection with the monoid homomorphisms $Q \to \kk$;
Definition~\ref{d:aug} guarantees that the image lies in~$\kk^*$.
\end{proof}

\begin{prop}
Fix a monoid algebra $\kk[Q]$ over a field\/~$\kk$ with $Q$ finitely
generated.  An augmentation ideal is associated to~$\kk[Q]$ if and
only if $Q$ has a finite kernel, and in that case the unital
augmentation ideal is associated to~$\kk[Q]$.
\end{prop}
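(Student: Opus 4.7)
My plan is to prove the two directions by producing, on each side, an explicit element of $\kk[Q]$ whose annihilator is an augmentation ideal, with support equal to the finite kernel of~$Q$. The key bridge is Lemma~\ref{l:aug}: any augmentation ideal~$\Iaug$ is the maximal ideal arising as the kernel of a monoid homomorphism $\phi\colon Q\to\kk^*$, so $\ttt^q\equiv\lambda_q\bmod\Iaug$ with $\lambda_q=\phi(q)\in\kk^*$.

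For the \emph{if} direction, suppose $Q$ has a finite kernel~$K$. For each $q\in Q$ the set $q+K$ is a nonempty ideal of~$Q$, and $q+K\subseteq K$ since $K$ is an ideal, so minimality of~$K$ forces $q+K=K$; finiteness of~$K$ then makes $k\mapsto q+k$ a bijection of~$K$ onto itself. Define $f=\sum_{k\in K}\ttt^k\in\kk[Q]$. Then $\ttt^q f=\sum_{k\in K}\ttt^{q+k}=f$ for all $q\in Q$, so $(\ttt^q-1)f=0$, and hence $\Iaug^1\subseteq\ann(f)$. Because $\Iaug^1$ is maximal and $f\neq 0$, we conclude $\ann(f)=\Iaug^1$, so the unital augmentation ideal is associated to~$\kk[Q]$.

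For the \emph{only if} direction, suppose some augmentation ideal $\Iaug$ is associated to~$\kk[Q]$, say $\Iaug=\ann(f)$ for some nonzero $f=\sum_{i=1}^r c_i\ttt^{q_i}\in\kk[Q]$ with distinct exponents $q_i$ and $c_i\in\kk^*$. Let $S=\{q_1,\dots,q_r\}=\mathrm{supp}(f)$. For each $q\in Q$ the relation $\ttt^q f=\lambda_q f$ says, after collecting terms, that the multiset $\{q+q_i\}_{i=1}^r$ reduces to the set~$S$ with coefficients $(\lambda_q c_i)_{i=1}^r$; since $|\{q+q_i\}|\le r=|S|$ and the supports must coincide, the map $q_i\mapsto q+q_i$ is a bijection of~$S$. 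Thus $q+S\subseteq S$ for every $q\in Q$, showing $S$ is an ideal of~$Q$, and $S=q+S\subseteq q+Q$ for every $q\in S$, so $S$ is contained in every principal ideal generated by one of its elements. If $J\subseteq Q$ is any nonempty ideal with $a\in J$, then $S=a+S\subseteq a+Q\subseteq J$, so $S$ is contained in every nonempty ideal of~$Q$; hence $S$ is the kernel of~$Q$, and it is finite because $f$ is a polynomial.

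The main obstacle I anticipate is the support argument in the second direction: one must handle the possibility that the exponents $q+q_i$ collide upon multiplication by $\ttt^q$, and argue that after collecting coefficients no monomial of~$f$ is lost. This is exactly where the fact $\lambda_q\in\kk^*$ (from Lemma~\ref{l:aug}) intervenes, preventing $\lambda_q f$ from having strictly smaller support than~$f$ and forcing the map $q_i\mapsto q+q_i$ to be a bijection on~$S$. Once that is established, the identification of~$S$ with the kernel, and the construction of~$f$ in the converse, are both direct.
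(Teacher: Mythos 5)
Your proof is correct and follows essentially the same approach as the paper's: in both directions the key step is showing that $q+\mathrm{supp}(f)=\mathrm{supp}(f)$ for all $q\in Q$, where $f$ is a witness with $\ann(f)=\Iaug$, and then observing that such a set is exactly a finite kernel. The only cosmetic difference is that the paper first reduces the ``only if'' direction to the unital augmentation ideal via the rescaling automorphism $\ttt^q\mapsto\lambda_q\ttt^q$, whereas you carry the scalars $\lambda_q$ directly through the support-bijection argument (which you spell out more explicitly than the paper does).
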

\begin{proof}
If $Q$ has a finite kernel~$K$, then $\Iaug^1$ is the annihilator of
the sum $f = \sum_{k \in K}\ttt^k$.  Indeed, $q + K \subseteq K$ is an
ideal of~$Q \implies q + K = K$ for all $q \in Q$ $\implies \ttt^q f =
f$ for all $q \in Q \implies (\ttt^q - 1) f = 0$ for all $q \in Q
\implies \Iaug^1 \subseteq \ann(f)$; but $\Iaug^1$ is a maximal ideal.

Now suppose that an augmentation ideal $\Iaug$ is associated
to~$\kk[Q]$.  The homomorphism $q \mapsto \lambda_q$ in
Lemma~\ref{l:aug} induces an automorphism of~$\kk[Q]$ that rescales
the monomials by $\ttt^q \mapsto \lambda_q\ttt^q$.  This automorphism
takes $\Iaug$ to~$\Iaug^1$.  Therefore, we may as well assume $\Iaug =
\Iaug^1$ is the unital augmentation ideal.  Let $K \subseteq Q$ be a
nonempty subset such that $f = \sum_{k \in K} \mu_k \ttt^k$ is
annihilated by~$\Iaug^1$, where $\mu_k \in \kk^*$ for all~$k \in K$.
It suffices to show that $K$ is a kernel of~$Q$.  But $\ttt^q f = f$
for all $q \in Q$ implies that $q + K = K$ for all $q \in Q$, which
implies both that $K$ is an ideal of~$Q$ (since $q + K \subseteq K$
for all~$q$) and also that $K$ is contained in every ideal of~$Q$
(since $K + q \supseteq K$).
\end{proof}

\begin{thm}\label{t:chain}
If $I_\ell \supset \cdots \supset I_0$ is a chain of distinct binomial
ideals in~$\kk[Q]$ inducing the same congruence on~$Q$, then $\ell
\leq 1$.  Moreover, if $\ell = 1$ then $I_1$ contains monomials and
$I_0$ does not: $I_0 = I_1 \cap \Iaug$ for an augmentation ideal
$\Iaug$ compatible~with~$I_1$.
\end{thm}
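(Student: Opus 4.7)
The plan is to apply Proposition~\ref{p:nil} for the bound $\ell \le 1$, and when $\ell = 1$ to produce the augmentation ideal $\Iaug$ from an explicit idempotent in $R := \kk[Q]/I_0$ spanning the nil graded piece. The main obstacle is producing a $\kk^*$-valued monoid character $\phi : Q \to \kk^*$ that factors through $R$; the idempotent will convert this potentially cohomological question into a direct computation, using the fact that absence of monomials in $I_0$ forces every $\oQ$-graded piece of $R$ to be one-dimensional over $\kk$.

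For the chain bound, Proposition~\ref{p:nil} applied to $I_0$ says the only binomial ideal strictly containing $I_0$ with the same induced congruence $\til_{I_0}$ is $I_0 + \<\ttt^q \mid \ol q = \nil\>$, and only if $\oQ := Q/\til_{I_0}$ has a nil $\nil$. A chain of distinct such ideals therefore admits at most one step up: $\ell \le 1$, and when $\ell = 1$ we have $I_1 = I_0 + \<\ttt^q \mid \ol q = \nil\>$. Let $N$ denote the nil class. To see $I_0$ has no monomials: monomials in any binomial ideal all lie in the absorbing nil class, so if $\ttt^p \in I_0$ with $p \in N$, then for every $q \in N$ we have $\ttt^p - \lambda \ttt^q \in I_0$ with $\lambda \in \kk^*$, forcing $\ttt^q \in I_0$; then $\<\ttt^q \mid q \in N\> \subseteq I_0$ would collapse $I_1 = I_0$. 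Consequently every graded piece of $R$ is one-dimensional, and $I_1$ contains monomials (any $\ttt^{q_\nil}$ with $q_\nil \in N$).

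Now fix $q_\nil \in N$ and let $v$ be the image of $\ttt^{q_\nil}$ in $R$. Since $R_\nil = \kk v$ is one-dimensional and $2 q_\nil \in N$, $v^2 = cv$ for a unique $c \in \kk^*$ (nonzero, else $\ttt^{2 q_\nil} \in I_0$). Thus $u := c^{-1} v$ is an idempotent spanning $R_\nil$. For each $q \in Q$, $\ttt^q u \in R_\nil = \kk u$ defines $\phi(q) \in \kk$ by $\ttt^q u = \phi(q) u$, and the same no-monomial argument gives $\phi(q) \in \kk^*$. Associativity of multiplication in $R$ makes $\phi : Q \to \kk^*$ a monoid homomorphism, so by Lemma~\ref{l:aug} the kernel $\Iaug$ of the induced $\kk$-algebra map $\kk[Q] \to \kk$ is an augmentation ideal, and $\phi$ factoring through $R$ gives $I_0 \subseteq \Iaug$.

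It remains to verify $I_0 = I_1 \cap \Iaug$, which also makes $\Iaug$ compatible with $I_1$ in Definition~\ref{d:aug}. The inclusion $\subseteq$ is clear. Conversely, any $\ttt^q$ with $q \in N$ equals a nonzero scalar multiple of $\ttt^{q_\nil}$ modulo $I_0$, so $I_1 = I_0 + \<\ttt^{q_\nil}\>$, and any $f \in I_1 \cap \Iaug$ writes as $f = g + h \ttt^{q_\nil}$ with $g \in I_0$ and $h \in \kk[Q]$. Applying $\phi$ yields $0 = \phi(h) c$, so $\phi(h) = 0$, i.e.\ $hu = 0$ in $R$; therefore $h \ttt^{q_\nil} = c \cdot hu = 0$ in $R$, placing $h \ttt^{q_\nil} \in I_0$ and $f \in I_0$.
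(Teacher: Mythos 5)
Your proof is correct and takes essentially the same approach as the paper's: reduce the chain bound to Proposition~\ref{p:nil}, then use the one-dimensional $\oQ$-graded pieces of $\kk[Q]/I_0$ together with the nil class to produce the character $q \mapsto \lambda_q$ defining $\Iaug$. The only differences are cosmetic — you normalize $\ttt^{q_\nil}$ to an idempotent and check $I_1 \cap \Iaug \subseteq I_0$ by an explicit computation with $\phi$, whereas the paper works with the unnormalized socle element and closes by observing that $I_1/I_0$ is one-dimensional.
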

\begin{proof}
The first sentence follows from Proposition~\ref{p:nil}, as does the
statement about monomials when $\ell = 1$.  It remains to show that
$I_0 = I_1 \cap \Iaug$ if $\ell = 1$.  Set $I = I_0$.  Under the
grading of the quotient algebra $\kk[Q]/I$ by $\oQ = Q/\til_I$ the
dimension of the graded piece $(\kk[Q]/I)_\ol q$ as a vector space
over~$\kk$ is~$1$ for all $\ol q \in \oQ$.  Let $\ol\nil \in \oQ$ be
the nil, which exists because it is the class of all exponents on
monomials in~$I_1$.  Fix a nonzero element $\ttt^\ol\nil \in \kk[Q]/I$
of degree~$\ol\nil$.  Then $\ttt^q\ttt^\ol\nil = \lambda_q
\ttt^\ol\nil$ for each $q \in Q$.  Set $\Iaug = \<\ttt^q - \lambda_q
\mid q \in Q\>$.  Then $\Iaug \supseteq I$ by construction, but $\Iaug
\not \supseteq I_1$, since $I_1$ contains monomials and $\Iaug$ does
not.  Therefore $I_1 \supsetneq I_1 \cap \Iaug \supseteq I$, whence
$I_1 \cap \Iaug = I$, because $I_1/I = \<\ttt^\ol\nil\> \subseteq
\kk[Q]/I$ has dimension~$1$ as a vector space over~$\kk$ by
Proposition~\ref{p:nil}.
\end{proof}

\begin{example}
The ideal $I = \<x^2 - xy, xy - 2y^2\> \subseteq \kk[x,y]$ contains
monomials even when $\mathrm{char}(\kk) \neq 2$, because $I$ contains
both of $x^2y - xy^2$ and $x^2y - 2xy^2$, so $x^2y$ and~$xy^2$ lie
in~$I$.  However, Theorem~\ref{t:chain} implies that there is no
augmentation ideal compatible with~$I$.  Indeed, every binomial
ideal~$I'$ contained in~$I$ and inducing the same congruence
necessarily contains a binomial of the form $x^2 - \lambda xy$ and one
of the form $xy - \mu y^2$, so $I'$ contains both $x^2 - xy$ and $xy -
2y^2$ (and therefore $I' = I$) since~$xy \notin I$.
\end{example}

\section{Taxonomy of binomial ideals in monoid algebras}\label{s:binomial}

The concepts of primary, mesoprimary, primitive, prime, and toric
congruence from Definition~\ref{d:prim*} have precise analogues for
binomial ideals in monoid algebras.  As a small measure to aid the
reader with conflicting usages of the terms ``primary'' and ``prime'',
long since immovably set in the literature, the items in the following
definition are listed in the order corresponding exactly to
Definition~\ref{d:prim*}, as Theorem~\ref{t:til} makes precise; for
quick reference, consult the following table.

\medskip
\begin{center}
\begin{tabular}{c|c}
\ldots congruence on~$Q$ &
\ldots binomial ideal in $\kk[Q]$\\\hline
primary & cellular\\
mesoprimary & mesoprimary\\
primitive & primary\\
prime & mesoprime\\
toric & prime\\[-2ex]\multicolumn{2}{c}{}
\end{tabular}
\end{center}
The table explains our choice of terminology: ``mesoprimary'' sits
between the two occurrences of ``primary'', being stronger than one
and weaker than the other.

Our choice to work over fields that need not be algebraically closed
forces us to consider slight generalizations of group algebras.

\begin{defn}\label{d:twisted}
A \emph{twisted group algebra} over a field~$\kk$ is a $\kk$-algebra
that is graded by a group~$G$ and, after tensoring with the algebraic
closure $\ol\kk$, is isomorphic to the group algebra~$\ol\kk[G]$ via a
$G$-graded isomorphism.  A \emph{monomial homomorphism} from a monoid
algebra to a twisted group algebra takes each monomial to a
homogeneous element (possibly~$0$).
\end{defn}

\begin{example}\label{e:twisted}
The ring $R = \QQ[x]/\<x^3-2\>$ is not isomorphic to the group
algebra~$\QQ[G]$ for $G = \ZZ/3\ZZ$ over~$\QQ$, because no element
of~$R$ is a cube root of~$2$.  On the other hand, the element $y =
x\sqrt[3]2 \in R_\CC := R \otimes_\QQ \CC$ generates $R_\CC$, yielding
the presentation $R_\CC = \CC[y]/\<y^3-1\> \cong \CC[G]$.  Therefore
$R$ is a nontrivial twisted group algebra for the group $G = \ZZ/3\ZZ$
over the rational numbers~$\QQ$.
\end{example}

Generalizing the manipulations in Example~\ref{e:twisted} yields the
following.

\begin{prop}\label{p:twisted}
The twisted group algebras~$R$ over~$\kk$ (for a finitely generated
group $G$) are precisely the quotients of Laurent polynomial rings
over~$\kk$ by binomial~ideals.
\end{prop}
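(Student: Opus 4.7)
The plan is to prove this ``same thing as'' statement as a biimplication, with a natural construction in each direction, both exploiting the $G$-grading.

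For the forward direction, let $R$ be a twisted group algebra for the finitely generated abelian group~$G$, and pick generators $g_1,\ldots,g_n$ of~$G$. Since $R\otimes_\kk\ol\kk \cong \ol\kk[G]$ as $G$-graded algebras, the graded piece~$R_g$ is one-dimensional over~$\kk$ for every $g\in G$. Choose nonzero $y_i \in R_{g_i}$ and a nonzero $w_i \in R_{-g_i}$; then $y_iw_i$ lies in $R_0 = \kk\cdot 1$ and must be a nonzero scalar because $y_i$ is a unit over~$\ol\kk$. Hence each~$y_i$ is invertible in~$R$, and sending $t_i\mapsto y_i$ defines a $\kk$-algebra homomorphism $\phi\colon\kk[t_1^{\pm 1},\ldots,t_n^{\pm 1}]\to R$. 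Equip the source with the $G$-grading pulled back along $\ZZ^n\onto G$; then $\phi$ is $G$-homogeneous, surjective (since $\prod y_i^{c_i}$ spans $R_g$ for $g=\sum c_ig_i$), and within each $G$-graded piece of the source the monomials form a basis with each sent to a unit of~$R$. Thus $\ker\phi$ intersected with each piece has codimension one and is spanned by binomials $\ttt^a - \lambda\ttt^b$, which shows that $\ker\phi$ is a binomial ideal.

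For the reverse direction, let $I\subsetneq\kk[t_1^{\pm 1},\ldots,t_n^{\pm 1}]$ be a proper binomial ideal and set $R := \kk[\ttt^{\pm 1}]/I$. Properness forces $I$ to contain no monomials, since every monomial in a Laurent polynomial ring is a unit. The induced congruence~$\til_I$ identifies monomials that become proportional in~$R$, so $G := \ZZ^n/\til_I$ is a finitely generated abelian group, $R$ is $G$-graded, and each $R_g$ is one-dimensional over~$\kk$. To produce the required $G$-graded isomorphism $R\otimes_\kk\ol\kk \cong \ol\kk[G]$, decompose $G$ into cyclic factors with generators $\gamma_j$ of orders $n_j\in\NN\cup\{\infty\}$, pick any nonzero $y_j \in R_{\gamma_j}\otimes\ol\kk$, and use algebraic closure of~$\ol\kk$ to rescale so that $y_j^{n_j} = 1$ whenever $n_j<\infty$. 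Sending each group generator of~$\ol\kk[G]$ to the corresponding~$y_j$ then gives a surjective $G$-graded homomorphism $\ol\kk[G]\to R\otimes_\kk\ol\kk$, which is an isomorphism because both sides are one-dimensional in every $G$-degree.

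The main obstacle I anticipate is careful bookkeeping of the two gradings on the Laurent polynomial ring in the forward direction: the fine $\ZZ^n$-grading and the coarser $G$-grading pulled back along $\ZZ^n\onto G$. Binomiality of $\ker\phi$ is visible only in the $G$-grading, so one must confirm that each $G$-graded piece is spanned by finitely many monomials which all map to units, so that the ``subtract a distinguished monomial'' argument genuinely produces a basis of binomials for the kernel in that piece. The reverse direction is lighter, with the only delicate step being the root extraction used to normalize the~$y_j$, which is precisely the reason the definition of ``twisted'' passes to~$\ol\kk$ rather than imposing a presentation already over~$\kk$.
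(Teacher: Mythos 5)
Your argument is correct and proceeds along essentially the same lines as the paper's: both directions rest on the observation that every $G$-graded piece of~$R$ is one-dimensional over~$\kk$. Two implementation details differ. In the forward direction, the paper cites \cite[Proposition~1.11]{ES96} to obtain a binomial presentation over~$\kk[\NN^n]$ first and then observes that all monomials become invertible, passing to~$\kk[\ZZ^n]$; you instead build the surjection from the Laurent ring directly by picking unit representatives~$y_i$ in degrees~$g_i$, which is equivalent and a touch more self-contained. In the reverse direction the paper first invokes the classification of proper binomial ideals in Laurent rings (Lemma~\ref{l:groupalg}, i.e.\ \cite[Theorem~2.1]{ES96}) to identify $I = I_\sigma$ for a partial character $\sigma\colon L \to \kk^*$, then uses that $\ol\kk^*$ is divisible---hence an injective abelian group---to extend $\sigma$ to all of~$\ZZ^n$, and uses the extension to rescale variables into a literal presentation of~$\ol\kk[G]$. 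You bypass the lattice-ideal classification, read $G$ off the congruence~$\til_I$, decompose $G$ into cyclic factors, and normalize the chosen degree-$\gamma_j$ elements by root extraction in~$\ol\kk$. Both routes reduce to the same underlying fact, that characters on subgroups extend over~$\ol\kk$, but yours is a shade more elementary while the paper's injectivity formulation sidesteps the choice of a cyclic decomposition. Your own cautionary note about tracking the fine $\ZZ^n$-grading versus the coarse pulled-back $G$-grading is exactly the right thing to worry about, and your resolution---that within each $G$-graded piece all monomials map to nonzero multiples of a common spanning element---is what the paper's citation to \cite[Proposition~1.11]{ES96} encapsulates.
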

\begin{proof}
Let $R$ be a twisted group algebra.  Every $G$-graded piece of~$R$ has
dimension $\dim_\kk(R_g) = 1$ for all~$g \in G$, because this is true
after tensoring with~$\ol\kk$ by definition.  Thus $R$ admits a
binomial presentation $R \cong \kk[\NN^n]/I$
\cite[Proposition~1.11]{ES96}.  Every monomial~$\xx^u \in \kk[\NN^n]$
becomes invertible in~$R$ because every such monomial becomes
invertible in $R_\ol\kk := R \otimes_\kk \ol\kk$.  Therefore $R \cong
\kk[\ZZ^n]/I$ is a binomial quotient of a Laurent polynomial ring.  On
the other hand, the characterization of Laurent binomial ideals~$I$
\cite[Theorem~2.1]{ES96} (or see Lemma~\ref{l:groupalg}, below)
implies that there is a unique sublattice $L \subseteq \ZZ^n$ and
character $\sigma: L \to \kk$ such that $I = \<\xx^q - \sigma(q) \mid
q \in L\>$.  Over~$\kk$, not much more can be said, in general; but
over~$\ol\kk$, the fact that $\ol\kk{}^*$ is an injective abelian
group implies that $\sigma$ extends to a character $\rho: \ZZ^n \to
\ol\kk{}^*$.  If $y_i$ is the image in $R_\ol\kk$ of $\rho(x_i)x_i \in
\ol\kk[\ZZ^n]$, then naturally $R_\ol\kk = \ol\kk[y_1,\ldots,y_n] =
\ol\kk[G]$ for $G = \ZZ^n/L$.
\end{proof}

\begin{defn}\label{d:cellular}
A binomial ideal $I \subset \kk[Q]$ in the monoid algebra for a
monoid~$Q$ is
\begin{enumerate}
\item%
\emph{cellular} if every monomial $\ttt^q \in \kk[Q]/I$ is either
nilpotent or a nonzerodivisor.

\item%
\emph{mesoprimary} if it is maximal among the proper binomial ideals
inducing a given mesoprimary congruence (as per
Theorem~\ref{t:chain}).

\item%
\emph{primary} if the quotient $\kk[Q]/I$ has precisely one associated
prime ideal.

\item%
\emph{mesoprime} if $I$ is the kernel of a monomial homomorphism from
$\kk[Q]$ to a twisted group algebra over~$\kk$.

\item%
\emph{prime} if $\kk[Q]/I$ is an integral domain: $fg = 0$ in $\kk[Q]/I$
implies $f = 0$ or $g = 0$.
\end{enumerate}
\end{defn}

\begin{remark}
The maximality for a mesoprimary ideal $I \subseteq \kk[Q]$ amounts to
stipulating that the nil class of~$\til_I$ consists of elements $q \in
Q$ with $\ttt^q \in I$, the alternative being that none of these
monomials lie in~$I$ but differences of scalar multiples thereof~do.
\end{remark}

\begin{thm}\label{t:til}
Let $\alpha \in \{1,2,4\}$.  A binomial ideal~$I$ satisfies
part~$\alpha$ of Definition~\ref{d:cellular} if and only if its
induced congruence satisfies part~$\alpha$ of Definition~\ref{d:prim*}
and $I$ is maximal among proper ideals inducing that congruence.  For
$\alpha = 5$ the same holds if\/ $\kk$ is algebraically closed.  When
$\alpha = 3$ the implication Definition~\ref{d:prim*}.3 $\implies$
Definition~\ref{d:cellular}.3 holds in general, and the converse holds
if\/ $\kk$ is algebraically closed~of~characteristic~$0$.
\end{thm}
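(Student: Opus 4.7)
The plan is to pivot on the $\oQ$-grading of $\kk[Q]/I$, leveraging Proposition~\ref{p:nil} and Theorem~\ref{t:chain}: under this grading each non-nil graded piece is $1$-dimensional over~$\kk$, while the nil-class piece vanishes precisely when $I$ is maximal among binomial ideals inducing~$\til_I$. Since Definition~\ref{d:cellular}.2 is essentially the asserted equivalence verbatim, the case $\alpha = 2$ is immediate; the remaining cases all cascade from the structure imposed by this grading.

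For $\alpha = 1$, under maximality, multiplication by $\ttt^q$ carries the piece $(\kk[Q]/I)_{\ol p}$ isomorphically onto $(\kk[Q]/I)_{\ol{p+q}}$ when both $\ol p$ and $\ol{p+q}$ are non-nil, and is the zero map whenever either is nil. From this I would extract the desired equivalences: $\ttt^q$ is a nonzerodivisor in $\kk[Q]/I$ iff $\ol q$ is cancellative in~$\oQ$ (using that nonzerodivision prohibits both collapsing non-nil pieces to nil and merging distinct non-nil pieces into one), and $\ttt^q$ is nilpotent iff $\ol q$ is nilpotent in~$\oQ$. For the maximality half under the assumption that $I$ is cellular, the key observation is that if $\ol q = \ol\nil$ and $\ttt^q \neq 0$ in $\kk[Q]/I$, then multiplication by $\ttt^q$ lands inside the single graded piece of degree $\ol\nil$ and therefore cannot be injective unless $\kk[Q]/I$ is itself at most $1$-dimensional; ruling out this degenerate case, $\ttt^q$ must be nilpotent, placing a nil-class monomial in~$I$ and hence, by Proposition~\ref{p:nil}, all nil-class monomials.

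For $\alpha = 4$, in the forward direction $I = \ker\phi$ for a monomial homomorphism $\phi: \kk[Q] \to R$ into a twisted group algebra graded by a group~$G$, so $\oQ$ injects into $G \cup \{\nil\}$, rendering non-nil classes cancellative and placing nil-class monomials in $\ker\phi = I$, which gives both primality of $\til_I$ and maximality of~$I$. Conversely, primality of $\til_I$ plus maximality of $I$ makes every nonzero monomial in $\kk[Q]/I$ a nonzerodivisor by the $\alpha = 1$ analysis, so localizing at these monomials yields a $\kk$-algebra graded by the universal group of the cancellative submonoid $F \subseteq \oQ$ with $1$-dimensional graded pieces; this localization is a Laurent binomial quotient, hence a twisted group algebra by Proposition~\ref{p:twisted}, and $I$ is the kernel of the composition $\kk[Q] \to \kk[Q]/I \into R$. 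The case $\alpha = 5$ then builds on this: over $\kk = \ol\kk$ a twisted group algebra is the ordinary group algebra $\kk[G]$, which is a domain iff $G$ is torsion-free---the precise additional hypothesis distinguishing toric from prime congruences---and the maximality argument reuses the collapsing observation from~$\alpha = 1$.

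The hardest part, as expected, will be $\alpha = 3$, where only one direction holds unconditionally. The implication primitive $+$ maximal $\implies$ primary follows from Proposition~\ref{p:mesoprimcomp}, whose analysis shows that the primary components of a mesoprimary ideal are indexed by the characters of a finite abelian group $\sat(K)/K$; torsion-freeness of the cancellative quotient makes this group trivial, leaving a unique primary component. The converse genuinely requires $\kk = \ol\kk$ of characteristic zero because over smaller fields a primary binomial ideal can arise from a non-primitive mesoprimary congruence whose arithmetic components fail to split over~$\kk$; for this direction I would invoke the classification of primary binomial ideals from \cite{ES96} and \cite{primDecomp}, which supplies the combinatorial description of the associated congruence under these hypotheses.
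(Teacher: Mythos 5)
For $\alpha\in\{1,2,4,5\}$ your approach is essentially the paper's: pivot on the $\oQ$-grading, use Proposition~\ref{p:nil}/Theorem~\ref{t:chain} to tie maximality to vanishing of the nil-degree piece, and for $\alpha\in\{4,5\}$ invert nonzero monomials and apply Proposition~\ref{p:twisted}. A small stylistic difference is in the $\alpha=1$ maximality step, where you argue via a grading-dimension count (multiplication by a nil-degree monomial lands in a single $1$-dimensional piece) rather than the paper's direct factorization $\ttt^q(1-\lambda_r\ttt^{(r-1)q})\in I$; both land on the cellular hypothesis forcing nilpotence, so that is fine.

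The genuine gap is in the direction Definition~\ref{d:prim*}.3 $\implies$ Definition~\ref{d:cellular}.3, which the theorem asserts over an arbitrary field. Your argument invokes Proposition~\ref{p:mesoprimcomp}, which is stated only for $\kk=\ol\kk$, so as written it does not deliver the field-independent conclusion. Worse, Proposition~\ref{p:mesoprimcomp} sits downstream of Theorem~\ref{t:til}: its proof rests on the structure of associated primes of mesoprimary ideals (Corollary~\ref{c:assofmesoprimary}, Proposition~\ref{p:filter}, Proposition~\ref{p:one}), and Proposition~\ref{p:one} in turn cites Corollary~\ref{c:implies'}, which is a corollary of Theorem~\ref{t:til}.1 and~.2 — so your route threatens a dependency cycle and in any case recycles, in disguise, the very filtration argument the paper uses to prove this case directly. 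The paper's proof avoids both problems: since $\til_I$ is primitive, the cancellative part $F\subseteq\oQ$ is an affine semigroup, and the $\oQ/F$-grading (Lemma~\ref{l:poset}) induces a filtration of $\kk[Q]/I$ whose associated graded is $\bigoplus_T\kk\{T\}$ over $F$-orbits $T$; each $\kk\{T\}$ is a torsion-free module over the domain $\kk[F]=\kk[Q]/\pp_F$, so $\pp_F$ is the unique associated prime of $\kk[Q]/I$ and $I$ is primary — no hypothesis on $\kk$. You should replace the appeal to Proposition~\ref{p:mesoprimcomp} with this direct argument. (Your description of the other $\alpha=3$ direction, via \cite{ES96} and \cite{primDecomp} over $\ol\kk$ of characteristic~$0$, matches the paper's reduction, though it would need to be spelled out.)
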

\begin{proof}
Fix a binomial ideal~$I$ and use notation as in
Definition~\ref{d:prim*} for $\til = \til_I$.  First we assume that
$I$ satisfies Definition~\ref{d:cellular}.$\alpha$ and show that $I$
satisfies Definition~\ref{d:prim*}.$\alpha$.
\begin{enumerate}
\item%
If a monomial $\ttt^q \in \kk[Q]/I$ is nilpotent or a nonzerodivisor
then the image $\ol q \in \oQ$ of~$q$ is nilpotent or cancellative,
respectively.

\item%
By definition.

\item%
Pick a presentation $\NN^n \onto Q$.  The kernel of the induced
surjection $\kk[\NN^n] \onto \kk[Q]$ is a binomial ideal
\cite[\S7]{gilmer}, so the preimage of~$I$ in $\kk[\NN^n]$ is a
primary binomial ideal $I' \subseteq \kk[\NN^n]$ such that
$\NN^n/\til_{I'} = \oQ$.  Replacing $I$ by~$I'$ if necessary, we
therefore may as well assume $Q = \NN^n$, since the definitions of
primitive congruence and primary ideal depend only on the quotients
$\ol\NN{}^n = \oQ$ and $\kk[\NN^n]/I' = \kk[Q]/I$.

Each binomial prime in $\kk[\NN^n] = \kk[x_1,\ldots,x_n]$ can be
expressed as a sum $\pp_b + \mm_J \subseteq \kk[\NN^n]$ of its
``binomial portion'' $\pp_b$, which is a prime binomial ideal
containing no monomials, and a monomial prime $\mm_J := \<x_i \mid i
\notin J\>$, which is generated by the variables whose indices are
\emph{not} contained in $J \subseteq \{1,\ldots,n\}$
\cite[Corollary~2.6]{ES96}; this deduction relies on the algebraically
closed hypothesis.  Rescaling the variables of $\kk[\NN^n]$ if
necessary, we can assume that the unique associated prime $\pp = \pp_b
+ \mm_J$ of~$\kk[\NN^n]/I$ is \emph{unital}---that is, $\pp_b$ is a
unital ideal.  Given that $\kk$ is algebraically closed of
characteristic~$0$, the $\pp$-primary condition on~$I$ implies that it
contains~$\pp_b$ \cite[Theorem~7.1$'$]{ES96}.  Therefore, replacing
$\kk[\NN^n]$ by $\kk[\NN^n]/\pp_b$ and $I$ by~$I/\pp_b$, we assume
that $Q$ is an affine semigroup and $\pp$ is generated by monomials.
The desired result now follows from \cite[Theorem~2.15 and
Proposition~2.13]{primDecomp} or \cite[Theorem~2.23]{abelSurvey}, the
latter being an equivalent statement that directly implies the
characterization of mesoprimary congruences in Corollary~\ref{c:one}.

\item%
If $\ol q$ is not nil then $\ttt^q \in \kk[Q]$ lies outside of~$I$, so
$\ttt^q$ maps to a nonzero monomial in the twisted group algebra,
whence $\ol q$ is cancellative because $G$ is cancellative.

\item%
When $I$ is a monomial prime in an affine semigroup ring, the result
is obvious.  But prime $\implies$ primary, so the reduction to that
case in part~3 applies.  Moreover, since $I = \pp$ contains~$\pp_b$
already, the characteristic~$0$ hypothesis is superfluous.
\end{enumerate}

For this half of the theorem, it remains to explain, for $\alpha \neq
2$, why $I$ is maximal among ideals inducing~$\til$.  For that, it
suffices by Theorem~\ref{t:chain} to show that $I$ contains a
mono\-mial if $\oQ$ has a nil~$\nil$.  For part~1 (the cellular case),
if $\ol q = \nil$, then by definition of~nil there is for each~$r \in
\NN$ a binomial $\ttt^q - \lambda_r\ttt^{rq} \in I$ for some
\mbox{$\lambda_r \in \kk^*$}, so $\ttt^q(1 - \lambda_r\ttt^{(r-1)q})
\in I$, whence $\ttt^q$ is a zerodivisor modulo~$I$ and thus nilpotent
modulo~$I$---say $\ttt^{rq} \in I$; then $\ttt^q - \lambda_r\ttt^{rq}
\in I \implies \ttt^q \in I$.  For part~3 (the primary case),
Theorem~\ref{t:chain} implies that~$I$ has at least two associated
primes---one or more arising from an augmentation ideal---if
maximality fails.  For part~4 (the mesoprime case), any monomial
$\ttt^q$ with $\ol q = \nil$ must lie in~$I$ because a group has no
nil.  For part~5 (the prime case), the maximality is a special case of
part~1, because prime $\implies$ cellular for binomial ideals.

Next, assuming that $I$ is maximal among the binomial ideals inducing
a congruence~$\til$ on~$Q$ satisfying
Definition~\ref{d:prim*}.$\alpha$, we prove that $I$ satisfies
Definition~\ref{d:cellular}.$\alpha$.  As a matter of notation, write
$\ott^q$ for the image of $\ttt^q$ in~$\kk[Q]/I$.  In all cases, if $q
\in Q$ is an element whose image $\ol q \in \oQ$ is nil, then $\ott^q
= 0$ by Theorem~\ref{t:chain}, using the maximality property of~$I$.
Consequently, if $q \in Q$ is nilpotent, then $\ott^q$ is nilpotent
in~$\kk[Q]/I$.
\begin{enumerate}
\item%
By the previous paragraph, if $q \in Q$, then either the monomial
$\ott^q$ is nilpotent or $\ol q$ is cancellative.  In the latter case,
multiplication by $\ott^q$ is injective on~$\kk[Q]/I$ because
$\kk[Q]/I$ is $\oQ$-graded and addition by~$\ol q$ is injective
on~$\oQ$.

\item%
By definition.

\item%
The quotient $\oQ$ satisfies the condition of Corollary~\ref{c:one}
in which the cancellative monoid $F \subseteq \oQ$ is an affine
semigroup.  Each orbit is a finite union of translates $\ol q + F$
because $\oQ$ itself is generated by~$F$ and finitely many
nilpotent elements.  The proof now proceeds as
\cite[Proposition~2.13]{primDecomp} does: owing to the partial order
on the set of orbits afforded by Lemma~\ref{l:poset}, the
$\oQ/F$-grading on $\kk[Q]/I$ induces a filtration by
$\kk[Q]$-submodules with associated graded module
$$%
  \mathrm{gr}(\kk[Q]/I) \cong \bigoplus_{F\text{-orbits}~T} \kk\{T\},
$$
where $\kk\{T\}$ is the vector space over~$\kk$ with basis~$T$.  The
isomorphism above is as $\kk[F]$-modules, or equivalently, as
$\kk[Q]$-modules annihilated by the kernel~$\pp_F$ of the
surjection $\kk[Q] \onto \kk[F]$, with the $\kk[F]$-module
structure on $\kk\{T\}$ induced by the $F$-action on~$T$.  Since
$\kk\{T\}$ is torsion-free as a $\kk[F]$-module, the direct sum
over~$T$ has only one associated prime, namely~$\pp_F$, whence
$\kk[Q]/I$ does,~too.

\item%
Set $\oQ' = \oQ \minus \{\nil\}$ if $\oQ$ has a nil, and $\oQ' = \oQ$
otherwise.  By maximality of~$I$, the quotient $\kk[Q]/I$ is
$\oQ'$-graded.  By part~1, every nonzero monomial $\ott^q \in
\kk[Q]/I$ is a nonzerodivisor.  Therefore $\kk[Q]/I$ injects into its
localization~$R$ obtained by inverting the nonzero monomials.  Any
presentation $\ZZ^n \onto G$ for the universal group~$G$ of~$Q$
results in a presentation $\kk[\ZZ^n] \onto \kk[G] \onto \kk[G]/I = R$
whose kernel is a binomial ideal.  Thus $R$ is a twisted group algebra
over~$\kk$ by Proposition~\ref{p:twisted}.

\item%
The argument for part~4 works in this case, too, but now $\oQ'$ is an
affine semigroup, so that $\ol\kk\otimes_\kk R$, and hence
also~$\kk[Q]/I$, is an integral domain.\qedhere
\end{enumerate}
\end{proof}

\begin{cor}\label{c:implies'}
For binomial ideals in~$\kk[Q]$, over an arbitrary field except
where~noted,
\begin{itemize}
\item%
prime $\implies$ mesoprime $\implies$ mesoprimary $\implies$ cellular;
and
\item%
prime $\implies$ primary $\implies$ mesoprimary $\implies$ cellular
(we only claim the second implication when $\kk$ is algebraically
closed of characteristic~$0$).
\end{itemize}
\end{cor}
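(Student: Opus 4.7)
The plan is to bootstrap the corollary from Lemma~\ref{l:implies} and Theorem~\ref{t:til}. The content of Theorem~\ref{t:til} for parts $\alpha\in\{1,2,4\}$ of Definition~\ref{d:cellular} is that a binomial ideal satisfies property~$\alpha$ exactly when its induced congruence satisfies the corresponding part of Definition~\ref{d:prim*} and $I$ is maximal among binomial ideals inducing that congruence. Crucially, by Theorem~\ref{t:chain} and Proposition~\ref{p:nil}, maximality is a condition on $I$ alone, unconnected to any particular congruence-type: it says either $\til_I$ has no nil, or $I$ contains every monomial whose exponent maps to the nil class. Hence once maximality is established at the strongest end of a chain of implications, it persists through all weakenings of the congruence-type.

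I would carry out each implication uniformly. For mesoprime $\implies$ mesoprimary $\implies$ cellular, start from the hypothesis on $I$, invoke the appropriate part of Theorem~\ref{t:til} to extract the congruence property together with maximality of $I$, apply Lemma~\ref{l:implies} to weaken the congruence, and reapply Theorem~\ref{t:til} (using the same maximality) to deduce the weaker ideal property. The second chain is analogous: prime $\implies$ primary is the standard fact that a domain has a single associated prime, while primary $\implies$ mesoprimary is exactly where the algebraically closed characteristic zero hypothesis must enter, because only the forward half of the $\alpha=3$ statement in Theorem~\ref{t:til} is available over arbitrary fields. For prime $\implies$ mesoprime over an arbitrary~$\kk$, I would bypass Theorem~\ref{t:til}.5 (which requires algebraic closure) and argue directly as in the converse half of the proof of Theorem~\ref{t:til}.4: when $\kk[Q]/I$ is a domain, every nonzero monomial is a nonzerodivisor, so $\kk[Q]/I$ injects into the localization~$R$ obtained by inverting its nonzero monomials. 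Via a presentation $\NN^n \onto Q$, the ring~$R$ is a binomial quotient of a Laurent polynomial ring, hence a twisted group algebra by Proposition~\ref{p:twisted}; then $I = \ker(\kk[Q]\to R)$ is the kernel of a monomial homomorphism to a twisted group algebra, which is exactly mesoprimeness.

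The main obstacle, and essentially the only step requiring thought rather than bookkeeping, is verifying that the field-theoretic hypothesis is genuinely needed only at the advertised step and nowhere else. All remaining implications go through unconditionally because they invoke only parts~1, 2, and~4 of Theorem~\ref{t:til}, whose biconditionals hold for arbitrary~$\kk$, together with the direct argument above for the prime case. After this is laid out, the proof amounts to two short chains of citations, one for each bullet of the corollary.
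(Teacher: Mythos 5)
Your proposal is correct and follows essentially the same structure as the paper's proof: bootstrap most implications through the maximality observation in Theorem~\ref{t:til} combined with Lemma~\ref{l:implies}, and handle prime $\implies$ mesoprime separately since Theorem~\ref{t:til}.5 would otherwise demand algebraic closure. The one place you diverge is at that final implication: the paper simply cites the forthcoming Theorem~\ref{t:binomPrime} (whose proof reduces to the case where $Q$ is cancellative and $I$ contains no monomials, embeds $\kk[Q]$ into the group algebra $\kk[Q_\nothing]$, and invokes Lemma~\ref{l:groupalg} to produce the character), whereas you inline an argument replicating the converse half of Theorem~\ref{t:til}.4's proof, localizing $\kk[Q]/I$ at nonzero monomials and invoking Proposition~\ref{p:twisted}. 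Both routes are sound and closely parallel; yours is more self-contained at the cost of not surfacing the extra structural information in Theorem~\ref{t:binomPrime} that the character is arithmetically saturated. One small gap worth noting if you were to write this out in full: you assert that $R$ is a binomial quotient of a Laurent polynomial ring, which requires observing that the preimage in $\kk[\NN^n]$ of the monoid prime $P = \{q \in Q \mid \ttt^q \in I\}$ has complement a face $\NN^J$, that the kernel of $\kk[\NN^n]\to\kk[Q]/I$ is binomial, and that this kernel's localization to $\kk[\ZZ^J]$ remains binomial and proper; these are all routine but deserve a sentence each.
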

\begin{proof}
Use Theorem~\ref{t:til}: if $I$ is maximal among binomial ideals
inducing a congruence from Definition~\ref{d:prim*}, then it is
maximal among binomial ideals inducing any of the weaker congruences
from Lemma~\ref{l:implies}.  This proves every implication except for
prime $\implies$ mesoprime, which a~priori requires $\kk$ to be
algebraically closed, if Theorem~\ref{t:til} is being applied.  But in
fact the implication holds in general, even though the quotient by a
prime binomial ideal~$I$ need not be an affine semigroup ring if $\kk$
is not algebraically closed.  This is a consequence of the stronger
statement in Theorem~\ref{t:binomPrime}, below.
\end{proof}

\begin{example}\label{e:primary=/=>mesoprimary}
In general a primary ideal need not be mesoprimary.  For instance,
$\<1-x^p, y-xy, y^2\>$ is primary in characteristic~$p$, but the
congruence it induces has two associated prime congruences regardless
of the characteristic.
\end{example}

\begin{remark}\label{r:primitive}
The given proof of the implication Definition~\ref{d:cellular}.3
$\implies$ Definition~\ref{d:prim*}.3 fails in characteristic~$p$,
even if the field~$\kk$ is algebraically closed, because primary
binomial ideals in characteristic~$p$ do not necessarily contain the
binomial part of their associated prime \cite[Theorem~7.1$'$]{ES96}.
\end{remark}

Theorem~\ref{t:til} implies the following result, which reflects the
table preceding Definition~\ref{d:cellular} homogeneously across all
of its rows, and shows that all of the richness in
Definition~\ref{d:cellular} is already exhibited by \emph{unital
ideals}: those generated by monomials and unital binomials.

\begin{cor}\label{c:til}
A congruence satisfies a part of Definition~\ref{d:prim*} if and only if
the kernel of the surjection $\kk[Q] \onto \kk[\oQ]^-\!$ satisfies the
corresponding part of Definition~\ref{d:cellular}.~~$\Box$
\end{cor}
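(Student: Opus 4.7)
The plan is to apply Theorem~\ref{t:til} after identifying the kernel $I := \ker\!\big(\kk[Q] \onto \kk[\oQ]^-\big)$ as the canonical unital binomial ideal attached to~$\til$.

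First I would identify $I$ explicitly. Since $\kk[\oQ]^-$ is $\oQ$-graded with each nonzero graded piece of $\kk$-dimension one, and with the nil-graded piece equal to zero when $\oQ$ has a nil, the kernel is a binomial ideal generated by the unital differences $\ttt^p - \ttt^q$ for $p \sim q$ in~$Q$, together with the monomials $\ttt^q$ whose exponents lie in the nil class of~$\oQ$ (when that class exists). In particular, $I$ is a unital binomial ideal, plus-the-nil-monomials, and it induces the congruence~$\til$ on~$Q$.

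Next I would verify that $I$ is maximal among binomial ideals inducing~$\til$. This is immediate from Theorem~\ref{t:chain}: the only way to strictly enlarge a binomial ideal without altering the induced congruence is to adjoin the monomials from the nil class of the quotient monoid, and $I$ already contains these by construction. (If $\oQ$ has no nil then $I$ is in fact the unique binomial ideal inducing~$\til$, again by Theorem~\ref{t:chain}.)

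Having established maximality, Theorem~\ref{t:til} yields directly, for each $\alpha$, that $I$ satisfies part~$\alpha$ of Definition~\ref{d:cellular} if and only if $\til$ satisfies part~$\alpha$ of Definition~\ref{d:prim*}. The principal step is the maximality check; the rest is bookkeeping. The field hypotheses appearing in Theorem~\ref{t:til} for $\alpha \in \{3,5\}$ were invoked there only to reduce an arbitrary binomial ideal to the unital situation (by rescaling variables so that the associated prime becomes unital, and then forcing containment in its binomial part), and both reductions are automatic for our~$I$, so the equivalence in the corollary holds under the same hypotheses as the relevant directions of Theorem~\ref{t:til}.
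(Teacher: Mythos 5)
Your main line of argument is exactly the intended one: the kernel $I$ is the unital binomial ideal inducing $\til$ that already contains the monomials of the nil class, so it is maximal among binomial ideals inducing $\til$ (Proposition~\ref{p:nil} or Theorem~\ref{t:chain}), and Theorem~\ref{t:til} then gives the equivalence. That is why the paper records the corollary as an immediate consequence. However, two of your side remarks are wrong. The parenthetical that $I$ is the unique binomial ideal inducing $\til$ when $\oQ$ has no nil overreaches: Theorem~\ref{t:chain} bounds chain length, but incomparable binomial ideals can induce the same congruence; for instance $\<x^2-xy\>$ and $\<x^2-2xy\>$ in $\kk[x,y]$ with $\mathrm{char}\,\kk\neq 2$ both induce the same nil-free congruence, and neither contains the other. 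Maximality only says that no binomial ideal \emph{properly containing} $I$ induces $\til$.

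More substantially, the claim that the field hypotheses for $\alpha\in\{3,5\}$ in Theorem~\ref{t:til} only serve to reduce to the unital case, and therefore become vacuous for our $I$, is false. The characteristic-zero hypothesis in the direction Definition~\ref{d:cellular}.3 $\Rightarrow$ Definition~\ref{d:prim*}.3 is used to conclude that a primary binomial ideal contains the binomial part of its associated prime (\cite[Theorem~7.1$'$]{ES96}), and this step is not automatic for unital ideals. Concretely, with $Q=\NN$, $\oQ=\ZZ/p\ZZ$, and $\kk=\mathbb{F}_p$, the kernel is $I=\<x^p-1\>=\<(x-1)^p\>\subset\mathbb{F}_p[x]$, which is primary in the sense of Definition~\ref{d:cellular}.3, yet $\oQ$ has torsion, so $\til$ is not primitive in the sense of Definition~\ref{d:prim*}.3. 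The corollary therefore carries the same field hypotheses as the corresponding parts of Theorem~\ref{t:til} --- which is where your final sentence lands --- but the ``both reductions are automatic'' justification should be removed.
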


\section{Monomial localization, characters, and mesoprimes}\label{s:char}

For arithmetic reasons, intersections of binomial ideals need not
reflect their combinatorics completely accurately.  The simplest
example is $\<x^2-1\> = \<x-1\> \cap \<x+1\>$, whose congruence fails
to equal the common refinement of the congruences induced~by $\<x-1\>$
and $\<x+1\>$.  Precise statements about relations between
combinatorics and arithmetic use characters on subgroups of the unit
groups of localizations of~$Q$.

Localizations of monoids at their prime ideals corresponds to
inverting monomials in their monoid algebras.

\begin{defn}\label{d:mmP}
For a prime ideal~$P \subset Q$, the corresponding monomial ideal
in~$\kk[Q]$ is $\mm_P = \<\ttt^p \mid p \in P\>$.
\end{defn}

\begin{remark}\label{r:maximal}
When $P$ is maximal, $\mm_P$ is the maximal proper $Q$-graded ideal in
the monoid algebra~$\kk[Q]$, but it need not be maximal in the set of
all proper ideals of~$\kk[Q]$.
\end{remark}

\begin{defn}\label{d:monomial-localization}
The \emph{monomial localization $\kk[Q]_P$ of\/ $\kk[Q]$ along~$P$} is
the monoid algebra of the localization~$Q_P$, arising by adjoining
inverses to all monomials outside of~$\mm_P$.  The \emph{monomial
localization} of any $\kk[Q]$-module $M$ \emph{along~$P$} is
\mbox{$M_P = M \otimes_{\kk[Q]} \kk[Q]_P$}.
\end{defn}

Localization behaves well upon passing between algebra and
combinatorics; it forms part of the justification for characterizing
algebraic notions, such as the concept of $I$-witness in the next
section, in combinatorial~terms.

\begin{lemma}\label{l:cong}
If $I \subseteq \kk[Q]$ is a binomial ideal inducing the
congruence~$\til$ on~$Q$ with quotient~$\oQ$, then for any monoid
prime $P \subset Q$, the quotient of~$Q_P$ modulo the congruence
induced by $I_P$ is the monoid localization~$\oQ_P$ from
Definition~\ref{d:localize}.
\end{lemma}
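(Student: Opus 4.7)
The plan is to verify that two congruences on $Q_P$ coincide: the congruence $\til_{I_P}$ induced by the localized binomial ideal $I_P \subseteq \kk[Q_P]$, and the congruence induced on $Q_P$ by $\til = \til_I$ in the sense of Definition~\ref{d:localize} (which is what defines~$\oQ_P$). Equality of these two congruences on $Q_P$ gives $Q_P/\til_{I_P} = Q_P/\til = \oQ_P$, as desired.

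First I would observe that monomial localization is compatible with monoid localization: the natural map $\kk[Q]_P \to \kk[Q_P]$ sending $\ttt^q \ttt^{-u} \mapsto \ttt^{q-u}$ (for $u \in Q \minus P$) is a $\kk$-algebra isomorphism. Under this identification, $I_P$ is generated by the image of any binomial generating set of~$I$, and hence is itself a binomial ideal in~$\kk[Q_P]$, so $\til_{I_P}$ is well-defined via Definition~\ref{d:I}.

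Next, I would unpack what it means for $\alpha \sim_{I_P} \beta$. Writing $\alpha = a - u$ and $\beta = b - v$ with $a,b \in Q$ and $u, v \in Q \minus P$, and multiplying the relation $\ttt^\alpha - \lambda\ttt^\beta \in I_P$ by the unit $\ttt^{u+v} \in \kk[Q_P]^*$, we obtain the equivalent condition $\ttt^{v+a} - \lambda\ttt^{u+b} \in I_P$ for some $\lambda \in \kk^*$. The key reduction is the standard fact about ideal localization at a multiplicatively closed set of monomials: a binomial $\ttt^x - \lambda\ttt^y \in \kk[Q]$ lies in $I_P$ if and only if there exists $w \in Q \minus P$ with $\ttt^{w+x} - \lambda\ttt^{w+y} \in I$. (The coefficient $\lambda$ is unaffected by clearing denominators because the multiplier is a monomial.) Applying this translates $\alpha \sim_{I_P} \beta$ into the assertion that there is some $w \in Q \minus P$ with $w + v + a \sim_I w + u + b$ in~$Q$.

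Finally, by Definition~\ref{d:localize} applied to the quotient $\oQ = Q/\til$, the classes of $\alpha = a - u$ and $\beta = b - v$ coincide in $\oQ_P = Q_P/\til$ precisely when there exists $w \in Q \minus P$ with $w + v + a \sim_I w + u + b$ in~$Q$; this is the standard ``cross-multiplication'' criterion for equality of formal differences in a localized module. Since this is exactly the condition derived above for $\til_{I_P}$, the two congruences agree on~$Q_P$, completing the proof. The only delicate point is the bookkeeping of the unit scalar $\lambda$ when clearing denominators and when translating between $\kk[Q]_P$ and $\kk[Q_P]$, which is straightforward once one notes that multiplication by a monomial fixes the coefficient ratio of a binomial.
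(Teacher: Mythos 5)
Your proof is correct and follows the same approach as the paper, which simply dismisses the lemma with the phrase ``Immediate from the definitions.'' You have written out that unwinding explicitly: identifying $\kk[Q]_P \cong \kk[Q_P]$, clearing denominators by a monomial unit (noting the scalar $\lambda$ is preserved), invoking the standard criterion $f \in I_P \iff \ttt^w f \in I$ for some $w \in Q\minus P$, and matching it against the cross-multiplication rule for formal differences in Definition~\ref{d:localize}.
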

\begin{proof}
Immediate from the definitions.
\end{proof}

\begin{defn}\label{d:character}
For any group~$L$, a \emph{character} is a homomorphism $\rho: L \to
\kk^*$.  A character $\rho': L' \to \kk^*$ \emph{extends}~$\rho$ if
$L \subseteq L'$ is a subgroup and $\rho'(\ell) = \rho(\ell)$ for
$\ell \in L$.  The extension is \emph{finite} if $L'/L$ is finite.
\end{defn}

\begin{conv}
The domain $L$ is part of the data of a character $\rho: L \to \kk^*$;
that is, we simply speak of the character~$\rho$, and write $L_\rho$
if it is necessary to specify~$L$.
\end{conv}

\begin{defn}\label{d:P-mesoprime}
Fix a subgroup $K \subseteq G_P$ of the unit group $G_P$
at~$P$.  For any character $\rho: K \to \kk^*$, the
\emph{$P$-mesoprime of~$\rho$} is the preimage $\Irp$ in~$\kk[Q]$ of
the ideal
\begin{equation*}
(\Irp)_P
  := \<\ttt^u - \rho(u-v)\ttt^v \mid u-v \in K\> + \mm_P \subseteq \kk[Q]_P.
\end{equation*}
Viewing $P$ as implicit in the definition of~$\rho$, the symbol
$I_\rho$ refers to the preimage in~$\kk[Q]$ of the ideal $\<\ttt^u -
\rho(u-v)\ttt^v \mid u-v \in K\> \subseteq \kk[Q]_P$.
\end{defn}

\begin{defn}\label{d:saturate}
A subgroup $L \subseteq G$ of an abelian group is \emph{saturated}
in~$G$ if there is no subgroup of $G$ in which $L$ is properly
contained with finite index.  The \emph{saturation} $\sat(L)$ of~$L$
is the intersection of all saturated subgroups of $G$ that
contain~$L$.  For any prime number $p \in \NN$, the largest subgroup
of~$\sat(L)$ whose quotient modulo $L$~has~order
\begin{itemize}
\item%
a power of $p$ is $\sat_p(L)$.
\item%
coprime to~$p$ is $\sat'_p(L)$.
\end{itemize}
For $p=0$ set $\sat_p(L) = L$ and $\sat'_p(L) = \sat(L)$.
\end{defn}

The following implies, in particular, that the set of saturations of a
character is finite.  The statement is actually a slight
generalization of \cite[Corollary~2.2]{ES96}, in that the domain $L$
of~$\rho$ is allowed to be a subgroup of an arbitrary finitely
generated abelian unit group~$G_P$, and $\Irp$ is not an arbitrary
ideal in a finitely generated group algebra, but rather an ideal
containing the maximal monomial ideal in an arbitrary finitely
generated monoid algebra.  However, the generalization follows from
the original by working modulo the maximal monomial ideal and lifting
to any presentation of~$G_P$, taking note that all of the characters
in question are trivial on the kernel of the presentation.

\begin{prop}[{\cite[Corollary~2.2]{ES96}}]\label{p:assofmeso}
Fix an algebraically closed field $\kk$ of characteristic $p \geq 0$.
Let $\rho: L \to \kk^*$ be a character on a subgroup $L \subseteq
G_P$, and write $g$ for the order of $\sat'_p(L)/L$.  There are $g$
distinct characters $\rho_1,\dots,\rho_g$ on $\sat'_p(L)$ that
extend~$\rho$.  For each $\rho_j$ there is a unique character
$\rho_j'$ on $\sat(L)$ extending~$\rho_j$.  There is a unique
character $\rho'$ that extends~$\rho$ and is defined on~$\sat_p(L)$.
Moreover,
\begin{enumerate}
\item%
$\sqrt{\Irp} = I_{\rho', P}$,
\item%
$\Ass(S/\Irp) = \{I_{\rho_j',P} \mid j = 1,\ldots,g\}$, and
\item%
$\Irp = \bigcap_{j=1}^g I_{\rho_j,P}$.
\end{enumerate}
\end{prop}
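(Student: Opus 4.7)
The plan is to reduce the statement to its original form \cite[Corollary~2.2]{ES96} in a Laurent polynomial ring, as indicated in the remark preceding the proposition. All claims concern $(\Irp)_P \subseteq \kk[Q]_P$ and its associated primes; since $(\Irp)_P$ contains $\mm_P\kk[Q]_P$ by construction, it descends to an ideal in the group algebra $\kk[Q]_P/\mm_P\kk[Q]_P = \kk[G_P]$. Fix a presentation $\pi: \ZZ^n \onto G_P$ with kernel lattice $N \subseteq \ZZ^n$, so the induced surjection $\kk[\ZZ^n] \onto \kk[G_P]$ has kernel the Laurent binomial ideal of~$N$. Setting $\wt L = \pi^{-1}(L)$ and $\wt\rho: \wt L \to \kk^*$ by $\wt\rho(\ell) = \rho(\pi(\ell))$ (well defined because $\wt\rho$ is automatically trivial on~$N$), the pullback in $\kk[\ZZ^n]$ of the descended $I_\rho$ is precisely the Laurent binomial ideal $I_{\wt\rho}$ treated in~\cite{ES96}. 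The map $\pi$ induces a bijection between subgroups of~$G_P$ containing~$L$ and subgroups of~$\ZZ^n$ containing~$\wt L$ that respects saturation and preserves quotient groups, so each of ES96's three conclusions for $I_{\wt\rho}$ will transfer verbatim once one has also matched up characters.

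Next I would verify the enumeration of extensions of~$\rho$. Since $\kk = \ol\kk$, the group $\kk^*$ is divisible and hence injective as a $\ZZ$-module, so any character on a subgroup of an abelian group extends to any overgroup; the set of extensions to $M \supseteq L$ forms a torsor over $\mathrm{Hom}(M/L, \kk^*)$. When $\mathrm{char}(\kk) = p > 0$, the group~$\kk^*$ contains no nontrivial $p$-power roots of unity, so $\mathrm{Hom}(M/L, \kk^*)$ vanishes whenever $M/L$ is a $p$-group; this forces the uniqueness of the extension~$\rho'$ through~$\sat_p(L)$. On the other hand, a finite abelian group whose order is coprime to~$p$ embeds into~$\kk^*$ and is its own character group, producing exactly $g = |\sat'_p(L)/L|$ extensions $\rho_1,\ldots,\rho_g$ of~$\rho$ to~$\sat'_p(L)$. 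Since $\sat(L)/\sat'_p(L)$ is a $p$-group, each $\rho_j$ further extends uniquely to the character~$\rho_j'$ on~$\sat(L)$.

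With these correspondences in place, the three identities follow by applying \cite[Corollary~2.2]{ES96} to the lifted ideal $I_{\wt\rho} \subseteq \kk[\ZZ^n]$ and pulling the conclusions back through~$\pi$ and then through monomial localization. The relevant compatibilities are that taking $p$-th power roots (which generates the radical), extending characters to finite-index supergroups (which produces primary components), and intersecting primary components all commute with the surjection $\kk[\ZZ^n] \onto \kk[G_P]$, whose kernel is contained in every $I_{\wt\sigma,P}$ in sight because it corresponds to the trivial character on~$N$. I expect the main obstacle to be bookkeeping: verifying simultaneously that saturation, character extension, mesoprime construction, and ideal intersection all commute with the two reductions (monomial localization plus descent through~$\mm_P$, and lifting through~$\pi$), since each individual step is elementary but the combined translation must preserve every piece of structure appearing in the three conclusions.
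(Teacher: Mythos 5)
Your reduction — descend modulo $\mm_P$ to $\kk[G_P]$, lift through a presentation $\ZZ^n \onto G_P$ noting the characters vanish on the kernel lattice, apply \cite[Corollary~2.2]{ES96}, and pull back — is exactly the route the paper indicates in the remark preceding the proposition. The extra detail you supply (divisibility of $\kk^*$ yielding the extension count, uniqueness through the $p$-power part) is a correct unpacking of the same argument rather than an alternative one.
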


The following lemma is a variant of \cite[Lemma~2.9]{dmm}
and~\cite[Theorem 2.1]{ES96}.

\begin{lemma}\label{l:groupalg}
If\/ $\kk[\Phi]$ is the group algebra of a finitely generated abelian
group $\Phi$, then for any proper binomial ideal $I \subset \kk[\Phi]$
there is a subgroup $L \subseteq \Phi$ and a character $\rho: L \to
\kk^*$ such that $I = I_\rho$.
\end{lemma}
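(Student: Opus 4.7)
\medskip

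\noindent\textbf{Proof proposal for Lemma~\ref{l:groupalg}.}

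The plan is to reconstruct both the subgroup~$L$ and the character~$\rho$ directly from the binomial ideal~$I$, exploiting the fact that every monomial $\ttt^w$ in the group algebra $\kk[\Phi]$ is a unit.  Define
$$
  L = \{w \in \Phi \mid \ttt^w - \lambda \in I \text{ for some } \lambda \in \kk^*\}
$$
and, whenever $w \in L$, set $\rho(w) = \lambda$ for the scalar~$\lambda$ exhibiting membership in~$L$.  The first task is to show that $\rho$ is well-defined: if $\ttt^w - \lambda$ and $\ttt^w - \lambda'$ both lie in~$I$, then subtracting yields $\lambda' - \lambda \in I$, which forces $\lambda = \lambda'$ because $I$ is proper.

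Next I would verify that $L$ is a subgroup of~$\Phi$ and that $\rho$ is a character.  The identity $0 \in L$ with $\rho(0) = 1$, because $\ttt^0 - 1 = 0 \in I$.  For closure, if $\ttt^w - \rho(w) \in I$ and $\ttt^{w'} - \rho(w') \in I$, then multiplying the first by $\ttt^{w'}$ gives $\ttt^{w+w'} - \rho(w)\ttt^{w'} \in I$, and adding $\rho(w)\bigl(\ttt^{w'} - \rho(w')\bigr) \in I$ yields $\ttt^{w+w'} - \rho(w)\rho(w') \in I$; hence $w + w' \in L$ and $\rho$ is multiplicative.  For inverses, multiply $\ttt^w - \rho(w) \in I$ by the unit $\rho(w)^{-1}\ttt^{-w}$ to obtain $\ttt^{-w} - \rho(w)^{-1} \in I$, so $-w \in L$ with $\rho(-w) = \rho(w)^{-1}$.

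The inclusion $I_\rho \subseteq I$ holds by construction: each generator $\ttt^u - \rho(u-v)\ttt^v$ factors as $\ttt^v\bigl(\ttt^{u-v} - \rho(u-v)\bigr)$, and the parenthesised binomial belongs to~$I$ by definition of~$L$ and~$\rho$.  For the reverse inclusion, I would first observe that $I$ contains no monomial: any such $\ttt^u$ would be a unit and force $I = \kk[\Phi]$, contradicting properness.  Consequently every element of any binomial generating set for~$I$ has the form $a\ttt^u - b\ttt^v$ with both $a, b \in \kk^*$; after dividing by~$a$ and multiplying by the unit $\ttt^{-v}$ one obtains $\ttt^{u-v} - (b/a) \in I$, which places $u - v$ in~$L$ with $\rho(u-v) = b/a$, so the original binomial is an element of~$I_\rho$.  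Thus $I \subseteq I_\rho$, completing the proof.

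The only delicate point is the absence of monomials in~$I$, which is what converts ``binomial generator'' into ``unital binomial after rescaling'' and thereby certifies that the witnessing scalar~$\lambda$ really is a unit; this is where the group algebra hypothesis is used decisively.  Otherwise the argument is a bookkeeping exercise analogous to, but simpler than, \cite[Theorem~2.1]{ES96}.
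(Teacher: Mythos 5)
Your proof is correct and takes essentially the same approach as the paper: both reconstruct $L$ and $\rho$ directly from~$I$ by rescaling each binomial generator to the form $\ttt^w - \lambda$ (using that monomials in a group algebra are units), then verify that the resulting set of exponents is a subgroup and the assignment $w \mapsto \lambda$ is a character. Your write-up is slightly more explicit about the two inclusions $I_\rho \subseteq I$ and $I \subseteq I_\rho$, and about the role of properness in excluding monomials, but there is no difference in the underlying argument.
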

\begin{proof}
The binomial ideal is of the form $\<1-\lambda_u \ttt^u \mid u \in
\mathcal{U}\>$ for some finite $\mathcal{U} \subseteq \Phi$.  First off,
$\mathcal{U}$ is a subgroup of~$\Phi$ since
$1 - \lambda\mu\ttt^{u+v} = \mu\ttt^v(1 - \lambda\ttt^u) + (1 -
\mu\ttt^v)$ for all $\lambda,\mu \in \kk$, including $\lambda =
\lambda_u$ and $\mu = \lambda_v$.  The set $\mathcal{U}$ is closed
under inverses because $(1 - \lambda\ttt^u)/\lambda\ttt^u = -(1
-\ttt^{-u}/\lambda)$ when $\lambda \neq 0$, and $I \neq \kk[\Phi]
\implies \lambda_u \neq 0$.  The very same arguments show that the map
$\rho: \mathcal{U} \to \kk^*$ defined by $u \mapsto \lambda_u$ is a
homomorphism.
\end{proof}

\begin{defn}\label{d:qchar}
Fix a binomial ideal $I \subseteq \kk[Q]$.
\begin{enumerate}
\item%
The \emph{stabilizer} of an element $q \in Q$ along a prime ideal $P
\subset Q$ is the subgroup $K_q^P \subseteq G_P$ (sometimes denoted
by~$K_q$ if $P$ is clear from context) fixing the class of $q \in Q_P$
under the action from Lemma~\ref{l:action} for the
congruence~$\til_I$.
\item%
For $\ttt^q \not\in I_P$, the \emph{character (of~$I_P$) at $q$} is
the homomorphism \mbox{$\rho = \rqp: K_q^P \to \kk^*$} such that the
$\kk[G_P]$-module map $\kk[G_P] \to \kk[Q_P]/I_P$ taking $1 \mapsto
\ttt^q$ has kernel~$I_\rho$.
\item%
The ideal $\Iqp := \Irp \subseteq \kk[Q]$ is the \emph{$P$-mesoprime
of~$I$ at~$q$}.
\end{enumerate}
\end{defn}

\begin{remark}\label{r:qchar}
The homomorphism $\kk[G_P] \to \kk[Q_P]/I_P$ in Definition~\ref{d:qchar}.2
has kernel of the form~$I_\rho$ by Lemma~\ref{l:groupalg}.  Indeed,
the kernel is a~priori the binomial ideal $(I_P:\ttt^q) \cap
\kk[G_P]$, which is not the unit ideal in $\kk[G_P]$ because $\ttt^q$
lies outside of~$I_P$.
\end{remark}

Saturations of subgroups (Definition~\ref{d:saturate}) are more or
less combinatorial in nature.  Saturations of characters, on the other
hand, are more subtle, because arithmetic properties of the target
field~$\kk$ can enter.

\begin{defn}\label{d:charSaturate}
Fix a subgroup $L$ of an abelian group~$G$.  A character $\rho: L \to
\kk^*$~is
\begin{itemize}
\item%
\emph{saturated} if the subgroup~$L$ is saturated, and
\item%
\emph{arithmetically saturated} if $\rho$ has no finite proper extensions.
\end{itemize}
A \emph{saturation} of~$\rho$ is an extension of~$\rho$ to~$\sat(L)$.
\end{defn}

The importance of saturated characters has been demonstrated in
Proposition~\ref{p:assofmeso}, which required the algebraically closed
hypothesis.  Without it, the arithmetically saturated condition holds
sway, and equivalence of primality with saturation can break.

\begin{thm}\label{t:binomPrime}
If a binomial ideal in~$\kk[Q]$ over an arbitrary field\/~$\kk$ is
prime then it is a mesoprime $\Irp$ for an arithmetically saturated
character~$\rho$.  The converse holds if $\kk$ is algebraically
closed, and it can fail if not.
\end{thm}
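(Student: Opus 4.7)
The plan is to prove the forward direction by localizing~$I$ at its nilpotent monoid prime~$P$ and reducing to a character on a subgroup of~$G_P$ via Lemma~\ref{l:groupalg}, then to extract arithmetic saturation from primality by a grading argument inside the quotient domain. The converse over an algebraically closed field is essentially a Laurent polynomial ring computation, and failure over non-closed fields will be witnessed by $x^4+4$ over~$\QQ$.

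For the forward direction, let $I \subseteq \kk[Q]$ be prime and set $P = \{q \in Q : \ttt^q$ is nilpotent in $\kk[Q]/I\}$. Because $\kk[Q]/I$ is a domain, every non-nil monomial is a nonzerodivisor, so non-nil classes of $\oQ = Q/\til_I$ are cancellative; hence $\til_I$ is prime in the sense of Definition~\ref{d:prim*}.4 and~$P$ is a monoid prime. Nilpotence in a domain means vanishing, so $\mm_P \subseteq I$, and therefore $I_P \supseteq \mm_P$ inside $\kk[Q]_P$. The quotient $I_P/\mm_P$ is a proper binomial ideal in the group algebra $\kk[G_P] = \kk[Q]_P/\mm_P$, so Lemma~\ref{l:groupalg} produces a unique character $\sigma : L_\sigma \to \kk^*$ with $I_P = \langle \ttt^u - \sigma(u-v)\ttt^v : u-v \in L_\sigma\rangle + \mm_P = (\Isp)_P$, and hence $I = \Isp$ in the notation of Definition~\ref{d:P-mesoprime}. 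To show~$\sigma$ is arithmetically saturated, suppose it admits a finite extension $\sigma' : L' \to \kk^*$ with $L' \supsetneq L_\sigma$. Pick $g \in L' \minus L_\sigma$ and let $m > 1$ be the smallest integer with $mg \in L_\sigma$; set $\beta = \ol{\ttt^g}/\sigma'(g)$ in the domain $D = \kk[Q]_P/I_P = \kk[G_P]/(I_P/\mm_P)$. Then $\beta^m = \ol{\ttt^{mg}}/\sigma'(g)^m = \sigma(mg)/\sigma'(mg) = 1$, yielding
\[
    0 \;=\; \beta^m - 1 \;=\; (\beta - 1)\bigl(1 + \beta + \cdots + \beta^{m-1}\bigr)
\]
in~$D$. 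Now $D$ is $G_P/L_\sigma$-graded with one-dimensional homogeneous pieces, and the powers $1, \beta, \ldots, \beta^{m-1}$ lie in the $m$ distinct nonzero pieces of degrees $0, g, \ldots, (m-1)g \in G_P/L_\sigma$. Both factors above therefore have nonzero homogeneous components in pairwise distinct degrees and so are nonzero, contradicting that~$D$ is a domain. The main obstacle is precisely this grading step: a nontrivial $m$-th root of unity can live in a general domain, and ruling it out here requires the combination of one-dimensional graded pieces with the fact that $g \notin L_\sigma$.

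For the converse when $\kk$ is algebraically closed, divisibility of~$\kk^*$ ensures that every character on $L_\rho$ extends to every finite overgroup, so arithmetic saturation forces $L_\rho$ itself to be saturated in~$G_P$, making $G_P/L_\rho$ free abelian. Rescaling monomials by~$\rho$ then identifies $\kk[G_P]/(I_P/\mm_P)$ with the ordinary Laurent polynomial ring $\kk[G_P/L_\rho]$, a domain, so $\Irp$ is the kernel of the composite $\kk[Q] \to \kk[Q]_P/I_P$ into a domain and is prime. For the failure assertion, I would exhibit $\kk = \QQ$, $Q = \NN$, $P = \nothing$ (so $G_P = \ZZ$), and $\rho : 4\ZZ \to \QQ^*$ with $\rho(4) = -4$. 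This~$\rho$ is arithmetically saturated because $-4$ has neither a square root nor a fourth root in~$\QQ$, yet $I_\rho = \langle x^4 + 4\rangle \subseteq \QQ[x^\pm]$ factors as $(x^2 + 2x + 2)(x^2 - 2x + 2)$, so the preimage $\Irp \subseteq \QQ[x]$ is not prime.
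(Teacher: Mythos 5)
Your proposal is correct, and its overall scaffolding matches the paper's: localize at the nilpotent monoid prime, pass to the group algebra via Lemma~\ref{l:groupalg}, argue arithmetic saturation, then handle the converse and the $x^4+4$ counterexample (which is literally the paper's Example~\ref{e:arithsat}). The genuine divergence is in how arithmetic saturation is extracted from primality. The paper passes to the algebraic closure $\ol\kk$ and invokes Proposition~\ref{p:assofmeso} (the Eisenbud--Sturmfels analysis of primary decomposition of lattice ideals over $\ol\kk$), splitting into three cases by the characteristic of~$\kk$ and the arithmetic of $|K/L|$, and concludes either that $I_\rho$ is not radical or that it factors as an intersection of two strictly larger binomial ideals. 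You instead argue directly inside the domain $D = \kk[Q]_P/I_P$: a finite proper extension $\sigma'$ of $\sigma$ manufactures an element $\beta = \ol{\ttt^g}/\sigma'(g)$ that is a nontrivial $m$-th root of unity, and the factorization $\beta^m - 1 = (\beta-1)(1+\beta+\cdots+\beta^{m-1})$ produces a zero product of two nonzero elements, the nonvanishing of each factor being forced by the $G_P/L_\sigma$-grading with one-dimensional pieces. This is more elementary and uniform: it avoids passing to $\ol\kk$, needs no case split on characteristic, and does not depend on the Eisenbud--Sturmfels machinery. What the paper's route buys in exchange is tighter integration with Proposition~\ref{p:assofmeso}, which it reuses heavily elsewhere, so that the argument exposes the associated-prime structure of the mesoprime rather than producing a single ad hoc zerodivisor. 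Your converse argument (extend $\rho$ to all of $G_P$ by divisibility of~$\kk^*$, rescale monomials to untwist, land in the Laurent polynomial ring $\kk[G_P/L_\rho]$) is also fine and slightly more explicit than the paper's reference to Proposition~\ref{p:assofmeso} and \cite[Theorem~2.1]{ES96}. One small point to make explicit in your forward direction: the identification $I = \Isp$ from $I_P = (\Isp)_P$ uses that monomials outside $\mm_P$ are nonzerodivisors on $\kk[Q]/I$, so that $I$ is precisely the contraction of~$I_P$; you asserted the nonzerodivisor fact but did not flag that it is exactly what makes the contraction recover~$I$.
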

\begin{proof}
Suppose that $\kk[Q]/I$ is a domain.  The ideal of monoid elements $p
\in Q$ such that $\ttt^p \in I$ is a monoid prime~$P$.  Replacing $Q$
with the monoid $Q \minus P$ and $I$ with its image in $\kk[Q \minus
P] = \kk[Q]/\<\ttt^p \mid p \in P\>$, it suffices to prove that $I =
I_\rho$ for an arithmetically saturated character when $Q$ is
cancellative and $I$ contains no monomials.  Since $\kk[Q]$ injects
into its localization $\kk[Q]_\nothing = \kk[\Phi]$ for the universal
group $\Phi = Q_\nothing$, and $I$ contains no monomials,
Lemma~\ref{l:groupalg} implies the existence of a subgroup $L
\subseteq \Phi$ and a character $\rho: L \to \kk^*$ such that $I =
I_\rho$.  It remains to show that $I_\rho$ is not prime if $\rho$ is
not arithmetically saturated.  Suppose $\sigma: K \to \kk^*$ properly
extends~$\rho$ to a subgroup $K \subseteq \sat(L)$.  Then $I_\sigma
\supsetneq I_\rho$.  By restricting $\sigma$ to a subgroup of~$K$ that
still properly contains~$L$, we can assume that $|K/L| > 1$ and one of
the following~occurs:
\begin{itemize}
\item%
$\kk$ has positive characteristic~$p$ and $|K/L|$ is a power of~$p$;
\item%
$\kk$ has positive characteristic~$p$ and $|K/L|$ is relatively prime
to~$p$; or
\item%
$\kk$ has characteristic~$0$.
\end{itemize}
Proposition~\ref{p:assofmeso} implies that in the first case, the
extension $\ol I_\sigma$ of~$I_\sigma$ to~$\ol\kk$ has the same
radical as the extension~$\ol I_\rho$, in which case $I_\rho$ itself
is not a radical ideal.  In the remaining two cases,
Proposition~\ref{p:assofmeso} implies that $\ol I_\rho = \ol I_\sigma
\cap \ol J$, with no associated prime of either intersectand
containing an associated prime of the other.  It follows that $I_\rho
= I_\sigma \cap J$, where $I_\sigma$ and $J := (I_\rho \mid I_\sigma)$
both properly contain~$I_\rho$, so $I_\rho$ is not~prime.

The $\kk = \ol\kk$ converse is implicit in
Proposition~\ref{p:assofmeso}, and anyway follows easily from
\cite[Theorem~2.1]{ES96}.  Example~\ref{e:arithsat} demonstrates
failure of the general converse.
\end{proof}

\begin{example}\label{e:arithsat}
The ideal $I_\rho \subset \QQ[x]$ for the character $\rho: 4\ZZ \to
\QQ^*$ defined by $\rho(4) = -4$ is $\<x^4 + 4\>$.  This ideal is not
prime because it factors as $\<x^4 + 4\> = \<x^2 - 2x + 2\> \cap \<x^2
+ 2x + 2\>$.  Nonetheless, $\rho$ is arithmetically saturated because
$x^4 + 4$ has no binomial factors of degree~$2$.
\end{example}

\begin{example}\label{e:twisted'}
The ideal $\<x^3 - 2\>$ in Example~\ref{e:twisted} is prime (by
Eisenstein's criterion, for example).  Therefore the character $\rho:
3\ZZ \to \QQ^*$ sending $3 \mapsto 2$ is arithmetically saturated,
viewing $3\ZZ$ as a subgroup of~$\ZZ$: any proper extension of~$\rho$
to a character $\ZZ \to \QQ^*$ would require a cube root~of~$2$.
\end{example}

\section{Coprincipal and mesoprimary components of binomial ideals}\label{s:components}

\begin{defn}\label{d:witness'}
Fix a binomial ideal $I \subseteq \kk[Q]$ inducing a congruence~$\til$
on~$Q$.
\begin{enumerate}
\item%
An element $w \in Q$ is an \emph{$I$-witness} for a monoid prime~$P$
if it is a $\til$-witness~for~$P$ or if $P = \nothing$ is the empty
monoid ideal and $I$ contains no monomials.
\item%
An element $w \in Q$ is an \emph{essential} $I$-witness for a monoid
prime~$P$ if $w$ is a key $\til_I$-witness or some polynomial
annihilated by~$\mm_P$ in $\kk[Q_P]/I_P$ (Definitions~\ref{d:mmP}
and~\ref{d:monomial-localization}) has $\ttt^w$ minimal (under Green's
preorder) among its nonzero monomials.
\item%
If $\Irp$ is the $P$-mesoprime of~$I$ (Definition~\ref{d:qchar}) at
some $I$-witness $w$ for~$P$, then $w$ is an \emph{$I$-witness
for~$\Irp$}.
\item%
$\Irp$ is an \emph{associated mesoprime} of~$I$ if there is an
essential $I$-witness for~$\Irp$.
\end{enumerate}
\end{defn}

\begin{lemma}\label{l:ess}
Every essential $I$-witness for~$P$ is an $I$-witness for~$P$.
\end{lemma}
\begin{proof}
Assume that $f \in \kk[Q]$ such that $\mm_P f \subseteq I_P$.  Let $m
= \lambda\ttt^w$ be a term of~$f$ (that is, a nonzero constant times a
monomial) minimal under Green's preorder on~$Q_P$ restricted to the
terms of~$f$.  Fix a nonunit monoid element~$p \in Q_P$.  Since
$\ttt^p f \in I_P$, the term $\ttt^p m$ must equal, modulo~$I_P$, some
sum of terms whose monomials $\ttt^{p+a}$ have $\ttt^a$ appearing with
nonzero coefficient in~$f$.  It follows that $\ttt^p m$ shares its
$\oQ_P$-graded degree with at least one of these monomials
$\ttt^{p+a}$, where $\oQ = Q/\til_I$.  Thus $w$ is a witness by
minimality of~$m$: at least one of the elements~$a$ is an aide for
$w$~and~$p$.
\end{proof}

\begin{example}\label{e:ess-wit}
If $I = \<y-x^2y,y^2-xy^2,y^3\>$ is the binomial ideal from
Example~\ref{e:pure-diff}.5, then $\Irp = \<x^2-\lambda,y\>$ for $P =
\<e_y\>$, $\rho: \<(2,0)\> \to \kk^*$ defined by $\rho(2,0) = \lambda$
induces the associated prime congruence of~$\til_I$ for any
$\lambda\in\kk^*$.  The monomial $x^ay \in \kk[x,y]$ is a witness for
any $a \in \NN$, and it lies in one of two possible essential witness
classes, depending on the parity of~$a$; see the figure in
Example~\ref{e:pure-diff}.  However, only $\lambda = 1$ gives the
associated mesoprime itself, as opposed to merely inducing its
congruence.
\end{example}

\begin{lemma}\label{l:finite-ess}
Every binomial ideal in~$\kk[Q]$ has only finitely many essential
witnesses.
\end{lemma}
\begin{proof}
Theorem~\ref{t:finiteAss} takes care of key witnesses, so it is enough
to treat witnesses arising from annihilation by~$\mm_P$.  As~$Q$ has
finitely many prime ideals, it suffices to bound the number of
essential witnesses for a fixed prime ideal~$P$.  By definition,
$\mm_P$ annihilates the $\kk[Q_P]$-submodule of $\kk[Q_P]/I_P$
consisting of polynomials giving rise to essential $I$-witnesses.
Hence the $\kk[Q_P]$-submodule in question is finitely generated over
$\kk[G_P] = \kk[Q_P]/\mm_P$, so only finitely $G_P$-orbits of
(exponents on) monomials are~involved.
\end{proof}

\begin{remark}\label{r:allUnital}
All associated mesoprimes of a unital binomial ideal (generated by
differences of monomials with unit coefficients) are unital.
\end{remark}

\begin{remark}\label{r:nothing}
When $I$ contains no monomials, every monomial is an essential
$I$-wit\-ness for the empty monoid ideal~$\nothing \subset Q$.  The
condition that $\Irn$ be an associated mesoprime of~$I$ for some
(unique) character~$\rho$ is similar to the condition that $\nothing$
be assoc\-iated to the congruence~$\til$ induced by~$I$, but it is not
equivalent.  These conditions differ only when $I$ is minimal and not
maximal among binomial ideals inducing~$\til$
(cf.~Proposition~\ref{t:chain})---that is, when $\til$ has a nil class
but $I$ nonetheless contains no monomials---in which case $I$ has an
\mbox{associated~mesoprime~$\Irn$ but $\nothing$ is not associated
to~$\til$}.
\end{remark}

\begin{lemma}\label{l:mesoprime}
If $w$ is an $I$-witness for~$\Irp$, then the localization along~$P$
of the $P$-mesoprime $\Iwp$ of~$I$ at~$w$ satisfies $(\Iwp)_P =
(\Irp)_P = (I_P:\ttt^w) + \mm_P$.
\end{lemma}
\begin{proof}
The first equality is by Definition~\ref{d:qchar}.  For the second,
use Theorem~\ref{t:P-coprincipal}, which implies that $I$ and $I_P +
\ttt^w\mm_P$ have the same $P$-mesoprime at~$w$.  It follows that the
natural isomorphism $\kk[G_P] \to \kk[Q_P]/\mm_P$ induced by the
inclusion $\kk[G_P] \to \kk[Q_P]$ descends to an isomorphism
$\kk[G_P]/(I_P:\ttt^w) \cap \kk[G_P] \to \kk[Q_P]/\big((I_P:\ttt^w) +
\mm_P\big)$.  Now apply Remark~\ref{r:qchar}.
\end{proof}

\begin{remark}\label{r:assLattice}
If $Q=\NN^n$ and $I$ is unital, then all information about associated
mesoprimes is contained in the set of \emph{associated lattices} $L
\subset \ZZ^J$, each of which comes with an \emph{associated subset}
$J \subseteq \{1,\ldots,n\}$.  Indeed, a prime ideal~$P$ of $\NN^n$ is
the complement of a face $\NN^J$ of $\NN^n$, and specifying a prime
congruence on~$\NN^n$ amounts to choosing such a face along with a
lattice $L \subset \ZZ^J$.  To see why, first observe that
localization along~$P$ inverts the face, turning $\NN^n$ into $\ZZ^J
\times \NN^\oJ = G_P \times \NN^\oJ$.  Subsequently passing to the
quotient by a given prime congruence, the complement of the face maps
to nil, and the subgroup~$L$ is the stabilizer of any class under the
action of $\ZZ^J = G_P$ on the quotient.  We were led to associated
lattices (before the more general associated prime congruences) in
part by \cite[Theorem~8.1]{ES96}.  Although that theorem only covers
cellular cases, the upshot is that a collection of associated lattice
ideals contributes~associated~primes.
\end{remark}

\begin{remark}\label{r:ass}
When the domain~$K$ of a character $\rho: K \to \kk^*$ is a saturated
subgroup of~$G_P$, the ideal $\Irp$ is often an associated prime of a
binomial ideal~$I$ without being an associated mesoprime of~$I$.  The
reason is that the congruences induced by associated $P$-mesoprimes
are immediately visible in the congruence induced by~$I_P$, whereas
the associated primes of~$I$ usually induce coarser congruences
(larger congruence classes) than those visible.  The quintessential
example to consider is the lattice ideal~$I$ for an unsaturated
sublattice of~$\ZZ^n$: the lattice ideal for the saturation is an
associated prime of~$I$, but the unique associated mesoprime of~$I$ is
$I$ itself.
\end{remark}

\begin{prop}\label{p:one}
A binomial ideal $I \subseteq \kk[Q]$ is mesoprimary if and only if $I$
has exactly one associated mesoprime.
\end{prop}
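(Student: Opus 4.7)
The plan is to parallel Theorem~\ref{t:oneassprim} (combinatorial mesoprimary iff one associated prime congruence) and use Theorem~\ref{t:til}.2 (a binomial ideal is mesoprimary iff it is maximal and induces a mesoprimary congruence) as the bridge. Since ``one associated mesoprime'' packages a combinatorial uniqueness (of $P$ and the stabilizer $K$) together with an arithmetic uniqueness (of the character $\rho$), my argument would split along these lines.

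For the forward direction, suppose $I$ is mesoprimary. Theorem~\ref{t:til}.2 gives that $\til_I$ is mesoprimary and that $I$ is maximal among proper binomial ideals inducing~$\til_I$. Theorem~\ref{t:oneassprim} then yields a unique associated prime congruence, so every $I$-witness $w$ lies over the same prime $P$ and has the same stabilizer $K = K_w^P \subseteq G_P$. To show that the characters $\rho_w^P$ all coincide, I would invoke Corollary~\ref{c:one}: the $G_P$-action on $\oQ_P \minus \{\infty\}$ is semifree with finitely many $G_P/K$-torsor orbits. Maximality of~$I$ (Theorem~\ref{t:chain}) makes $\kk[Q]_P/(I_P+\mm_P)$ into an algebra graded by $\oQ_P \minus \{\infty\}$ with one-dimensional graded pieces, and multiplication by $\ttt^k$ with $k \in K$ acts on each such piece by a scalar $\rho(k) \in \kk^*$. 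Coherence of these scalars across orbits (forced by the $\kk[G_P]$-module structure on the quotient) pins down a single character $\rho$, giving the unique associated mesoprime $I_{\rho, P}$.

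For the reverse direction, suppose $I$ has a unique associated mesoprime $I_{\rho, P}$. Since the stabilizer $K$ and prime $P$ are recoverable from the mesoprime, every witness shares the same $(P, K)$ and hence the same $P$-prime congruence. Consequently, $\til_I$ has a unique associated prime congruence and is mesoprimary by Theorem~\ref{t:oneassprim}. The remaining step is to deduce that $I$ is maximal among proper binomial ideals inducing $\til_I$. By Theorem~\ref{t:chain}, the only possible strict enlargement is $I^+ = I + \<\ttt^q \mid \ol q = \infty\>$, and in that case $I = I^+ \cap \Iaug$ for an augmentation ideal $\Iaug$ attached to a monoid homomorphism $\phi: Q \to \kk^*$. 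I would show that a nontrivial $\phi$ alters the character computation $(I_P : \ttt^w) + \mm_P$ at some witness~$w$, producing a mesoprime distinct from $I_{\rho, P}$ and contradicting uniqueness; conversely, if $\phi$ is so compatible with~$\rho$ that no new mesoprime appears, then $\Iaug$ collapses to the unital augmentation and $I = I^+$.

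The main obstacle is precisely this reverse maximality step, since the combinatorics of witnesses does not distinguish $I$ from its maximal lift~$I^+$. All of the work therefore happens arithmetically, by tracking how the augmentation character $\phi$ perturbs the scalar action of $\kk[G_P]$ on the graded pieces of $\kk[Q]_P/(I_P+\mm_P)$; making this precise, and in particular isolating the exact compatibility condition that collapses $\Iaug$ to the unital augmentation, is the delicate core of the argument.
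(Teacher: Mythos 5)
There is a genuine gap in both directions, though the forward-direction gap is fixable while the reverse-direction one is more structural.

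\textbf{Forward direction.} Your outline is close to the paper's, but your mechanism for pinning down a single character does not work as stated. You invoke ``coherence of these scalars across orbits (forced by the $\kk[G_P]$-module structure on the quotient),'' but $\kk[G_P]$ only moves monomials \emph{within} a single $G_P$-orbit of $\oQ_P \minus \{\nil\}$; it cannot compare the scalars $\rho_{w}(k)$ and $\rho_{w'}(k)$ arising from witnesses $w, w'$ in different orbits. What actually forces coherence is the full $\kk[Q]_P$-module structure: multiplication by the nilpotent monomial $\ttt^q$ (for a witness~$q$) is injective from the degree-$0$ graded piece to the degree-$\ol q$ graded piece (this is semifreeness, i.e.\ Corollary~\ref{c:one}, together with the one-dimensionality from maximality), so the character one reads off at $\ttt^q$ is forced to agree with the one at~$1$. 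That is exactly the paper's argument, which compares $(I_P:\ttt^q)+\mm_P$ to $I_P+\mm_P$ via multiplication by $\ttt^q$. Also a small slip: you wrote $\kk[Q]_P/(I_P+\mm_P)$; the ring graded by $\oQ_P \minus \{\nil\}$ with one-dimensional pieces is $\kk[Q]_P/I_P$ (or its truncation), whereas $\kk[Q]_P/(I_P+\mm_P)$ is only the degree-$0$ layer.

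\textbf{Reverse direction.} Here you diverge from the paper substantially and stop short of a proof. You correctly reduce the issue to maximality via Theorem~\ref{t:chain} and propose to show that the augmentation ideal $\Iaug$ in $I = I^+ \cap \Iaug$ must perturb the character at some witness, but you flag this as the unresolved ``delicate core''; and indeed the expected dichotomy (nontrivial $\phi$ perturbs a witness character; trivial $\phi$ forces $I = I^+$) is not something you can take for granted, because $\Iaug$ lives at the prime $\nothing$ while witnesses for a $P$-mesoprimary congruence with $P \neq \nothing$ live over~$P$, and localizing along $P$ discards the difference between $I$ and $I^+$ entirely (the added monomials lie in $\mm_P$, so $I_P + \mm_P = I^+_P + \mm_P$). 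The paper avoids this thicket: it simply observes that either $I$ already contains a monomial, in which case Theorem~\ref{t:chain} gives maximality at once, or $I$ contains no monomials, in which case the unique associated monoid prime is $\nothing$, so $\til_I$ has no nil class and there is nothing to add; no analysis of augmentation characters is needed. Your proposal does not reach that conclusion, and the route you sketch for getting there is exactly the part that remains unjustified.
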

\begin{proof}
If $I$ is mesoprimary then it is cellular by
Corollary~\ref{c:implies'} and the congruence $\til_I$ is mesoprimary
by Definition~\ref{d:cellular}.  If $w$ is any witness (essential or
not) for the unique associated prime congruence and $I' = (I:\ttt^w)$
is the annihilator of the image of~$\ttt^w$ in~$\kk[Q]/I$, then
multiplication by~$\ttt^w$ induces an isomorphism $I_P + \mm_P \to
I'_P + \mm_P$, so every associated mesoprime of~$I$ is equal to $I +
\mm_P$.

On the other hand, assume that $I$ has only one associated mesoprime,
and that its associated monoid prime is $P \subset Q$.  The congruence
$\til$ induced by~$I$ is mesoprimary by Lemma~\ref{l:associated} and
Theorem~\ref{t:oneassprim}.  Either $I$ contains a monomial, in which
case it is already maximal among ideals inducing its congruence by
Theorem~\ref{t:chain}, or else $I$ contains no monomials, in which
case the unique associated monoid prime ideal is $P = \nothing$ by
definition.  When $P = \nothing$, if $I$ is not maximal then $\til$
has a witness for some monoid prime ideal other than~$\nothing$ by
Remark~\ref{r:nothing}, as $\til$ has an associated monoid prime but
$\nothing$ is not one of them.  Thus uniqueness of the associated
mesoprime implies~maximality.
\end{proof}

\begin{remark}\label{r:semifree'}
Building on Remark~\ref{r:semifree}, Proposition~\ref{p:one} says that
the character of~$I_P$ is the same at every nonzero monomial as soon
as it is the same at every essential witness monomial, and that is
what it means to be a mesoprimary ideal.
\end{remark}

\begin{defn}\label{d:P-mesoprimary}
Given a monoid prime $P \subset Q$, a mesoprimary binomial ideal
in~$\kk[Q]$ is \emph{$P$-mesoprimary} if the associated prime ideal of
its induced congruence is~$P$.
\end{defn}

The principal use of the following definition, which builds on the
notion of order ideal from Definition~\ref{d:orderideal}, concerns the
case where the set $\ww$ consists of a single witness.  The more
general case arises during the construction of mesoprimary
decompositions with as few components as possible
(Corollary~\ref{c:mesodecomp'}).

\begin{defn}\label{d:cogenerated'}
Fix a binomial ideal $I \subseteq \kk[Q]$, a prime $P \subset Q$, and
a finite subset $\ww \subseteq Q$.  The \emph{monomial ideal $\MwpI
\subseteq \kk[Q]$ cogenerated by~$\ww$ along~$P$} is generated by the
monomials $\ttt^u \in \kk[Q]$ such that $u$ lies outside of the order
ideal $Q_{\preceq w}^P$ cogenerated by~$w$ at~$P$
(Definition~\ref{d:orderideal}) under the congruence~$\til_I$ for
all~$w \in \ww$.
\end{defn}

\begin{defn}\label{d:Wwp}
Fix a binomial ideal $I \subseteq \kk[Q]$ and a finite set $\ww
\subseteq Q$ such that the $P$-mesoprime $\Iwp$ of~$I$ at~$w$
is~$\Irp$ for all $w \in \ww$.  The \emph{$P$-mesoprimary component
of~$I$ cogenerated by~$\ww$} is the preimage $\WwpI$ in~$\kk[Q]$ of
the ideal $I_P + I_\rho + \MwpI \subseteq \kk[Q]_P$.
\end{defn}

\begin{remark}\label{r:Wwp}
Comparing to Definition~\ref{d:cogenerated}, adding $\MwpI$ in
Definition~\ref{d:Wwp} joins all pairs of elements in $Q \minus
\Qlqp$, while adding~$I_\rho$ joins the pairs $(a,b) \in Q$ satisfying
conditions~(i) and~(ii) in Definition~\ref{p:coprincipal}.
\end{remark}

\begin{defn}\label{d:cogenerator}
A \emph{cogenerator} of a mesoprimary binomial ideal $I \subseteq
\kk[Q]$, or of the quotient $\kk[Q]/I$, is a monoid element that is a
cogenerator of the induced congruence.  A~\emph{monomial cogenerator}
is a monomial in $\kk[Q]$ whose exponent is a cogenerator.
\end{defn}

The nomenclature in Definition~\ref{d:Wwp} is justified by the
following result, which arithmetizes the combination of
Theorem~\ref{t:P-coprincipal} and Lemma~\ref{l:peak}.

\begin{prop}\label{p:cogenerated}
If\/ $\ww\!$ consists of $I$-witnesses for~$P$, then the ideal $\WwpI$
in Definition~\ref{d:Wwp} is mesoprimary with associated
mesoprime~$\Irp$.  Moreover, if $I$ induces $\til$ on~$Q$, then
$\WwpI$ induces the common refinement of the coprincipal components
$\til_w^P$ cogenerated by the elements in~$\ww$ along~$P$.  Every
cogenerator of~$\WwpI$ lies in~$\ww$.
\end{prop}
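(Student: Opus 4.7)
The plan is to prove the second assertion first---that $\WwpI$ induces the common refinement $\til' := \bigcap_{w \in \ww} \til_w^P$ of the coprincipal congruences---since the other two assertions will follow from it using Proposition~\ref{p:mesorefine}, Proposition~\ref{p:one}, and Example~\ref{e:socle}. For the congruence equality, I would analyze the three summands of $(\WwpI)_P = I_P + I_\rho + (\MwpI)_P$ in turn: the monomial ideal $(\MwpI)_P$ contributes the Rees congruence collapsing to nil everything outside $\bigcup_{w \in \ww} Q_{\preceq w}^P$; the ideal $I_P$ imposes the original $\til_I$ on the surviving part; and $I_\rho$ adds the character relations given by $K$ and~$\rho$. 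The crucial point is the standing hypothesis on $\ww$: since $\Irp = I_w^P$ is the $P$-mesoprime of~$I$ at every $w \in \ww$, Definition~\ref{d:qchar} gives $(I_P:\ttt^w) + \mm_P = I_\rho + \mm_P$ at each such~$w$, so the character relations encoded in $I_\rho$ are already implicit in~$\til_I$ on each order ideal $Q_{\preceq w}^P$. Combined with Lemma~\ref{l:sum}, this shows that the congruence induced by the sum is exactly $\til'$ (not strictly coarser).

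Once the congruence equality is in hand, the first assertion is almost automatic. Each $\til_w^P$ is $P$-mesoprimary by Lemma~\ref{l:coprincipal} and Definition~\ref{d:coprincipal}, and all share the same associated prime congruence, which is determined by the common character $\rho$ on~$K$. Iterating Proposition~\ref{p:mesorefine} over $\ww$ shows that $\til'$ is itself $P$-mesoprimary with that same associated prime congruence. Proposition~\ref{p:one} then says $\WwpI$ has a unique associated mesoprime, which by construction of $\WwpI$ equals~$\Irp$.

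For the socle assertion, fix $w \in \ww$. Under the coprincipal $\til_w^P$, the class of~$w$ lies in the socle of $Q/\til_w^P$: by Definition~\ref{d:coprincipal} the Green's class of~$w$ is the unique maximal non-nil class, so the image of $w + p$ in $Q/\til_w^P$ is nil for every nonunit~$p$. Passing to the refinement $\til'$ preserves nilness of $w + p$, while the image of~$w$ itself remains non-nil in $Q/\til'$, since $w \in Q_{\preceq w}^P$ ensures non-nilness in $Q/\til_w^P$, and the nil class of a common refinement is the intersection of the nil classes of the pieces. Hence the image of~$w$ lies in the socle of the mesoprimary quotient $Q/\til'$; by Example~\ref{e:socle}, $w$ is a witness of~$\til'$. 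Translating back, $\ttt^w$ lies in the $P$-socle modulo~$\WwpI$.

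The principal obstacle is the congruence computation in the first paragraph: specifically, verifying that the addition of $I_\rho$ to $I_P + (\MwpI)_P$ introduces no identifications beyond those present in $\til'$. This depends essentially on the uniformity hypothesis in Definition~\ref{d:Wwp}---that $\rho$ is the $P$-mesoprime character of~$I$ at \emph{every} $w \in \ww$---so that the binomial generators of~$I_\rho$ are redundant modulo~$I_P$ on each $Q_{\preceq w}^P$; relaxing to a shared character only at a single element would make the character relations introduce genuinely new identifications in other order ideals.
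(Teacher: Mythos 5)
Your overall architecture is reasonable and, reordering aside, matches the paper's: reduce to the congruence-level identity, then harvest mesoprimarity, the associated mesoprime, and the socle statement from Propositions~\ref{p:mesorefine} and~\ref{p:one} and Example~\ref{e:socle}. But the central step---establishing that the congruence induced by $\WwpI$ is exactly $\app := \bigcap_{w\in\ww}\til_w^P$---is not actually proved, and the justification you offer for it is incorrect at a key point.

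The problematic claim is that, because $(I_P:\ttt^w)+\mm_P = I_\rho + \mm_P$ at each $w\in\ww$, ``the binomial generators of~$I_\rho$ are redundant modulo~$I_P$ on each $Q_{\preceq w}^P$.'' That is false in general. The hypothesis only says the character relations of~$I_\rho$ become relations of~$I$ \emph{after multiplying by~$\ttt^w$}, i.e.\ at the Green's level of the cogenerator; in degrees strictly below~$w$ in the order ideal, the binomials of~$I_\rho$ are typically \emph{not} in~$I_P$, which is precisely why $\WwpI$ differs from $I_P + \MwpI$. Your ``analyze the three summands in turn'' plan therefore does not, as stated, rule out that $I_\rho$ introduces identifications beyond those in~$\app$, nor that the sum kills monomials that should survive (the Lemma~\ref{l:sum} ``enlarged nil class'' phenomenon).

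The paper's proof closes exactly this gap with two pieces of machinery you omit. First it observes that $I$ and $I_\rho$ (and hence, via Lemma~\ref{l:sum}, also $\WwpI$) are graded by~$Q/\app$, so one may argue degree by degree. Then, for $a\approx' b$ with $\ttt^a\notin\MwpI$, it chooses $w\in\ww$ with $a\in Q_{\preceq w}^P$, and \emph{lifts}: it picks $u$ so that $u+a$ lands in the Green's class of~$w$ in $Q/\app$, uses the colon-ideal equality precisely at that Green's level to conclude that the graded piece $(\WwpI)_{\appq}$ equals $(I)_{\appq}$ for $q=u+a$, and finally uses the well-definedness of the character at~$u+a$ to show $\ttt^{u+a}\notin\WwpI$, hence $\ttt^a\notin\WwpI$. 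This lift-to-the-witness-level-then-descend argument, together with the explicit verification that no extra monomial killing occurs, is the heart of the proposition; it cannot be replaced by the informal ``implicit on the order ideal'' assertion.
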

\begin{proof}
The claim has little content if $P = \nothing$, as then $\Irp = I_\rho
= I_P$, so assume \mbox{$P \neq \nothing$}.  Since $\WwpI$ contains
monomials by definition, it suffices by Theorem~\ref{t:chain} to
verify that $\WwpI$ induces the common refinement~$\app$ of
coprincipal congruences in question, given that $\app$ is mesoprimary
by Theorem~\ref{t:P-coprincipal} and Proposition~\ref{p:mesorefine}.

By construction (specifically, Definition~\ref{d:cogenerated};
cf.~Remark~\ref{r:Wwp}), the mesoprimary congruence~$\app$ refines the
congruence~$\app'$ induced by~$\WwpI$: the monomial ideal $\MwpI$ sets
all elements outside of the order ideal equivalent to one another, and
the generators of~$I_\rho$ carry out the remaining required
identifications.  The harder direction is showing that no more
relations are introduced.

Since $\WwpI$ is obtained from an extension to the
localization~$\kk[Q]_P$ along~$P$, we may as well assume that $Q =
Q_P$, so $P$ is the maximal ideal of~$Q$.  The congruences induced by
$I$ and~$I_\rho$ each individually refine the congruence~$\app$ (not
to be confused with~$\app'$ here); for~$I$ this is by
Theorem~\ref{t:coprincipal}, and for~$I_\rho$ this is by
Corollary~\ref{c:one} (see also Remark~\ref{r:semifree}).  Therefore
both of $I$ and~$I_\rho$ are ideals graded by~$Q/\app$.  We deduce
that $\WwpI$ is graded by~$Q/\app$ as well, since $\MwpI$ is a
monomial ideal and hence is automatically graded by~$Q/\app$.
Consequently, each non-nil congruence class of~$\app'$ is contained in
some congruence class of~$\app$.

It remains to treat the nil class of~$\app'$.  Assuming $a \in Q$ with
$\ttt^a \not\in \MwpI$, it suffices to show $\ttt^a \not\in \WwpI$.
Choose $w \in \ww$ with $a$ in the order ideal $\Qlwp = \Qlwp(\til)$,
which can be done by definition of~$\MwpI$.  Next pick $u \in Q$ such
that the images of $u + a$ and~$w$ in $Q/\app$ are Green's equivalent
to one another; this is possible by definition of the order
ideal~$\Qlwp$.  Use a double bar to denote passage from $Q$
to~$Q/\app$, so $\ol{\ol q} \in Q/\app$ is the image of~$q$ for any $q
\in Q$.  The choice of the character~$\rho$ was made precisely so that
the graded piece $(I)_{\ol{\ol q}}$ of the ideal~$I$ contains the
graded piece $(I_\rho)_{\ol{\ol q}}$ whenever $\ol{\ol q}$ is Green's
equivalent to~$\ol{\ol w}$ in~$Q/\app$.  This means that $I_\rho$ adds
no new relations to~$I$ in degree~$\ol{\ol q}$.  Since $\MwpI$ adds no
new relations to~$I$ in degree~$\ol{\ol q}$ by definition,
$\WwpI_{\ol{\ol q}} = (I)_{\ol{\ol q}}$ for $q = u + a$.  The class of
$u + a$ is not nil in~$Q/\til$ because the character of~$I_P$ at $u +
a$ is~$\rho$.  Hence $\ttt^a \not\in \WwpI$.

The final claim of the Proposition follows from Lemma~\ref{l:peak}.
\end{proof}

\begin{defn}\label{d:coprincipal'}
A binomial ideal is \emph{coprincipal} if it is mesoprimary and its
induced congruence is coprincipal.  A~\emph{coprincipal component
$W_q^P(I)$ of~$I$ cogenerated by~$q$ at~$P$} is a $P$-mesoprimary
component $W_{\{q\}}^P(I)$ cogenerated by a single element~$q$.
\end{defn}

\begin{cor}\label{c:coprincipal}
If $I \subseteq \kk[Q]$ is a binomial ideal and $w$ is an $I$-witness
for~$P$, then the coprincipal component of~$I$ cogenerated by~$w$
at~$P$ is a coprincipal binomial ideal.
\end{cor}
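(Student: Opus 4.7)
The plan is to verify the two defining requirements of Definition~\ref{d:coprincipal'}: that $W_q^P(I)$ induces a coprincipal congruence, and that it is maximal among binomial ideals inducing that congruence. The first requirement follows immediately from Proposition~\ref{p:cogenerated} applied to the singleton $\ww = \{q\}$: the common refinement in that proposition collapses to the single congruence $\til_q^P$ cogenerated by~$q$ along~$P$, which is coprincipal by Lemma~\ref{l:coprincipal}. (The hypothesis that $q$ is an $I$-witness for~$P$ is used here to ensure that $\rho = \rqp$ is defined, so that the compatibility condition on~$\ww$ required by Definition~\ref{d:Wwp} holds trivially.)

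For the maximality requirement, I would invoke Theorem~\ref{t:chain}, which reduces the question to the following: if $Q/\til_q^P$ has a nil class, then $W_q^P(I)$ must contain at least one monomial~$\ttt^u$ whose exponent lies in that nil class. More concretely, Theorem~\ref{t:chain} implies that an ideal fails to be maximal among those inducing its congruence precisely when the induced congruence possesses a nil class in which the ideal has no monomials.

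By Lemma~\ref{l:coprincipal}, the nil class of $Q/\til_q^P$, when nonempty, is exactly $Q \minus \Qlqp$. By Definition~\ref{d:cogenerated'}, the monomial ideal $M_{\{q\}}^P$ is generated by the monomials $\ttt^u$ with $u \notin \Qlqp$---that is, by precisely the monomials indexed by the nil class. Since $M_{\{q\}}^P \subseteq W_q^P(I)$ by Definition~\ref{d:Wwp}, the coprincipal component contains every monomial in the nil class, verifying maximality.

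There is no substantial obstacle: the corollary is essentially a direct combination of Proposition~\ref{p:cogenerated} (which already supplies mesoprimarity and identifies the induced congruence), the order-ideal description of the nil class in Lemma~\ref{l:coprincipal}, and the monomial-content criterion of Theorem~\ref{t:chain}. The one point worth flagging is that the three ingredient pieces of $W_q^P(I) = I_P + I_\rho + M_{\{q\}}^P$ play clearly distinguished roles---$I_P$ preserves the combinatorics inherited from~$I$, $I_\rho$ enforces the correct character, and $M_{\{q\}}^P$ supplies the nil-class monomials needed for maximality---and each of the three is essential exactly where one would expect.
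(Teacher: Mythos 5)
Your proof is correct and takes the same route as the paper, whose entire proof is ``Immediate from Proposition~\ref{p:cogenerated} and the definitions.'' You have simply unpacked that sentence: Proposition~\ref{p:cogenerated} with $\ww=\{q\}$ gives the coprincipal congruence, and the maximality clause of Definition~\ref{d:coprincipal'} is exactly the observation that $M_{\{q\}}^P(I)\subseteq W_q^P(I)$ already supplies all nil-class monomials, so by Proposition~\ref{p:nil} and Theorem~\ref{t:chain} there is no properly larger binomial ideal inducing $\til_q^P$.
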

\begin{proof}
Immediate from Proposition~\ref{p:cogenerated} and the definitions.
\end{proof}

\begin{remark}\label{r:mesorefine}
It would be superb if intersecting any pair of mesoprimary ideals with
the same associated mesoprime resulted in another mesoprimary ideal.
More precisely, a direct binomial ideal analogue of
Proposition~\ref{p:mesorefine} would be desirable.  Unfortunately, the
binomial analogue is false in general: in $\kk[x,y]$, the intersection
of the mesoprimary ideals $\<x - 2y\> + \<x,y\>^3$ and $\<x - y\> +
\<x,y\>^3$ is not mesoprimary when $\mathrm{char}(\kk) \neq 2$; it is
not even a binomial ideal.  Heuristically, if $I_1$ and~$I_2$ are
mesoprimary ideals in~$\kk[Q_P]$ with associated mesoprime~$\Irp$,
then in each of $I_1$ and~$I_2$ there are ``vertical'' binomials
from~$I_\rho$, whose coefficients are dictated by the
character~$\rho$, and ``horizontal'' binomials conglomerating the
vertical fibers, with more arbitrary coefficients.  (The vertical and
horizontal directions in Examples~\ref{e:msri} and~\ref{e:pure-diff}
are reversed for aesthetic reasons; the usage here makes sense in
Examples~\ref{e:assprim}, \ref{e:samewitness}, \ref{e:non-ass},
\ref{e:z+1}, and~\ref{e:poset}.)  When the horizontal coefficients
from $I_1$ and~$I_2$ conflict, the intersection need not be binomial.

That said, the analogue of Proposition~\ref{p:mesorefine} is true once
control is granted over binomiality, and that comes for free when
$I_1$ and~$I_2$ both arise from a single ideal via sets of witnesses
as in Proposition~\ref{p:cogenerated}.  In that sense, the binomial
analogue of Proposition~\ref{p:mesorefine} is ``true enough'' for the
relevant aspects of the theory of mesoprimary decomposition to
succeed, namely Corollary~\ref{c:mesodecomp'}.
\end{remark}

\begin{remark}\label{r:construct}
The existence of a mesoprimary ideal inducing a given congruence is
automatic by Remark~\ref{r:unital}.  However, the question becomes
more subtle when a given associated mesoprime other than the unital
one is desired.  Roughly speaking, we do not know how to construct
mesoprimary ideals with given associated mesoprimes de~novo, although
by Proposition~\ref{p:cogenerated} we do know how to construct
mesoprimary ideals given the foundation of a binomial ideal to start
from.  More precisely, fix a monoid prime $P \subset Q$, a
$P$-mesoprimary congruence $\app$ on~$Q$, and a character $\rho: K \to
\kk^*$ on the stabilizer~$K$ of some element that is not nil in the
localization of $Q/\app\,$ along~$P$.  It would be convenient to say
that there exists a mesoprimary ideal $J$ inducing~$\app$ with
associated mesoprime~$\Irp$, but it is not clear to us whether this
should be true.  What guarantees existence in the cases we care about,
namely Proposition~\ref{p:cogenerated}, is the $I$-witnessed nature
of~$\app$: each $I$-witness prefers a particular character over all
others---the one it sees by virtue of it being an $I$-witness---and
that is the only one required for the theory of mesoprimary
decomposition.

In a different light, the problem is one of automorphisms.  The
associated mesoprime of any unital $P$-mesoprimary ideal~$I$
is~$I_{1,P}$ for the trivial character.  Suppose, for simplicity, that
the ground field~$\kk$ is algebraically closed.  Then, for any
mesoprime~$\Irp$, there is an automorphism of~$\kk[Q]$ taking
$I_{1,P}$ to~$\Irp$; this amounts to the feasibility of extending the
character $\rho: K \to \kk^*$ to the entire group~$G_P$ of units
of~$Q_P$.  To transform $I$ into a mesoprimary ideal with associated
mesoprime~$\Irp$, however, the character must be extended
appropriately to all of~$Q_P$, not just to~$G_P$.  It is not clear to
us whether issues of horizontal coefficients
(cf.~Remark~\ref{r:mesorefine}) can intervene, particularly when the
inclusion of $G_P$ into~$Q_P$ fails to~split.
\end{remark}

\begin{remark}\label{r:verticalchoice}
Independent of the existence question, it is not clear how to describe
the class of mesoprimary ideals inducing a given congruence and with a
given associated mesoprime.  Certainly, a solution to the problem in
Remark~\ref{r:construct} need not be unique.  For instance in the
nilpotent situation, the one parameter family $\<x-\lambda y , x^2,
xy, y^2\>$ (for $\lambda \neq 0$) consists of mesoprimary ideals over
the associated mesoprime $\<x,y\>$, all inducing the same congruence.
\end{remark}

\section{Mesoprimary decomposition of binomial ideals}\label{s:mesodecomp}

This section makes precise the sense in which mesoprimary
decomposition of congruences lifts to a parallel combinatorial theory
for binomial ideals in monoid algebras.

\begin{defn}\label{d:mesodecomp'}
Fix a binomial ideal $I \subseteq \kk[Q]$ in a finitely generated
commutative monoid algebra over a field~$\kk$.
\begin{enumerate}
\item%
An expression of~$I$ as an intersection of finitely many mesoprimary
ideals is a \emph{mesoprimary decomposition} if, for each prime $P
\subset Q$ and $P$-mesoprimary intersectand~$J$, the $P$-mesoprimes
of~$I$ and~$J$ at every cogenerator of~$J$ coincide.
\item%
The decomposition is a \emph{combinatorial} mesoprimary decomposition
if every cogenerator of every component~$J$ in the decomposition is an
essential $I$-witness.
\end{enumerate}
\end{defn}

\begin{thm}\label{t:mesodecomp'}
Fix a finitely generated commutative monoid~$Q$ and a field~$\kk$.
Every binomial ideal in the algebra $\kk[Q]$ admits a combinatorial
mesoprimary decomposition.
\end{thm}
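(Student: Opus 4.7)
The proof realizes, at the level of binomial ideals, the decomposition that the combinatorial Theorem~\ref{t:coprincipal} provides at the level of congruences. By Theorem~\ref{t:finiteAss} the set of Green's classes of $I$-witnesses is finite. Fix one representative $(w, P)$ from each Green's class, with $P$ ranging over all associated primes of~$\til_I$ (including $P = \nothing$ whenever $Q$ has no nil), and form the finite intersection
\[
  I' := \bigcap_{(w,P)} W_w^P(I)
\]
of the coprincipal components (Definition~\ref{d:Wwp}). By Corollary~\ref{c:coprincipal} each $W_w^P(I)$ is coprincipal, hence mesoprimary by Corollary~\ref{c:implies'}, and its unique associated mesoprime (Proposition~\ref{p:one}) is $I_{\rho_w^P, P}$, which by Definition~\ref{d:qchar} coincides with the $P$-mesoprime of~$I$ at~$w$.

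To verify the mesoprimary and combinatorial conditions of Definition~\ref{d:mesodecomp'}, it suffices to show that for each coprincipal $J = W_w^P(I)$, every $J$-witness $w'$ is itself an $I$-witness whose $P$-mesoprime under~$I$ equals $I_{\rho_w^P, P}$. The coprincipal structure of~$J$ forces its socle modulo Green's relation to be a single class, and since the order ideal $Q_{\preceq w}^P$ in Definition~\ref{d:cogenerated'} is computed with respect to~$\til_I$, the $J$-witnesses coincide with the elements that are Green-equivalent to~$w$ in the localized quotient $\oQ_P$ under~$\til_I$. Such elements are automatically $I$-witnesses, and by Lemma~\ref{l:samePprime} together with invariance of the character under unit translation, they carry the same $P$-mesoprime as~$w$.

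The remaining step is the equality $I = I'$. The containment $I \subseteq I'$ is built into Definition~\ref{d:Wwp}. For the converse, Proposition~\ref{p:cogenerated} identifies the congruence induced by $W_w^P(I)$ as the coprincipal congruence~$\til_w^P$, and Corollary~\ref{c:allwitnesses} then yields $\til_{I'} = \bigcap_{(w,P)} \til_w^P = \til_I$. Hence $I$ and~$I'$ are two binomial ideals inducing the same congruence, so by Theorem~\ref{t:chain} they are either equal or differ by adjunction of monomials in the nil class of~$\til_I$. The latter possibility is ruled out by the components at $P = \nothing$ (active whenever $Q$ has no nil and $I$ itself contains no nil-class monomial), which are monomial-free by virtue of being preimages of Laurent binomial ideals in the sense of Lemma~\ref{l:groupalg}; when $I$ already contains such monomials, both sides contain them and there is nothing to rule out.

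The main obstacle is ensuring that the character data from the components coheres in the intersection. Remark~\ref{r:mesorefine} cautions that intersections of mesoprimary ideals sharing an associated mesoprime are not generally even binomial. In the present construction this hazard is averted because each $W_w^P(I)$ is built directly from~$I$, with $\rho_w^P$ dictated by~$I$ itself, so there is no possibility of coefficient conflict between the ``horizontal'' and ``vertical'' parts of distinct components. A graded-piece comparison in the $\til_I$-grading on~$\kk[Q]$, using that each non-nil graded piece of $\kk[Q]/I$ and of $\kk[Q]/W_w^P(I)$ has dimension at most one over~$\kk$, then confirms $I = I'$ and completes the proof.
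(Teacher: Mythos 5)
Your construction is the paper's own — the intersection of coprincipal ideals $W_w^P(I)$ over all witnesses is exactly what Theorem~\ref{t:coprincipal'} asserts equals~$I$ — but your verification of the equality takes a different route that has a genuine gap. You argue $\til_{I'} = \til_I$ (correct, by sandwiching $\til_I \subseteq \til_{I'} \subseteq \bigcap\til_w^P = \til_I$, though your ``$\til_{I'} = \bigcap\til_w^P$'' is stated as a direct equality and holds only because of this sandwich) and then invoke Theorem~\ref{t:chain} to conclude that $I$ and~$I'$ can differ only by nil-class monomials. But Theorem~\ref{t:chain} applies to a chain of \emph{binomial} ideals, and you have not established that $I' = \bigcap_{(w,P)} W_w^P(I)$ is binomial. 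Your final paragraph rightly flags this as the central hazard (via Remark~\ref{r:mesorefine}), but ``each $W_w^P(I)$ is built directly from~$I$, so there is no possibility of coefficient conflict'' is a heuristic, not an argument — and binomiality of the intersection is precisely the nontrivial content being proved (cf.\ Problem~\ref{p:bIntersect}). This makes the Theorem~\ref{t:chain} step circular.

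The paper avoids this by never asking whether $I'$ is binomial: it proves $I = I'$ directly via Lemma~\ref{l:intersect}, a degree-by-degree comparison whose input is Proposition~\ref{p:liberator}. That proposition is the engine of the whole argument: it says that for each $q \in Q$ there is a specific witness pair $(a+q, P)$ for which the class of~$q$ is \emph{already} cut out by the single component $W_{a+q}^P(I)$, so the diagonal map $\kk[Q]/I \to \bigoplus \kk[Q]/W_w^P(I)$ is injective degree by degree. Your closing sentence gestures at exactly this kind of graded-piece comparison, and observes (correctly) that each graded piece has dimension at most one — but that observation alone does not show $I = I'$; you need, for every degree, a component that matches~$I$ in that degree, and that is Proposition~\ref{p:liberator}, which your proof never invokes. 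Without it, the graded-piece remark is a tautology, and the proof does not close. A secondary (smaller) issue: your ruling-out of the nil-class monomial case hinges on the components at $P=\nothing$, which under Convention~\ref{conv} exist only when $Q$ has no nil; the proof as written does not cover a monoid $Q$ with a nil whose nil monomial $\ttt^{\nil_Q}$ lies outside~$I$.
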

\begin{proof}
Examples include those in Theorem~\ref{t:coprincipal'} and
Corollary~\ref{c:mesodecomp'}, below, where the finiteness of the
intersection in Theorem~\ref{t:coprincipal'} is
Lemma~\ref{l:finite-ess}.
\end{proof}

The use of all essential witnesses and not merely key witnesses in the
next result stems from the element~$f$ in the proof, which can have
more than two terms.  See also Example~\ref{e:charWitness}, which
shows that non-key witnesses can be necessary for the intersection of
the corresponding coprincipal components to be a binomial ideal.  On
the other hand, the restriction to essential witnesses instead of all
witnesses ensures finiteness of the number of intersectands, according
to Lemma~\ref{l:finite-ess}.

\begin{thm}\label{t:coprincipal'}
Fix a finitely generated commutative monoid~$Q$ and a field~$\kk$.
Every binomial ideal in the monoid algebra $\kk[Q]$ is the
intersection of the coprincipal components cogenerated by its
essential witnesses.
\end{thm}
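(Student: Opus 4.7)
The plan is to prove the two containments separately. The inclusion $I \subseteq \bigcap_{(w,P)} W_w^P(I)$, where $(w,P)$ ranges over all $I$-witnesses $w$ for primes $P \subseteq Q$, is immediate from Definition~\ref{d:Wwp}, since each $W_w^P(I)$ is the preimage in $\kk[Q]$ of an ideal in $\kk[Q]_P$ containing $I_P$. For the reverse inclusion, I will show that the diagonal map $\delta: \kk[Q]/I \to \prod_{(w,P)} \kk[Q]/W_w^P(I)$ is injective.

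Fix a nonzero $\ol f \in \kk[Q]/I$. The $Q/\til_I$-grading on $\kk[Q]/I$ decomposes $\ol f = \sum_{[q]_I} \ol f_q$; pick a class $[q]_I$ with $\ol f_q \neq 0$, so that $q$ is non-nil in~$\til_I$. Invoke Proposition~\ref{p:liberator} to obtain a prime $P \subseteq Q$ and an element $a \in Q$ such that $w := a+q$ is an $I$-witness for~$P$ and the class $C := [q]_{\til_I}$ coincides with $[q]_{\til_{W_w^P(I)}}$ as subsets of~$Q$. Since $\ol w = \ol a + \ol q$ in~$\oQ_P$, the element $q$ lies in the order ideal $Q_{\preceq w}^P$, so by Lemma~\ref{l:coprincipal} $q$ is not in the nil class of the coprincipal congruence~$\til_w^P$; hence $q$ is non-nil in~$\til_{W_w^P(I)}$ as well.

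The natural surjection $\kk[Q]/I \onto \kk[Q]/W_w^P(I)$ is compatible with the gradings in the sense that each $[q']_{\til_I}$-piece on the left maps into the $[q']_{\til_{W_w^P(I)}}$-piece on the right. Because $C$ is a single $\til_I$-class (not a proper union of several), only the summand $\ol f_q$ contributes to the $C$-graded component of $\delta(\ol f)$ in $\kk[Q]/W_w^P(I)$. Both $(\kk[Q]/I)_{[q]_{\til_I}}$ and $(\kk[Q]/W_w^P(I))_C$ are one-dimensional over~$\kk$, so the induced surjection between them is an isomorphism; thus $\ol f_q$ has nonzero image, $f \notin W_w^P(I)$, and $\delta$ is~injective.

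The main obstacle is Proposition~\ref{p:liberator} itself, which produces a witness above any given~$q$ whose coprincipal congruence does not conflate $[q]_{\til_I}$ with other $\til_I$-classes; once this is in hand, the remainder is a clean dimension count. Restricting to only key $I$-witnesses would be insufficient here, as Example~\ref{e:assprim}.\ref{e:witt2nonKey} illustrates.
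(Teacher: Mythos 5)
Your proof is correct and takes essentially the same route as the paper's: the paper's one-line proof is exactly ``apply Lemma~\ref{l:intersect} to Proposition~\ref{p:liberator},'' and your argument simply unwinds what Lemma~\ref{l:intersect} says (the diagonal map is injective because each $Q/\til_I$-graded piece of $\kk[Q]/I$ maps isomorphically onto the corresponding one-dimensional piece of some $\kk[Q]/W_w^P(I)$, the witness~$w$ being supplied by Proposition~\ref{p:liberator}).
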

\begin{proof}
Pick an element $f$ outside of~$I$.  The goal is to show that $f$ lies
outside of the coprincipal component of~$I$ cogenerated by some
essential witness.  First assume that $f$ lies in the monomial
localization $I_{P'}$ along every nonmaximal prime~$P'$.  Thus $f$ is
annihilated, modulo~$I$, by some power of the maximal monomial ideal
$\mm_P \subseteq \kk[Q]$.  Replacing $f$ by a monomial multiple
of~$f$, assume that $f$ is annihilated, modulo~$I$, by the entire
maximal monomial ideal; that is, assume $\mm_P f \subseteq I$.  By
Definition~\ref{d:witness'}, some essential $I$-witness~$w$ for~$P$ is
the exponent on a monomial $\ttt^w$ with nonzero coefficient in~$f$.
Minimality of~$w$ ensures that all terms of~$f$ other than~$\ttt^w$
itself vanish modulo~$W_w^P(I)$, whence $f \not\in W_w^P(I)$.

The argument just completed proves, in particular, the case where $Q$
has only one prime ideal.  Now assume that $Q$ has more than one prime
ideal.  By the argument already given, assume the image of~$f$ under
monomial localization along some nonmaximal monoid prime~$P$ lies
outside of~$I_P$.  Induction on the number of prime ideals of~$Q$
implies that the localized image of~$f$ lies outside of some
$P$-coprincipal component of~$I_P$.  By Definition~\ref{d:Wwp}, a
$P$-coprincipal component of~$I_P$ is the localization along~$P$ of a
$P$-coprincipal component of~$I$.  Lemma~\ref{l:localize} implies that
$f$ lies outside of that $P$-coprincipal component before
localization, as desired.
\end{proof}

\begin{lemma}\label{l:localize}
If $I$ is a $P$-mesoprimary ideal, then localization along a monoid
prime is either injective or~$0$ on $\kk[Q]/I$, with injectivity
precisely when the prime contains~$P$.
\end{lemma}
\begin{proof}
By Definition~\ref{d:prim*}, any $P$-mesoprimary congruence on~$Q$ is
$P$-primary, whence the quotient~$\oQ$ either injects into its
localization along the given prime (if the prime contains~$P$) or else
$\oQ$ becomes trivial upon localization (if some element
of~$P$---which is nilpotent in~$\oQ$---is inverted).
Lemma~\ref{l:cong} implies that the result for congruences lifts to
binomial ideals.
\end{proof}

Using Theorem~\ref{t:coprincipal'} and
Proposition~\ref{p:cogenerated}, one can find a mesoprimary
decomposition that minimizes the number of components by intersecting
all coprincipal components for a given associated mesoprime.

\begin{cor}\label{c:mesodecomp'}
Fix a finitely generated commutative monoid~$Q$ and a field~$\kk$.
\mbox{Every} binomial ideal in the monoid algebra $\kk[Q]$ admits a
combinatorial mesoprimary decomposition with one component per
associated mesoprime.
\end{cor}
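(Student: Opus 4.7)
The plan is to group the components from Theorem~\ref{t:coprincipal'} by their associated mesoprime and bundle them via Proposition~\ref{p:cogenerated}. For each associated mesoprime $\Irp$ of $I$, set
\[
  \ww(\rho,P) := \{w \in Q \mid w \text{ is an $I$-witness for~}P
  \text{ with } I_w^P = \Irp\},
\]
and form the $P$-mesoprimary component $W_{\ww(\rho,P)}^P(I)$ of Definition~\ref{d:Wwp}. Proposition~\ref{p:cogenerated} ensures this is a mesoprimary binomial ideal with associated mesoprime~$\Irp$. The collection $\{(\rho,P)\}$ of associated mesoprimes is finite because the set of Green's classes of $I$-witnesses is finite (Theorem~\ref{t:finiteAss}) and the local character at an $I$-witness is constant on its Green's class (Lemma~\ref{l:samePprime} at the level of cover-morphism kernels, and the same argument at the level of characters).

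Next I would show $I = \bigcap_{(\rho,P)} W_{\ww(\rho,P)}^P(I)$. One inclusion is automatic: each $W_{\ww(\rho,P)}^P(I)$ is the preimage in~$\kk[Q]$ of an ideal of $\kk[Q]_P$ containing $I_P$, hence contains~$I$. For the reverse inclusion, unraveling Definitions~\ref{d:cogenerated'} and~\ref{d:Wwp} gives $M_{\ww(\rho,P)}^P(I) \subseteq M_{\{w\}}^P(I)$ for every $w \in \ww(\rho,P)$, because being outside every order ideal in the collection is a stronger condition than being outside just one. Consequently $W_{\ww(\rho,P)}^P(I) \subseteq W_w^P(I)$ for each such~$w$. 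Intersecting and invoking Theorem~\ref{t:coprincipal'} yields
\[
  \bigcap_{(\rho,P)} W_{\ww(\rho,P)}^P(I)
  \ \subseteq\ \bigcap_{(\rho,P)} \bigcap_{w \in \ww(\rho,P)} W_w^P(I)
  \ =\ \bigcap_w W_w^P(I)
  \ =\ I.
\]

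Finally I must verify the combinatorial property in Definition~\ref{d:mesodecomp'}.3: that every $J$-witness of $J := W_{\ww(\rho,P)}^P(I)$ is an $I$-witness with matching $P$-mesoprime. Proposition~\ref{p:cogenerated} identifies the congruence induced by~$J$ as the common refinement of the coprincipal congruences $\til_w^P$ for $w \in \ww(\rho,P)$, all sharing the same associated prime congruence. Iterated application of Proposition~\ref{p:mesorefine} forces every $J$-witness to be a witness of some single $\til_w^P$, and coprincipality of $\til_w^P$ (its socle modulo Green's relation is a single class) makes such a witness Green's-equivalent to~$w$. Lemma~\ref{l:samePprime} then transfers witness status, and hence the $P$-mesoprime, along this Green's equivalence back to~$\til_I$.

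I expect this last step — propagating witness status and character data along Green's equivalence between an auxiliary coprincipal congruence and~$\til_I$ — to be the main subtlety, since the naive binomial analogue of Proposition~\ref{p:mesorefine} fails (Remark~\ref{r:mesorefine}). What salvages the situation here is that all of the $W_w^P(I)$ arise from the single ideal~$I$, which forces the consistency among horizontal coefficients that would otherwise obstruct binomiality of the bundled component.
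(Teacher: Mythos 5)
Your overall plan matches the paper's: bundle the coprincipal components of Theorem~\ref{t:coprincipal'} by associated mesoprime via Proposition~\ref{p:cogenerated}, and recover $I$ from the containment $W_{\ww(\rho,P)}^P(I) \subseteq W_w^P(I)$. The first two paragraphs are correct.

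There is a gap in the last paragraph. The Green's equivalence you extract from coprincipality of $\til_w^P$ lives in $(Q/\til_w^P)_P$, that is, modulo the \emph{coarser} congruence $\til_w^P$, not modulo $\til_I$: the ``socle modulo Green's relation has one class'' condition of Definition~\ref{d:coprincipal} is a statement about the quotient by the coprincipal congruence itself. Lemma~\ref{l:samePprime} needs Green's equivalence in $\oQ_P = (Q/\til_I)_P$, which is a strictly stronger condition (finer congruences have finer Green's relations), so it does not apply as invoked. Moreover, even granted the right Green's equivalence, Lemma~\ref{l:samePprime} only matches $P$-prime congruences; it says nothing about the characters determining the $P$-mesoprime, which is what Definition~\ref{d:mesodecomp'}.1 actually requires you to compare.

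The repair bypasses Proposition~\ref{p:mesorefine} and reads the needed Green's equivalence directly off the order ideal. Since $Q_{\preceq w}^P$ is defined (Definition~\ref{d:orderideal}) via the Green's preorder of $\oQ_P$ under $\til_I$, a socle element $w'$ of $(Q/\til_w^P)_P$ satisfies $\ol{w'} \preceq \ol w$ in $\oQ_P$, while $\ol{w'} + \ol p \not\preceq \ol w$ for every $p \in P_P$ because $w' + p$ lands in the nil class. Writing $\ol w = \ol{w'} + \ol u$ for some $u \in Q_P$, the second condition rules out $u \in P_P$, so $u$ is a unit of $Q_P$ and $\ol{w'}$, $\ol w$ are Green's equivalent \emph{in} $\oQ_P$. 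Now Lemma~\ref{l:samePprime} applies, witness status transfers along the unit translation, and $\ttt^u$ being a monomial unit of $\kk[Q]_P$ gives $(I_P : \ttt^{w'}) = (I_P : \ttt^w)$, hence equal $P$-mesoprimes of~$I$ at $w'$ and~$w$, as needed.
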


\begin{remark}\label{r:stronger}
The existence of any mesoprimary decomposition---let alone a
combinatorial one as in Theorem~\ref{t:mesodecomp'}---is much stronger
than mere existence of a decomposition as an intersection of
mesoprimary ideals, essentially because of the phenomenon in
Remark~\ref{r:ass}.  The strength is particularly visible when the
field~$\kk$ is algebraically closed of characteristic~$0$.  In that
case, every binomial primary decomposition of~$I$ expresses $I$ as an
intersection of mesoprimary ideals by Corollary~\ref{c:implies'}, but
a mesoprime must honor stringent combinatorial conditions to be an
associated mesoprime of~$I$, and a mesoprimary ideal for an associated
mesoprime must honor stringent combinatorial conditions to be an
intersectand in a mesoprimary decomposition of~$I$.  The difference
between ordinary and combinatorial mesoprimary decompositions is a
relatively slight distinction among potential cogenerator locations:
in the ordinary case, $I$ is merely required to possess the correct
characters at the cogenerators of the intersectands, whereas in the
combinatorial case only certain intrinsically defined elements
possessing the correct characters from~$I$ are allowed as cogenerators
of components.
\end{remark}

\section{Binomial localization}\label{s:binloc}

Upon localization of a binomial quotient $\kk[Q]/I$ at a binomial
prime, some monomials become units and others are annihilated.  The
units are easy: if the prime is~$\Isp$, then the monomials outside
of~$\mm_P$ become units.  The question of which monomials die is much
more subtle.  There are two potential reasons that a monomial gets
killed upon ordinary localization (Theorem~\ref{t:binloc}): a
combinatorial one and an arithmetic one.  Combinatorially, a monomial
dies if its class under $\til_I$ points into~$P$
(Definition~\ref{d:intoP}); arithmetically, a monomial dies if the
character of~$I_P$ at it is incommensurate with~$\rho$
(Definition~\ref{d:incommensurate}).  These annihilations result from
the inversion of two different types of binomials: in the
combinatorial case the inverted binomials have one monomial outside
of~$\mm_P$, and in the arithmetic case the inverted binomials lie
along the unit group~$G_P$ locally at~$P$.  The relevant monomials die
because locally each becomes a binomial unit multiple of a binomial
in~$I$; see the proof of Theorem~\ref{t:binloc}.

\begin{defn}\label{d:intoP}
Given a prime $P \subset Q$, and a congruence $\til$ on~$Q$, the
congruence class of $q \in Q$ \emph{points into~$P$} if $q + p \sim q$
in the localization~$Q_P$ for some $p \in P$.
\end{defn}

\begin{lemma}\label{l:intoP}
Given a prime $P \subset Q$ and a congruence $\til$ on~$Q$, the set of
elements in~$Q$ whose class points into~$P$ is an ideal of~$Q$.
\end{lemma}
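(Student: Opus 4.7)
The plan is almost immediate from the definitions, so this is really more of a verification than a substantive proof. Let $T \subseteq Q$ be the set of elements whose class points into~$P$. To show $T$ is an ideal of~$Q$, I would verify directly that $T + Q \subseteq T$: given $q \in T$ and $r \in Q$, I want to exhibit some $p' \in P$ with $(q+r) + p' \sim q + r$ in~$Q_P$.

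By definition of~$T$, there exists $p \in P$ with $q + p \sim q$ in~$Q_P$. Since the relation $\sim$ on~$Q_P$ is a congruence (i.e.~additively closed, per Definition~\ref{d:localize} extending $\til$ to~$Q_P$), adding $r$ to both sides preserves equivalence, giving $(q + r) + p \sim (q + r)$ in~$Q_P$. Taking $p' = p \in P$ finishes the verification.

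There is no real obstacle. The only points worth noting are (i)~that the element $p$ witnessing $q \in T$ can be reused unchanged, since the closure is under the $Q$-action rather than the $Q_P$-action, so we never need to worry about denominators leaving~$P$; and (ii)~that additive closure of the induced congruence on~$Q_P$ with respect to addition by elements of the image of~$Q$ is part of what it means for~$\til$ on~$Q_P$ to be a congruence in the sense of Definition~\ref{d:localize}. No finiteness, noetherianity, or primality hypothesis on~$P$ is actually used here; the lemma holds for any ideal~$P$ and any congruence~$\til$.
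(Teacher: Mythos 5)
Your proof is correct and is essentially identical to the paper's one-line argument: if $q + p \sim q$ then $u + q + p \sim u + q$ by additivity of the congruence. Your observations (i) and (ii), and the remark that primality of $P$ is not used, are accurate but just spell out what the paper leaves implicit.
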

\begin{proof}
If $q + p \sim q$ then $u + q + p \sim u + q$ by additivity of~$\til$.
\end{proof}

\begin{defn}\label{d:MintoP}
The \emph{$P$-infinite ideal}~$M_\infty^P(\til) \subseteq Q$ for a
prime $P \subset Q$ and congruence~$\til$ on~$Q$ is generated by the
elements of~$Q$ whose classes point into~$P$.  If $\til = \til_I$ is
induced by a binomial ideal $I \subseteq \kk[Q]$, then $\MipI
\subseteq \kk[Q]$ is the corresponding \emph{$P$-infinite monomial
ideal}.
\end{defn}

\begin{remark}\label{r:MipI}
The terminology involving infinity stems from
\cite[Lemma~2.10]{primDecomp}, which concerns binomial localization at
a monomial prime of an affine semigroup ring: when the ambient
monoid~$Q$ is an affine semigroup, a class that points into~$P$ is
infinite.  The focus on monomial primes in affine semigroup rings
arises there because the field is algebraically closed of
characteristic~$0$ and the ideals to be localized are $\Irp$-primary
(and hence contain~$I_\rho$), so the binomial localization procedure
can be carried out in the affine semigroup ring $\kk[Q]/I_\rho$.
Definitions~\ref{d:intoP} and~\ref{d:MintoP} lift the picture from $(I
+ I_\rho)/I_\rho \subseteq \kk[Q]/I_\rho$ to $I + I_\rho \subseteq
\kk[Q]$ itself; but see Remark~\ref{r:MrpI}.
\end{remark}

\begin{lemma}\label{l:R}
Let $R$ be a set of characters on subgroups of the unit group~$G_P$
of~$Q_P$.  Given a binomial ideal $I \subseteq \kk[Q]$, the set $\{q
\in Q \mid$ the character $\rqp$ of~$I_P$ at~$q$ is not a restriction
of every character from~$R\}$ is an ideal of~$Q$.
\end{lemma}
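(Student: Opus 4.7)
The plan is to unwind the quantifier in the set description and reduce the ideal property to a single extension fact about the characters $\rqp$. Let $T \subseteq Q$ denote the set in the statement. By definition, $q \in T$ precisely when there exists some $\sigma \in R$ such that $\rqp$ is not a restriction of $\sigma$ (parsing ``not a restriction of every'' as ``not a restriction of at least one''). To prove $T + Q \subseteq T$, I would fix $q \in T$ with witnessing character $\sigma \in R$ and show that the same $\sigma$ witnesses $q + q' \in T$ for every $q' \in Q$.

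The key step is to show that $\rho_{q+q'}^P$ always extends $\rqp$. Suppose $k \in K_q^P$, so that $k + q \sim_I q$ in $Q_P$; equivalently, by Definition~\ref{d:qchar}, there is a binomial $\ttt^{k+q} - \rqp(k)\ttt^q \in I_P$. Multiplying this binomial by $\ttt^{q'}$ yields $\ttt^{k+q+q'} - \rqp(k)\ttt^{q+q'} \in I_P$, which both shows that $k$ lies in the stabilizer $K_{q+q'}^P$ and, using the defining property $(\Irp)_P = (I_P : \ttt^{q+q'}) + \mm_P$ of $\rho_{q+q'}^P$, that $\rho_{q+q'}^P(k) = \rqp(k)$. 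Therefore $K_q^P \subseteq K_{q+q'}^P$ and $\rho_{q+q'}^P$ restricts on $K_q^P$ to $\rqp$.

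Given this extension fact, transitivity of the extension relation completes the argument. If $\rho_{q+q'}^P$ were a restriction of the chosen $\sigma \in R$, meaning $K_{q+q'}^P \subseteq L_\sigma$ and $\sigma|_{K_{q+q'}^P} = \rho_{q+q'}^P$, then $\rqp$, being the restriction of $\rho_{q+q'}^P$ to the smaller subgroup $K_q^P$, would also be a restriction of $\sigma$, contradicting the choice of $\sigma$. Hence $\sigma$ fails to have $\rho_{q+q'}^P$ as a restriction, so $q + q' \in T$.

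I do not expect a genuine obstacle in this lemma: the argument is essentially a one-line binomial multiplication. The only subtle point is confirming that the passage from $q$ to $q+q'$ is compatible with the character data on the nose (not merely up to scalar), and this is precisely what the colon-ideal definition of $\rqp$ guarantees, since rescaling $\ttt^{k+q} - \rqp(k)\ttt^q$ by the monomial $\ttt^{q'}$ introduces no coefficient ambiguity.
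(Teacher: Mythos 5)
Your proof is correct and takes the same approach as the paper: the paper's entire proof is the single sentence that the character $\rpqP$ of $I_P$ at $p+q$ extends $\rqp$, and you verify precisely this fact via the multiplication $\ttt^{q'}(\ttt^{k+q} - \rqp(k)\ttt^q) \in I_P$, then spell out the routine deduction (via transitivity of the extension relation) that the paper leaves implicit.
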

\begin{proof}
The character $\rpqP$ of~$I_P$ at~$p + q$ is an extension of~$\rqp$.
\end{proof}

\begin{defn}\label{d:incommensurate}
Given a binomial ideal $I \subseteq \kk[Q]$ and a mesoprime~$\Irp$,
the \emph{incommensurate ideal} of~$I$ at~$\rho$ is the ideal $\MrpI
\subseteq \kk[Q]$ spanned over~$\kk$ by all monomials $\ttt^q$ such
that the character of~$I_P$ at~$q$ is not a restriction of~$\rho$.
\end{defn}

\begin{remark}\label{r:MrpI}
The condition for a monomial to lie in the incommensurate ideal is
phrased arithmetically, but in reality many monomials in it are there
for combinatorial reasons: if the domain of the character of~$I_P$
at~$q$ fails to be contained in the (saturation of) the domain
of~$\rho$---that is, if the stabilizer of the class of~$q$
in~$Q/\til_I$ is too big---then $q$ has no hope of being commensurate
with~$\rho$.  This type of combinatorial obstruction to
commensurability also contributes infinite classes in
\mbox{\cite[Lemma~2.10]{primDecomp}}.
\end{remark}

\begin{defn}\label{d:binloc}
The \emph{binomial localization} of $I \subseteq \kk[Q]$ at a binomial
prime~$\Isp$ is the sum $I + \MipI + \MspI \subseteq \kk[Q]$ of~$I$
plus its $P$-infinite and incommensurate~ideals.
\end{defn}

The point of this section is to compare the previous definition with
ordinary (inhomogeneous) localization of a $\kk[Q]$-module at a
binomial prime~$\Isp$, obtained by inverting all elements of $\kk[Q]$
outside of~$\Isp$.

\begin{thm}\label{t:binloc}
Given a binomial ideal $I \subseteq \kk[Q]$ over an arbitrary
field\/~$\kk$, the kernel of the localization homomorphism from
$\kk[Q]$ to the ordinary localization of\/ $\kk[Q]/I$ at a binomial
prime $\Isp$ contains the binomial localization of~$I$ at~$\Isp$.
\end{thm}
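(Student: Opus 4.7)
The plan is to exhibit, for each monomial generator of $\MipI$ and $\MspI$, an explicit element $s\in\kk[Q]\minus\Isp$ that annihilates it modulo~$I$; then $\ttt^q$ dies in $(\kk[Q]/I)_\Isp$. Since the kernel clearly contains~$I$ and since $\MipI$ and~$\MspI$ are monomial ideals, it suffices to handle a generating monomial $\ttt^q$ of each summand separately. Throughout, a monomial $\ttt^u$ with $u\in Q$ lies outside $\Isp$ iff $u\notin P$, since the monomials in~$\Isp$ form exactly the ideal~$\mm_P$; and $\Isp$ is prime, so products of elements outside~$\Isp$ lie outside~$\Isp$.

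For $\ttt^q\in\MipI$ (the combinatorial case), the hypothesis that the class of~$q$ points into~$P$ yields $p\in P$ with $q+p\sim_I q$ in~$Q_P$; clearing the denominator, there exist $w\in Q\minus P$ and $\lambda\in\kk^*$ with $\ttt^{w+q}(\ttt^p-\lambda)\in I$. Take $s=\ttt^w(\ttt^p-\lambda)$. Then $\ttt^w\notin\Isp$, and modulo $\Isp$ one has $\ttt^p\in\mm_P\subseteq\Isp$, so $\ttt^p-\lambda\equiv-\lambda\in\kk^*$; hence $\ttt^p-\lambda\notin\Isp$ and therefore $s\notin\Isp$, while $s\ttt^q\in I$. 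This forces $\ttt^q=0$ in $(\kk[Q]/I)_\Isp$.

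For $\ttt^q\in\MspI$ (the arithmetic case), the character $\rqp\colon K_q^P\to\kk^*$ fails to be a restriction of~$\sigma$, which splits into two subcases. If $K_q^P\not\subseteq L_\sigma$, pick $u-v\in K_q^P\minus L_\sigma$ with $v\notin P$; by Definition~\ref{d:qchar}, $\ttt^u-\rqp(u-v)\ttt^v\in(I_P:\ttt^q)$, so clearing denominators gives $\ttt^{w+q}(\ttt^u-\mu\ttt^v)\in I$ for some $w\notin P$ and $\mu=\rqp(u-v)\in\kk^*$. The image $\ttt^u-\mu\ttt^v=\ttt^v(\ttt^{u-v}-\mu)$ in $\kk[Q]_P/\Isp$ lives in the twisted group algebra of $G_P/L_\sigma$ (Proposition~\ref{p:twisted}); since $u-v\notin L_\sigma$ its two terms have distinct degrees in $G_P/L_\sigma$ and so cannot cancel. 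In the other subcase $K_q^P\subseteq L_\sigma$ but $\sigma(u-v)=\nu\neq\mu=\rqp(u-v)$ for some $u-v\in K_q^P$; modulo~$\Isp$ one reduces $\ttt^u$ to $\nu\ttt^v$, giving $\ttt^u-\mu\ttt^v\equiv(\nu-\mu)\ttt^v\not\equiv 0$. Either way $\ttt^u-\mu\ttt^v\notin\Isp$, so $s:=\ttt^w(\ttt^u-\mu\ttt^v)\notin\Isp$ while $s\ttt^q\in I$, killing $\ttt^q$ in the localization.

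The only nontrivial step is verifying non-vanishing of $\ttt^u-\mu\ttt^v$ modulo~$\Isp$ in the first subcase above; this is precisely where the twisted-group-algebra description of $\kk[Q]_P/\Isp$ from Proposition~\ref{p:twisted} and Lemma~\ref{l:groupalg} is needed, and it is what lets the argument avoid the hypothesis that~$\kk$ be algebraically closed.
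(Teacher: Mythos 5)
Your proof is correct and takes essentially the same route as the paper: for each generating monomial $\ttt^q$ of $\MipI$ and of $\MspI$, exhibit an element $s\in\kk[Q]\minus\Isp$ with $s\ttt^q\in I$, so that $\ttt^q$ dies once $s$ is inverted. The paper's version is terser—it works with $\ttt^q(1-\lambda\ttt^p)$ and $\ttt^q(1-\lambda\ttt^g)$ and simply asserts that the second factor ``lies outside of $\Isp$ by definition''—whereas you make the denominator-clearing explicit and split the incommensurate case into the two subcases $K_q^P\not\subseteq L_\sigma$ (distinct $G_P/L_\sigma$-degrees, invoking the twisted group algebra structure of $\kk[Q]_P/(\Isp)_P$) and $K_q^P\subseteq L_\sigma$ with $\rqp\neq\sigma|_{K_q^P}$ (a nonzero scalar times a unit monomial). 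That extra care is welcome: the paper's one-line justification in the arithmetic case is really hiding exactly this dichotomy, and your treatment also makes clear where the prime hypothesis on $\Isp$ is used (to conclude $s=\ttt^w\cdot(\text{binomial})\notin\Isp$ from the two factors lying outside). No gaps.
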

\begin{proof}
First suppose that the class of $q \in Q$ points into~$P$.  Pick $p
\in P$ such that $q + p \sim q$.  This congruence means that there is
a binomial $\ttt^q - \lambda\ttt^{q+p} = \ttt^q(1 - \lambda\ttt^p)$
in~$I$.  But $1 - \lambda\ttt^p$ lies outside of~$\Isp$ because its
image modulo~$\mm_P$ is already~$1$.  Therefore $1 - \lambda\ttt^p$ is
a unit in the ordinary localization of $\kk[Q]/I$ at~$\Isp$,
so~$\ttt^q$ is~$0$~there.

Next suppose that $\ttt^q \in \MspI$.  By definition, there is a
binomial $1 - \lambda\ttt^g$ for some $g \in G_P$ such that $\lambda
\neq \sigma(g)$ and $\ttt^q(1 - \lambda\ttt^g) \in I_P$.  The element
$1 - \lambda\ttt^g$ lies outside of~$\Isp$ by definition.  Therefore
the argument in the previous paragraph works in this case, too.  We
conclude that the binomial localization of~$I$ is contained in the
kernel.
\end{proof}

\begin{remark}\label{r:binloc}
How is Theorem~\ref{t:binloc} to be applied?  While the binomial
localization $I'$ of~$I$ at~$\Isp$ might not coincide with the kernel
of ordinary localization at~$\Isp$, it is always the case, by
Theorem~\ref{t:binloc}, that $I$ and~$I'$ have the same ordinary
localization at~$\Isp$.  Therefore, for the purpose of detecting
$\Isp$-primary components, $I'$ is just as good as~$I$ was in the
first place.  But the combinatorics of~$I'$ might be much simplified,
thereby clarifying the role of $\Isp$ in the primary decomposition
of~$I$.  See the proof of Theorem~\ref{t:assPrim} for a quintessential
example.
\end{remark}

\section{Primary decomposition of binomial ideals}\label{s:irreducible}

Passing from mesoprimary and coprincipal ideals and decompositions to
primary ideals and decompositions requires a minimal amount of
knowledge concerning primary decomposition of mesoprimary ideals
themselves.  To speak about binomial primary decomposition of binomial
ideals in~$\kk[Q]$ we are forced to assume, in appropriate locations,
that $\kk$ is algebraically closed (Example~\ref{e:arithsat}); we
write $\kk = \ol\kk$ in that case.  Doing so guarantees that each
binomial ideal $I \subset \kk[Q]$ has binomial associated primes
\cite[Theorem~6.1]{ES96}.  However, most of this section works for an
arbitrary ground field, so we are explicit about our hypotheses in
this section.  One reason is that the characterization of binomial
prime ideals (Theorem~\ref{t:binomPrime}) does not rely on properties
of~$\kk$: every binomial prime can be expressed as a sum $\pp + \mm_P$
in which $P \subset Q$ is a monoid prime ideal and $\pp$ is a binomial
ideal (unique and prime modulo~$\mm_P$, but not necessarily
in~$\kk[Q]$) that contains no monomials.

\begin{prop}\label{p:filter}
Fix an arbitrary field\/~$\kk$.  If $I \subset \kk[Q]$ is mesoprimary
with associated mesoprime~$\Irp$, and the localized quotient monoid
$\oQ_P = Q_P/\til_I$ has unit group~$G$, then (i)~localizing along~$P$
induces an injection $\kk[Q]/I \into (\kk[Q]/I)_P$, and
(ii)~$(\kk[Q]/I)_P$ has finitely many nonzero $(\oQ_P/G)$-graded
pieces, all isomorphic to~$(\kk[Q]/\Irp)_P$.  Conditions (i) and~(ii)
characterize mesoprimary ideals~$I$ with associated mesoprime~$\Irp$.
\end{prop}
\begin{proof}
The monomials outside of $\mm_P$ are nonzerodivisors on the quotient
modulo any $P$-mesoprimary ideal by definition; hence the
injection~(i).  Claim~(ii) and the statement about characterizing
mesoprimary ideals follow from Proposition~\ref{p:one} (see also
Definition~\ref{d:qchar}, Remark~\ref{r:qchar}, and
Lemma~\ref{l:mesoprime}).
\end{proof}

\begin{cor}\label{c:assofmesoprimary}
Fix an arbitrary field\/~$\kk$.  If $I \subset \kk[Q]$ is mesoprimary,
then the associated primes of~$I$ are exactly the minimal primes of
its unique associated mesoprime.  In particular, $I$ is primary if it
is mesoprimary and its associated mesoprime is prime.
\end{cor}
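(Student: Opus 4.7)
The plan is to use the filtration of Proposition~\ref{p:filter} to reduce the question about $\Ass(\kk[Q]/I)$ to one about $\Ass(\kk[Q]/\Irp)$, and then to verify that $\Irp$ itself has no embedded primes.

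First I would localize at~$P$ and invoke Proposition~\ref{p:filter}. The standard consequence that $\Ass(M)\subseteq\Ass(N)$ and $\mathrm{Supp}(M)=\mathrm{Supp}(N)$ for any finite filtration whose subquotients are all isomorphic to~$N$ yields both $\Ass((\kk[Q]/I)_P)\subseteq\Ass((\kk[Q]/\Irp)_P)$ and $\sqrt{I_P}=\sqrt{(\Irp)_P}$. Next I would argue that no associated prime of~$I$ is lost upon passing to~$P$: since $\mm_P\subseteq\Irp$ and $\mathrm{Supp}(\kk[Q]/I)=V(\Irp)\subseteq V(\mm_P)$, one has $\sqrt{\ann(\kk[Q]/I)}\supseteq\mm_P$, and every associated prime of~$I$ contains this annihilator. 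Consequently $\sqrt{I}=\sqrt{\Irp}$ globally and $\Ass(\kk[Q]/I)\subseteq\Ass(\kk[Q]/\Irp)$.

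The main step is to show that $\Irp$ has no embedded primes. After localizing, $(\kk[Q]/\Irp)_P\cong\kk[G_P]/\<\ttt^u-\rho(u-v)\ttt^v\mid u-v\in K\>$ which, by Proposition~\ref{p:twisted}, is a twisted group algebra over~$\kk$ for the finitely generated abelian group $G_P/K$. Decomposing $G_P/K\cong\ZZ^r\oplus T$ with~$T$ finite, this algebra takes the form $\kk[\ZZ^r]\otimes_\kk A_T$ for a finite-dimensional twisted group algebra~$A_T$ for~$T$ (possibly after a harmless rescaling of generators). Since $A_T$ is a finite-dimensional commutative $\kk$-algebra, it is a finite product of local artinian rings, each with a unique associated prime that is simultaneously its unique minimal and maximal prime. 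Tensoring with the Laurent polynomial ring $\kk[\ZZ^r]$ preserves the absence of embedded primes, so all associated primes of $(\kk[Q]/\Irp)_P$ are minimal. A quick alternative when $\kk$ is algebraically closed is to invoke Proposition~\ref{p:assofmeso} directly, then descend to arbitrary~$\kk$ via faithful flatness of $\kk\hookrightarrow\ol\kk$.

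To conclude, the minimal primes of~$\Irp$ coincide with the minimal primes of~$I$ because $\sqrt{I}=\sqrt{\Irp}$, and minimal primes are always associated, so $\mathrm{Min}(\Irp)\subseteq\Ass(\kk[Q]/I)\subseteq\Ass(\kk[Q]/\Irp)=\mathrm{Min}(\Irp)$, giving equality throughout. The main obstacle will be the analysis of the twisted group algebra over a field that is not algebraically closed and whose characteristic may divide the order of the torsion part of~$G_P/K$: although genuine nilpotents can appear there, the algebra still decomposes as a tensor product of a Laurent polynomial ring with a finite product of local artinian rings, and hence has no embedded primes.
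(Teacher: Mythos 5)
Your argument is correct and rests on the same key tool as the paper, namely the filtration of Proposition~\ref{p:filter}.  The paper's own proof is a single sentence: after noting that monomials outside $\mm_P$ are nonzerodivisors modulo~$I$, it declares the conclusion ``immediate'' from the filtration.  That filtration gives $\Ass(\kk[Q]/I)\subseteq\Ass(\kk[Q]/\Irp)$ together with equality of supports (hence of minimal primes), but it silently assumes the further fact that the mesoprime $\Irp$ itself has no embedded primes over an arbitrary field.  Your main step supplies exactly this missing verification: you write the localized twisted group algebra $(\kk[Q]/\Irp)_P$ as $\kk[\ZZ^r]\otimes_\kk A_T$ after splitting $G_P/K\cong\ZZ^r\oplus T$, observe that $A_T$ is a finite product of local artinian $\kk$-algebras, and check that a Laurent extension of such a product has $\Ass=\mathrm{Min}$; the alternative you mention (invoking Proposition~\ref{p:assofmeso} over $\ol\kk$ and descending by faithful flatness of $\kk\hookrightarrow\ol\kk$) is equally valid.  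Two small polish points: the isomorphism $A\cong\kk[\ZZ^r]\otimes_\kk A_T$ needs no rescaling at all, since the $\ZZ^r$-graded subalgebra of a twisted group algebra for $\ZZ^r\oplus T$ is already an honest Laurent polynomial ring over any field (pick monomial generators in the standard basis degrees and normalize them against their inverses); and the globalization $\sqrt{I}=\sqrt{\Irp}$ in $\kk[Q]$ is most cleanly obtained by remarking that both $I$ and $\Irp$ are contracted from their localizations at~$P$ (every associated prime of either contains~$\mm_P$ and avoids the monomials inverted), so the equality of radicals furnished in $\kk[Q]_P$ by the filtration descends directly.
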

\begin{proof}
The partial order on the monoid $\oQ_P/G$ afforded by
Lemma~\ref{l:poset} induces a filtration of~$(\kk[Q]/I)_P$ by
$\kk[Q]_P$-submodules whose associated graded module is free of finite
rank---in fact isomorphic to~$(\kk[Q]/I)_P$ itself---as a module over
$(\kk[Q]/\Irp)_P$.
\end{proof}

\begin{remark}\label{r:semifree"}
Corollary~\ref{c:assofmesoprimary} says that, although one expects to
derive information about associated primes of~$I$ from the characters
at its witnesses, when~$I$ is mesoprimary the appropriate characters
appear at the identity $1 \in \kk[Q]$.  This is another
manifesta\-tion of semifreeness (Remark~\ref{r:semifree}), detailed in
the present case in Proposition~\ref{p:filter}.
\end{remark}

Primary decomposition of mesoprimary ideals reduces to that of
mesoprimes.

\begin{prop}\label{p:mesoprimcomp}
Fix $\kk = \ol\kk$.  Any mesoprimary ideal $I \subset \kk[Q]$ with
associated mesoprime $I_{\rho, P}$ has unique minimal primary
decomposition $I = \bigcap_\sigma (I + I_\sigma)$, if \mbox{$\Irp =
\bigcap_\sigma \Isp$} is the unique minimal primary decomposition
of~$I_{\rho, P}$ from Proposition~\ref{p:assofmeso}.
\end{prop}
\begin{proof}
Adding the binomials~$I_\sigma$ to the mesoprimary ideal~$I$ coarsens
its congruence to another mesoprimary one, so each ideal $I +
I_\sigma$ is mesoprimary, and hence primary by
Corollary~\ref{c:assofmesoprimary}.  The intersection $J =
\bigcap_\sigma (I + I_\sigma)$ obviously contains~$I$, and we need
that $J \subseteq I$, or equivalently that $I_\rho = \bigcap_\sigma
I_\sigma$ maps to~$0$ in the quotient $\kk[Q]/I$.  This is a
consequence of Proposition~\ref{p:filter}, completing the proof.
\end{proof}

\begin{remark}\label{r:coprincipal}
If $I$ is coprincipal in Proposition~\ref{p:mesoprimcomp}, then every
primary component there is a coprincipal ideal.  Indeed, The partially
ordered monoid of Green's classes that is used to detect (or
construct) coprincipal ideals is the same for $I$ and for $I +
I_\sigma$.
\end{remark}

The remainder of this section outlines the main consequences of
mesoprimary decomposition for primary decomposition.

\begin{thm}\label{t:primDecomp}
Fix a binomial ideal $I \subseteq \ol\kk[Q]$ over an algebraically
closed field\/~$\ol\kk$.  Refining any mesoprimary decomposition
of~$I$ by canonical primary decomposition of its components yields a
binomial primary decomposition of~$I$.  In characteristic~$0$, each
primary component in this decomposition induces a primitive congruence
on~$Q$.
\end{thm}
\begin{proof}
Proposition~\ref{p:mesoprimcomp} implies binomiality of the primary
decomposition.  For the final claim, it suffices to prove that every
component $I + I_\sigma$ in Proposition~\ref{p:mesoprimcomp} induces a
primitive congruence in characteristic~$0$.  But since $\sigma$ is a
saturation of~$\rho$, the quotient of~$Q_P$ modulo the congruence
induced by $I + I_\sigma$ is exactly the quotient of $Q_P/\til_I$ by
the torsion subgroup of its unit~group.
\end{proof}

\begin{remark}\label{r:canonical}
No choices are necessary to construct the coprincipal decomposition in
Theorem~\ref{t:coprincipal'} or the combinatorial mesoprimary
decomposition in Theorem~\ref{c:mesodecomp'}, and hence no choices are
necessary to construct the primary decomposition in
Theorem~\ref{t:primDecomp}: these decompositions are all canonically
recovered from essentially combinatorial data---a set of witnesses and
monoid primes, plus the congruence induced by the binomial
ideal---just as in the monomial case.  Canonicality in the binomial
context, however, comes at the price of non-minimality.  Some
redundancy can be eliminated using Section~\ref{s:false}, but without
arbitrary, unmotivated (and often symmetry-breaking) choices,
redundancy can stubbornly persist.  The reason is that the redundancy
is already inherent in the combinatorics; that is, it happens at the
level of monoids, congruences, and witnesses, before coefficients
enter the picture.  Note that by ``canonical'' we mean in the sense of
``determined without extra data or requirements''.  In contrast, Ortiz
\cite{ortiz59} uses the adjective ``canonical'' to refer to primary
decompositions that minimize a certain index of nilpotency.
Regardless of the name, Ojeda \cite{ojeda11} proves that the
components in Ortiz's ``canonical'' decompositions are binomial when
the original ideal is binomial, but these decompositions generally
differ from the ones here, which rely solely on intrinsic data.
\end{remark}

\begin{remark}
In positive characteristic~$p$, primary binomial ideals need not be
mesoprimary.  This feature of mesoprimary decomposition reflects its
freedom from characteristic.  For instance, according to Hasse's
local-to-global principle the ideal $\<x^p-\nolinebreak 1,\linebreak
y(x-1), y^2\>$ has no business being primary: in all but one
characteristic it has two or more associated objects that
accidentially coincide in characteristic~$p$.
\end{remark}

When the base field~$\kk$ is not algebraically closed, the binomial
ideal~$I$ need not possess a binomial primary decomposition over~$\kk$
(see Example~\ref{e:arithsat}, for instance), but it does have one
over the algebraic closure~$\ol\kk$.  One of our original motivations
for seeking a theory of mesoprimary decomposition was to gather
primary components in such a way that Galois automorphisms
(of~$\ol\kk$ over~$\kk$) permute them.  In particular, if two primes
are Galois translates of one another, then we wanted their
corresponding primary components to look combinatorially the same.

\begin{thm}\label{t:galois}
If the ideal~$I$ in Theorem~\ref{t:primDecomp} is defined over a
subfield\/~$\kk$ of its algebraic closure~$\ol\kk$, then the primary
decomposition there is fixed by the Galois group
$\mathrm{Gal}(\ol\kk/\kk)$.  More precisely, if $\pi \in
\mathrm{Gal}(\ol\kk/\kk)$ is a Galois automorphism and $C$ is one of
the primary components of~$I$ from Theorem~\ref{t:primDecomp}, then
$\pi(C)$ is another one of them.
\end{thm}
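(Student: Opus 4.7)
The strategy is to trace the construction in Corollary~\ref{c:irredDecomp} step by step and check Galois-stability at each stage. A Galois automorphism $\pi \in \mathrm{Gal}(\ol\kk/\kk)$ acts $\kk$-linearly on $\ol\kk[Q]$ fixing every monomial $\ttt^q$, so it carries binomial ideals to binomial ideals, preserves inclusion and intersection, and satisfies $\pi(I)=I$ by hypothesis. It therefore suffices to show that the set of primary (or irreducible) components produced by Corollary~\ref{c:irredDecomp} is stable as a set under~$\pi$.

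First I would show that the combinatorial mesoprimary decomposition of Corollary~\ref{c:mesodecomp'}, when applied to $I$ viewed inside $\ol\kk[Q]$, consists of mesoprimary components each of which is already defined over~$\kk$, and hence individually $\pi$-fixed. The input data for the decomposition are the monoid prime~$P$, the $I$-witnesses $w \in Q$ for~$P$, and the $P$-mesoprime~$\Iqp$ of~$I$ at each such~$w$. The witness data depends only on the congruence $\til_I$, which is purely combinatorial and inherited from~$\kk$. The $P$-mesoprime at~$w$ is $(I_P:\ttt^w)+\mm_P$, which is a $\kk$-rational ideal of~$\ol\kk[Q]_P$ since $I$, $\ttt^w$, and~$\mm_P$ are all defined over~$\kk$; combining this with the uniqueness of the pair $(L,\rho)$ in Lemma~\ref{l:groupalg} forces the corresponding character~$\rqp$ to take values in~$\kk^*$, not merely in $\ol\kk^*$. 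Consequently the ideals $I_\rho$ and the monomial ideals $\MwpI$ that assemble each component $\WwpI$ in Definition~\ref{d:Wwp} are all defined over~$\kk$, so every mesoprimary component is Galois-fixed.

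Next I would analyze the refinement into primary components afforded by Proposition~\ref{p:mesoprimcomp} and Remark~\ref{r:mesoprimcomp}. Fix a mesoprimary component $J$ with associated mesoprime $\Irp$ and $\kk$-valued character $\rho : K \to \kk^*$. The proposition rewrites
\[
  J \;=\; \bigcap_\sigma\, (J + I_\sigma),
\]
where $\sigma$ ranges over the finitely many saturated extensions of~$\rho$ to characters $\sat(K) \to \ol\kk^*$. The Galois group acts on this index set by $\sigma \mapsto \pi \circ \sigma$, since $\rho$ itself is pointwise fixed, and on the components by $\pi(I_\sigma) = I_{\pi\circ\sigma}$. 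Thus $\pi$ permutes the primary components $\{J + I_\sigma\}$, which combined with the Galois-fixedness of~$J$ from the previous step proves the claim for primary decompositions. The irreducible case is identical: by Lemma~\ref{l:irred} the refinement of a coprincipal mesoprimary component by saturations of~$\rho$ consists of coprincipal ideals, which are irreducible by Theorem~\ref{t:irred}, and the same Galois-permutation argument applies.

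The main technical obstacle, and really the only nonformal step, is the rationality of the character~$\rqp$ at an $I$-witness~$w$. Without it, Galois would merely permute the mesoprimary components among themselves, and one would be forced to group primary components into Galois orbits that are not obviously contained in a single mesoprimary refinement. Once that rationality is established, the rest of the argument is the formal observation that $\mathrm{Gal}(\ol\kk/\kk)$ permutes the set of extensions of any fixed $\kk$-valued character.
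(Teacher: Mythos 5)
Your proposal is correct and takes essentially the same approach as the paper's own (one-line) proof, which simply observes that the Galois group fixes each mesoprimary component elementwise and that the primary decomposition of a mesoprimary ideal via Proposition~\ref{p:mesoprimcomp} is canonical. You supply the details that the paper leaves implicit, in particular the $\kk$-rationality of the character $\rqp$ at each witness, extracted from the Galois-invariance of $(I_P:\ttt^w)+\mm_P$ together with the uniqueness in Lemma~\ref{l:groupalg}.
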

\begin{proof}
The Galois group fixes every mesoprimary component of~$I$, and the
primary decomposition of a mesoprimary ideal
(Proposition~\ref{p:mesoprimcomp}) is canonical.
\end{proof}

Our final result on the primary-to-mesoprimary correspondence shows
that, for general binomial ideals, every associated prime is detected
by an associated mesoprime.  For cellular binomial ideals, the
relationship between associated mesoprimes and associated primes is
even more perfectly precise.  The cellular case of the following
result over an algebraically closed field is \cite[Theorem~8.1]{ES96}
and its converse; the latter was stated and used without proof after
\cite[Algorithm~9.5]{ES96}.  First, a matter of notation.

\begin{defn}\label{d:P-cellular}
Fix a cellular binomial ideal $I \subset \kk[Q]$.  If $P \subset Q$ is
the prime ideal of exponents on monomials that are nilpotent
modulo~$I$, then $I$ is \emph{$P$-cellular}.
\end{defn}

\begin{thm}\label{t:assPrim}
Fix a binomial ideal $I \subseteq \kk[Q]$ over an arbitrary
field\/~$\kk$.
\begin{enumerate}
\item%
Each associated prime of~$I$ is minimal over some associated mesoprime
of~$I$.
\item%
If $I$ is cellular, then the binomial converse holds: every binomial
prime that is minimal over an associated mesoprime of~$I$ is an
associated prime of~$I$.
\end{enumerate}
\end{thm}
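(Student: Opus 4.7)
For part~(1), the plan is to extract associated primes from an explicit decomposition. By Corollary~\ref{c:mesodecomp'}, fix a combinatorial mesoprimary decomposition $I=\bigcap_j J_j$. Since $\kk[Q]/I$ embeds into $\bigoplus_j \kk[Q]/J_j$, any $\pp \in \Ass(\kk[Q]/I)$ lies in $\Ass(\kk[Q]/J_j)$ for some~$j$, and Corollary~\ref{c:assofmesoprimary} then identifies $\pp$ as a minimal prime over the unique associated mesoprime of~$J_j$. Because the decomposition is combinatorial, Definition~\ref{d:mesodecomp'} identifies the associated mesoprime of~$J_j$ with the $P$-mesoprime of~$I$ at a $J_j$-witness, which is also an $I$-witness; hence it is an associated mesoprime of~$I$, completing~(1).

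For part~(2), assume $I$ is $P$-cellular and let $\pp=\Isp$ be a binomial prime minimal over an associated mesoprime $\Irp$ of~$I$. Since $\pp\supseteq\mm_P$, no monomial inverted in passing from $\kk[Q]$ to $\kk[Q]_P$ lies in~$\pp$, so it suffices to exhibit $\pp\kk[Q]_P$ as an associated prime of $\kk[Q]_P/I_P$. Pick an $I$-witness $w$ for~$P$ whose $P$-mesoprime equals $\Irp$; by Definition~\ref{d:qchar} this witness relation is
\[
(I_P:\ttt^w) + \mm_P = (\Irp)_P .
\]
Because $w$ is a witness, the image of $\ttt^w$ in~$\kk[Q]_P/I_P$ is nonzero, so it generates a cyclic submodule isomorphic to $\kk[Q]_P/(I_P:\ttt^w)$. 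The plan is to show that the minimal primes of $(I_P:\ttt^w)$ coincide with the minimal primes of $(\Irp)_P$, and then invoke the standard fact that a minimal prime over the annihilator of a module is one of its associated primes.

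The key computation is the radical equality $\sqrt{(I_P:\ttt^w)} = \sqrt{(\Irp)_P}$. The inclusion $\subseteq$ is immediate from the displayed equation. For the reverse, the $P$-cellular hypothesis makes every $\ttt^p$ with $p\in P$ nilpotent modulo~$I$, so $\mm_P\subseteq\sqrt{(I_P:\ttt^w)}$; combining this with the displayed relation shows that $I_\rho$ also sits inside $\sqrt{(I_P:\ttt^w)}$, forcing $\sqrt{(\Irp)_P}\subseteq\sqrt{(I_P:\ttt^w)}$. The prime~$\pp$ is minimal over $(\Irp)_P$ by hypothesis and therefore minimal over $(I_P:\ttt^w)$, hence associated to the cyclic module $\<\ttt^w\>\iso \kk[Q]_P/(I_P:\ttt^w)$, and so to $\kk[Q]_P/I_P$ and finally to $\kk[Q]/I$.

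The only real obstacle is this radical computation, and it is precisely where the cellular hypothesis enters: cellularity promotes the monomial summand $\mm_P$ of the mesoprime into the radical of the witness annihilator, bridging the combinatorial witness relation of Definition~\ref{d:qchar} and the statement about honest annihilators in the full ring. Because the argument uses only ideal containments and radicals, no algebraic-closure hypothesis on~$\kk$ is needed, and the conclusion applies to every binomial prime minimal over~$\Irp$---including those arising from arithmetically saturated (but not saturated) extensions of~$\rho$.
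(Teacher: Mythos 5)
Your proof of part~(1) is essentially the same as the paper's: both extract associated primes of the components of a mesoprimary decomposition via Corollary~\ref{c:assofmesoprimary}.

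For part~(2), however, you take a genuinely different and substantially more elementary route than the paper. The paper's proof passes to the submodule $\kk[Q]/I'$ with $I' = (I:\ttt^w)$, applies the \emph{binomial localization} of Section~\ref{s:binloc} at the prime $\Isp$ to kill off incompatible characters, locates a monomial in the $P$-socle of the result, and observes that the annihilator of that socle monomial is a mesoprime sandwiched between $\Irp$ and $\Isp$, so that Corollary~\ref{c:assofmesoprimary} applies; it then transfers back via Theorem~\ref{t:binloc}. Your argument bypasses binomial localization and the socle analysis entirely: after ordinary monomial localization along~$P$, you observe that cellularity forces $\mm_P \subseteq \sqrt{I_P} \subseteq \sqrt{(I_P:\ttt^w)}$, so the defining relation $(\Irp)_P = (I_P:\ttt^w) + \mm_P$ from Definition~\ref{d:qchar} yields the radical equality $\sqrt{(I_P:\ttt^w)} = \sqrt{(\Irp)_P}$. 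Minimal primes over an ideal depend only on its radical, and minimal primes over the annihilator of a cyclic module are always associated primes, so $\Isp$ is associated to the cyclic submodule $\<\ttt^w\>$ and hence to~$\kk[Q]_P/I_P$, whence to~$\kk[Q]/I$. This is correct and cleaner; it trades the structural information of the paper's proof (an explicit mesoprime submodule whose character sits between $\rho$ and~$\sigma$) for brevity. One point shared implicitly by both arguments and worth making explicit: the monoid prime of $\Isp$ must coincide with~$P$ for $\pp$ to survive the localization along~$P$; this follows by noting that all minimal primes of the mesoprime $\Irp$ are $P$-primes (via Proposition~\ref{p:assofmeso} over $\ol\kk$, then descending).
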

\begin{proof}
For part~1, apply Corollary~\ref{c:assofmesoprimary} to the components
of~$I$ under any mesoprimary decomposition from
Theorem~\ref{t:mesodecomp'}.

For the cellular converse, suppose that $I$ is $P$-cellular, and that
a binomial prime $\Isp$ is minimal over some associated
mesoprime~$\Irp$ of~$I$.  The submodule of~$\kk[Q]/I$ generated by a
witness for~$\Irp$ is isomorphic to a quotient $\kk[Q]/I'$ for a
binomial ideal~$I'$ all of whose witness characters are extensions
of~$\rho$.  After subsequently binomially localizing at~$\Isp$, the
only surviving characters are restrictions of~$\sigma$, and hence sit
between $\sigma$ and~$\rho$.  In particular, this is true for the
character at any given monomial~$\ttt^q$ such that $q$ is a
cogenerator of the induced congruence.  Such a monomial generates a
mesoprime submodule with $\Isp$ among its associated primes by
Corollary~\ref{c:assofmesoprimary}.  Therefore $\Isp$ is associated
to~$I'$, and hence to $I$ by Theorem~\ref{t:binloc}; see
Remark~\ref{r:binloc}.
\end{proof}

\begin{example}\label{e:nonuniqueAss}
Given an associated prime of~$I$ as in Theorem~\ref{t:assPrim}.1, the
associated mesoprime guaranteed by the theorem need not be unique.
This phenomenon is illustrated by Example~\ref{e:pure-diff}.5.  The
binomial prime $\<x - 1, y\>$ for the trivial character on the
$x$-axis $\NN \times \{0\}$ is associated to~$I$ and has two possible
choices of associated mesoprime, namely $\<x - 1, y\>$ and $\<x^2 - 1,
y\>$.  Combinatorially, the row of dots at height~$1$ consists of two
classes, each being the nonnegative points in a coset of an
unsaturated lattice, while the row of dots at height~$2$ comprise just
one class, the nonnegative points in a coset of the saturation.  In
general, when the group of units $G_P$ acts, there could be a whole
$G_P$-orbit of classes corresponding to an unsaturated subgroup~$K$,
and a higher $G_P$-orbit with an associated subgroup anything between
$K$ and its saturation.
\end{example}

\begin{example}\label{e:cellular-not-mesoprimary}
Unmixed (cellular) binomial ideals need not be mesoprimary.  Consider
the cellular binomial ideal $\<x^2-1, y(x-1), y^2\> \subset \kk[x,y]$.
It is not mesoprimary, but because its associated primes are
$\<x-1,y\>$ and $\<x+1, y\>$, it is unmixed (even primary if
$\mathrm{char}(\kk)=2$).  Consequently, the unmixed decompositions of
\cite[Corollary~8.2]{ES96} and \cite[Algorithm~A4]{ojeda00} do not
decompose this ideal and thus do not lead to mesoprimary---let alone
coprincipal---decompositions, even in cellular cases.
\end{example}

\section{Character witnesses and false witnesses}\label{s:false}

The set of $I$-witnesses in the arithmetic setting of a binomial
ideal~$I$ in a monoid algebra can be redundant in a manner that
parallels the redundancy of witnesses in the combinatorial setting of
monoid congruences.  In the combinatorial setting, some of the
redundancy is naturally eliminated by restricting to key witnesses; in
the arithmetic setting here, character witnesses
(Definition~\ref{d:false'}) play an analogous role.  For cellular
binomial ideals this is Theorem~\ref{t:false}.  Lifting to the general
(i.e., non-cellular) case is possible but would take us too far afield
to be included here.

\begin{defn}\label{d:coverExt}
Fix a binomial ideal $I \subset \kk[Q]$, an element $q \in Q$, and a
monoid prime ideal $P \subset Q$.  A \emph{$P$-cover extension at~$q$}
is an extension of the character $\rqp: K_q \to \kk^*$ of~$I_P$ at~$q$
to the character $\rpqP: K_{p+q} \to \kk^*$ at a $P$-cover $p+q$
of~$q$ (Defs.~\ref{d:cover} and~\ref{d:qchar}).
\end{defn} 

There can be many---even infinitely many---choices of minimal
generating sets for~$P$ (Remark~\ref{r:cover}), but just as in
Lemma~\ref{l:cover}, there are not too many $P$-cover extensions.

\begin{lemma}
In the situation of Definition~\ref{d:coverExt}, the set of $P$-cover
extensions at~$q$ is finite, in the sense that only finitely many
stabilizers $K_{p+q}$ occur, and only finitely many characters defined
on each stabilizer occur among the characters~$\rpqP$.
\end{lemma}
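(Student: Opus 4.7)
The plan is to combine Remark~\ref{r:cover}'s finiteness of minimal generators of~$P_P$ modulo Green's relation with the observation that Green's equivalent generators induce identical cover extensions, so that all finiteness reduces to the known finiteness of Green's classes of~$p$'s.

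The key step is the following claim: if $p, p' \in Q_P$ are Green's equivalent, then $(I_P : \ttt^{p+q}) = (I_P : \ttt^{p'+q})$. Writing $p = p' + u$ and $p' = p + v$ with $u, v \in Q_P$, one has $\ttt^p = \ttt^{p'}\ttt^u$ and $\ttt^{p'} = \ttt^p\ttt^v$ in $\kk[Q]_P$, and a routine colon computation gives
$$
  (I_P : \ttt^{p+q}) = \bigl((I_P : \ttt^{p'+q}) : \ttt^u\bigr) \supseteq (I_P : \ttt^{p'+q}),
$$
with the reverse containment following by symmetry from~$v$. Adjoining $\mm_P$ preserves this equality, so Definition~\ref{d:qchar} (together with the uniqueness of the data $(K, \rho)$ furnished by Lemma~\ref{l:groupalg}) yields $K_{p+q} = K_{p'+q}$ and $\rpqP = \rho_{p'+q}^{P}$ as identical homomorphisms.

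To conclude, Remark~\ref{r:cover} provides only finitely many Green's classes among the minimal generators of~$P_P$. Thus as $p$ ranges over such generators, only finitely many distinct pairs $(K_{p+q}, \rpqP)$ arise, which gives both finiteness statements in the lemma at once: only finitely many stabilizers $K_{p+q}$ occur, and since each Green's class contributes a single character, each such stabilizer supports only finitely many characters among the~$\rpqP$.

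The main obstacle, such as it is, is the colon-ideal bookkeeping establishing Green's-invariance of $(I_P : \ttt^{p+q})$; all remaining finiteness is supplied by Remark~\ref{r:cover}, and no hypothesis on the ground field~$\kk$ is required.
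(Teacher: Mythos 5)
Your proof is correct and follows essentially the same route as the paper's: both reduce to Remark~\ref{r:cover}'s finiteness of Green's classes of minimal generators of $P_P$, after checking that Green's-equivalent covering elements yield the same stabilizer--character pair. The paper phrases the key invariance in terms of Green's equivalence of the \emph{images} of $p$ and $p'$ in $\oQ$ (a slightly weaker hypothesis than your Green's equivalence in $Q_P$), but since Remark~\ref{r:cover} supplies finitely many Green's classes already in $Q_P$, your version suffices; your colon-ideal computation just makes explicit the step the paper asserts tersely.
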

\begin{proof}
Let $\oQ$ be the quotient of~$Q$ modulo the congruence determined
by~$I$.  If the images of $p$ and~$p'$ are Green's equivalent
in~$\oQ$, then the stabilizers $K_{p+q}$ and $K_{p'+q}$ coincide, as
do the extensions to $\rpqP$ and~$\rho_{p'+q}^{\hspace{.2ex}P}$.  Now
apply Remark~\ref{r:cover}.
\end{proof}

\begin{defn}\label{d:false'}
Fix a prime $P \subset Q$, a $P$-cellular binomial ideal $I \subset
\kk[Q]$, and $w \in Q$.
\begin{enumerate}
\item%
The \emph{testimony} of~$w$ at~$P$ is the set $T_P(w)$ of $P$-cover
extension characters.
\item%
The testimony $T_P(w)$ is \emph{suspicious} if the intersection of the
corresponding mesoprimes equals the $P$-mesoprime $\Iwp$
(Definition~\ref{d:P-mesoprime}); that is, if $\Iwp = \bigcap_{\rho
\in T_P(w)} \!\Irp$.
\item%
A \emph{false witness} is an $I$-witness $w$ for~$P$ that is not
maximal (under Green's preorder) among $I$-witnesses for~$P$ and whose
testimony at~$P$ is suspicious.
\item%
An $I$-witness that is not false is a \emph{character
witness}.
\end{enumerate}
\end{defn}

\begin{remark}\label{r:false}
For algebraically closed $\kk = \ol\kk$, Definition~\ref{d:false'}.4
becomes transparent, as follows.  Minimal primary decompositions of
mesoprimes $\Irp$ (Proposition~\ref{p:assofmeso}) are easy and
canonical in that case: every saturated finite extension of~$\rho$
appears exactly once.  A finite intersection of mesoprimes~$\Isp$,
each containing~$\Irp$, equals~$\Irp$ when, among all of the saturated
finite extensions of the characters~$\sigma$, every saturated finite
extension of~$\rho$ appears at least once.  A character witness
for~$P$ with associated mesoprime $\Irp$ is a witness in possession of
a new character (a saturated finite extension) not present in its
testimony.  By the same token, a witness is false if it has no new
characters to mention: the set of characters in its testimony is
suspiciously complete.
\end{remark}

The relation between the different types of witnesses from monoid land
(key witnesses) and binomial land (character witnesses) is not as
strong as one may hope.  For example, a key witness can be a false
witness (Example~\ref{e:false}), and a character witness might not be
a key witness (Example~\ref{e:charWitness}).  It is also possible for
a non-key witness to be a false witness (Example~\ref{e:non-key}).
All of these examples are cellular binomial ideals.

\begin{example}\label{e:false}
Consider the ideal $I' = \<x(z-1),y(z+1),z^2-1,x^2,y^2\>$ from
Example~\ref{e:z+1} and let $P$ be the monoid prime of~$\NN^3$ such
that $\mm_P = \<x,y\>$.  Then $0 \in \NN^3$ is a key $I'$-witness
for~$P$ that is a false $I'$-witness: the $P$-mesoprimes at the
$P$-covers of~$0$ are $\<z-1\>$ and $\<z+1\>$, whose characters form
the complete set of saturated finite extensions of the character
for~$\<z^2-1\>$.  The testimony is suspicious because $\<z-1\> \cap
\<z+1\> = \<z^2-1\>$.  In contrast, $0 \in \NN^3$ is a character
$I$-witness for~$P$, where the ideal $I =
\<x(z-1),y(z-1),z^2-1,x^2,xy,y^2\>$ induces the same congruence
as~$I'$.
\end{example}

\begin{example}\label{e:charWitness}
In Definition~\ref{d:false'}, the intersection of the mesoprimes is
the analogue of intersecting the kernels of the cover morphisms in
Definition~\ref{d:witness}.  The necessity of allowing all (non-key)
witnesses as potential character witnesses stems from the phenomenon
in Example~\ref{e:irreducible} (the common refinement of the
congruences induced by $\<x-1\>$ and $\<y-1\>$ is trivial whereas the
intersection of these ideals not) but is better illustrated by $I =
\<x^2 -xy, y^2 -\nolinebreak xy, x(z-\nolinebreak 1), y(w-1), x^3\>
\subset \kk[x,y,z,w]$, which throws an extra generator~$x^3$ into the
ideal from Example~\ref{e:assprim}.\ref{e:witt2nonKey}.  In contrast
with that example, the extra monomial causes~$I$ to be cellular: the
primary congruence it induces has associated monoid prime $P = \<e_x,
e_y\>$.  But the $P$-prime congruence at the character $I$-witness $0
\in \NN^4$ remains trivial, being the common refinement of the
congruences induced by $\<z-1\>$ and $\<w-1\>$.  This trivial
$P$-prime congruence at~$0$ indicates a total lack of binomials in the
$\oQ$-degree~$0$ part of the intersection $\<z-1,x^2,y\> \cap
\<w-1,x,y^2\>$, but this lack is accompanied by non-binomial elements.
An additional intersectand, namely the prime ideal $\<x,y\>$ itself,
is required to enforce~binomiality.

In terms of Definition~\ref{d:false'}, the testimony consists entirely
of saturated but infinite extensions of the character of~$I_P$ at $0
\in \NN^4$.  Therefore no saturated finite extensions occur, in the
sense of Remark~\ref{r:false}, making $0 \in \NN^4$ a rather strong
character $I$-witness, even though it is not a key witness for the
congruence induced by~$I$.
\end{example}

\begin{example}\label{e:non-key}
Non-key witnesses can be false witnesses.  In Example~\ref{e:non-ass}
the origin is a false witness because $\<a^2-1, b-1\> \cap \<a-1,
b^2-1\> \cap \<ab-1, a-b\> = \<a^2-1, b^2-1\>$ exhibits suspicious
testimony.
\end{example}

\begin{defn}\label{d:mesodecomp"}
Fix a cellular binomial ideal $I \subseteq \kk[Q]$ in a finitely
generated commutative monoid algebra over a field~$\kk$.  A
mesoprimary decomposition of~$I$ is \emph{characteristic} if every
cogenerator for every mesoprimary component is a character
$I$-witness.
\end{defn}

\begin{thm}\label{t:false}
Fix $I$, a cellular binomial ideal.  $I$ admits a characteristic
mesoprimary decomposition.  In fact, $I$ is the intersection of the
coprincipal ideals cogenerated by its character witnesses.  More
generally, if $I$ is expressed as an intersection of coprincipal
components of~$I$, then any component cogenerated by a false witness
is~redundant.
\end{thm}

In particular, the components for false witnesses can be thrown out
(with their testimony) from the coprincipal decomposition in
Theorem~\ref{t:coprincipal'} for a cellular binomial~ideal.

\begin{proof}
$P$-cellular ideals have only finitely many Green's classes of
witnesses for~$P$, because their induced congruences have only
finitely many Green's classes to begin with by Lemma~\ref{l:poset}.
Therefore the intersection over character witnesses is finite.

Express $I$ as an intersection of mesoprimary components of~$I$
cogenerated by single witnesses, one of which is $W = W_w^P(I)$
cogenerated by a false witness~$w$.  Given an element $f \not\in W$,
we need $f$ to lie outside of the intersection~$I'$ of the other
components.  It suffices to show that $f$ lies outside at least one of
the other components.  To that end, there is no harm in localizing
along~$P$, because by Lemma~\ref{l:localize} if $f$ lies outside of a
coprincipal component after localizing then it does so before
localizing.  Henceforth, therefore, assume $P$ is the maximal monoid
ideal.  Furthermore, if $f'$ is a monomial multiple of~$f$ that
remains outside of~$W$, then concluding that $f' \not\in I'$ is
enough.  Therefore, replacing $f$ by a monomial multiple of~$f$,
assume $f$ is annihilated, modulo~$W$, by the entire maximal monomial
ideal.  Write $f = f_{\preceq w} + f_{\not\preceq w}$, where
$f_{\preceq w}$ is the sum of the terms of~$f$ whose exponents lie in
$w + G$ for $G = Q \minus P$, the unit group of~$Q$.

The first goal is to show that $f \in I' \implies f_{\preceq w} \in
I'$.  Let $v$ be any $I$-witness and set $W' = W_v^P(I)$.  When $w
\not\prec v$ in Green's preorder, it is automatic that $f_{\preceq w}
\in W'$, for then all monomials with exponents in $w + G$ lie in~$W'$.
Therefore assume $w \prec v$ and $f \in W'$.  The relation $w \prec v$
implies that $w$ is not nil modulo the congruence $\til_{W'}$ induced
by~$W'$, and consequently no term of $f_{\preceq w}$ has an exponent
that is congruent under $\til_{W'}$ to the exponent on a term of
$f_{\not\preceq w}$.  Therefore $f\in W'\implies f_{\preceq w}\in W'$.

We have reduced to showing that $f \not\in W \implies f \not\in I'$
when $f = f_{\preceq w}$, so assume $f = f_{\preceq w} \not\in W$.
For each generator $p \in P$, let $\sigma_p \in T_P(w)$ be the
corresponding $P$-cover extension character.  The crucial observation
is that, since $f$ is a sum over~$w + G$,
$$%
  \ttt^p f \in I\ \iff\ f \in W + I_{\sigma_p}.
$$
This equivalence holds by tracing through all of the definitions; it
implies that $\ttt^p f \in I$ for all generators $p \in P$ precisely
when
\begin{align*}
   f \in \textstyle\bigcap_p (W + I_{\sigma_p})
&= W + \textstyle\bigcap_p I_{\sigma_p}\\
&= W + I_w^P\\
&= W,
\end{align*}
where the first displayed equality is a consequence of
Proposition~\ref{p:filter}.  Since $f \not\in W$, it follows that
there is some generator $p \in P$ such that $\ttt^p f \not\in I$.  But
$\ttt^p f \in W$ by construction, so $\ttt^p f$ lies outside of some
other coprincipal component of~$I$, and hence so does~$f$ itself, as
desired.
\end{proof}

Where did cellularity enter the proof of the preceding proposition?
Beyond finiteness of witnesses, the conclusion that no term of
$f_{\preceq w}$ has an exponent congruent under $\til_{W'}$ to the
exponent on a term of $f_{\not\preceq w}$ would be false if $W'$ were
allowed to be a coprincipal component for a monoid prime strictly
contained in~$P$; see Example~\ref{e:non-cellular}.

\begin{example}\label{e:non-cellular}
Let $I = \<x^2-x\dot x,x\dot x-\dot
x^2,x^3,x^2y,z^2-1,x^2(z-1),y(z+1),y(x-\nolinebreak\dot x)\> \subseteq
\kk[x,\dot x,y,z]$.  (The variables $x$ and~$\dot x$ correspond to $x$
and~$y$ in Example~\ref{e:mesoprimary}.)  Then $\dot x$ is a false
$I$-witness monomial for the monoid prime~$P$ corresponding to $\mm_P
= \<x,\dot x,y\>$:
the character at~$\dot x$ is $z^2-1$, while at $x\dot x$ it is $z-1$
and at $y\dot x$ it is $z+1$.  Omitting the coprincipal component
$\<x^2,x\dot x,\dot x^2,y,z^2-1\>$ of~$I$ cogenerated by~$\dot x$ from
the coprincipal decomposition of~$I$ in Theorem~\ref{t:coprincipal'}
leaves $\<x^3,x^2-x\dot x,x\dot x-\dot x^2,y,z-1\> \cap \<x^2,x-\dot
x,z+1\>$, which is not a binomial ideal.  The element $f = x - \dot x$
has a monomial $\dot x = \ttt^w$ whose exponent~$w$ is congruent to
the exponent of $x = \ttt^q$ under $\til_{W'}$ for $W' = \<x^2,x-\dot
x,z+1\>$ even though $q$ and~$w$ are incomparable.  $W'$ is
$P'$-mesoprimary for $\mm_{P'} = \<x,\dot x\> \subsetneq \mm_P$.
\end{example}

\begin{remark}\label{r:essfalse}
One reason Theorem~\ref{t:false} restricts to the cellular case is the
automatic finiteness for witnesses.  In contrast, in
Section~\ref{s:components} the notion of essential witness does the
job by Lemma~\ref{l:finite-ess}.  In general, even modulo Green's
equivalence the set of $I$-witnesses can be infinite.  For example,
infinitness causes Proposition~\ref{p:cogenerated} to fail when $I =
\<x^2y-y^2x\>$ if one uses all $I$-witnesses for $\mm_P = \<x,y\>$.
The sets of essential and character witnesses do not coincide, because
of the false key witnesses in Example~\ref{e:false}, but it is
possible that every character $I$-witness could be an essential
$I$-witness.
\end{remark}

\begin{question}\label{q:redundant}
Are there redundant character witnesses?  How about key witnesses?
\end{question}

\section{Open problems}\label{s:open}

Beyond Question~\ref{q:redundant}, the results of this paper raise
other problems implicitly in the remarks, and still others that
constitute future research directions beyond the scope of this paper.
We collect some of these problems here.

\subsection{Intersections of binomial ideals}

\begin{prob}\label{p:bIntersect}
Characterize when an intersection of binomial ideals is binomial.
\end{prob}

Problem~\ref{p:bIntersect} was originally posed by Eisenbud and
Sturmfels \cite[Problem~4.9]{ES96}, who answered it in the reduced
situation \cite[Theorem~4.1]{ES96}.  In our language, that theorem
contains information about the associated prime ideals of the
congruence induced by a radical binomial ideal.  It is possible that
the general case could reduce to the radical case, by considering what
the congruence or the $P$-prime characters induced by the intersection
could possibly look like at each monoid element.  This type of
consideration underlies the definition of character witness
(Definition~\ref{d:false'}), where non-binomiality at specific monoid
elements would otherwise occur, without specifically throwing in
additional binomials, because of incompatibility of congruences or
characters arising from covers.

As a stepping stone to a full answer to Problem~\ref{p:bIntersect},
one might consider \cite[Problem~6.6]{ES96}: does intersecting the
minimal primary components of a binomial ideal result in another
binomial ideal?

\subsection{Choices of vertical coefficients}

Remarks~\ref{r:construct} and~\ref{r:verticalchoice} raise the
following.

\begin{prob}\label{p:verticalCoeff}
Characterize the mesoprimary ideals that induce a fixed mesoprimary
congruence with a fixed associated mesoprime.  In particular, decide
when the set of such mesoprimary ideals is nonempty.
\end{prob}

\subsection{Primary binomial ideals in positive characteristic}

Lack of knowledge conconerning the combinatorics of primary binomial
ideals in positive characteristic is an obstacle in our
investigations.  In particular we do not know exactly which primary
binomial ideals are mesoprimary.

\begin{prob}\label{p:p>0}
Characterize primary binomial ideals with nontrivial mesoprimary
decompositions.
\end{prob}

\subsection{Posets of mesoprimes}

\begin{prob}\label{p:assmeso}\vspace{-.36ex}
Characterize the posets of associated prime congruences of primary
congruences.
\end{prob}\vspace{-.36ex}

The problem could have been stated for arbitrary congruences, but then
every finite poset would be possible, because every finite poset
occurs as the set of associated primes of a monomial ideal (this is a
good exercise, but it follows from \cite{multIdeals}).
Problem~\ref{p:assmeso} is equivalent to characterizing posets of
associated mesoprimes of unital cellular binomial ideals.  Such posets
always possess a unique minimal element, represented by the identity
element of the finite partially ordered monoid of Green's classes in
Lemma~\ref{l:poset}.  When devising examples for the present paper, we
often used a technique to ``place'' associated mesoprimes at desired
locations, illustrated as follows.

\begin{example}\label{e:poset}
Let $\Delta \subsetneq \Gamma$ be simplicial complexes on
$\{1,\ldots,n\}$ and consider the polynomial ring in $2n$ variables $S
= \kk[x_1,\dots,x_n,y_1,\dots,y_n]$. For any $A \in \Gamma \minus
\Delta$ write $x_A := \prod_{i\in A}x_i$.  The poset of associated
mesoprimes of the cellular binomial ideal
$$%
    I_{\Gamma\minus \Delta} = \sum_{A \in \Gamma\minus\Delta}
  I_A + \<x_i^2 \mid i = 1,\dots, n\> \subset S
  \quad\text{ for }\quad I_A = \< x_A(y_i-1) \mid i\in A\>
$$
is isomorphic to $(\Gamma \minus \Delta) \cup \{\emptyset\}$.
\end{example}

\begin{remark}\label{r:filler}
The construction in the previous example is fairly general, and one
might hope that complete generality is possible.
In practice this problem will be about understanding what happens to
the partial order on~$\NN^n$ when passing to a quotient and under the
order-preserving map assigning to a witness its associated prime
congruence.
\end{remark}

\begin{remark}\label{r:poset}
Definition~\ref{d:mesoass} requires associated prime congruences to
appear at key witnesses.  Allowing arbitrary witnesses yields an
a~priori different notion of associated prime congruence: although the
$P$-prime congruence at an arbitrary witness for~$P$ agrees with the
$P$-prime congruence at some key witness, the key witness might be for
a monoid prime smaller than~$P$.  This phenomenon does not occur for
primary congruences, however, as they have only one associated monoid
prime.  Thus Problem~\ref{p:assmeso} would have the same answer if
Definition~\ref{d:mesoass} had allowed arbitrary witnesses.

Nonetheless, this line of thinking indicates that care must be taken
in lifting Problem~\ref{p:assmeso} to the arithmetic setting, where
Definition~\ref{d:witness'} requires associated mesoprimes to appear
at arbitrary witnesses, not at a subset of all witnesses.  For
instance, a $P$-mesoprime can be associated to an ideal even though it
only appears at a false witness; this occurs in both
Example~\ref{e:false} and Example~\ref{e:non-key}.  This idiosyncracy
in the definition of associated mesoprime motivates a new definition.
\end{remark}

\begin{defn}\label{d:truly}
An associated mesoprime of a binomial ideal~$I$ is \emph{truly
associated} if it is the $P$-mesoprime of~$I$ at a character
$I$-witness for~$P$.
\end{defn}

\begin{prob}\label{p:poset}
Characterize the posets of associated mesoprimes of cellular binomial
ideals.  Do the same for posets of truly associated mesoprimes.
\end{prob}

\begin{remark}
The family of posets referred to in (either version of)
Problem~\ref{p:poset} contains the family of posets in
Problem~\ref{p:assmeso} by Remark~\ref{r:poset} applied to the case of
unital binomial ideals.
\end{remark}

\subsection{Mesoprimary decomposition of modules}

Grillet \cite{grilletActions} shows how subdirect decompositions of
semigroups induce subdirect decompositions of sets acted on by
semigroups; see Remark~\ref{r:grilletActs}.  In a similar vein,
mesoprimary decomposition ought to extend to finitely generated monoid
actions.

\begin{prob}
Generalize mesoprimary decomposition of congruences to $Q$-modules.
\end{prob}

The generalization ought to parallel the manner in which ordinary
primary decomposition of ideals in rings extends to primary
decomposition of modules over rings.  In the arithmetic setting of
mesoprimary decomposition, however, even the first step of the
extension requires thought.

\begin{question}
What is a binomial module over a commutative monoid algebra?
\end{question}

A good theory of such modules should yield the desired generalization.

\begin{prob}
Extend mesoprimary decomposition to binomial $\kk[Q]$-modules.
\end{prob}

\subsection{Homological invariants of binomial rings}

The combinatorics of the free commutative monoid~$\NN^n$ gives rise to
formulas and constructions for all sorts of homological invariants
involving monomial ideals---Betti numbers, Bass numbers, free
resolutions, local cohomology, and so on---due to the $\NN^n$-grading;
see \cite{cca}.  Gradings by more general affine semigroups yield
formulas and constructions for local cohomology over affine semigroup
rings (with maximal support \cite{Ish87} as well as with more
arbitrary monomial support \cite{bassNumbers,injAlg}), and Betti
numbers for toric ideals \cite[Theorem~I.7.9]{Sta}, etc.  Having
identified the combinatorics controlling decompositions of binomial
ideals, the way is open to generalize monomial homological~algebra.

\begin{question}
Do there exist combinatorial (that is, monoid-theoretic) formulas for
local cohomology, Tor, and Ext involving binomial quotients of
polynomial rings?
\end{question}

\begin{remark}
In contrast, it is unclear to us whether combinatorial formulas for
local cohomology with binomial support should exist, partly because of
ill-behaved characteristic dependence; see
\cite[Example~21.31]{twentyfour}.
\end{remark}

As soon as there is some control over Betti tables, Boij--S\"oderberg
theory \cite{floystad} enters the picture.  There one decomposes the
Betti table $\beta(M)$ of a module $M$ over a polynomial ring~$S$ as a
rational linear combination of certain \emph{pure tables} $\pi_d$:
\begin{equation*}
\beta (M) = \sum a_d \pi_d.
\end{equation*}

\begin{question}
What combinatorics, if any, explains the coefficients $a_d$ of
$S/\Irp$ as an $S$-module when $\Irp$ is a mesoprime?
\end{question}

Even the special case of Boij--S\"oderberg theory for toric ideals is
currently open.

\subsection{Test sets in integer programming}

Let $A\in\ZZ^{d\times n}$ be an integer matrix.  An \emph{integer
program} is an optimization problem that seeks, for a given cost
vector $\omega\in\RR^n$, to maximize $\omega \cdot u$ over the integer
points in the polyhedron $\mathcal{F}_b = \{u \in \NN^n \mid Au = b\}$
for $b \in \NN A := A(\NN^n) \subseteq \ZZ^d$.  A solution to this
problem is the computation of a \emph{test set}: a set $\mathcal{B}$
of differences between points in~$\mathcal{F}_b$ such that for any
candidate solution~$u$ to the optimization problem, its optimality can
be tested by comparing it to $u + v$ for $v \in \mathcal{B}$.
Computing a Gr\"obner basis of the toric ideal
$$%
  I_A = \<\xx^u - \xx^v \mid u,v \in \NN^n \text{ and } Au = Av\>
$$
provides a simultaneous test set for all right-hand sides~$b$, but
this procedure may be computationally prohibitive.  The hope behind
the following problem is that for many~$b$ a test set is significantly
simpler than a Gr\"obner basis.
\begin{prob}\label{p:testset}
Fix a finite set $\mathcal{B} \subset \ker_\ZZ A$.
\begin{enumerate}
\item%
Characterize the multidegrees $b \in \NN A$ for which $\mathcal{B}$ is
a test set.
\item%
Quantify the failure of~$\mathcal{B}$ to be a test set in large
fibers~$\mathcal{F}_b$.
\end{enumerate}
\end{prob}
\noindent
Intuition for the second problem comes from the geometry of
mesoprimary components, or better yet, coprincipal components: their
thicknesses in various directions should provide bounds on how close
an integer point in~$\mathcal{F}_b$ can get to optimality
using~$\mathcal{B}$.  Indeed, starting at some $u \in \ZZ^n$ and
successively progressing to the (local) optimum achieved by moving
along vectors in~$\mathcal{B}$ is equivalent to normal form reduction
of $\xx^u$ using binomials in the ideal $I_\mathcal{B} = \<\xx^u -
\xx^v \mid u - v \in \mathcal{B}\>$.  Classes for the congruence
induced by~$I_\mathcal{B}$ can be thought of, roughly speaking, as
polyhedra of the form~$\mathcal{F}_b$ with bits (the ``skerries'' from
\cite[Section~1.1]{primDecomp}) eaten away from the boundary;
mesoprimary decomposition controls the missing boundary bits.

Diaconis, Eisenbud, and Sturmfels suggested---though not in the
presence of a cost vector---to systematically study lattice walks with
step set~$\mathcal{B}$ using primary decomposition of
$I_\mathcal{B}$~\cite{diaconisLattice}.  Given the unsuitability of
primary decomposition for combinatorial purposes, the method should be
updated to work with mesoprimary decompositions.  This is especially
true in the presence of unsaturated lattices among the minimal primes
of $I_\mathcal{B}$, in which case the combinatorial flavor of the
problem becomes clouded in the arithmetic (rather than combinatorics)
of binomial primary decomposition.

A first step toward Problem~\ref{p:testset} was developed
in~\cite{krs}.  There the authors study only the connectivity of
$\mathcal{F}_b$ as a function of the position of $b$ in the cone
$\QQ_+ A$.  Additionally all ideals there are radical, and
consequently the subtleties of mesoprimary decomposition play no role.


\end{document}